\documentclass{amsart}
\usepackage[english]{babel}
\usepackage{amsmath, amsthm, amssymb}
\usepackage{enumitem}
\usepackage{mathrsfs}
\usepackage{import}
\usepackage{scalerel}
\usepackage[hidelinks]{hyperref}

\title[Countable compactness in powers of topological groups]
{Countable compactness in powers of topological groups and Ramsey theoretic variations of compactness}

\author[V. O. Rodrigues]{Vinicius de Oliveira Rodrigues}
\address{Institute of Mathematics, Statistics and Computer Science, University of São Paulo, Rua do Matão 1010, 05508-090 São Paulo, SP, Brazil}
\email{vinior@ime.usp.br}
\thanks{The first author was supported in part by the São Paulo Research Foundation (FAPESP), Brazil, grant 2025/07302-0.}

\author[P. J. Szeptycki]{Paul Jan Szeptycki}
\address{Department of Mathematics and Statistics, York University, 4700 Keele Street, Toronto, Ontario M3J 1P3, Canada}
\email{szeptyck@yorku.ca}

\author[A. H. Tomita]{Artur Hideyuki Tomita}
\address{Institute of Mathematics, Statistics and Computer Science, University of São Paulo, Rua do Matão 1010, 05508-090 São Paulo, SP, Brazil}
\email{tomita@ime.usp.br}

\subjclass[2020]{Primary 54D30; Secondary 54A20, 54B10, 54B20, 54H11}
\keywords{Countable compactness, products, ultrafilter limits, barriers, topological groups, Stone--\v{C}ech compactification, Vietoris hyperspaces}

\newtheorem{theorem}{Theorem}[section]
\newtheorem{proposition}[theorem]{Proposition}
\newtheorem{lemma}[theorem]{Lemma}
\newtheorem{corollary}[theorem]{Corollary}
\theoremstyle{definition}
\newtheorem{definition}[theorem]{Definition}

\theoremstyle{plain}
\newtheorem*{claim}{Claim}
\newtheorem*{lemma*}{Lemma}
\newtheorem*{theorem*}{Theorem}

\DeclareMathOperator{\cl}{cl}
\DeclareMathOperator{\cf}{cf}
\DeclareMathOperator{\Fun}{Fun}
\DeclareMathOperator{\ran}{ran}
\DeclareMathOperator{\dom}{dom}
\DeclareMathOperator{\finct}{FinCt}
\DeclareMathOperator{\li}{LI^{*}}
\DeclareMathOperator{\LI}{LI^{+}}
\let\bigopsize\bigoplus
\def\bigominus{{\scalerel*{\boldsymbol\ominus}{\bigopsize}}}

\begin{document}
\begin{abstract}
Countable compactness need not be preserved by finite products. Motivated by
compactness notions for maps indexed by finite subsets of $\omega$, we introduce
cascade countable compactness for arbitrary barriers. For
$\mathcal B=[\omega]^2$, this is the previously studied notion of being doubly
countably compact.
We show that, in ZFC, if $G$ is a Hausdorff topological
group and $1\leq k<\omega$, then $k$-cascade countable compactness of $G$
implies that $G^k$ is countably compact. Cascade countable compactness for the
Schreier barrier implies that $G^\omega$ is countably compact. In contrast, we
construct a Tychonoff space that is $n$-cascade countably compact for every
$n<\omega$ but has a non-countably compact square, and a Hausdorff Boolean
group $H$ that is $\mathcal B$-countably compact for every barrier $\mathcal B$
but whose square is not countably compact. We also construct a Hausdorff
Boolean group without nontrivial convergent sequences that is
$\mathcal B$-cascade countably compact for every barrier $\mathcal B$. Finally,
we obtain a subspace $X\subseteq\beta\omega$ such that $X^\kappa$ is
$n$-cascade countably compact for every $\kappa<\mathfrak h$ and every
$n<\omega$, whereas $\exp X$ is not pseudocompact.
\end{abstract}
    \maketitle
    \section{Introduction}\label{section:introduction}

Tychonoff's Theorem states that an arbitrary product of compact spaces is
compact. For weaker compactness properties, product preservation may fail, and
one may ask which additional hypotheses imply it.

Two central weakenings of compactness are pseudocompactness and countable
compactness.
A topological space is pseudocompact if every continuous real-valued function
on it is bounded.
It is countably compact if every sequence has an accumulation
point.
Every countably compact space is pseudocompact.

Terasaka and Nov\'ak provided examples of pairs of countably compact spaces
whose product is not even pseudocompact
\cite{terasaka1952cartesian,novak1953cartesian}.
Thus, neither pseudocompactness nor countable compactness is preserved even under finite products of arbitrary
spaces.
For topological groups, by contrast, Comfort and Ross proved that an arbitrary product of pseudocompact
topological groups is pseudocompact \cite{comfort1966pseudocompactness}.

The corresponding problem for countable compactness became one of the
longest-standing questions in the theory of topological groups.
In 1966, Comfort asked whether there are countably compact groups $G_0$ and
$G_1$ for which
$G_0\times G_1$ is not countably compact; the question was later recorded in
\cite{comfort1990problems}.

The first results were obtained under additional set-theoretic
hypotheses.
Hajnal and Juh\'asz constructed under the Continuum Hypothesis a countably
compact group without nontrivial convergent sequences
\cite{hajnal1976separable}.
In 1980, van Douwen gave the first consistent answer to Comfort's product
question: under Martin's Axiom, he constructed two countably compact groups
whose product is not countably compact \cite{vandouwen1980product}.
His paper also proved in ZFC that every countably compact Boolean group without
nontrivial convergent sequences contains two countably compact subgroups
whose product is not countably compact. He asked whether such a Boolean group
could be constructed in ZFC, a question known as van Douwen's problem. A
positive answer would also give a ZFC example for Comfort's question.
Hart and van Mill strengthened the consistent counterexample in 1991: under
$\mathrm{MA}_{\mathrm{countable}}$, they produced a Boolean countably compact
group $H$ whose square $H^2$ is not countably compact
\cite{hart1991countably}.
These results gave consistent examples, while the ZFC problem remained open.

Tomita strengthened van Douwen's reduction by proving in ZFC that the
existence of a countably compact Abelian group without non-trivial convergent
sequences implies the existence of a countably compact group whose square is
not countably compact \cite{tomita2005square}.
Consistent constructions also exhibited a wide range of behavior in powers.
Assuming the existence of $2^{\mathfrak c}$ selective ultrafilters and
$2^{\mathfrak c}=2^{<2^{\mathfrak c}}$, Tomita proved that, for every cardinal
$\alpha\leq 2^{\mathfrak c}$, there is a topological group $G$ such that
$G^\gamma$ is countably compact for every $\gamma<\alpha$, whereas $G^\alpha$
is not \cite{tomita2005powers}.
For hereditarily finally dense (HFD) groups, Szeptycki and Tomita showed under
the Continuum Hypothesis that, for every positive integer $n$, there is an HFD
group $G$ such that $G^n$ is countably compact but $G^{n+1}$ is not. They also
proved that the countable power of each HFD example constructed using random
reals is countably compact \cite{szeptycki2014powers}.
Tomita later showed that, if $\kappa\leq\omega$ and an Abelian topological group
$H$ without non-trivial convergent sequences has $H^n$ countably compact for
every $n<\kappa$, then there is a group $G$ such that $G^n$ is countably compact
for every $n<\kappa$, but $G^\kappa$ is not. If $H$ is torsion, the same transfer
holds for $\kappa=\omega_1$ \cite{tomita2019vandouwen}.

Finally, in 2021, Hru\v{s}\'ak, van Mill, Ramos-Garc\'\i a, and Shelah later constructed in ZFC a
countably compact Boolean subgroup of $2^{\mathfrak c}$ without non-trivial
convergent sequences \cite{hruvsak2021countably}.
This answered both van Douwen's problem and, through his reduction, also Comfort's a question in ZFC more than half a century after it
was posed.

Ultrafilter limits are a useful tool for exploring the structure and productivity of countably compact spaces.
Given a topological space $X$ and a free ultrafilter $p$ on $\omega$, a sequence $(a_n:n \in \omega)$ and $a \in X$, we say that $a$ is a $p$-limit of $(a_n:n \in \omega)$ if $\{n\in \omega:a_n\in U\}\in p$ for every neighborhood
$U$ of $x$.
In this case, the sets of the form $\{n\in \omega:a_n\in U\}$, where $U$ is a neighborhood of $x$, form a filter base for $p$.
The fact that, in general, countable compactness is not preserved by products is due to the fact that given two sequences $(a_n:n\in\omega)$ and $(b_n:n\in\omega)$ and respective accumulation points $a$ and $b$, there may exist open neighborhoods $U$ of $a$ and $V$ of $b$ such that $\{n\in\omega: a_n\in U\}\cap \{n\in\omega: b_n\in V\}$ have empty intersection.
Consequently, there is no free ultrafilter $p$ on $\omega$ such that both sequences have $p$-limits.

In a compact space, every sequence has a $p$-limit with respect to every free ultrafilter $p$ on $\omega$.
All $p$-limits are accumulation points of their respective sequences, so if every sequence has a $p$-limit (where $p$ may depend on the sequence), then the space is countably compact.
In fact, a space $X$ is countably compact if and only if every sequence has a $p$-limit for some free ultrafilter $p$ on $\omega$.

Bernstein introduced an intermediate property between compactness and
countable compactness \cite{bernstein1970new}. Given a free ultrafilter $p$, a
space is $p$-compact if every sequence has a $p$-limit.
Every compact space is $p$-compact for every free ultrafilter $p$, and every $p$-compact space is countably compact.
Moreover, since $p$-limits work coordinatewise, the notion of $p$-compactness is preserved by arbitrary products.
This motivates the following question:

\begin{quote}
Which conditions weaker than $p$-compactness guarantee that, for each sequence
in a product of topological spaces or groups, there exists a free ultrafilter $q$ for which all coordinate sequences
have $q$-limits?
\end{quote}

Barriers are combinatorial objects that extend the role played by the families $[\omega]^n$ of all $n$-element subsets of $\omega$.
Many classical Ramsey-theoretic phenomena that hold for $[\omega]^n$ admit natural analogues in the context of barriers, as illustrated by the Nash--Williams Theorem \cite{nash1965well}, which extends the classical infinite Ramsey theorem to barriers.
Barriers come equipped with ranks taking values in the countable ordinals, where the rank of $[\omega]^n$ is $n$.
Thus, they provide a natural framework for defining higher-order versions of several topological properties related to products.

This perspective has recently been developed in the context of sequential compactness and countable compactness.
The barrier version of sequential compactness was introduced in \cite{kubis2023topological} and further studied in \cite{corral2024infinite}.
Corral, Memarpanahi, and Szeptycki introduced in \cite{corral2024highCompactness} barrier versions of countable compactness associated with fronts and barriers.
Two notions are especially relevant for the present paper.
The first is $\mathcal B$-countable compactness, introduced in \cite{corral2024highCompactness}, and defined by requiring the existence of filter-limit points for functions $f:\mathcal B\to X$.
The second is the stronger notion of $\mathcal B$-cascade countable compactness.
It was introduced in the two-dimensional case as doubly countably compact in \cite{banakh2009rees} and is generalized in the present paper.

Our first product result, Theorem~\ref{theorem:productOfCountablyCompactGroups}, shows that, for every Hausdorff topological group $G$ and every $1\leq k<\omega$,
\[
    G\text{ is $k$-cascade countably compact}
    \quad\Longrightarrow\quad
    G^k\text{ is countably compact}.
\]
For the Schreier barrier, Corollary~\ref{corollary:productOfCountablyCompactGroups} gives
\[
    G\text{ is Schreier-cascade countably compact}
    \quad\Longrightarrow\quad
    G^\omega\text{ is countably compact}.
\]
These implications do not hold for arbitrary Tychonoff spaces: we construct a space that is
$n$-cascade countably compact for every finite $n$, while its square is not
countably compact; see
Corollary~\ref{corollary:betaOmegaHighDimensionalProduct}.

The cascade hypothesis in the group product theorem cannot be replaced by
ordinary barrier countable compactness. We construct, in ZFC, a Hausdorff Boolean group $H$ with no nontrivial
convergent sequences that is $\mathcal B$-countably compact for every barrier
$\mathcal B$, while $H^2$ is not countably compact; see
Theorem~\ref{theorem:exampleOfBCountablyCompactGroup}.
We also construct a Hausdorff Boolean group $G$ of cardinality $\mathfrak c$,
with no nontrivial convergent sequences, that is $\mathcal B$-cascade countably
compact for every barrier $\mathcal B$. By the Schreier case of our product
theorem, $G^\omega$ is countably compact; see
Theorem~\ref{theorem:exampleOfBCascadeCountablyCompactGroup}.

Our results for general spaces also answer several open questions about these
compactness notions. Assuming a Ramsey ultrafilter, Corral, Memarpanahi, and
Szeptycki \cite{corral2024highCompactness} constructed a doubly countably compact
subspace of $\beta\omega$ with non-countably compact square and a
$2$-countably compact space that is not doubly countably compact.
Questions~4.15 and~4.16 of that paper ask whether these examples exist in ZFC, and Question~4.17 asks for the relevant two-dimensional compactness of complements of subsets of $\omega^*$ of cardinality $\mathfrak c$.
We answer all three questions in ZFC and extend the corresponding conclusions
to every finite dimension.
More precisely, if $A\subseteq\beta\omega$ and $|A|<2^{\mathfrak c}$, then
\[
    \beta\omega\setminus A
    \text{ is $n$-cascade countably compact for every }n<\omega;
\]
see Proposition~\ref{proposition:betaOmegaSmallDimensional}.
From this we obtain the square counterexample above and a space that is $k$-countably compact for every $k<\omega$ but not $2$-cascade countably compact (that is, doubly countably compact); see Proposition~\ref{proposition:betaOmega2NotCascade}.

The proofs use a theorem of Carlson, Hindman, and Strauss: for every map
$f:[\omega]^n\to X$ into a Hausdorff space, there is an infinite
$M\subseteq\omega$ such that $f[[M]^n]$ is discrete
\cite{carlson2005discrete}. We show that the analogous statement fails for the
Schreier barrier and, more generally, for uniform barriers of infinite rank.
Accordingly, the discrete-thinning argument applies only to finite ranks.

Finally, we consider the Vietoris hyperspace $\exp X$. Ginsburg proved that if
every power of a Tychonoff space $X$ is countably compact, then $\exp X$ is
countably compact, and asked whether $\exp X$ is pseudocompact whenever every
power of $X$ is pseudocompact \cite{ginsburg1975some}. The latter question was
answered negatively in \cite{hruvsak2007pseudocompactness}. This result was
strengthened in \cite{ortiz2018small} by constructing a subspace
$X\subseteq\beta\omega$ such that $X^\kappa$ is countably compact for every
$\kappa<\mathfrak h$, whereas $\exp X$ is not pseudocompact. We construct such
an $X$ for which $X^\kappa$ is, in addition, $n$-cascade countably compact for
every $\kappa<\mathfrak h$ and every $n<\omega$; see
Corollary~\ref{corollary:mainHyperspaceH}.

The paper is organized as follows.
Section~\ref{section:preliminaries} collects notation, barrier combinatorics, and the two compactness notions.
Section~\ref{section:discrete-images} proves the failure discrete-thinning on uniform barriers of infinite rank.
Section~\ref{section:betaomega} gives the ZFC constructions in $\beta\omega$ and answers Questions~4.15--4.17 of \cite{corral2024highCompactness}.
Section~\ref{section:product-groups} proves the finite and countable product theorems for groups.
Section~\ref{section:boolean-groups} gives the two Boolean group constructions described above.
Section~\ref{section:hyperspaces} treats Vietoris hyperspaces.

\section{Preliminaries}\label{section:preliminaries}
\subsection{Basic Notation}
We use standard set-theoretic notation.
For convenience, we record here the conventions that will be used throughout the paper.

Given a set $X$, the power set of $X$, denoted $\mathcal{P}(X)$, is the collection of all subsets of $X$.
If $\kappa$ is a cardinal, then $[X]^{\kappa}$ is the collection of all subsets of $X$ of cardinality $\kappa$.
Likewise, $[X]^{<\kappa}$ is the collection of all subsets of $X$ of cardinality less than $\kappa$, and $[X]^{\leq \kappa}$ is the collection of all subsets of $X$ of cardinality at most $\kappa$.

If $a, b \subseteq \omega$, we say that $a$ is an \emph{initial segment} of $b$ ($a\sqsubseteq b$) if $a\subseteq b$ and whenever $n,m \in b$ satisfy $n<m$ and $m \in a$, then $n \in a$.
If $a\neq b$, we say that $a$ is a \emph{proper initial segment} of $b$ ($a\sqsubset b$) if $a\sqsubseteq b$ and $a\neq b$.
We write $a<b$ if for every $n \in a$ and $m \in b$ we have $n<m$.

If $A\subseteq \omega$ and $n \in \omega$, we define
\[
    A\restriction_{>n}=\{m \in A: m>n\}.
\]
For convenience, if $s \in [\omega]^{<\omega}$, we write $A\restriction_s=A\restriction_{>\max s}$ if $s\neq \emptyset$, and $A\restriction_s=A$ if $s=\emptyset$.

\begin{definition}
    Let $\mathcal F\subseteq [\omega]^{<\omega}$, and let $M\subseteq\omega$.
    The restriction of $\mathcal F$ to $M$ is
    \begin{equation*}
        \mathcal{F}\restriction M = \{s \in \mathcal{F} : s \subseteq M\}=\mathcal F\cap[M]^{<\omega}.
    \end{equation*}

    The \emph{$\sqsubseteq$-tree generated by $\mathcal F$} is
    \begin{equation*}
        \mathcal{T}_{\mathcal{F}}=\{s \in [\omega]^{<\omega} : \exists t \in \mathcal{F}\ (s\sqsubseteq t)\}.
    \end{equation*}
    We order $\mathcal{T}_{\mathcal{F}}$ by $\sqsubseteq$.
\end{definition}
Note that, in general, $\mathcal F\restriction M$ is \emph{not} the set $\{s\cap M: s\in \mathcal F\}$.
Moreover, $\mathcal{T}_{\mathcal{F}}$ is a tree: for every $t\in \mathcal{T}_{\mathcal{F}}$, the set of predecessors of $t$ in $\mathcal{T}_{\mathcal{F}}$ is finite and linearly ordered by $\sqsubseteq$.

\begin{definition}
    Let $\mathcal{T}\subseteq [\omega]^{<\omega}$.
    For every $s \in [\omega]^{<\omega}$, we define the collection of extensions of $s$ in $\mathcal T$, denoted $\mathcal{T}_s$, by
    \begin{equation*}
        \mathcal{T}_s = \{t\setminus s: s \sqsubseteq t \in \mathcal T\}.
    \end{equation*}
    We regard $\mathcal T_s$ as ordered by $\sqsubseteq$.
\end{definition}

If $\mathcal T$ is a tree, then $\mathcal T_s$ is also a tree.

\subsection{Basics on barriers}
Fronts and barriers are higher-order analogs of the families $[\omega]^n$.
In this subsection, we review their basic properties.

\begin{definition}
Let $\mathcal{F}$ be a collection of finite subsets of $\omega$. We say that:
\begin{enumerate}[label=(\alph*)]
    \item $\mathcal{F}$ is \emph{Ramsey} if for every finite partition $\mathscr{G}$ of $\mathcal F$ and for every infinite set $N \subseteq \omega$, there exists an infinite set $M \subseteq N$ and $\mathcal G \in \mathscr{G}$ such that $\mathcal{F}\restriction M=\mathcal G\restriction M$.
    \item $\mathcal{F}$ is \emph{Nash-Williams}, or \emph{thin}, if $\mathcal F$ is a $\sqsubseteq$-antichain, that is: for every two distinct $s, t \in \mathcal F$, $s\not \sqsubseteq t$ and $t\not \sqsubseteq s$.
    \item $\mathcal{F}$ is \emph{Sperner} if $\mathcal F$ is a $\subseteq$-antichain, that is: for every two distinct $s, t \in \mathcal F$, $s\not \subseteq t$ and $t\not \subseteq s$.
\end{enumerate}
\end{definition}

It is easy to verify that all three classes of collections are closed under taking restrictions, that is, if $\mathcal{F}$ is Ramsey, Nash-Williams or Sperner, then so is $\mathcal{F}\restriction M$ for every infinite $M \subseteq \omega$.

The following is a celebrated Ramsey-type result for Nash-Williams collections.
For a modern proof, we also refer to \cite[Theorem 1.14]{todorcevic2010introduction}.
\begin{theorem}[Nash-Williams \cite{nash1965well}]
    Every Nash-Williams collection is Ramsey.
\end{theorem}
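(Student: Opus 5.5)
Our plan is the classical Galvin--Prikry style argument via ``accepting/rejecting'' combinatorics, specialised to a single colour class. Since a finite partition can be handled by induction on the number of pieces (refine $N$ for one piece versus its complement, then repeat), it suffices to prove: for every Nash-Williams $\mathcal F$, every $\mathcal G\subseteq\mathcal F$ and every infinite $N$, there is an infinite $M\subseteq N$ with $\mathcal F\restriction M\subseteq\mathcal G$ or $\mathcal F\restriction M\cap\mathcal G=\emptyset$. Indeed, with $\mathscr G=\{\mathcal G_1,\dots,\mathcal G_k\}$, apply this to $\mathcal G_1$; if $\mathcal F\restriction M\subseteq \mathcal G_1$ we are done, otherwise $\mathcal F\restriction M\subseteq\mathcal G_2\cup\dots\cup\mathcal G_k$ and we recurse inside $M$. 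The conclusion $\mathcal F\restriction M=\mathcal G\restriction M$ follows, since $\mathcal G\subseteq\mathcal F$.

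For the two-colour statement, we say that $M$ \emph{accepts} $s\in[\omega]^{<\omega}$ if every $t\in\mathcal F$ with $s\sqsubseteq t\subseteq s\cup M\restriction_s$ satisfies $t\in\mathcal G$. We say $M$ \emph{rejects} $s$ if no infinite $M'\subseteq M$ accepts $s$. The standard facts follow routinely:
\begin{enumerate}[label=(\roman*)]
\item acceptance and rejection are hereditary to infinite subsets;
\item for every $s$ and $M$ there is an infinite $M'\subseteq M$ that accepts or rejects $s$;
\item if $M$ rejects $s$, then there is an infinite $M'\subseteq M$ rejecting $s\cup\{n\}$ for all but finitely many (in fact all) $n\in M'$.
\end{enumerate}
Fact (iii) is proved as follows. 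Otherwise, by a diagonal construction we find $M'$ accepting $s\cup\{n\}$ for all $n\in M'$. Then $M'$ accepts $s$, because any $t\in\mathcal F$ with $s\sqsubseteq t$ either equals $s$ or extends some $s\cup\{n\}$ with $n\in M'$. The case $t=s\in\mathcal F\setminus\mathcal G$ is handled separately: then $s$ is rejected by everything, trivially.

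Next we diagonalise. Using (ii) for the countably many $s\subseteq$ current set, we build by a fusion argument $N\supseteq M_0\supseteq M_1\supseteq\cdots$ with $m_k=\min M_k$ increasing. Let $M_\infty=\{m_k\}$; then for each finite $s\subseteq M_\infty$, $M_\infty\restriction_s$ accepts or rejects $s$. If $M_\infty$ accepts $\emptyset$, then $\mathcal F\restriction M_\infty\subseteq\mathcal G$. Otherwise $M_\infty$ rejects $\emptyset$, and using (iii) inside the fusion we refine further to $M$ so that $M$ rejects every finite $s\subseteq M$.

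The main obstacle is extracting the contradiction, or the disjointness, in the rejecting case, and this is exactly where thinness is used. Suppose $t\in\mathcal F\restriction M\cap\mathcal G$. Because $\mathcal F$ is a $\sqsubseteq$-antichain, $t$ is the only element of $\mathcal F$ that is $\sqsubseteq$-comparable with $t$ and contained in $t\cup M\restriction_t$ and extending $t$; that is, the only $u\in\mathcal F$ with $t\sqsubseteq u\subseteq t\cup M\restriction_t$ is $u=t$. Hence $M$ accepts $t$, contradicting that $M$ rejects $t$. So $\mathcal F\restriction M\cap\mathcal G=\emptyset$, which completes the argument. I expect the care needed is in the fusion so that rejection is preserved for all finite subsets of the final $M$ simultaneously; I would interleave steps (ii) and (iii) along the enumeration of $[\{m_0,\dots,m_k\}]^{<\omega}$ at stage $k$.
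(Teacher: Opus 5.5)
You have the right overall strategy, and the reduction to two colours, facts (i) and (ii), the fusion producing a set that decides all its finite subsets, and the closing use of thinness are all fine. The gap is that fact (iii) is false as stated, so the set $M$ you aim for in the rejecting case cannot exist. Your notion of acceptance only tests elements of $\mathcal F$ that $\sqsubseteq$-extend $s$; elements of $\mathcal F$ that are proper initial segments of $s$ are invisible to it.

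Take $s\in\mathcal F\setminus\mathcal G$. Every $M$ rejects $s$. But for $n>\max s$, thinness gives that no $u\in\mathcal F$ satisfies $s\cup\{n\}\sqsubseteq u$. So every infinite set accepts $s\cup\{n\}$ vacuously, and the conclusion of (iii) fails. Your remark that this case is ``handled separately'' only disposes of the inference from ``$M'$ accepts every $s\cup\{n\}$'' to ``$M'$ accepts $s$''. It does not give rejection of the one-point extensions.

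In fact no infinite $M$ rejects every finite $s\subseteq M$. Rejecting $\emptyset$ implies that $M$ does not accept $\emptyset$, so there is some $t\in(\mathcal F\restriction M)\setminus\mathcal G$. Then $t\cup\{n\}$ with $n\in M\restriction_t$ is vacuously accepted by $M$.

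The repair is local. State and prove (iii) only for $s\notin\mathcal F$. There the case $t=s$ never arises, and your argument goes through: first decide every $s\cup\{n\}$ by a fusion; then either infinitely many are rejected, or the accepted ones form an infinite set accepting $s$. In the second fusion, require only that $M$ reject every finite $s\subseteq M$ with no proper initial segment in $\mathcal F$. This family has the two properties you need:
\begin{enumerate}[label=(\alph*)]
\item Every nonempty member $v$ is $s\cup\{\max v\}$ for a member $s\notin\mathcal F$, which is exactly where the corrected (iii) applies.
\item By thinness it contains every $t\in\mathcal F\restriction M$, which is all your last paragraph uses.
\end{enumerate}

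Alternatively, use the Galvin--Prikry definition: $M$ accepts $s$ if $s\cup X$ has an initial segment in $\mathcal G$ for every infinite $X\subseteq M\restriction_s$. Acceptance of $s$ is then determined by the infinite sets $s\cup X$, each equal to $(s\cup\{n\})\cup(X\setminus\{n\})$ with $n=\min X$. So there is no exceptional case $t=s$, and your proof of (iii) works for every $s$. Thinness is then needed only to turn ``$M$ accepts $\emptyset$'' into $\mathcal F\restriction M\subseteq\mathcal G$. (The paper itself gives no proof of this theorem; it cites Nash-Williams and Todorcevic's book.)
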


We now introduce fronts and barriers.
\begin{definition}
    Let $M \subseteq \omega$ be infinite and $\mathcal F \subseteq [M]^{<\omega}$.
    \begin{enumerate}[label=(\alph*)]
        \item $\mathcal{F}$ is a \emph{front} on $M$ if $\mathcal{F}$ is Nash-Williams and, for every infinite $N \subseteq M$, there exists $s \in \mathcal F$ such that $s\sqsubseteq N$.
        \item $\mathcal{F}$ is a \emph{barrier} on $M$ if $\mathcal{F}$ is a Sperner front on $M$.
    \end{enumerate}
\end{definition}

\begin{proposition}
    Let $M \in [\omega]^{\omega}$ and $\mathcal F \subseteq [M]^{<\omega}$ be a front on $M$.
    Then the relation $\sqsupset$ is well-founded on $\mathcal T_{\mathcal F}$; equivalently, $\mathcal T_{\mathcal F}$ has no infinite $\sqsubseteq$-branches.
\end{proposition}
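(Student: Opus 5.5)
We prove the second formulation directly: assuming an infinite $\sqsubseteq$-branch in $\mathcal T_{\mathcal F}$, we derive a contradiction with the front property. The equivalence of the two formulations is standard. In $\mathcal T_{\mathcal F}$, each $t$ has only finitely many $\sqsubseteq$-predecessors, and they are linearly ordered. An infinite strictly $\sqsubset$-increasing sequence $t_0\sqsubset t_1\sqsubset\cdots$ in $\mathcal T_{\mathcal F}$ is exactly a failure of well-foundedness of $\sqsupset$. Closing such a sequence downward gives an infinite branch, and conversely any infinite branch contains such a sequence.

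So suppose $t_0\sqsubset t_1\sqsubset t_2\sqsubset\cdots$ lie in $\mathcal T_{\mathcal F}$. Put $N=\bigcup_i t_i$. Since the $t_i$ are strictly increasing finite sets, $N$ is infinite, and each $t_i$ is an initial segment of $N$. Each $t_i$ is contained in some element of $\mathcal F\subseteq[M]^{<\omega}$, so $N\subseteq M$.

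Since $\mathcal F$ is a front on $M$, there is $s\in\mathcal F$ with $s\sqsubseteq N$. Because $s$ is finite, choose $i$ with $|t_i|>|s|$. Both $s$ and $t_i$ are initial segments of $N$, and initial segments of the same set are comparable with the shorter one below. Hence $s\sqsubset t_i$.

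Now $t_i\in\mathcal T_{\mathcal F}$, so there is $u\in\mathcal F$ with $t_i\sqsubseteq u$. Then $s\sqsubset u$, so $s\neq u$ and $s\sqsubseteq u$, with both in $\mathcal F$. This contradicts the Nash-Williams property, since $\mathcal F$ is a $\sqsubseteq$-antichain.

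The only step needing care is the claim that two initial segments of the same set $N$ are $\sqsubseteq$-comparable. This follows directly from the definition of $\sqsubseteq$: an initial segment of $N$ of size $k$ must be the $k$ smallest elements of $N$. No further obstacles are expected.
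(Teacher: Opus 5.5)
Your proof is correct. The paper states this proposition without proof, treating it as a standard fact about fronts. Your argument is the standard one: set $N=\bigcup_i t_i\subseteq M$, use the front property to get $s\in\mathcal F$ with $s\sqsubseteq N$, and for large $i$ observe that $s\sqsubset t_i\sqsubseteq u\in\mathcal F$, which contradicts the Nash-Williams property. The one step you leave implicit, that each $t_i\sqsubseteq N$, follows directly from transitivity of $\sqsubseteq$.
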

We define the rank of a front as follows:
\begin{definition}
    Let $M \in [\omega]^{\omega}$ and $\mathcal F \subseteq [M]^{<\omega}$ be a front on $M$.
    The \emph{rank} of $s \in \mathcal T_{\mathcal F}$, denoted by $\mathrm{rk}_{\mathcal F}(s)$, is recursively defined by
    \[
        \mathrm{rk}_{\mathcal F}(s)
        =
        \sup\{\mathrm{rk}_{\mathcal F}(s\cup\{n\})+1 : n \in M\restriction_s,\ s\cup\{n\}\in \mathcal T_{\mathcal F}\}.
    \]
    The rank of $\mathcal F$, denoted by $\mathrm{rk}(\mathcal F)$, is defined by $\mathrm{rk}_{\mathcal F}(\emptyset)$.
\end{definition}

    The rank of a front is well-defined by the previous proposition.
    We also note the following lemma.
    Its proof is straightforward, and we refer to \cite{todorcevic2010introduction} for details.
\begin{lemma}
    Let $M \in [\omega]^{\omega}$ and $\mathcal F \subseteq [M]^{<\omega}$ be a front on $M$, and let $s \in \mathcal T_{\mathcal F}$.
    Then:
    \begin{enumerate}[label=(\alph*)]
        \item $\mathcal F_s$ is a front on $M\restriction s$.
        \item If, additionally, $\mathcal F$ is a barrier, then $\mathcal F_s$ is a barrier on $M\restriction_s$.
        \item $\mathrm{rk}_{\mathcal F}(s\cup t)=\mathrm{rk}_{\mathcal F_s}(t)$ for every $t \in \mathcal T_{\mathcal F_s}$.
        \item In particular, $\mathrm{rk}(\mathcal F_s)=\mathrm{rk}_{\mathcal F}(s)$ for every $s \in \mathcal T_{\mathcal F}$, and if $s\neq\emptyset$, then $\mathrm{rk}(\mathcal F_s)<\mathrm{rk}(\mathcal F)$.
    \end{enumerate}
\end{lemma}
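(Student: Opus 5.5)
We prove the four items in order, each directly from the definitions together with the proposition that $\sqsupset$ is well-founded on $\mathcal T_{\mathcal F}$. Throughout, fix $s\in\mathcal T_{\mathcal F}$ and note the elementary fact that for $t\subseteq M\restriction_s$ we have $s\sqsubseteq s\cup t$. We also use that $u\sqsubseteq v$ iff $s\cup u\sqsubseteq s\cup v$, for $u,v\subseteq M\restriction_s$. (When $s=\emptyset$ the restriction is all of $M$ and everything is trivial.)

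For (a), first observe $\mathcal F_s\subseteq[M\restriction_s]^{<\omega}$. Indeed, if $s\sqsubseteq t\in\mathcal F$, then every element of $t\setminus s$ exceeds $\max s$. Otherwise some $m\in t\setminus s$ has $m<\max s\in s$, which contradicts $s\sqsubseteq t$.

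Next, $\mathcal F_s$ is Nash-Williams. If $u=t\setminus s$ and $u'=t'\setminus s$ with $u\sqsubseteq u'$, then $t=s\cup u\sqsubseteq s\cup u'=t'$, so $t=t'$ by thinness of $\mathcal F$.

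For the front property, take an infinite $N\subseteq M\restriction_s$. Then $s\cup N$ is an infinite subset of $M$, so some $t\in\mathcal F$ satisfies $t\sqsubseteq s\cup N$. Since $s\in\mathcal T_{\mathcal F}$, pick $t'\in\mathcal F$ with $s\sqsubseteq t'$. Both $s$ and $t$ are initial segments of $s\cup N$, so they are $\sqsubseteq$-comparable. If $t\sqsubset s$, then $t\sqsubset t'$, contradicting thinness. Hence $s\sqsubseteq t$, and then $t\setminus s\sqsubseteq N$ with $t\setminus s\in\mathcal F_s$.

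For (b), the Sperner property transfers in the same way: if $u\subseteq u'$ in $\mathcal F_s$, then $s\cup u\subseteq s\cup u'$, which forces equality.

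For (c), we first note $\mathcal T_{\mathcal F_s}=\{t\setminus s: s\sqsubseteq t\in\mathcal T_{\mathcal F}\}$, and that $t\mapsto s\cup t$ is an order isomorphism from $\mathcal T_{\mathcal F_s}$ onto the cone $\{v\in\mathcal T_{\mathcal F}: s\sqsubseteq v\}$. Moreover, for $t\in\mathcal T_{\mathcal F_s}$ and $n$ in $(M\restriction_s)\restriction_t=M\restriction_{s\cup t}$, we have $t\cup\{n\}\in\mathcal T_{\mathcal F_s}$ iff $s\cup t\cup\{n\}\in\mathcal T_{\mathcal F}$. The one-point extensions in the two recursive definitions therefore correspond exactly. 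We then prove $\mathrm{rk}_{\mathcal F}(s\cup t)=\mathrm{rk}_{\mathcal F_s}(t)$ by $\sqsupset$-induction on $t$, which is legitimate because the relation is well-founded on $\mathcal T_{\mathcal F_s}$ by (a) and the proposition. The two suprema range over matching sets of values, by the induction hypothesis.

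For (d), setting $t=\emptyset$ in (c) gives $\mathrm{rk}(\mathcal F_s)=\mathrm{rk}_{\mathcal F}(s)$. For the strict inequality, a straightforward induction shows that ranks strictly decrease along proper one-point extensions inside the tree, hence along any $\sqsubset$-chain. In particular, if $s\neq\emptyset$, then $\mathrm{rk}_{\mathcal F}(s)<\mathrm{rk}_{\mathcal F}(\emptyset)$.

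The main obstacle is the front property in (a). Specifically, one must rule out the case where the element $t$ found below $s\cup N$ is a proper initial segment of $s$, and this is exactly where thinness of $\mathcal F$ and the assumption $s\in\mathcal T_{\mathcal F}$ enter.
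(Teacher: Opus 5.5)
Your proof is correct and is the standard direct verification: you transfer thinness and the Sperner property through $u\mapsto s\cup u$, you use $s\in\mathcal T_{\mathcal F}$ together with thinness to exclude $t\sqsubset s$ in the front property, and you prove the rank equality by a $\sqsupset$-induction that matches one-point extensions. The paper gives no proof of its own (it calls the lemma straightforward and refers to Todorcevic's book), and your argument is exactly the routine one intended.
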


The rank of a front or barrier need not be preserved under restrictions. For
recursive arguments, we therefore use uniform fronts and barriers, whose rank
is preserved under restriction.
\begin{definition}
    Let $M\subseteq \omega$ be infinite and $\mathcal{F}\subseteq [M]^{<\omega}$ be a front of rank $\alpha$.
    We define by induction on $\alpha$ that $\mathcal F$ is a \emph{uniform front on $M$} if one of the following holds:
    \begin{itemize}
        \item $\alpha=0$ and $\mathcal F=\{\emptyset\}$, or
        \item $\alpha=\beta+1$ for some $\beta<\omega_1$ and, for every $n \in M$, $\mathcal{F}_{\{n\}}$ is a uniform front on $M\setminus n+1$ of rank $\beta$, or
        \item $\alpha$ is a limit ordinal and, for every $n \in M$, $\mathcal{F}_{\{n\}}$ is a uniform front on $M\setminus n+1$ of some rank $\alpha_n<\alpha$, $(\alpha_n)_{n \in M}$ is strictly increasing and $\sup_{n \in M} \alpha_n=\alpha$.
    \end{itemize}

    A \emph{uniform barrier on $M$} is a uniform front on $M$ which is also a barrier.
\end{definition}

Typical examples of uniform barriers are the families $[\omega]^n$, for $n\in\omega$, and the Schreier barrier.
Inductively, one easily sees that for every $n\in\omega$, $[\omega]^n$ is a uniform barrier on $\omega$.
The \emph{Schreier barrier} is defined by
\[
    \{s\subseteq\omega : s\neq\emptyset,\ \min s+1=|s|\}.
\]
It is easily seen to be a uniform barrier of rank $\omega$.

Restrictions of uniform fronts and barriers preserve uniformity and rank
\cite{todorcevic2010introduction}.
\begin{lemma}
    Let $M\subseteq \omega$ be infinite and $\mathcal{F}\subseteq [M]^{<\omega}$ be a uniform front (barrier) on $M$ of rank $\alpha$.
    Then, for every $N \in [M]^{\omega}$, $\mathcal{F}\restriction N$ is a uniform front (barrier) on $N$ of rank $\alpha$.
\end{lemma}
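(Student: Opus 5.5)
We argue by induction on the rank $\alpha$, proving simultaneously the front and barrier versions. Fix an infinite $N\subseteq M$ and set $\mathcal G=\mathcal F\restriction N$. The goal is to show that $\mathcal G$ is a front (respectively barrier) on $N$, that $\mathrm{rk}(\mathcal G)=\alpha$, and that $\mathcal G$ satisfies the recursive clause of uniformity on $N$.

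\emph{Front and barrier properties.} The Nash-Williams and Sperner properties pass to subfamilies, as already noted after the definitions. For the covering property, let $L\subseteq N$ be infinite. Then $L\subseteq M$, so some $s\in\mathcal F$ satisfies $s\sqsubseteq L$. Since $s\subseteq L\subseteq N$, we have $s\in\mathcal G$. Hence $\mathcal G$ is a front on $N$, and it is a barrier whenever $\mathcal F$ is.

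\emph{Uniformity and rank, by induction on $\alpha$.} If $\alpha=0$, then $\mathcal F=\{\emptyset\}$, so $\mathcal G=\{\emptyset\}$ and there is nothing to prove. If $\alpha>0$, then $\emptyset\notin\mathcal F$: otherwise $\mathcal F=\{\emptyset\}$ by the Nash-Williams property, and this family has rank $0$. The key identity is
\[
(\mathcal F\restriction N)_{\{n\}}=\mathcal F_{\{n\}}\restriction (N\setminus(n+1))\qquad\text{for } n\in N .
\]
To verify it, note that an element of the left side is $t\setminus\{n\}$ with $\{n\}\sqsubseteq t\in\mathcal F$ and $t\subseteq N$. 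This is exactly an element $u=t\setminus\{n\}$ of $\mathcal F_{\{n\}}$ with $u\subseteq N$ and $u>\{n\}$; the condition $u>\{n\}$ holds automatically because $\{n\}\sqsubseteq t$ forces $n=\min t$.

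By hypothesis, $\mathcal F_{\{n\}}$ is a uniform front (barrier) on $M\setminus(n+1)$ of rank $\alpha_n$. Here $\alpha_n=\beta$ in the successor case, and in the limit case the $\alpha_n$ are strictly increasing with supremum $\alpha$. Since $\alpha_n<\alpha$, the induction hypothesis applied to the infinite set $N\setminus(n+1)\subseteq M\setminus(n+1)$ shows that $\mathcal G_{\{n\}}$ is a uniform front (barrier) on $N\setminus(n+1)$ of rank $\alpha_n$.

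\emph{Checking the clauses.} In the successor case, every $\alpha_n=\beta$, so the successor clause holds directly. The rank computation gives
\[
\mathrm{rk}(\mathcal G)=\sup_{n\in N}\bigl(\mathrm{rk}(\mathcal G_{\{n\}})+1\bigr)=\beta+1 .
\]
In the limit case, the subsequence $(\alpha_n)_{n\in N}$ is still strictly increasing. Its supremum is still $\alpha$, because $N$ is infinite and a cofinal subsequence of a strictly increasing sequence with supremum $\alpha$ has the same supremum. Then
\[
\mathrm{rk}(\mathcal G)=\sup_{n\in N}(\alpha_n+1)=\alpha .
\]
Both rank computations rely on two facts: every $n\in N$ gives $\{n\}\in\mathcal T_{\mathcal G}$, since $\mathcal G$ covers $N\setminus n$, and $\mathrm{rk}_{\mathcal G}(\{n\})=\mathrm{rk}(\mathcal G_{\{n\}})$ by the earlier lemma.

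\emph{Main obstacle.} The main point of care is the limit case. We need that $\alpha_n<\alpha$, so that the induction applies, and that the supremum is preserved under passing to the infinite subset $N$. Both hold because the sequence $(\alpha_n)$ is strictly increasing and $N$ is infinite. Everything else is routine bookkeeping with the identity above.
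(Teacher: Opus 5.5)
Your proof is correct and complete. The paper does not prove this lemma itself; it only cites Todorcevic's book. Your induction on the rank supplies the omitted argument, and it is the natural one. The identity $(\mathcal F\restriction N)_{\{n\}}=\mathcal F_{\{n\}}\restriction(N\setminus(n+1))$, together with the fact that $N$ is unbounded in $M$, is exactly what carries both the uniformity clauses and the rank computation through.

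One small simplification: you need not carry the barrier case through the induction. As written, the fact that $\mathcal F_{\{n\}}$ is a barrier does not come from the uniformity hypothesis but from part (b) of the earlier lemma on $\mathcal F_s$. It is simpler to note that once $\mathcal F\restriction N$ is a uniform front of rank $\alpha$, it is also Sperner, since that property passes to subfamilies. Hence it is a uniform barrier by definition.
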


\subsection{Topological notions}
Finite-dimensional Ramsey versions of sequential compactness were introduced
in \cite{kubis2023topological} and extended to arbitrary barriers in
\cite{corral2024infinite}, while barrier versions of countable compactness were
introduced in \cite{corral2024highCompactness}.
We recall $\mathcal B$-countable compactness and then introduce its stronger recursive
variant, which we call \emph{cascade countable compactness}.

To define these notions, we first associate a recursively defined filter
$\mathcal F^{\mathcal B}$ with each pair $(\mathcal F,\mathcal B)$.

\begin{definition}
    Let $M\subseteq \omega$, $\mathcal F$ be a free filter on $\omega$ with $M \in \mathcal F$ and $\mathcal B$ be a front on $M$.
    
    We define $\mathcal F^{\mathcal B}$ as follows:

    \begin{enumerate}[label=(\alph*)]
        \item If $\mathrm{rk}(\mathcal B)=0$, that is, if $\emptyset\in \mathcal B$ then $\mathcal F^{\mathcal B}=\{\{\emptyset\}\}$.
        \item Otherwise, we (recursively) define:
        \begin{equation*}
            \mathcal F^{\mathcal B}=\left\{A\subseteq \mathcal B:\left\{n \in M: \left\{s \in \mathcal B_{\{n\}}: \{n\}\cup s \in A\right\}\in \mathcal F^{\mathcal B_{\{n\}}}\right\}\in \mathcal F\right\}.
        \end{equation*}
    \end{enumerate}
\end{definition}

This recursive definition is well-founded, since for every $n \in M$, $M\setminus n+1\in\mathcal F$, $\mathcal B_{\{n\}}$ is a front on $M\setminus n+1$, and $\mathrm{rk}(\mathcal B_{\{n\}})<\mathrm{rk}(\mathcal B)$.

It is straightforward to verify that $\mathcal F^{\mathcal B}$ is a filter on $\mathcal B$ and that
\[
\mathcal F^{[\omega]^1}
=
\bigl\{\{\{n\}: n\in A\}: A\in\mathcal F\bigr\}.
\]
Under the identification of $n\in\omega$ with $\{n\}\in[\omega]^1$,
$\mathcal F^{[\omega]^1}$ is identified with $\mathcal F$.

The filter $\mathcal F^{\mathcal B}$ is obtained by iterating $\mathcal F$
along the tree of the front $\mathcal B$.

Recall that if $Z$ is a set, $X$ is a topological space, $\mathcal F$ is a filter with $Z\in\mathcal F$, and $f$ is a function into $X$ whose domain contains $Z$, then $x\in X$ is an $\mathcal F$-limit point of $f$ if, for every neighborhood $U$ of $x$, one has
$\{z\in Z: f(z)\in U\}\in\mathcal F$.

\begin{definition}
    Let $X$ be a topological space, $M\subseteq \omega$ be infinite, and $\mathcal B\subseteq [M]^{<\omega}$ be a front on $M$.
    We say that $X$ is \emph{$\mathcal B$-countably compact} if for every $f:\mathcal B\to X$, there exists a free ultrafilter $p\in\beta\omega$ with $M\in p$ such that $f$ has a $p^{\mathcal B}$-limit point in $X$.
    If $\mathcal B=[\omega]^n$, we say that $X$ is \emph{$n$-countably compact}.
\end{definition}

The previous definition only asks for a limit point with respect to the induced filter on $\mathcal B$.
The stronger notion below requires the limiting process to be witnessed recursively along the derived fronts $\mathcal B_{\{n\}}$.

\begin{definition}\label{def:cascadeLimitPoint}
    Let $X$ be a topological space, let $M\subseteq\omega$ be infinite, let
    $\mathcal B\subseteq[M]^{<\omega}$ be a front on $M$, let $\mathcal F$ be a
    free filter on $\omega$ with $M\in\mathcal F$, and let $x\in X$.

    Given a function $f$ into $X$ whose domain contains $\mathcal B$, if $\mathrm{rk}(\mathcal B)>0$ and $n \in M$, we define $f_n:\mathcal B_{\{n\}}\to X$ as $f_n(s)=f(\{n\}\cup s)$.

    We say that $x$ is a \emph{$\mathcal B$-cascade $\mathcal F$-limit point} of $f$ if one of the following holds.
    \begin{enumerate}[label=(\alph*)]
        \item If $\mathrm{rk}(\mathcal B)=0$, that is, if $\emptyset\in \mathcal B$, then $x$ is a $\mathcal B$-cascade $\mathcal F$-limit point of $f$ if and only if $f(\emptyset)=x$.
        \item Otherwise, if $\mathrm{rk}(\mathcal B)>0$, then $x$ is a $\mathcal B$-cascade $\mathcal F$-limit point of $f$ if there exists $A \in \mathcal F$ with $A\subseteq M$ and a sequence $(x_n: n \in A)$ such that, for every $n \in A$, $x_n$ is a $\mathcal B_{\{n\}}$-cascade $\mathcal F$-limit point of $f_n$ and $x$ is an $\mathcal F$-limit point of $(x_n: n \in A)$.
        Here, $\mathcal B_{\{n\}}$ is seen as a front on $M\restriction_{>n}$.
    \end{enumerate} 
\end{definition}

\begin{definition}
    Let $X$ be a topological space, $M\subseteq \omega$ be infinite, and $\mathcal B\subseteq [M]^{<\omega}$ be a front on $M$.
    We say that $X$ is \emph{$\mathcal B$-cascade countably compact} if for every $f:\mathcal B\to X$, there exists a free ultrafilter $p\in\beta\omega$ with $M\in p$ such that $f$ has a $\mathcal B$-cascade $p$-limit point in $X$.

        If $\mathcal B=[\omega]^n$, we say that $X$ is \emph{$n$-cascade countably compact}.
\end{definition}

Under the standard identification of $[\omega]^2$ with increasing pairs in
$\omega^2$, $[\omega]^2$-cascade countable compactness agrees with the notion
of being \emph{doubly countably compact} defined in \cite{banakh2009rees}.
This terminology also appears in \cite{corral2024highCompactness}.
Thus, the definition above generalizes the notion of being doubly countably compact to arbitrary fronts.
\begin{lemma}
    Let $X$ be a topological space, $M\subseteq \omega$ be infinite, $\mathcal B\subseteq [M]^{<\omega}$ be a front on $M$, $f:\mathcal B\to X$, $x \in X$ and $\mathcal F$ be a free filter on $\omega$ with $M \in \mathcal F$.

    If $x$ is a $\mathcal B$-cascade $\mathcal F$-limit point of $f$, then $x$ is an $\mathcal F^{\mathcal B}$-limit point of $f$.

    Thus, every $\mathcal B$-cascade countably compact space is $\mathcal B$-countably compact.
\end{lemma}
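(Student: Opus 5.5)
We argue by induction on $\mathrm{rk}(\mathcal B)$, proving the first assertion simultaneously for all infinite $M$, all fronts $\mathcal B$ on $M$, all free filters $\mathcal F$ with $M\in\mathcal F$, and all $f$ and $x$. The second assertion then follows at once. Indeed, if $X$ is $\mathcal B$-cascade countably compact and $f:\mathcal B\to X$, choose a free ultrafilter $p$ with $M\in p$ such that $f$ has a $\mathcal B$-cascade $p$-limit point $x$. By the first assertion, $x$ is a $p^{\mathcal B}$-limit point of $f$, which is exactly what $\mathcal B$-countable compactness requires.

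\emph{Base case.} If $\mathrm{rk}(\mathcal B)=0$, then $\mathcal B=\{\emptyset\}$, $f(\emptyset)=x$, and $\mathcal F^{\mathcal B}=\{\{\emptyset\}\}$. For every neighborhood $U$ of $x$ we have $\{s\in\mathcal B: f(s)\in U\}=\{\emptyset\}\in\mathcal F^{\mathcal B}$, so $x$ is an $\mathcal F^{\mathcal B}$-limit point of $f$.

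\emph{Inductive step.} Suppose $\mathrm{rk}(\mathcal B)>0$. Take $A\in\mathcal F$ with $A\subseteq M$ and points $(x_n:n\in A)$ witnessing Definition~\ref{def:cascadeLimitPoint}(b). Fix an open neighborhood $U$ of $x$ and put $S=\{s\in\mathcal B: f(s)\in U\}$. By the recursive definition of $\mathcal F^{\mathcal B}$, it suffices to show that
\[
N=\bigl\{n\in M:\{t\in\mathcal B_{\{n\}}:\{n\}\cup t\in S\}\in\mathcal F^{\mathcal B_{\{n\}}}\bigr\}\in\mathcal F .
\]
Note that $\{t\in\mathcal B_{\{n\}}:\{n\}\cup t\in S\}=\{t\in\mathcal B_{\{n\}}: f_n(t)\in U\}$.

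Let $A'=\{n\in A: x_n\in U\}$. Since $x$ is an $\mathcal F$-limit point of $(x_n:n\in A)$, we have $A'\in\mathcal F$. For $n\in A'$, the point $x_n$ is a $\mathcal B_{\{n\}}$-cascade $\mathcal F$-limit point of $f_n$, where $\mathcal B_{\{n\}}$ is a front on $M\restriction_{>n}$. This set belongs to $\mathcal F$ because $\mathcal F$ is free, and $\mathrm{rk}(\mathcal B_{\{n\}})<\mathrm{rk}(\mathcal B)$. So the induction hypothesis applies and shows that $x_n$ is an $\mathcal F^{\mathcal B_{\{n\}}}$-limit point of $f_n$.

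Since $U$ is an open neighborhood of $x_n$, it follows that $\{t: f_n(t)\in U\}\in\mathcal F^{\mathcal B_{\{n\}}}$, that is, $n\in N$. Hence $A'\subseteq N$, so $N\in\mathcal F$ and therefore $S\in\mathcal F^{\mathcal B}$.

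The only delicate points are bookkeeping. First, the filter $\mathcal F^{\mathcal B_{\{n\}}}$ must be computed relative to $M\restriction_{>n}$, which is legitimate because that set lies in $\mathcal F$. Second, the neighborhood $U$ must be taken open, so that it serves as a neighborhood of each $x_n$ with $x_n\in U$. Checking limit points on open neighborhoods suffices, since every neighborhood contains an open one and filters are upward closed.
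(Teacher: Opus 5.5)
Your proof is correct and follows the paper's argument: induction on the rank, with the set $\{n\in A: x_n\in U\}$ lying in $\mathcal F$ and, by the induction hypothesis, contained in the set required by the recursive definition of $\mathcal F^{\mathcal B}$. Two points the paper leaves implicit are made precise in your write-up: taking $U$ open, so that it is a neighborhood of each $x_n\in U$, and checking that $M\restriction_{>n}\in\mathcal F$.
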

\begin{proof}
We proceed by induction on the rank of $\mathcal B$.

If $\mathrm{rk}(\mathcal B)=0$, then $\emptyset \in \mathcal B$ and $f(\emptyset)=x$, so $x$ is an $\mathcal F^{\mathcal B}$-limit point of $f$.

Now, assume that $\mathrm{rk}(\mathcal B)>0$ and that the result holds for every front of rank $<\mathrm{rk}(\mathcal B)$.
Let $U$ be a neighborhood of $x$.
Let $A \in \mathcal F$ with $A\subseteq M$ and $(x_n: n \in A)$ be such that for every $n \in A$, $x_n$ is a $\mathcal B_{\{n\}}$-cascade $\mathcal F$-limit point of $f_n$ and $x$ is an $\mathcal F$-limit point of $(x_n: n \in A)$.

Now let $A_U=\{n\in A:x_n\in U\}$.
Since $x$ is an $\mathcal F$-limit point of $(x_n:n\in A)$, we have $A_U\in\mathcal F$.

For every $n\in A_U$, by the inductive hypothesis,
\[
f_n^{-1}[U]\in \mathcal F^{\mathcal B_{\{n\}}}.
\]

Observe that $f_n^{-1}[U]=\{s \in \mathcal B_{\{n\}}: f(\{n\}\cup s) \in U\}=\{s \in \mathcal B_{\{n\}}: \{n\}\cup s \in f^{-1}[U]\}$.

We must conclude that $f^{-1}[U]\in \mathcal F^{\mathcal B}$, that is, that $\{n \in M: \{s \in \mathcal B_{\{n\}}: \{n\}\cup s \in f^{-1}[U]\}\in \mathcal F^{\mathcal B_{\{n\}}}\}\in \mathcal F$.
However, we have just seen that $A_U$ is contained in the former set.
Since $A_U\in\mathcal F$ and $\mathcal F$ is upward closed, the former set belongs to $\mathcal F$.
\end{proof}

The converse fails in general, as our later examples will show.

The same inductive argument, now keeping track of restrictions, yields the following.

\begin{lemma}
    Let $X$ be a topological space, $M\subseteq \omega$ be infinite, $N \in [M]^{\omega}$, $\mathcal B\subseteq [M]^{<\omega}$ be a front on $M$, $f:\mathcal B\to X$, $x \in X$, and $\mathcal F$ be a free filter on $\omega$ with $N \in \mathcal F$.

    If $x$ is a $(\mathcal B\restriction N)$-cascade $\mathcal F$-limit point of $f$, then $x$ is an $\mathcal F^{\mathcal B}$-limit point of $f$.

    In particular, every $(\mathcal B\restriction N)$-cascade countably compact space is $\mathcal B$-countably compact.
\end{lemma}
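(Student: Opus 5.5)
Since the argument repeats the proof of the previous lemma with restrictions tracked, I would proceed by induction on $\mathrm{rk}(\mathcal B\restriction N)$, proving the slightly stronger statement for all fronts simultaneously.

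First, a remark on what $\mathcal B\restriction N$ is. Since $\mathcal B$ is a front on $M$ and $N\subseteq M$ is infinite, every infinite $L\subseteq N$ has an initial segment in $\mathcal B$, and that segment lies in $[N]^{<\omega}$. Nash-Williamsness is inherited. Hence $\mathcal B\restriction N$ is a front on $N$, so the cascade notion makes sense. The key compatibility is
\[
(\mathcal B\restriction N)_{\{n\}}=\mathcal B_{\{n\}}\restriction (N\restriction_{>n})\quad\text{for } n\in N,
\]
where $\mathcal B_{\{n\}}$ is a front on $M\restriction_{>n}$ and $N\restriction_{>n}\in[M\restriction_{>n}]^\omega$. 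Also $N\restriction_{>n}\in\mathcal F$ because $\mathcal F$ is free. The functions $f_n$ on both sides agree on the common domain.

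The base case is $\mathrm{rk}(\mathcal B\restriction N)=0$. Then $\emptyset\in\mathcal B\restriction N\subseteq\mathcal B$, so $\mathcal B=\{\emptyset\}$ by Nash-Williamsness. Thus $f(\emptyset)=x$, and $\mathcal F^{\mathcal B}=\{\{\emptyset\}\}$, so the claim holds.

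For the inductive step, take $A\in\mathcal F$ with $A\subseteq N$ and witnesses $x_n$, $n\in A$. Each $x_n$ is a $(\mathcal B_{\{n\}}\restriction N\restriction_{>n})$-cascade $\mathcal F$-limit of $f_n$, and $x$ is an $\mathcal F$-limit of $(x_n)$. Note that $\mathrm{rk}(\mathcal B_{\{n\}}\restriction N\restriction_{>n})<\mathrm{rk}(\mathcal B\restriction N)$. Given a neighborhood $U$ of $x$, set $A_U=\{n\in A:x_n\in U\}\in\mathcal F$. By induction, applied to the front $\mathcal B_{\{n\}}$ on $M\restriction_{>n}$ with the subset $N\restriction_{>n}$, we get $f_n^{-1}[U]\in\mathcal F^{\mathcal B_{\{n\}}}$ for $n\in A_U$. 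Hence the set of $n\in M$ with $\{s\in\mathcal B_{\{n\}}:\{n\}\cup s\in f^{-1}[U]\}\in\mathcal F^{\mathcal B_{\{n\}}}$ contains $A_U$. By definition, this gives $f^{-1}[U]\in\mathcal F^{\mathcal B}$.

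For the final clause, take $f:\mathcal B\to X$ and apply $(\mathcal B\restriction N)$-cascade countable compactness to $f\restriction(\mathcal B\restriction N)$. This yields an ultrafilter $p\ni N$ and a cascade limit point. Then $M\in p$, and the main part shows $f$ has a $p^{\mathcal B}$-limit.

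The only delicate point is the rank bookkeeping. Induction must be on the rank of the restricted front, because the rank of $\mathcal B$ need not behave well under restriction. The identity for $(\mathcal B\restriction N)_{\{n\}}$ above, together with the earlier lemma on ranks of derived fronts, guarantees that the induction descends.
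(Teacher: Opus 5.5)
Your proof is correct and is the argument the paper intends; the paper only remarks that the inductive proof of the preceding lemma goes through once restrictions are tracked. The identity $(\mathcal B\restriction N)_{\{n\}}=\mathcal B_{\{n\}}\restriction(N\restriction_{>n})$, together with $N\restriction_{>n}\in\mathcal F$ by freeness, is exactly what makes that inductive step work.

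One correction to your closing remark: induction on $\mathrm{rk}(\mathcal B)$ works just as well. The inductive step passes to the derived front $\mathcal B_{\{n\}}$, not to a restriction of $\mathcal B$, and $\mathrm{rk}(\mathcal B_{\{n\}})<\mathrm{rk}(\mathcal B)$. So inducting on $\mathrm{rk}(\mathcal B\restriction N)$ is a convenience, not a necessity.
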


As ultrafilter limits are unique in Hausdorff spaces, the following is an immediate consequence of the previous lemma.
\begin{corollary}\label{corollary:uniqueCascadeLimit}
    Let $X$ be a Hausdorff topological space, $M\subseteq \omega$ be infinite, $\mathcal B\subseteq [M]^{<\omega}$ be a front on $M$, $f:\mathcal B\to X$, and $p$ be a free ultrafilter on $\omega$ with $M \in p$.

    Then $f$ has at most one $\mathcal B$-cascade $p$-limit point in $X$.
\end{corollary}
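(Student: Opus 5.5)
The plan is to reduce Corollary~\ref{corollary:uniqueCascadeLimit} to the uniqueness of ultrafilter limits, via the preceding lemma. Let $x$ and $y$ both be $\mathcal B$-cascade $p$-limit points of $f$.

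\emph{Step 1.} Apply the preceding lemma with $\mathcal F=p$. This is legitimate, because $p$ is a free filter containing $M$. It gives that both $x$ and $y$ are $p^{\mathcal B}$-limit points of $f$.

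\emph{Step 2.} Check that $p^{\mathcal B}$ is a proper filter on $\mathcal B$, i.e.\ $\emptyset\notin p^{\mathcal B}$. The paper already notes that $p^{\mathcal B}$ is a filter, so only properness needs an argument. I would prove it by induction on $\mathrm{rk}(\mathcal B)$.
\begin{itemize}
\item If the rank is $0$, then $p^{\mathcal B}=\{\{\emptyset\}\}$, whose only member $\{\emptyset\}$ is nonempty.
\item Otherwise, $\emptyset\in p^{\mathcal B}$ would mean that $\{n\in M:\emptyset\in p^{\mathcal B_{\{n\}}}\}\in p$. By the inductive hypothesis this set is empty, and $\emptyset\notin p$.
\end{itemize}
In fact $p^{\mathcal B}$ is an ultrafilter by the same induction, using that $p$ is an ultrafilter, but properness is all that is needed.

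\emph{Step 3.} Suppose $x\neq y$. Since $X$ is Hausdorff, choose disjoint open sets $U\ni x$ and $V\ni y$. By Step 1, both $f^{-1}[U]$ and $f^{-1}[V]$ lie in $p^{\mathcal B}$. Their intersection is $f^{-1}[U\cap V]=\emptyset$, and this would have to lie in $p^{\mathcal B}$ as well, contradicting Step 2. Hence $x=y$.

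The only step needing any care is the properness check in Step 2, and it is a routine induction; there is no real obstacle.
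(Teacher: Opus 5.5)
Your proposal is correct and follows the same route as the paper. The paper also deduces the corollary from the preceding lemma (cascade $p$-limit points are $p^{\mathcal B}$-limit points) together with uniqueness of filter limits in Hausdorff spaces. Your Step 2 simply makes explicit the properness of $p^{\mathcal B}$, which the paper takes as part of its remark that $\mathcal F^{\mathcal B}$ is a filter.
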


    \section{On discrete images of barriers}\label{section:discrete-images}
It is well known that every infinite Hausdorff space has an infinite discrete subset.
In \cite{carlson2005discrete}, Carlson, Hindman, and Strauss proved the following strengthening for functions defined on $[N]^n$, where $N$ is an infinite countable set and $n \in \omega$.

\begin{proposition}[\cite{carlson2005discrete}]\label{prop:discreteImageCarlson}
    Let $X$ be a Hausdorff space, let $n \in \omega$, let $N$ be infinite and countable, and let $f: [N]^n \to X$.
    Then there exists $A \in [N]^{\omega}$ such that $f[[A]^n]$ is a discrete subset of $X$.
\end{proposition}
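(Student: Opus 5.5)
The plan is to argue by induction on $n$, and first to reduce to the case where $f$ is injective. The case $n=0$ is trivial, since $f[[A]^0]$ is a single point. For the reduction, apply the canonical Ramsey theorem of Erd\H{o}s--Rado to $f$. It gives an infinite $A_0\subseteq N$ and a set $I\subseteq n$ of positions such that, for $s,t\in[A_0]^n$, $f(s)=f(t)$ iff $s$ and $t$ agree on the positions in $I$. Thinning $A_0$ so that every $|I|$-subset of it can be padded to an $n$-set with the prescribed positions, $f$ factors as $f(s)=g(s\restriction I)$ with $g$ injective on $[A_1]^{|I|}$. Discreteness of $f[[A]^n]$ then reduces to discreteness of $g[[A]^{|I|}]$. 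If $|I|<n$, this is the induction hypothesis. So it remains to treat the case where $f$ is injective on $[A_1]^n$.

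For injective $f$, I will build $A=\{a_0<a_1<\dots\}$ by a fusion argument. Alongside it I will build a decreasing sequence of infinite reservoirs $N_0\supseteq N_1\supseteq\cdots$ with $a_k<N_k$, and open sets $U_s\ni f(s)$ for every $s\in[\{a_0,\dots,a_k\}]^n$. Discreteness requires $f(t)\notin U_s$ for all $t\neq s$ in $[A]^n$. Split each such pair according to which of $s,t$ is completed later, i.e.\ which has the larger maximum. When $\max t\le\max s$, the set $t$ is already known when $U_s$ is chosen. Hausdorffness handles these finitely many points, and the constraint is symmetric, so $U_t$ can also be shrunk by intersecting with a neighborhood missing $f(s)$.

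The real work is the case where $t$ is completed later than $s$. Write $t=u\cup v$ with $u\subseteq\{a_0,\dots,a_k\}$ and $v$ drawn from the reservoir, $|v|=m\ge 1$. For each of the finitely many triples $(s,u,m)$, consider the map $h_{u}(v)=f(u\cup v)$ on $[N_k]^m$. I want to shrink $U_s$ and $N_k$ so that $h_u[[N_{k+1}]^m]\cap U_s=\emptyset$. For a fixed open $V\ni f(s)$, the Ramsey theorem applied to the 2-colouring ``$h_u(v)\in V$ or not'' gives a homogeneous reservoir. If it is homogeneous ``out'', we are done with $U_s\subseteq V$. The obstacle is when every neighborhood $V$ of $f(s)$ is homogeneously ``in'' on some further subset. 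Then $f(s)$ is an ultrafilter-type limit of $h_u$ along the reservoir, and no choice of $U_s$ can work.

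This convergence phenomenon is the step I expect to be hardest. My plan is to rule it out globally rather than locally, by a Ramsey/Nash-Williams colouring of configurations of pairs $(s,t)$ according to their relative order type. Injectivity plus Hausdorffness shows that $f(s)$ can be such a limit for at most one ``pattern'' of $u$ and $m$. If $f(s)$ is the limit of $h_u$, then $f(s)$ is determined by $u$ alone, namely as the limit. Each $u$ is a proper subset of $s$ in position terms. This dependence is incompatible with injectivity of $f$ on configurations in which $s\setminus u$ varies while $u$ stays fixed. Concretely, two distinct $s,s'$ sharing $u$ would both be the (unique, Hausdorff) limit of the same $h_u$ along a common reservoir, forcing $f(s)=f(s')$.

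Thus, after a preliminary diagonal thinning of $A_1$ that fixes, for each pattern, which alternative of the Ramsey dichotomy occurs, the ``always in'' alternative is impossible for $f$ injective. The fusion then runs with the ``out'' alternative at every stage. Diagonalizing over the countably many tasks yields $A$, and the family $(U_s)_{s\in[A]^n}$ witnesses that $f[[A]^n]$ is discrete.
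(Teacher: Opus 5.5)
The paper gives no proof of this proposition; it is quoted from Carlson--Hindman--Strauss, so your argument has to stand on its own. Your reduction to injective $f$ via the Erd\H{o}s--Rado canonical theorem is correct. You also isolate the right obstruction in the fusion. Write $h_u(v)=f(u\cup v)$. There is no open $V\ni f(s)$ and infinite $N''\subseteq N'$ with $h_u[[N'']^m]\cap V=\emptyset$ exactly when $f(s)$ is a \emph{Ramsey limit} of $h_u$ on the current reservoir $N'$: for every open $V\ni f(s)$, every infinite subset of $N'$ contains an infinite $N'''$ with $h_u[[N''']^m]\subseteq V$.

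The gap is in how you dispose of this obstruction. Whether $f(s)$ is such a limit is not a property of the finite configuration $s\cup u$; it depends on the infinite reservoir. A limit may fail to exist on $N'$ and appear on some smaller $N''$, and different subreservoirs may have different limits. So there is no colouring of finite sets (or of a front) to which Ramsey's or Nash-Williams' theorem applies, and you never say what colour a configuration of pairs $(s,t)$ receives. Some global thinning is unavoidable. In $\mathbb R^2$ put $f(\{0,y\})=(y,0)$ and $f(\{y,z\})=(y,1/z)$ for $1\le y<z$. This $f$ is injective, yet a fusion that starts with $a_0=0$ is stuck at every later stage, with $s=\{0,a_k\}$ and $u=\{a_k\}$. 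Two smaller points: $u$ need not be a subset of $s$ (only $s\setminus u\neq\emptyset$ holds, because $|u|<n$), and the claim that $f(s)$ is a limit for at most one pattern is unjustified and not needed.

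The missing idea is to freeze the limits before homogenizing. By Hausdorffness a Ramsey limit is unique, and it persists to every infinite subset of the reservoir. The alternative ``no infinite subset of $N'$ carries a Ramsey limit of $h_u$'' is also hereditary. So run a preliminary fusion: decide this dichotomy for $u=\emptyset$ at the outset, then let $b_k=\min M_k$ and choose $M_{k+1}\subseteq M_k\setminus\{b_k\}$ deciding it for the finitely many $u\subseteq\{b_0,\dots,b_k\}$ with $|u|<n$. This produces $B$ and $\ell:[B]^{<n}\to X\cup\{\ast\}$ such that, for every infinite $N'\subseteq B\setminus(\max u+1)$, $h_u$ has a Ramsey limit on $N'$ iff $\ell(u)\neq\ast$, and then that limit is $\ell(u)$.

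Now ``$f(s)=\ell(u)$'' is an honest colouring of $s\cup u$ for each of the finitely many order types of pairs $(s,u)$ with $|u|<n$ and $\max u\le\max s$. Ramsey's theorem yields a homogeneous $A\subseteq B$. The colour ``yes'' is impossible: take $(s,u)$ spread out in $A$ and move one element of $s\setminus u$ within its gap to get $s'\neq s$ of the same type. Then $f(s')=\ell(u)=f(s)$, contradicting injectivity. On this $A$ your fusion runs as you describe.

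Also, you never need to shrink $U_t$ after the stage of $\max t$. For $\max t<\max s$, the requirement $f(s)\notin U_t$ is already guaranteed by the reservoir chosen at the stage of $t$.
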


Since for every barrier $\mathcal B$ of finite rank there exist $M \in [\omega]^{\omega}$ and $n \in \omega$ such that $\mathcal B\restriction M = [M]^n$, Proposition~\ref{prop:discreteImageCarlson} yields the analogous conclusion for maps from finite-rank barriers into Hausdorff spaces.
The next propositions show that the same conclusion fails for barriers of
infinite rank.

\begin{proposition}\label{prop:discreteImageSchreier}
Let $\mathcal B$ be the Schreier barrier.
Then there exist a countable linearly ordered topological space $L$, namely $\omega^{<\omega}$ endowed with the Kleene--Brouwer order, and a function $f:\mathcal B\to L$ such that for no infinite $A\subseteq\omega$, the set $f[\mathcal B\restriction A]$ is discrete.
\end{proposition}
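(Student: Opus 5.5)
We take $f:\mathcal B\to L$ to be the map that deletes the minimum: for $s=\{n_0<n_1<\dots<n_{n_0}\}\in\mathcal B$ we set
\[
    f(s)=\langle n_1,n_2,\dots,n_{n_0}\rangle\in\omega^{<\omega},
\]
the increasing enumeration of $s\setminus\{\min s\}$. Here $L=\omega^{<\omega}$ carries the order topology of the Kleene--Brouwer order: $u<_{KB}v$ iff $v$ is a proper initial segment of $u$, or $u(i)<v(i)$ at the first index $i$ where they differ. The idea is that the image of $\mathcal B\restriction A$ always contains a point $t$ together with a family of proper extensions of $t$ whose next entry tends to infinity, and in the Kleene--Brouwer order such extensions converge to $t$ from below.

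The first step is an order-theoretic lemma about $L$. For $t\in\omega^{<\omega}$, let $(v_k)_{k\in\omega}$ be sequences such that each $v_k$ properly extends $t$ and $v_k(|t|)\to\infty$. Then every neighbourhood of $t$ contains all but finitely many $v_k$. Each $v_k$ extends $t$, so $v_k<_{KB}t$, and hence it suffices to handle basic neighbourhoods $(u,w)$ with $u<_{KB}t<_{KB}w$; the upper bound $w$ is harmless. Since $u<_{KB}t$, either $u$ properly extends $t$, or $u$ differs from $t$ first at some $i<|t|$ with $u(i)<t(i)$. In the first case, as soon as $v_k(|t|)>u(|t|)$ we get $u<_{KB}v_k$. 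In the second case every $v_k$ agrees with $t$ up to $i$, so $u<_{KB}v_k$ for all $k$. Thus $v_k\in(u,w)$ for all large $k$. (If $t$ is the maximum of $L$, a neighbourhood is a ray $(u,\infty)$ and the same argument applies.)

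The second step is the combinatorics inside an arbitrary $A\in[\omega]^\omega$. Fix $a<a'$ in $A$ with $a\geq 1$. Next choose $z_1<\dots<z_a$ in $A$ with $z_1>a'$, and put $s=\{a,z_1,\dots,z_a\}$. Then $s\in\mathcal B\restriction A$ because $|s|=a+1=\min s+1$, and $t:=f(s)=\langle z_1,\dots,z_a\rangle$. For each $k$, choose $y^k_1<\dots<y^k_{a'-a}$ in $A$ above $z_a$ with $y^k_1\geq k$, and put
\[
    s_k=\{a',z_1,\dots,z_a,y^k_1,\dots,y^k_{a'-a}\}.
\]
This set is in $\mathcal B\restriction A$: its minimum is $a'$ because $a'<z_1$, and it has $a'+1$ elements. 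Its image $f(s_k)=t^\frown\langle y^k_1,\dots,y^k_{a'-a}\rangle$ is a proper extension of $t$ with $f(s_k)(|t|)=y^k_1\to\infty$. By the first step, $t$ is a limit of the points $f(s_k)\neq t$, all of which lie in $f[\mathcal B\restriction A]$. Hence $t$ is not isolated in $f[\mathcal B\restriction A]$, and this set is not discrete.

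The only point where care is needed is the ordering of choices in the second step. One must fix $a'$ before choosing $z_1$, so that $z_1>a'$ and $\min s_k=a'$; this is exactly where the Schreier condition $|s|=\min s+1$ is used, since it forces different minima to produce images of different lengths, so that genuine extensions of $t$ appear. Beyond that, the remaining work is the routine verification of the Kleene--Brouwer neighbourhood lemma in the first step.
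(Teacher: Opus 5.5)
Your proof is correct and takes essentially the same approach as the paper. You use the same map (delete the minimum) and the same witness: a length-$a$ image approximated by length-$a'$ images that extend it. The only difference is that you prove convergence for arbitrary proper extensions whose next entry tends to infinity in one step, whereas the paper proves it for one-point extensions and then iterates through a closure claim.
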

\begin{proof}
    If $u, v \in \omega^{<\omega}$ are such that $u\not\subseteq v$ and $v\not \subseteq u$, let $k_{uv}=\min\{k: u(k)\neq v(k)\}$.

    Consider $L=\omega^{<\omega}$ ordered as follows:

    \[
        u\leq v \quad\text{iff}\quad v\subseteq u,\ \text{or else }u,v\text{ are }\subseteq\text{-incomparable and }u(k_{uv})<v(k_{uv}).
    \]

    $L$ is known as the Kleene--Brouwer order on $\omega^{<\omega}$ (also known as the Lusin--Sierpi\'nski order).

    \begin{claim}
        For every $u\in L$ and every open set $W$ containing $u$, one has $u^\frown t\in W$ for all but finitely many $t\in\omega$.
    \end{claim}

    \begin{proof}[Proof of Claim]
        Since $L$ has no minimal elements, there exists $x<u$ such that $(x,u]\subseteq W$.
        Then either $u\subsetneq x$ or $x, u$ are $\subseteq$-incomparable and $x(k_{xu})<u(k_{xu})$.
        In the first case, for every $t>x(|u|)$, $u^\frown t \in (x, u]$.
        In the second case, for every $t\in \omega$, $u^\frown t \in (x, u]$.
    \end{proof}

    \begin{claim}
        For every infinite $X\subseteq \omega$, every $d\in\omega$, and every $1\leq n<m$ in $\omega$, the set $\{u \in X^n: d<u(0)<\dots<u(n-1)\}$ is contained in the closure of $\{u \in X^m: d<u(0)<\dots<u(m-1)\}$.
    \end{claim}

    \begin{proof}[Proof of Claim]
        It suffices to prove the case $m=n+1$, so the general case follows by induction on $m$.
        Let $d\in\omega$ and $u\in X^n$ be such that $d<u(0)<\dots<u(n-1)$.
        By the previous claim, $u$ belongs to the closure of
        \[
            Y=\{u^\frown t: t\in X\}.
        \]
        Since only finitely many $t\in X$ satisfy $t\leq u(n-1)$, all but finitely many elements of $Y$ belong to
        \[
            \{v \in X^{n+1}: d<v(0)<\dots<v(n)\}.
        \]
        Therefore, $u\in \overline{\{v \in X^{n+1}: d<v(0)<\dots<v(n)\}}$.
        This proves the claim.
    \end{proof}

    Now let $\mathcal B$ be the Schreier barrier.
    \[
        \mathcal B=\bigcup_{n \in \omega}\left\{\{n\}\cup s: s \in [\omega\setminus (n+1)]^n\right\}.
    \]

    Define $f:\mathcal B\to L$ by $f(\{n\}\cup s)=\bar s$, where $\bar s$ is $s$ written as an increasing sequence.
    We claim that for no infinite $A$, $f[\mathcal B\restriction A]$ is discrete.
    To see this, let $A\subseteq \omega$ be infinite, let $1\leq a<b$ be the second and third elements of $A$, and let $X=A\setminus\{a,b\}$. Then
    \[
        S_b=\{u \in X^b: b<u(0)<\dots<u(b-1)\}
    \]
    is contained in $f[\mathcal B\restriction A]$, since for every
    $u=(u(0), \dots, u(b-1))\in S_b$ we have
    \[
        \{b\}\cup \{u(0), \dots, u(b-1)\}\in \mathcal B\restriction A
    \]
    and
    \[
        f(\{b\}\cup \{u(0), \dots, u(b-1)\})=u.
    \]
    Choose $u\in S_b$, and let $w=u\restriction a=(u(0), \dots, u(a-1))$.
    Then $w\in f[\mathcal B\restriction A]$, because
    $\{a\}\cup \{u(0), \dots, u(a-1)\}\in \mathcal B\restriction A$
    and $f(\{a\}\cup \{u(0), \dots, u(a-1)\})=w$.

    Moreover, by the previous claim, $w\in \overline{S_b}$.
    Since $w\notin S_b$, it follows that $w$ is an accumulation point of $f[\mathcal B\restriction A]$.
    Therefore, $f[\mathcal B\restriction A]$ is not discrete.
\end{proof}

The same failure occurs for every uniform barrier of infinite rank. We use the
comparison relation between barriers introduced in \cite{corral2024infinite}.

\begin{definition}[{\cite[Definition 2.19]{corral2024infinite}}]
    Given two barriers $\mathcal B$ and $\mathcal C$ on a set $M\in [\omega]^\omega$, we write $\mathcal B \preceq \mathcal C$ if there is a finite-to-one, non-decreasing function $h \in \omega^\omega$ such that for every infinite subset $M' \in [M]^\omega$ there exists $N \in [M']^\omega$ such that $h\restriction N$ is one-to-one and
    \[
        \forall c \in (\mathcal C\restriction N)\ \exists b \in \mathcal B \, (b \sqsubseteq h[c]).
    \]
\end{definition}

\begin{proposition}[{\cite[Proposition 2.20]{corral2024infinite}}]\label{prop:barrierPreceq}
    Let $\mathcal B, \mathcal C$ be two barriers on $\omega$ such that
    $\mathrm{rk}(\mathcal B)\leq \mathrm{rk}(\mathcal C)$ and $\mathcal C$ is uniform.
    Then $\mathcal B \preceq \mathcal C$.
\end{proposition}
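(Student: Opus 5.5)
The plan is to build $h$ and, for each $M'$, a set $N\in[M']^\omega$, so that the following rank invariant holds along the tree $\mathcal T_{\mathcal C\restriction N}$:
\[
(\ast)\quad \text{for every } t\in\mathcal T_{\mathcal C\restriction N}:\ \text{either some } b\in\mathcal B \text{ satisfies } b\sqsubseteq h[t],\ \text{or } h[t]\in\mathcal T_{\mathcal B} \text{ and } \mathrm{rk}_{\mathcal B}(h[t])\le \mathrm{rk}_{\mathcal C}(t).
\]
This gives the statement, because for $c\in\mathcal C\restriction N$ we have $\mathrm{rk}_{\mathcal C}(c)=0$. So if the first alternative fails, then $h[c]\in\mathcal T_{\mathcal B}$ has $\mathcal B$-rank $0$, which means $h[c]\in\mathcal B$. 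At the root, $(\ast)$ is exactly the hypothesis $\mathrm{rk}(\mathcal B)\le\mathrm{rk}(\mathcal C)$, since $h[\emptyset]=\emptyset$.

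To keep the node $h[t]\cup\{m\}$ inside $\mathcal T_{\mathcal B}$ I use that $\mathcal B$ is a front on $\omega$. If $s\in\mathcal T_{\mathcal B}\setminus\mathcal B$ and $m>\max s$, apply the front property to the infinite set $s\cup\{m\}\cup(\omega\restriction_{>m})$. It has an initial segment in $\mathcal B$, and by the Nash-Williams property that segment is not $\sqsubseteq s$. Hence $s\cup\{m\}\in\mathcal T_{\mathcal B}$, and by definition $\mathrm{rk}_{\mathcal B}(s\cup\{m\})<\mathrm{rk}_{\mathcal B}(s)$. Once some $b\sqsubseteq h[t]$ lies in $\mathcal B$, the same holds for every extension, so the first alternative of $(\ast)$ is inherited downward.

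I will construct $N=\{n_0<n_1<\dots\}\subseteq M'$ by fusion, choosing $n_{k+1}$ with $h(n_{k+1})>h(n_k)$; this makes $h\restriction N$ injective. At stage $k+1$ there are only finitely many nodes $t'\subseteq\{n_0,\dots,n_k\}$ in the tree, and for each I need
\[
\mathrm{rk}_{\mathcal B}\bigl(h[t']\cup\{h(n_{k+1})\}\bigr)\le \mathrm{rk}_{\mathcal C}\bigl(t'\cup\{n_{k+1}\}\bigr).
\]
Uniformity of $\mathcal C$ gives two cases, which I handle separately.
\begin{itemize}
\item If $\mathrm{rk}_{\mathcal C}(t')=\beta+1$ is a successor, then the right side equals $\beta$. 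The left side is $<\mathrm{rk}_{\mathcal B}(h[t'])\le\beta+1$, so the inequality is automatic.
\item If $\mathrm{rk}_{\mathcal C}(t')=\alpha$ is a limit, then the right side is $\alpha_{n}$, which increases strictly to $\alpha$. The left side is some ordinal $<\alpha$, but it depends on the value $h(n)$, and its supremum over all values might be $\alpha$.
\end{itemize}

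The limit case is where the main obstacle lies: $h$ must be fixed before $M'$ is given. I will choose $h$ growing slowly enough that, for each $n$, the value $m=h(n)$ is safe for all relevant nodes simultaneously. Concretely, let $h(n)$ be the largest $m$ (made nondecreasing and finite-to-one, with $h\to\infty$) such that the following holds. For every $s\in\mathcal T_{\mathcal B}$ with $s\subseteq m$, every $m'\le m$ with $s\cup\{m'\}\in\mathcal T_{\mathcal B}$, and every $t\in\mathcal T_{\mathcal C}$ with $t\subseteq n$ and $\mathrm{rk}_{\mathcal C}(t)\ge\mathrm{rk}_{\mathcal B}(s)$, we require $\mathrm{rk}_{\mathcal B}(s\cup\{m'\})\le\mathrm{rk}_{\mathcal C}(t\cup\{n''\})$ for all $n''\ge n$.

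For a fixed finite configuration $(s,m',t)$ the required inequality holds for all large $n''$, since $\alpha_{n''}\to\alpha$ in the limit case and the successor case is automatic. So $h$ is unbounded, and it is finite-to-one once it is made nondecreasing. The delicate point I will have to verify is that the condition is finitary. The sets $t\subseteq n$ range over a finite family, but the ranks $\mathrm{rk}_{\mathcal C}(t\cup\{n''\})$ for $n''\ge n$ involve infinitely many $n''$. The required bound holds only for $n''$ beyond some threshold, and I must check that these thresholds can be bounded in terms of $n$ alone using the monotonicity of $(\alpha_n)$. If this direct definition proves awkward, my fallback is to first pass, via Nash-Williams' theorem, to a set on which $\mathrm{rk}_{\mathcal B}(s\cup\{m\})$ is monotone in $m$, and then define $h$ by comparing these monotone sequences with $(\alpha_n)$.
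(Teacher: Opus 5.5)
\paragraph{Gap: the choice of $h$.} Your invariant $(\ast)$, the front argument keeping $h[t]\cup\{m\}$ in $\mathcal T_{\mathcal B}$, the successor case, and the fusion skeleton are all fine. The problem is the definition of $h$. You require $h(n)$ to be safe simultaneously for \emph{every} $t\in\mathcal T_{\mathcal C}$ with $t\subseteq n$. You then conclude ``so $h$ is unbounded'' from the fact that each \emph{fixed} configuration $(s,m',t)$ is eventually safe. That does not follow. For fixed $m$, the set of nodes $t\subseteq n$ grows with $n$. A node $t$ of large limit rank with $\max t$ just below $n$ may have a child $t\cup\{n\}$ of tiny rank. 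The point you flagged, the infinitely many $n''\ge n$, is harmless, since $\mathrm{rk}_{\mathcal C}(t\cup\{n''\})$ is increasing in $n''$.

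\paragraph{Counterexample: your $h$ can be bounded.} Let $\mathcal B$ be the Schreier barrier and
\[
\mathcal C=\{s\cup u:\ s\in\mathcal B,\ \max s<\min u,\ |u|=\min u-2\min s\}.
\]
This is a uniform barrier of rank $\omega\cdot 2$. For the Sperner property: if $c\subsetneq c'$ have equal minima, compare the first elements of the two second blocks; if $\min c>\min c'$, count the elements of $c'$ above $\min c$.

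The node $t_a=\{a,a+1,\dots,2a\}$ has $\mathrm{rk}_{\mathcal C}(t_a)=\omega=\mathrm{rk}_{\mathcal B}(\emptyset)$. Yet $t_a\cup\{2a+1\}\in\mathcal C$ has rank $0<1=\mathrm{rk}_{\mathcal B}(\{1\})$. So your condition fails at $n=2a+1$ for every $m\ge 1$, giving $h(2a+1)=0$ for all $a$. Hence $h$ does not tend to infinity, and any nondecreasing function below it is identically $0$. Your fallback (monotonizing on the $\mathcal B$ side) does not touch this obstruction, which lives in $\mathcal C$.

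\paragraph{Repair.} Ask only for \emph{eventual} safety, and use the freedom you already have in choosing $n_{k+1}\in M'$. Consider the countably many pairs $(s,t)$ with $s\in\mathcal T_{\mathcal B}$, $t\in\mathcal T_{\mathcal C}$, $\mathrm{rk}_{\mathcal C}(t)$ a limit, and $\mathrm{rk}_{\mathcal B}(s)\le\mathrm{rk}_{\mathcal C}(t)$. For $n>\max t$, let $g_{s,t}(n)$ be the largest $m\le n$ such that $\mathrm{rk}_{\mathcal B}(s\cup\{m'\})\le\mathrm{rk}_{\mathcal C}(t\cup\{n\})$ for all $m'\le m$ with $s<\{m'\}$. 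By uniformity of $\mathcal C$, each $g_{s,t}$ is nondecreasing and tends to infinity. A diagonal argument then gives a nondecreasing unbounded (hence finite-to-one) $h$ with $h\le^* g_{s,t}$ for every pair.

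At stage $k+1$ of the fusion, only finitely many pairs $(h[t'],t')$ with $t'\subseteq\{n_0,\dots,n_k\}$ are relevant. So you can take $n_{k+1}\in M'$ past all their thresholds with $h(n_{k+1})>h(n_k)$. Then $(\ast)$ propagates exactly as you describe.
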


\begin{proposition}
    Let $\mathcal C$ be a uniform barrier on $\omega$ of infinite rank.
    Then there exists a function $g: \mathcal C \to L$, where $L$ is the Kleene--Brouwer order on $\omega^{<\omega}$, such that for no infinite $A\subseteq\omega$, the set $g[\mathcal C\restriction A]$ is discrete.
\end{proposition}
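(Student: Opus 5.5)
Since $\mathcal C$ has infinite rank, its rank is at least $\omega=\mathrm{rk}(\mathcal B)$, where $\mathcal B$ is the Schreier barrier. Proposition~\ref{prop:barrierPreceq} therefore gives $\mathcal B\preceq\mathcal C$. Fix a witness $h$: it is finite-to-one and non-decreasing, and for every $M'\in[\omega]^\omega$ there is $N\in[M']^\omega$ with $h\restriction N$ one-to-one and every $c\in\mathcal C\restriction N$ having some $b\in\mathcal B$ with $b\sqsubseteq h[c]$. Since $\mathcal B$ is thin, this $b$ is unique; call it $b(c)$. Let $f:\mathcal B\to L$ be the map from Proposition~\ref{prop:discreteImageSchreier}. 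The plan is to set
\[
g(c)=f\bigl(b(h[c])\bigr),
\]
where $b(h[c])$ denotes the unique element of $\mathcal B$ that is an initial segment of $h[c]$, whenever such an element exists. When none exists, $g(c)$ takes some fixed value, say the root $\emptyset\in L$.

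Now let $A\subseteq\omega$ be infinite. Choose $N\in[A]^\omega$ as in the definition of $\preceq$, so that $h\restriction N$ is injective and each $c\in\mathcal C\restriction N$ has $h[c]$ extending an element of $\mathcal B$. Put $A'=h[N]$, which is infinite. The key claim is
\[
g[\mathcal C\restriction N]=f[\mathcal B\restriction A'].
\]
The inclusion $\subseteq$ is immediate, since $b(h[c])\subseteq h[c]\subseteq A'$. For $\supseteq$, take $b\in\mathcal B\restriction A'$. Because $h\restriction N$ is an increasing bijection onto $A'$ (it is non-decreasing and injective), we can pull $b$ back to $b'=(h\restriction N)^{-1}[b]$ and extend $b'$ by a tail of $N$ to an infinite set $Y\subseteq N$. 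As $\mathcal C$ is a front, some $c\sqsubseteq Y$ lies in $\mathcal C\restriction N$. Then $h[c]\sqsubseteq h[Y]$, and $b\sqsubseteq h[Y]$ as well. The set $b(h[c])$ is an element of $\mathcal B$ that is an initial segment of $h[Y]$, and $b$ is another; thinness of $\mathcal B$ forces them to coincide. Hence $g(c)=f(b)$.

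Having the claim, we get $f[\mathcal B\restriction A']\subseteq g[\mathcal C\restriction A]$. By Proposition~\ref{prop:discreteImageSchreier}, $f[\mathcal B\restriction A']$ contains a point $w$ that is an accumulation point of $f[\mathcal B\restriction A']$ itself, so $w$ is an accumulation point of the larger set $g[\mathcal C\restriction A]$. Since $w$ also belongs to $g[\mathcal C\restriction A]$, that set is not discrete.

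The main obstacle is the bookkeeping in the claim. One must check that $b$ and the unique element $b(h[c])$ agree; this uses that two initial segments of the same infinite set $h[Y]$ that both lie in the thin family $\mathcal B$ must be equal. One must also make sure $g$ is well defined on all of $\mathcal C$ and not only on $\mathcal C\restriction N$, which the default value handles.
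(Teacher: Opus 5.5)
Your proof is correct and follows the paper's argument essentially step by step. Both use the reduction $\mathcal B\preceq\mathcal C$ from Proposition~\ref{prop:barrierPreceq}, define $g$ by composing $f$ with the map $c\mapsto$ (the unique element of $\mathcal B$ that is an initial segment of $h[c]$, or a default), and rest on the identity $g[\mathcal C\restriction N]=f[\mathcal B\restriction h[N]]$. The only difference is cosmetic. You prove the reverse inclusion by pulling $b$ back along $h\restriction N$ and extending by a tail. The paper first shows that the image of $\mathcal C\restriction N$ is a front on $h[N]$ and then invokes thinness. Both routes amount to the same computation.
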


\begin{proof}
    Let $\mathcal B$ be the Schreier barrier and let $f:\mathcal B\to L$ be as in Proposition~\ref{prop:discreteImageSchreier}.
    Since $\mathcal C$ is uniform and $\mathrm{rk}(\mathcal B)=\omega\leq \mathrm{rk}(\mathcal C)$, by Proposition~\ref{prop:barrierPreceq} we have $\mathcal B\preceq\mathcal C$.
    Hence, there exists a finite-to-one, non-decreasing function $h\in\omega^\omega$ such that for every infinite $M\in[\omega]^\omega$ there exists $N\in[M]^\omega$ such that $h\restriction N$ is one-to-one and
    \[
        \forall c\in (\mathcal C\restriction N)\ \exists b\in\mathcal B\ (b\sqsubseteq h[c]).
    \]

    Fix $b_0\in\mathcal B$.
    Define $\phi:\mathcal C\to\mathcal B$ as follows: if there exists $b\in\mathcal B$ such that $b\sqsubseteq h[c]$, let $\phi(c)=b$; otherwise, let $\phi(c)=b_0$.

    Notice that there is at most one such $b$, because if $b_1,b_2\in\mathcal B$ satisfy $b_1\sqsubseteq h[c]$ and $b_2\sqsubseteq h[c]$, then $b_1$ and $b_2$ are $\sqsubseteq$-comparable, and since $\mathcal B$ is Nash-Williams, it follows that $b_1=b_2$.

    Let $g=f\circ\phi:\mathcal C\to L$.
    We claim that for no infinite $M\subseteq\omega$, the set $g[\mathcal C\restriction M]$ is discrete.

    To see this, let $M\subseteq\omega$ be infinite, and choose $N\in[M]^\omega$ such that $h\restriction N$ is one-to-one and
    \[
        \forall c\in (\mathcal C\restriction N)\ \exists b\in\mathcal B\ (b\sqsubseteq h[c]).
    \]
    Since $h\restriction N$ is one-to-one and non-decreasing, it is strictly increasing, so $h[N]$ is infinite.

    We first show that
    \[
        \phi[\mathcal C\restriction N]=\mathcal B\restriction h[N].
    \]

    To see that $\phi[\mathcal C\restriction N]\subseteq \mathcal B\restriction h[N]$, note that if $c\in\mathcal C\restriction N$, then $\phi(c)\sqsubseteq h[c]$, and since $h[c]\subseteq h[N]$, we have $\phi(c)\in\mathcal B\restriction h[N]$.

    Conversely, let
    \[
        \mathcal F=\phi[\mathcal C\restriction N].
    \]
    We claim that $\mathcal F$ is a front on $h[N]$.
    Let $A\in[h[N]]^\omega$, and set
    \[
        N_A=h^{-1}[A]\cap N.
    \]
    Then $N_A\in[N]^\omega$.
    Since $\mathcal C$ is a front on $\omega$, there exists $c\in\mathcal C\restriction N_A$ such that $c\sqsubseteq N_A$.
    As $c\in\mathcal C\restriction N$, we have $\phi(c)\sqsubseteq h[c]$.
    Moreover, because $h\restriction N$ is strictly increasing and $c\sqsubseteq N_A$, we get
    \[
        h[c]\sqsubseteq A.
    \]
    Hence,
    \[
        \phi(c)\sqsubseteq A,
    \]
    proving that $\mathcal F$ is a front on $h[N]$.

    Since $\mathcal F\subseteq \mathcal B\restriction h[N]$ and $\mathcal B\restriction h[N]$ is a barrier on $h[N]$, it follows that $\mathcal F=\mathcal B\restriction h[N]$.
    Indeed, let $s\in\mathcal B\restriction h[N]$.
    Extend $s$ to some infinite $A\in[h[N]]^\omega$ with $s\sqsubseteq A$.
    Since $\mathcal F$ is a front on $h[N]$, there exists $t\in\mathcal F$ such that $t\sqsubseteq A$.
    As $t\in\mathcal F\subseteq \mathcal B\restriction h[N]$, both $s$ and $t$ belong to the barrier $\mathcal B\restriction h[N]$ and are initial segments of the same infinite set $A$.
    Therefore, $s=t$.
    Thus, $s\in\mathcal F$, and so $\mathcal B\restriction h[N]\subseteq \mathcal F$.

    Hence,
    \[
        \phi[\mathcal C\restriction N]=\mathcal B\restriction h[N].
    \]
    Therefore,
    \[
        g[\mathcal C\restriction N]
        =
        f[\phi[\mathcal C\restriction N]]
        =
        f[\mathcal B\restriction h[N]].
    \]
    By Proposition~\ref{prop:discreteImageSchreier}, the set $f[\mathcal B\restriction h[N]]$ is not discrete.
    Hence, $g[\mathcal C\restriction N]$ is not discrete.

    Since $N\subseteq M$, we have $g[\mathcal C\restriction N]\subseteq g[\mathcal C\restriction M]$, so $g[\mathcal C\restriction M]$ is not discrete either.
    As $M$ was arbitrary, this proves the result.
\end{proof}

The discrete-thinning result from \cite{carlson2005discrete} is used in
Section~\ref{section:betaomega} to treat finite cascade levels. Since it fails
for uniform barriers of infinite rank, the group constructions in
Section~\ref{section:boolean-groups} use a different argument.

    \section{Nonproductivity in subspaces of \texorpdfstring{$\beta\omega$}{beta omega}}\label{section:betaomega}

We identify points of $\beta\omega$ with ultrafilters on $\omega$.

A selective ultrafilter, also called a Ramsey ultrafilter, is a free ultrafilter $p$ on $\omega$ such that for every map $f:[\omega]^2\to\{0,1\}$, there exists $A\in p$ such that $f\restriction [A]^2$ is constant.
The existence of selective ultrafilters is independent of ZFC: for instance, many of them exist under the Continuum Hypothesis or Martin's Axiom (see, e.g., \cite[Theorem~2]{blass1973rudin}).
However, they do not exist after adding $\aleph_2$ random reals to a model satisfying the generalized continuum hypothesis \cite{kunen1976somePoints}.

Assuming the existence of selective ultrafilters, the following results were obtained in \cite{corral2024highCompactness}.

\begin{lemma*}[{\cite[Lemma~4.5]{corral2024highCompactness}}]
Assume the existence of $\kappa$ many selective ultrafilters. If $A \subseteq \beta\omega$ and
 $|A| < \kappa$, then $\beta\omega \setminus A$ is $2$-cascade countably compact.
\end{lemma*}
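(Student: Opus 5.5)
Given $f:[\omega]^2\to\beta\omega\setminus A$, we want a free ultrafilter $p$ and points $x_n$, each the $p$-limit of $(f(\{n,m\}):m>n)$, such that the $p$-limit $x$ of $(x_n)$ avoids $A$ and all $x_n$ avoid $A$. Since $\beta\omega$ is compact Hausdorff, all these $p$-limits exist in $\beta\omega$ and are unique. So the only issue is choosing $p$ so that none of the countably many limits $x_n$, nor $x$, lands in $A$. The plan is to exploit the $\kappa$ selective ultrafilters: we produce $\kappa$ candidate ultrafilters $p_\xi$ whose associated limit points are pairwise distinct. Since $|A|<\kappa$, and each point of $A$ can be hit by only "few" candidates, some $p_\xi$ has no relevant limit in $A$.

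First, I would thin out $f$. By Proposition~\ref{prop:discreteImageCarlson} there is an infinite $N$ with $f[[N]^2]$ discrete, and by Ramsey-type arguments I would arrange a canonical form. For instance, for each $n$ the map $m\mapsto f(\{n,m\})$ is either constant or one-to-one on a tail of $N$. Likewise, the map $n\mapsto$ (the resulting row behaviour) is uniform. In the degenerate cases (constant rows, or rows that repeat) the cascade limit reduces to a simple $p$-limit of a sequence in $\beta\omega\setminus A$, or to a value of $f$, which is handled directly.

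In the main case the rows are injective with discrete images. I would also make the sets $D_n=\{f(\{n,m\}):m\in N,\ m>n\}$ pairwise disjoint, or nearly so, and the whole image discrete. Discrete countable subsets of $\beta\omega$ are then $C^*$-embedded in their closures in the relevant sense: a countable discrete subset $D$ of $\beta\omega$ has closure homeomorphic to $\beta D\cong\beta\omega$. Hence the map $q\mapsto q\text{-}\lim_m f(\{n,m\})$ is injective on free ultrafilters on $N$. Consequently, for distinct selective ultrafilters $p_\xi$ (which are pairwise non-isomorphic after using distinctness up to Rudin--Keisler equivalence, or simply distinct ones handled with care), the row limits $x_n^\xi$ are distinct as $\xi$ varies. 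The same holds for the top limits $x^\xi$, provided the sequence $(x_n^\xi)_n$ is itself discrete.

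The main obstacle is exactly this point. The top-level limit uses the sequence $(x_n^\xi)_n$, which depends on $\xi$, so injectivity of $\xi\mapsto x^\xi$ is not automatic. Here selectivity is what I expect to do the work. I would try to show that for a selective $p$, the iterated limit $x^p$ equals the $p^{[\omega]^2}$-limit of $f$ (Lemma above). I would also show that the Fubini power $p^{[\omega]^2}$ restricted to a homogeneous set determines $p$, since $p$ is the projection of $p\otimes p$. Then $x^{p}$ determines $p$, via injectivity of limits on the discrete image $f[[N]^2]$ viewed inside $\beta([N]^2)$.

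With injectivity in hand, each $a\in A$ equals $x^\xi$ for at most one $\xi$, and equals some $x_n^\xi$ for at most countably many pairs $(n,\xi)$, hence for countably many $\xi$. This excludes at most $|A|\cdot\omega<\kappa$ indices, assuming $\kappa$ is uncountable. The finite case is trivial or handled separately. Any remaining $\xi$ gives the required $2$-cascade $p_\xi$-limit point outside $A$.
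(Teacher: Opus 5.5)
Your argument is correct. In substance it is the paper's ZFC route to this statement (Lemma~\ref{lemma:betaOmegaDimensionRaising} and Proposition~\ref{proposition:betaOmegaSmallDimensional}): discrete thinning by Proposition~\ref{prop:discreteImageCarlson}, Erd\H{o}s--Rado canonization, injectivity of $q\mapsto q\text{-}\lim$ along an injective sequence with discrete range in $\beta\omega$, and a counting argument.

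The only step you package differently is injectivity of the top-level limit. You factor $p\mapsto x^p$ as $p\mapsto p^{[N]^2}\mapsto p^{[N]^2}\text{-}\lim f$ and use that $f\restriction[N]^2$ is injective with discrete image. The paper instead separates $p\neq q$ by disjoint sets $D\in p$, $F\in q$. It then notes that $x_p\in\cl D_f$ and $x_q\in\cl F_f$, and these closures are disjoint because $D_f$ and $F_f$ are disjoint pieces of the discrete set $f[[B]^2]$. The paper's version only needs the rows to have pairwise disjoint ranges, so it treats constant rows and injective rows at once.

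However, selectivity plays no role in your factorization. The identity $x^p=p^{[N]^2}\text{-}\lim f$ holds for every $p\in N^*$, by the lemma that cascade limits are $p^{\mathcal B}$-limits. Also, every $p$ is recovered from $p^{[N]^2}$ by projecting to $\min$. So the ``main obstacle'' is not where selective ultrafilters are needed, and you should simply choose $p$ among all $2^{\mathfrak c}$ elements of $N^*$.

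That choice yields the stronger ZFC statement for $|A|<2^{\mathfrak c}$. It also removes two loose ends in your version:
\begin{enumerate}[label=(\arabic*)]
\item The given selective ultrafilters need not contain $N$, so you would have to push them forward along a bijection $\omega\to N$.
\item Your bound of $|A|\cdot\omega$ excluded indices need not be below $\kappa$ when $\kappa\leq\omega$, so the finite-$A$ case is not covered by your count.
\end{enumerate}
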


\begin{theorem*}[{\cite[Theorem~4.8]{corral2024highCompactness}}]
    Assume the existence of a selective ultrafilter. Then there is a $2$-cascade countably compact space whose square is not countably compact.
\end{theorem*}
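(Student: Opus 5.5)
The plan is to build the space inside $\beta\omega$ as a \emph{selector} for a fixed-point-free involution, so that it kills its own square, and to secure $2$-cascade countable compactness by a transfinite recursion that uses the selective ultrafilter.

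\emph{Setting up the square failure.} Fix a fixed-point-free involution $\sigma:\omega\to\omega$, for instance $\sigma(2n)=2n+1$ and $\sigma(2n+1)=2n$. Its extension $\bar\sigma$ is an involution of $\omega^*$ with no fixed points, so $\omega^*$ splits into $2^{\mathfrak c}$ two-element orbits $\{u,\bar\sigma u\}$. I will look for $Z=\omega\cup S$, where $S\subseteq\omega^*$ meets every orbit in \emph{at most} one point. Then $Z^2$ is not countably compact: the sequence $((n,\sigma(n)):n\in\omega)$ in $Z^2$ has no accumulation point. Indeed, any accumulation point in $(\beta\omega)^2$ has the form $(u,\bar\sigma u)$ with $u\in\omega^*$, and $u$ and $\bar\sigma u$ cannot both lie in $S$. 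So the whole task is to choose $S$ so that $Z$ is $2$-cascade countably compact.

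\emph{Supply of candidate limits.} Let $p$ be selective. Given $f:[\omega]^2\to Z$, I first thin out using selectivity (Ramsey for $[\omega]^2$ applied along $p$) to find $A\in p$ on which $f$ has one of a few canonical forms. Either each row $m\mapsto f(\{n,m\})$ is eventually constant or injective, and in the injective case the values in different rows are pairwise distinct or coincide in a controlled way. For each free ultrafilter $q$ in the $\le_{RK}$-orbit of $p$ under permutations of $\omega$ (there are $\mathfrak c$ such $q$, all selective), and more generally for each $q\in\omega^*$, consider the iterated limit
\[x_n^q=q\text{-}\lim_m f(\{n,m\}),\qquad x^q=q\text{-}\lim_n x_n^q.\]
The point $x^q$ is a $2$-cascade $q$-limit of $f$ provided each $x_n^q$ and $x^q$ lie in $Z$. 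Using Frolík-type independence of iterated ultrafilter limits of discrete sequences, I would show that as $q$ ranges over a suitable family of size $2^{\mathfrak c}$ (for example, sums over the fixed selective $p$), the points $x^q$ are pairwise distinct. They also lie in pairwise distinct $\bar\sigma$-orbits, and the inner limits $x_n^q$ can be computed in terms of finitely many types. The nontrivial case is when the rows are injective into $\omega$ or into $S$. If only finitely many values occur, or a row is eventually constant, a limit in $Z$ exists trivially.

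\emph{Recursion.} Enumerate all $f:[\omega]^2\to\beta\omega$ in type $2^{\mathfrak c}$; only those with range in the final $Z$ matter, so I handle every potential $f$. At stage $\alpha$ fewer than $2^{\mathfrak c}$ orbits have been \emph{decided}, meaning a point was put into $S$ or declared forbidden. If the current $f_\alpha$ has a value in a forbidden point, I skip it. Otherwise I pick $q$ such that every orbit of $x^q$ and of each $x_n^q$ is either undecided or already decided in agreement with $q$, and I add these countably many points to $S$. The final step, filling each remaining undecided orbit, can be done arbitrarily or not at all.

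The main obstacle I expect is the counting claim in the middle step. I need $2^{\mathfrak c}$ choices of $q$ whose associated countable sets $\{x^q\}\cup\{x^q_n:n\}$ are pairwise orbit-disjoint, apart from points already forced to lie in $Z$. This is what lets a set of fewer than $2^{\mathfrak c}$ decided orbits be avoided. Selectivity enters here to make the inner limits $x_n^q$ of injective rows behave uniformly, so that distinct $q$ give orbit-distinct outputs. I would first try to prove this with $q$ of the form $\Sigma$-sums $p$-$\lim_n q_n$ built from $p$, invoking the standard fact that $p$-sums of pairwise distinct ultrafilters are distinct and are not $\bar\sigma$-images of one another.
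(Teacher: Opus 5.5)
\textbf{There is a genuine gap.}

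Your reduction of the square failure is correct. If $Z=\omega\cup S$ with $S\subseteq\omega^*$ meeting each orbit $\{u,\bar\sigma u\}$ at most once, then $((n,\sigma(n)):n\in\omega)$ has no accumulation point in $Z^2$. This is the same mechanism as the paper's $\bar f[X]\cup\bar g[Y]$ with $X\cap Y=\omega$.

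The gap is the step you flag yourself. You never show that, at a stage where fewer than $2^{\mathfrak c}$ orbits are decided, some $q$ yields a $2$-cascade $q$-limit whose points can all be put into $S$. The mechanism you sketch does not supply this.
\begin{itemize}
\item A $2$-cascade $q$-limit uses the \emph{same} $q$ at both levels, $x^q=q\text{-}\lim_n q\text{-}\lim_m f(\{n,m\})$. A $p$-sum $p\text{-}\lim_n q_n$ computes iterated limits along different ultrafilters at the two levels, so ``sums over the selective $p$'' do not witness cascade limits.
\item Canonical forms obtained on some $A\in p$ say nothing about ultrafilters $q$ with $A\notin q$.
\item Pairwise orbit-disjointness of the sets $\{x^q\}\cup\{x^q_n:n\}$ across different $q$ is neither true nor needed. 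For $f(\{n,m\})=m$ one has $x^{\bar\sigma q}=\bar\sigma x^q$.
\item You ignore conflicts \emph{inside} a single cascade. Take an injection $\pi:\omega\times\omega\to\omega$ and set $f(\{2j,m\})=2\pi(j,m)$ and $f(\{2j+1,m\})=2\pi(j,m)+1$. Then $x^q_{2j+1}=\bar\sigma x^q_{2j}$ for every $q$, so blindly adding ``these countably many points'' destroys the selector property.
\item Rows that are eventually constant, with infinitely many distinct constants, are not a trivial case. The outer limit is a new point that still has to be placed.
\end{itemize}

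Both problems can be fixed in ZFC, with no selective ultrafilter.
\begin{itemize}
\item First use Ramsey's theorem to thin $f$ to $[B]^2$ with all values in one clopen half, $\cl(2\omega)$ or $\cl(2\omega+1)$. Every $q$-limit with $B\in q$ stays in that half, and that half meets each orbit exactly once, so internal conflicts disappear.
\item Then apply Lemma~\ref{lemma:betaOmegaDimensionRaising} with forbidden set $\bar\sigma[S_\alpha]$. This needs no value of $f$ on $[B]^2$ to be forbidden, which your skipping rule ensures. Carlson--Hindman--Strauss discreteness plus Erd\H{o}s--Rado canonization give $B'\subseteq B$ on which either $q\mapsto x^q$ is injective (the case $0\in I$), or all rows agree with a single row, reducing to the one-dimensional case (the case $0\notin I$).
\item Hence all but at most $|S_\alpha|+\aleph_0<2^{\mathfrak c}$ many $q\in(B')^*$ give a cascade limit that avoids $\bar\sigma[S_\alpha]$.
\end{itemize}

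With these two steps your recursion goes through, and it becomes a genuine alternative to the paper's route. The paper proves the ZFC strengthening, Corollary~\ref{corollary:betaOmegaHighDimensionalProduct}, differently. It takes $X=\beta\omega\cap M$ for a countably closed elementary submodel $M$ of size $\mathfrak c$, and $Y=\beta\omega\setminus(X\setminus\omega)$, which is cascade countably compact by Proposition~\ref{proposition:betaOmegaSmallDimensional}. It then glues $\bar f[X]\cup\bar g[Y]$ over a partition of $\omega$. Elementarity handles one side and the counting lemma handles the complement, with no recursive selector needed. As written, however, your proposal lacks the counting lemma and is incomplete at its core.
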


\begin{theorem*}[{\cite[Theorem~4.10]{corral2024highCompactness}}]
    Assume the existence of a selective ultrafilter.
    Then there exists a $2$-countably compact space which is not $2$-cascade countably compact.
\end{theorem*}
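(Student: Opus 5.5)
The plan is to take $X\subseteq\beta\omega$ obtained from $\beta\omega$ by deleting precisely the points that a single fixed map $f_0:[\omega]^2\to\omega$ could use as $2$-cascade limits. Keeping $\omega\subseteq X$ and deleting those points makes $f_0$ a witness to the failure of $2$-cascade countable compactness. The selective ultrafilter is then used to show that every other map $f:[\omega]^2\to X$ still has a $p^{[\omega]^2}$-limit in $X$ for a suitable ultrafilter, so $X$ is $2$-countably compact. This follows the pattern of the $2$-cascade results quoted just above, where everything lives in subspaces $\beta\omega\setminus A$.

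\emph{Step 1 (the witness).} Fix a bijection $e:[\omega]^2\to\omega$ and put $f_0=e$. For $q\in\omega^*$, a $[\omega]^2$-cascade $q$-limit of $f_0$ in $\beta\omega$ is $q\text{-}\lim_n q\text{-}\lim_m e(\{n,m\})$. This is the ultrafilter $e_*(q\otimes q)$, the image under $e$ of the Fubini square $q\otimes q$ restricted to increasing pairs. Let $E=\{e_*(q\otimes q):q\in\omega^*\}$ and $X=\beta\omega\setminus E$. Limits are unique in the Hausdorff space $\beta\omega$ (Corollary~\ref{corollary:uniqueCascadeLimit}), so no $q$ yields a cascade limit of $f_0$ inside $X$. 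Hence $X$ is not $2$-cascade countably compact.

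\emph{Step 2 (reduction for $2$-countable compactness).} Let $f:[\omega]^2\to X$ and fix a selective $p$. Apply the Carlson--Hindman--Strauss result (Proposition~\ref{prop:discreteImageCarlson}) together with selectivity to get $A\in p$ with $D=f[[A]^2]$ discrete. Then apply the canonical (Erd\H{o}s--Rado) partition theorem, which a selective ultrafilter satisfies with a homogeneous set in $p$. After shrinking $A$, $f\restriction[A]^2$ is constant, or depends injectively only on $\min$, or only on $\max$, or is injective.

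Since $\beta\omega$ is an F-space, the countable discrete set $D$ is $C^*$-embedded, so $\overline D\cong\beta D$. The $q^{[\omega]^2}$-limit of $f$ is therefore the image of the ultrafilter $q^{[\omega]^2}$ in $\beta D$, for any $q$ that is a Rudin--Keisler copy $\sigma_*(p)$ of $p$ with $A\in q$. In the constant case the limit is a point of $X$. In the $\min$- or $\max$-only cases the limit has the type of a single selective ultrafilter. A point of $E$ has type $q\otimes q$, which is strictly RK-above $q$, so these limits are not in $E$.

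\emph{Step 3 (the injective case; main obstacle).} Here the limit has the type of $p\otimes p$, and one must rule out that it equals $e_*(q\otimes q)$ for every admissible choice of copy $\sigma_*(p)$. I would try to exploit the freedom of $c$ many permutations $\sigma$. Two choices give the same limit only if $f\circ(\sigma\times\sigma)$ and $e\circ(\pi\times\pi)$ agree on a set in $p\otimes p$. Combining this with the fact that an injective map identifying two Fubini squares must (mod $p\otimes p$) be a product map, one should be able to pick $\sigma$ so that this agreement fails for every $\pi$.

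The real difficulty is that $D$ consists of points of $\omega^*$ rather than integers. So I would first use Frol\'ik-type arguments to show that a point of $E$ cannot lie in $\overline D\setminus D$ in the pattern required by $p\otimes p$. If that fails, I would fall back to shrinking $E$. Instead of all $q\in\omega^*$, it suffices to delete only those $e_*(q\otimes q)$ whose intermediate points $q\text{-}\lim_m e(\{n,m\})$ all lie in $X$, since a cascade limit needs those intermediate points in $X$ as well. This gives more room to keep the limits produced in Step 2 inside $X$.
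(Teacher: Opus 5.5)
There is a genuine gap, and it breaks the construction itself, not just Step~3. For an ultrafilter $q$, the filter $q^{[\omega]^2}=\{S\subseteq[\omega]^2:\{n:\{m>n:\{n,m\}\in S\}\in q\}\in q\}$ is exactly the Fubini square $q\otimes q$ on increasing pairs, and it is an ultrafilter. So the unique $q^{[\omega]^2}$-limit of $e$ in $\beta\omega$ is $e_*(q\otimes q)$, the same point as the cascade limit. The set $E$ you remove is therefore precisely the set of all $q^{[\omega]^2}$-limits of $f_0$, and $f_0$ alone shows that $X=\beta\omega\setminus E$ is \emph{not} $2$-countably compact.

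Your ``injective case'' is thus not an obstacle but false: for $f=f_0$, take $\pi=\sigma$. The fallback changes nothing either. The row limit $q\text{-}\lim_m e(\{n,m\})$ contains $e[\{\{n,m\}:m>n\}]$, and $\{\{n,m\}:m>n\}\notin r\otimes r$ for every free $r$. So no row limit ever lies in $E$, and the ``shrunken'' set is $E$ again.

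The missing observation is this. Inside the compact Hausdorff space $\beta\omega$, the $q^{[\omega]^2}$-limit and the $2$-cascade $q$-limit of any $f:[\omega]^2\to X$ are the same point $\ell_q$. The only difference between the two notions is that a cascade limit in $X$ also needs $q$-many of the row limits $q\text{-}\lim_m f(\{n,m\})$ to lie in $X$. So a separating subspace must \emph{keep} the final limits and \emph{delete} row limits, which is the opposite of your choice.

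That is what the paper does, in ZFC and without a selective ultrafilter (Proposition~\ref{proposition:betaOmega2NotCascade}, which implies the statement):
\begin{enumerate}[label=(\alph*)]
\item It takes $C_n\subseteq\beta\omega$, each $k$-cascade countably compact for all $k$, with $C_n\cap C_m=\omega$ for $n\neq m$.
\item It places a copy $D_n$ of $C_n$ in $\cl(\{n\}\times\omega)\subseteq\beta(\omega\times\omega)$.
\item It adds the set $D_\omega$ of all ultrafilters containing no column.
\end{enumerate}
For $g(\{n,m\})=(n,m)$, the row limit $\bar e_n(p)$ lies in $D$ only if $p\in C_n$, i.e.\ for at most one $n$, so $g$ has no cascade limit. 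Meanwhile, any $h:[\omega]^k\to D$ falls into one of two cases. Either it maps some $[A]^k$ into a single $D_n$, and one uses the cascade compactness of $D_n$. Or, after a Ramsey diagonalization making $h[[A_{n+1}]^k]\cap D_n=\emptyset$, its $p^{[B]^k}$-limit lies in no column closure, hence in $D_\omega$.
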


Using Proposition~\ref{prop:discreteImageCarlson}, we obtain ZFC versions of
the preceding results for all finite dimensions. We first prove that
$\beta\omega\setminus A$ is $n$-cascade countably compact for every $n<\omega$
whenever $A\subseteq\beta\omega$ and $|A|<2^{\mathfrak c}$. We then construct
a Tychonoff space that is $n$-cascade countably compact for every $n<\omega$
but whose square is not countably compact, and a space that is $k$-countably
compact for every $k<\omega$ but is not $2$-cascade countably compact.

To pass from dimension $2$ to higher finite dimensions, we use the
Erd\H{o}s--Rado canonical theorem.
For a modern proof, see \cite{todorcevic2010introduction}.

\begin{theorem}[Erd\H{o}s--Rado canonical theorem \cite{erdos1950combinatorial}]
    Let $k \in \omega$ and let $\varphi : [\omega]^k \to [\omega]^{<\omega}$.
    Then there exist an infinite set $M \subseteq \omega$ and a set $I \subseteq \{0, \ldots, k-1\}$ such that, for all
    $s = \{n_0 < \cdots < n_{k-1}\}, t = \{m_0 < \cdots < m_{k-1}\} \in [M]^k$,
    we have
    \[
        \varphi(s) = \varphi(t)
        \quad\text{if and only if}\quad
        n_i = m_i \text{ for all } i \in I.
    \]
\end{theorem}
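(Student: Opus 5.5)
The plan is the classical Ramsey-plus-patterns argument; the codomain $[\omega]^{<\omega}$ plays no role beyond being a set. For a pair $s=\{n_0<\dots<n_{k-1}\}$ and $t=\{m_0<\dots<m_{k-1}\}$ in $[\omega]^k$, define the \emph{pattern} of $(s,t)$ as the order type of the structure $(s\cup t,<,s,t)$. Equivalently, it is the way the two increasing enumerations interleave and coincide. There are only finitely many patterns, each realized on a set $s\cup t$ of size between $k$ and $2k$. For each pattern $P$ and each $j=|s\cup t|$, color $u\in[\omega]^j$ by whether $\varphi(s)=\varphi(t)$, where $(s,t)$ is the unique pair of pattern $P$ with $s\cup t=u$. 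Applying Ramsey's theorem finitely many times (once for each pattern), I get an infinite $M$ such that, for every pattern $P$, either all pairs in $[M]^k$ of pattern $P$ have $\varphi(s)=\varphi(t)$, or all have $\varphi(s)\neq\varphi(t)$. Call $P$ an \emph{equality pattern} in the first case. I will also thin $M$ so that consecutive elements leave infinitely many elements of $M$ between them; this is harmless and gives room to insert auxiliary sets.

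Next, define $I$ as the set of $i<k$ such that the pattern "agree at every coordinate except $i$" is \emph{not} an equality pattern. This needs a little care, since several patterns agree off $i$: they differ in whether $n_i<m_i$ or $m_i<n_i$, and both lie in the same gap. By symmetry of equality, the two orientations have the same status, so $I$ is well defined.

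For the direction "agree on $I$ $\Rightarrow$ equal values", take $s,t\in[M]^k$ with $n_i=m_i$ for all $i\in I$. I will move from $s$ to $t$ by changing one coordinate $i\notin I$ at a time. Each intermediate set must remain increasing, and each single step must be a change within one gap, so that it has the "differ only at $i$" pattern. To arrange this, I route through intermediate sets whose free coordinates are first pushed into fresh elements of $M$ lying in the appropriate gaps and then moved to the target values. The thinning of $M$ guarantees such elements exist. By the choice of $I$, every step preserves $\varphi$, so transitivity gives $\varphi(s)=\varphi(t)$.

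The converse, that $s,t$ differing at some $i\in I$ have $\varphi(s)\neq\varphi(t)$, is the main obstacle. Suppose instead that $\varphi(s)=\varphi(t)$ with $n_i\neq m_i$ for some $i\in I$, and let $P$ be their pattern, which is then an equality pattern. The idea is to produce $s',s''$ differing only at coordinate $i$ together with a third set $u$ such that both $(s',u)$ and $(s'',u)$ have pattern $P$ (or its reverse). Transitivity then gives $\varphi(s')=\varphi(s'')$, contradicting $i\in I$. To build these, I start from a realization of $P$ and duplicate the $i$-th element of $s$ into two nearby choices inside $M$ that have the same position relative to every element of $u$. This is possible because the gaps of $M$ are infinite and the relative position of $n_i$ with respect to $t$ is determined by $P$. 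The delicate point is checking that this duplication is always possible whatever $P$ is. In particular, if $n_i$ coincides with some element of $t$ in $P$, I first apply the previous paragraph to replace $t$ by a set that agrees with it on $I$ but avoids that coincidence. After that reduction, the same $M$ and $I$ witness the theorem.
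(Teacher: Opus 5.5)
\paragraph{The converse direction: a genuine gap.} Your architecture is the standard proof of the canonical theorem: Ramsey's theorem on the finitely many patterns, $I$ read off from the single-coordinate patterns, one-coordinate moves for the forward direction, and duplication of a coordinate for the converse. The paper does not prove this theorem; it only cites Erd\H{o}s--Rado and Todorcevic's book. However, your converse breaks down in the coincidence case.

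You fix an arbitrary $i\in I$ with $n_i\neq m_i$. If $n_i=m_j$, you propose replacing $t$ by a set that agrees with $t$ on $I$ and avoids $n_i$. This only works when $j\notin I$: if $j\in I$, every such replacement still has $n_i$ as its $j$-th element. Passing to the reverse pair does not rescue a badly chosen $i$. Take $k=3$, $I=\{0,1,2\}$, $s=\{a<b<c\}$, $t=\{b<c<d\}$ and $i=1$. Then $n_1=m_0$ and $m_1=n_2$ with $0,2\in I$, so neither $n_1$ nor $m_1$ can be freed.

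The missing idea is to choose $i$ extremally. Let $x$ be the largest element of $\{n_l,m_l: l\in I,\ n_l\neq m_l\}$, and, swapping $s$ and $t$ if necessary, assume $x=n_i$. Then $n_i>m_i$. Suppose $n_i=m_j$. Then $m_j>m_i$ forces $j>i$, so $n_j>x$. If $j$ were in $I$, then $n_j\neq n_i=m_j$ would make $j$ a differing $I$-coordinate with a value above $x$, contradicting the choice of $x$. Hence $j\notin I$. Moving $m_j$ inside its gap therefore preserves $\varphi(t)$ by the forward direction. Duplicating $n_i$ then yields $s,s''$ differing only at $i$ with $\varphi(s)=\varphi(s'')$, which contradicts $i\in I$. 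Note that after the move, the relevant equality pattern is that of the new pair, not $P$.

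\paragraph{The source of room is unavailable, but not needed.} No subset of $\omega$ can have infinitely many elements of $M$ between two consecutive elements, since only finitely many natural numbers lie between any two. So ``the gaps of $M$ are infinite'' is false. For $s,t$ built from consecutive elements of $M$, a bounded gap may contain no fresh point of $M$.

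In the forward direction, work inside each maximal block of consecutive coordinates outside $I$ (bounded by common $I$-values or by the ends). Pass from $s$ to the set whose entries in the block are $\max(n_j,m_j)$, changing coordinates from the top of the block downward. Do the same from $t$ to that set. Concatenating the first path with the reverse of the second connects $s$ to $t$. Every intermediate set is increasing and contained in $s\cup t\subseteq M$.

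In the converse, on $M$ equality of values depends only on the pattern. So build the configuration abstractly: place a new point $m_j'$ immediately after $m_j=n_i$, and $n_i'$ between $n_i$ and $m_j'$. If there is no coincidence, just place $n_i'$ immediately after $n_i$. Then embed this finite linear order into $M$. With these two repairs, $M$ itself (with no thinning) and your $I$ witness the theorem.
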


The following lemma provides the induction step from dimension $d$ to dimension
$d+1$.
\begin{lemma}\label{lemma:betaOmegaDimensionRaising}
    Let $d\in\omega$, let $A \in [\omega]^{\omega}$, let $X\subseteq \beta\omega$, and let
    $f:[A]^d\to \beta\omega\setminus X$.
    Then there exists $B \in [A]^{\omega}$ such that
    \[
        \left|\left\{p \in B^*: f \text{ has no } d\text{-cascade } p\text{-limit point in } \beta\omega\setminus X
        \right\}\right|
        \leq \max\{|X|,\aleph_0\}.
    \]
\end{lemma}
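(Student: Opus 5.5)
The plan is to show that, after thinning $A$, the $d$-cascade limit is \emph{injective} as a function of $p\in B^*$. The bound then follows at once.

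First note that $\beta\omega$ is compact Hausdorff. By induction on $d$, every $g:[B]^d\to\beta\omega$ therefore has, for every $p\in B^*$, a unique $d$-cascade $p$-limit point in $\beta\omega$. It is obtained recursively as $p$-$\lim_n x_n$, where $x_n$ is the $(d-1)$-cascade $p$-limit of $g_n$; uniqueness comes from Corollary~\ref{corollary:uniqueCascadeLimit}. Call this point $L_g(p)$. Then $f$ fails to have a $d$-cascade $p$-limit in $\beta\omega\setminus X$ exactly when $L_f(p)\in X$. So it suffices to find $B\in[A]^\omega$ for which $p\mapsto L_f(p)$ is injective on $B^*$ (or at most countable-to-one), or for which $L_f(p)\notin X$ for every $p\in B^*$. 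If $d=0$ there is nothing to do: $L_f(p)=f(\emptyset)\notin X$.

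For $d\ge1$, I will first apply Proposition~\ref{prop:discreteImageCarlson} to get $B_0\in[A]^\omega$ with $D=f[[B_0]^d]$ a countable discrete subset of $\beta\omega$. Next, since $D$ is countable, I fix an injection $D\to\omega$ and apply the Erd\H{o}s--Rado canonical theorem to $f$ composed with this injection. This gives $B\in[B_0]^\omega$ and $I\subseteq\{0,\dots,d-1\}$ such that $f(s)=f(t)$ iff $s$ and $t$ agree on the coordinates in $I$.
\begin{itemize}
\item If $I=\emptyset$, then $f$ is constant on $[B]^d$ with value in $\beta\omega\setminus X$. Hence $L_f(p)$ is that constant, and the exceptional set is empty.
\item If $m=|I|\ge1$, then $f$ factors as $f(s)=e(s\restriction I)$, where $e:[B]^m\to D$ is injective and $s\restriction I$ denotes the elements of $s$ in the positions listed in $I$.
\end{itemize}

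The key topological input is that $\beta\omega$ is an F-space, so countable subsets are $C^*$-embedded. Hence $\overline D\cong\beta D$, and limits of $D$-valued maps computed in $\beta\omega$ correspond to ultrafilters on $D$. Unwinding the recursive definition, $L_f(p)$ is the image under $f$ of the iterated Fubini ultrafilter $p^{\mathcal B}$ on $[B]^d$; this is the content of the earlier lemma that cascade limits are $p^{\mathcal B}$-limits, combined with the fact that in $\beta D$ the $\mathcal U$-limit of a map is the image ultrafilter. Equivalently, $L_f(p)=e_*(\pi_I)_*(p^{[B]^d})$, where $\pi_I(s)=s\restriction I$ is the projection. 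I will check that the image of the Fubini power $p^{[B]^d}$ under $\pi_I$ is the Fubini power $p^{[B]^m}$. Since the $\min$-marginal of $p^{[B]^m}$ is $p$, and $e$ is injective, the map $p\mapsto L_f(p)$ is injective on $B^*$. Therefore $|\{p\in B^*: L_f(p)\in X\}|\le|X|$.

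The main obstacle is this last identification: verifying carefully that the cascade limit computed in $\beta\omega$ coincides with the image ultrafilter in $\beta D\cong\overline D$, and that projecting onto the coordinates in $I$ yields $p^{[B]^m}$, with particular care when $0\notin I$. If the F-space argument is awkward, my fallback is to recover $p$ directly. I would separate the values $e(u)$ by pairwise disjoint clopen sets, which is possible in $\beta\omega$ for countable discrete sets, and read off $p$ from the first coordinate in $I$. Even a countable-to-one conclusion there would give the stated bound $\max\{|X|,\aleph_0\}$.
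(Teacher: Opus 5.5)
Your reduction in the first paragraph is false, and this is a genuine gap. A $d$-cascade $p$-limit point of $f$ in the \emph{subspace} $Y=\beta\omega\setminus X$ must be witnessed by intermediate points $x_n\in Y$ that are $(d-1)$-cascade $p$-limits of $f_n$ \emph{in $Y$}, and so on down the tree $\mathcal T_{[B]^d}$. Since cascade limits in $\beta\omega$ are unique, $f$ has such a limit if and only if $L_f(p)\in Y$ \emph{and} $\{n: f_n \text{ has a } (d-1)\text{-cascade } p\text{-limit in } Y\}\in p$. The second condition does not follow from the first.

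For example, let $f:[\omega]^2\to\omega$ be injective, fix $p\in\omega^*$, and put $X=\{L_{f_n}(p):n\in\omega\}\subseteq\omega^*$. Then $L_f(p)\notin X$: since $f$ is injective, $L_f(p)=L_{f_n}(p)$ would force $\{s:\min s=n\}\in p^{[\omega]^2}$. Yet $f$ has no $2$-cascade $p$-limit in $\beta\omega\setminus X$. So injectivity of $p\mapsto L_f(p)$ controls only the root of the cascade. Notice that your argument would even yield the bound $|X|$. The $\aleph_0$ in the statement comes from discarding one exceptional set for each of countably many lower nodes. In the paper this is exactly the job of the induction on $d$ together with the fusion sequence $(A_k)$. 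The inductive hypothesis, applied to each $f_{a_k}$ (and to $f_l$ in Case~2), handles all lower levels. Only then do Erd\H{o}s--Rado and the disjoint-closure argument deal with the root.

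Your computation at the root is correct: $\overline D\cong\beta D$, $L_f(p)=e_*(\pi_I)_*(p^{[B]^d})$, and $(\pi_I)_*(p^{[B]^d})=p^{[B]^m}$, whose $\min$-marginal is $p$. The gap can be closed with the same $B$. Take $s\in[B]^{<d}$ with $|s|=k$, including $s=\emptyset$. The section $f_s(t)=f(s\cup t)$, for $t\in[B\restriction_s]^{d-k}$, takes values in the discrete set $D$. It is canonical with index set $I_k=\{i-k: i\in I,\ i\ge k\}$. If $I_k=\emptyset$, then $f_s$ is constant with value in $Y$. Otherwise your argument shows that $p\mapsto L_{f_s}(p)$ is injective on $B^*$. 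Now remove the union of the countably many sets $\{p\in B^*:L_{f_s}(p)\in X\}$, $s\in[B]^{<d}$. This discards at most $\max\{|X|,\aleph_0\}$ many $p$. For every remaining $p$, all vertices $L_{f_s}(p)$ lie in $Y$, so $L_f(p)$ is a $d$-cascade $p$-limit in $Y$.

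With this repair your route genuinely differs from the paper's. Because the canonical form is inherited by all sections, a single application of Carlson--Hindman--Strauss and Erd\H{o}s--Rado replaces the induction on $d$, the fusion, and the case split $0\in I$ versus $0\notin I$. The cost is the ultrafilter bookkeeping: the $C^*$-embedding of $D$ and the computation of Fubini marginals.
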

\begin{proof}
    We proceed by induction on $d$.

    If $d=0$, the result is trivial.
    For $d=1$ the claim is easy since any $f:A\rightarrow \beta\omega$ has a restriction which is either constant or one-to-one with discrete range.

    Assume the claim holds for $d\geq 1$ and let
    \[
    f:[A]^{d+1}\to \beta\omega\setminus X.
    \]

    By Proposition~\ref{prop:discreteImageCarlson}, we may assume that $f[[A]^{d+1}]$ is a discrete subset of $\beta\omega\setminus X$.

    Recursively, define a decreasing sequence $(A_k: k \in \omega)$ and a sequence $(a_k: k \in \omega)$ such that:
    \begin{enumerate}[label=(\alph*)]
        \item $A_0=A$.
        \item $a_k=\min A_k$ for every $k \in \omega$.
        \item $A_{k+1}\in [A_k\setminus \{a_k\}]^{\omega}$ is such that
        \[
        \left|\left\{p \in A_{k+1}^*: f_{a_k} \text{ has no } d\text{-cascade } p\text{-limit point in } \beta\omega\setminus X\right\}\right|
        \leq \max\{|X|,\aleph_0\},
        \]
        which exists by the inductive hypothesis.
    \end{enumerate}

    Let $C=\{a_k: k \in \omega\}$.
    Fix $k \in \omega$ and let $c=a_k$.
    Since $C\setminus (c+1)\subseteq A_{k+1}$, for every $p \in C^*$ we have $A_{k+1}\in p$.
    Hence, by the choice of $A_{k+1}$,
    \[
    \left|\left\{p \in C^*: f_c \text{ has no } d\text{-cascade } p\text{-limit point in } \beta\omega\setminus X\right\}\right|
    \leq \max\{|X|,\aleph_0\}.
    \]

    By the Erd\H{o}s--Rado theorem, there exist $B \in [C]^{\omega}$ and $I\subseteq \{0,\ldots,d\}$ such that for every two distinct
    \[
    \{a_0,\ldots,a_d\},\ \{b_0,\ldots,b_d\}\in [B]^{d+1},
    \]
    with $a_0<\cdots<a_d$ and $b_0<\cdots<b_d$,
    we have $f(\{a_0, \ldots, a_d\})=f(\{b_0, \ldots, b_d\})$ if and only if $a_i=b_i$ for every $i \in I$.

    \textbf{Case 1: $0\in I$.}
    In this case, for every two distinct $k,l\in B$,
    \[
        f_k\big[[B\setminus (k+1)]^d\big]\cap f_l\big[[B\setminus (l+1)]^d\big]=\emptyset.
    \]
    For each $k\in B$, let
    \[
        E_k=\left\{p\in B^*: f_k \text{ has no } d\text{-cascade } p\text{-limit point in } \beta\omega\setminus X\right\}.
    \]
    Since $B^*\subseteq C^*$, we have $|E_k|\leq \max\{|X|,\aleph_0\}$ for every $k\in B$.
    Let $E=\bigcup_{k\in B} E_k$.
    Then $|E|\leq \max\{|X|,\aleph_0\}$.
    For each $p\in B^*\setminus E$ and each $k\in B$, let $x_{p,k}\in \beta\omega\setminus X$ be a $d$-cascade $p$-limit point of $f_k$.
    
    Now fix distinct $p,q\in B^*\setminus E$.
    Let $D\in p$ and $F\in q$ be disjoint.
    Put
    \[
    D_f=\bigcup_{k\in D} f_k\big[[B\setminus (k+1)]^d\big]
    \quad\text{and}\quad
    F_f=\bigcup_{k\in F} f_k\big[[B\setminus (k+1)]^d\big].
    \]
    
    Since $D_f$ and $F_f$ are disjoint subsets of the discrete set $f[[B]^{d+1}]$ in $\beta\omega$, we have $\cl D_f\cap \cl F_f=\emptyset$.
    Moreover, $x_{p, k}\in \cl D_f$ for every $k \in D$ and $x_{q, k}\in \cl F_f$ for every $k \in F$.
    Since $\beta\omega$ is compact, the limits $p\text{-}\lim_{k \in B} x_{p, k}$ and $q\text{-}\lim_{k \in B} x_{q, k}$ exist and belong to $\cl D_f$ and $\cl F_f$, respectively.
    Thus, $p\text{-}\lim_{k \in B} x_{p, k}\in \cl D_f$ and $q\text{-}\lim_{k \in B} x_{q, k}\in \cl F_f$ cannot be equal.
    Let $x_p=p\text{-}\lim_{k \in B} x_{p, k}$ for each $p\in B^*\setminus E$.

    Thus, the map $p\mapsto x_p$ is injective on $B^*\setminus E$.
    Let $F_X=\{p\in B^*\setminus E: x_p\in X\}$.
    Then $|F_X|\leq \max\{|X|,\aleph_0\}$.
    Hence, $|E\cup F_X|\leq \max\{|X|,\aleph_0\}$.

    Finally, if $p\in B^*\setminus (E\cup F_X)$, then for every $k\in B$, $x_{p,k}$ is a $d$-cascade $p$-limit point of $f_k$, and $x_p\in \beta\omega\setminus X$ is the $p$-limit of the sequence $(x_{p,k}:k\in B)$.
    Therefore, $x_p$ is a $(d+1)$-cascade $p$-limit point of $f$ in $\beta\omega\setminus X$.

    \textbf{Case 2: $0\notin I.$}
    Let $l=\min B$.
    For every $k\in B\setminus\{l\}$ and every $s\in [B\setminus (k+1)]^d$, the sets $\{l\}\cup s \quad\text{and}\quad \{k\}\cup s$ agree on all coordinates indexed by $I$.
    Since $0\notin I$, it follows that $f(\{l\}\cup s)=f(\{k\}\cup s)$.
    Hence, $f_k=f_l\restriction [B\setminus (k+1)]^d$
    for every $k\in B\setminus\{l\}$.

    By the inductive hypothesis, choose
    $Z\in[B\setminus(l+1)]^\omega$ and put
    \[
        T=\left\{p\in Z^*: f_l \text{ has no } d\text{-cascade } p\text{-limit point in } \beta\omega\setminus X\right\}.
    \]
    We may choose $Z$ so that $|T|\leq\max\{|X|,\aleph_0\}$.

    Fix $p\in Z^*\setminus T$, and let $x_p\in \beta\omega\setminus X$ be a $d$-cascade $p$-limit point of $f_l$.
    For every $k\in Z$, since $Z\setminus (k+1)\in p$ and
    \[
    f_k=f_l\restriction [Z\setminus (k+1)]^d,
    \]
    the same point $x_p$ is a $d$-cascade $p$-limit point of $f_k$.
    Therefore, taking the constant sequence $(x_p:k\in Z)$, we conclude that $x_p$ is a $(d+1)$-cascade $p$-limit point of $f$ in $\beta\omega\setminus X$.
\end{proof}

Now we are ready to prove the main result of this section: an improved version of \cite[Lemma~4.5]{corral2024highCompactness} which does not require the existence of selective ultrafilters.

\begin{proposition}\label{proposition:betaOmegaSmallDimensional}
    For every $n \in \omega$ and every $X\subseteq \beta\omega$ such that $|X|<2^{\mathfrak c}$, $\beta\omega\setminus X$ is $n$-cascade countably compact.
\end{proposition}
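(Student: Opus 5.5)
The plan is to apply Lemma~\ref{lemma:betaOmegaDimensionRaising} with $d=n$ and then use a cardinality count. Fix $n\in\omega$ and $X\subseteq\beta\omega$ with $|X|<2^{\mathfrak c}$, and let $f:[\omega]^n\to\beta\omega\setminus X$ be arbitrary. We must find a free ultrafilter $p$ (with $\omega\in p$, which is automatic) such that $f$ has an $n$-cascade $p$-limit point in $\beta\omega\setminus X$. If $n=0$ this is trivial: $f(\emptyset)$ is the required point for any $p$. So assume $n\geq 1$.

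Apply Lemma~\ref{lemma:betaOmegaDimensionRaising} with $d=n$, $A=\omega$ and the given $X$. This produces $B\in[\omega]^\omega$ such that the set
\[
E=\{p\in B^*: f \text{ has no } n\text{-cascade } p\text{-limit point in } \beta\omega\setminus X\}
\]
has $|E|\leq\max\{|X|,\aleph_0\}$. Recall that $B^*$ is homeomorphic to $\omega^*$, so $|B^*|=2^{\mathfrak c}$. Since $|X|<2^{\mathfrak c}$ and $\aleph_0<2^{\mathfrak c}$, we get $\max\{|X|,\aleph_0\}<2^{\mathfrak c}$, and hence $B^*\setminus E\neq\emptyset$.

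Now pick any $p\in B^*\setminus E$. Then $p$ is a free ultrafilter on $\omega$ containing $B$, and by the definition of $E$, $f$ has an $n$-cascade $p$-limit point in $\beta\omega\setminus X$. One should check that the cascade limit notion used in the lemma, where $f$ is defined on $[A]^n$ with $A=\omega$, matches the definition of $n$-cascade countable compactness for the front $[\omega]^n$. Here that is immediate, since $A=\omega$.

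The only substantive input is the lemma; the remaining step is just the count $|\omega^*|=2^{\mathfrak c}$ (Pospíšil), so no real obstacle remains.
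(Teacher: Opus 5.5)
Your proposal is correct and matches the paper's proof exactly: apply Lemma~\ref{lemma:betaOmegaDimensionRaising} with $A=\omega$ to obtain $B$ whose exceptional set of ultrafilters has size at most $\max\{|X|,\aleph_0\}<2^{\mathfrak c}=|B^*|$, then choose $p\in B^*$ outside that set. Your separate treatment of $n=0$ is harmless but not needed, since the lemma already covers $d=0$.
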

\begin{proof}
    Let $f:[\omega]^n\to \beta\omega\setminus X$.
    By Lemma~\ref{lemma:betaOmegaDimensionRaising}, there exists $B\in [\omega]^{\omega}$ such that
    \[
        \left|\left\{p \in B^*: f \text{ has no } n\text{-cascade } p\text{-limit point in } \beta\omega\setminus X\right\}\right|
        \leq \max\{|X|,\aleph_0\}.
    \]

    Since $|B^*|=2^{\mathfrak c}>\max\{|X|,\aleph_0\}$, choose $p\in B^*$
    outside the set above. Then $f$ has an $n$-cascade
    $p$-limit point in $\beta\omega\setminus X$.
\end{proof}

Using Proposition~\ref{proposition:betaOmegaSmallDimensional} and arguing as in \cite{corral2024highCompactness}, we obtain the following.

\begin{theorem}\label{theorem:betaOmegaHighDimensionalProduct}
    There exist subspaces $X$ and $Y$ of $\beta\omega$, both containing $\omega$, such that
    $X$ and $Y$ are $n$-cascade countably compact for every $n \in \omega$, while
    $X\times Y$ is not countably compact.

    In fact, the sequence $((n,n):n\in\omega)$ has no accumulation points in $X\times Y$.
\end{theorem}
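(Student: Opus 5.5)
We follow the classical construction from \cite{corral2024highCompactness}, using Proposition~\ref{proposition:betaOmegaSmallDimensional} in place of the selective ultrafilter lemma. Concretely, we build $X=\omega\cup X'$ and $Y=\omega\cup Y'$ with $X',Y'\subseteq\omega^*$ and $X'\cap Y'=\emptyset$. This disjointness gives the second assertion. A free ultrafilter $p$ makes the diagonal sequence $((n,n):n\in\omega)$ converge to $(p,p)$. Any accumulation point $(x,y)$ of this sequence in $X\times Y$ would have to equal $(p,p)$ for some $p\in\omega^*$: the points of $\omega$ are isolated, and each neighbourhood of the accumulation point meets the diagonal in a set that must lie in $p$. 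Such a point would give $p\in X'\cap Y'$, a contradiction. So it suffices to make both $X$ and $Y$ $n$-cascade countably compact for all $n$ while keeping $X'$ and $Y'$ disjoint.

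The construction is a transfinite recursion of length $2^{\mathfrak c}$. Enumerate as $(f_\alpha:\alpha<2^{\mathfrak c})$ all pairs $(n,f)$ with $n\in\omega$ and $f:[\omega]^n\to\beta\omega$ having countable range. There are only $\mathfrak c^{\aleph_0}=\mathfrak c$ countable subsets of $\beta\omega$ and $\mathfrak c$ maps into each, so $2^{\mathfrak c}$ steps suffice. Arrange the enumeration so that each pair appears cofinally often, or at least once at a stage after its range is already in the relevant set. At stage $\alpha$ we have $X_\alpha,Y_\alpha\subseteq\omega^*$, disjoint, each of size $<2^{\mathfrak c}$.

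If $\ran f_\alpha\subseteq\omega\cup X_\alpha$, apply Proposition~\ref{proposition:betaOmegaSmallDimensional} with the excluded set $Y_\alpha$, which has size $<2^{\mathfrak c}$. This yields a cascade $p$-limit point $x_\alpha\notin Y_\alpha$. In fact we need more: the witnesses at all levels of the cascade must be added as well. So we add to $X$ the countably many points that appear as the inner cascade limits; by choosing them via the Proposition they also avoid $Y_\alpha$. Handle $Y$ symmetrically, excluding $X_\alpha$.

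At limit stages take unions, and set $X'=\bigcup_\alpha X_\alpha$ and $Y'=\bigcup_\alpha Y_\alpha$.

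The main obstacle is the closing-off argument. Every $f:[\omega]^n\to X$ has countable range, and that range lies in some $X_\beta$ once $\cf(2^{\mathfrak c})>\omega$, which holds by K\"onig's theorem. Hence $f$ is treated at some later stage $\alpha$. There the Proposition supplies a cascade limit point lying in $\beta\omega\setminus Y_\alpha$, which is added to $X$, and the intermediate cascade witnesses are added too. This requires a strengthened form of Proposition~\ref{proposition:betaOmegaSmallDimensional}: all intermediate points in the cascade, and not merely the top one, must avoid the excluded set. We would get this by checking that the proof of Lemma~\ref{lemma:betaOmegaDimensionRaising} already controls every level, since it counts bad ultrafilters at each level, each bounded by $\max\{|X|,\aleph_0\}$.

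A further subtlety is that points added later to $Y$ must avoid earlier $X$-points. This is automatic, since at each stage we exclude the current other side. Finally, the cascade limits are computed in $\beta\omega$, and closures agree, so a limit remains a limit in the subspace $X$.
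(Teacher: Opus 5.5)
Your argument is correct, but it takes a different route from the paper's.

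\textbf{The paper's route.} The paper runs no bookkeeping recursion. It takes an $\aleph_0$-closed elementary submodel $M\prec H(\theta)$ of size $\mathfrak c$ with $\beta\omega\in M$, and sets $X=\beta\omega\cap M$. Any $f:[\omega]^n\to X$ lies in $M$, so by elementarity a cascade limit point, together with its countable witness tree, can be found in $M$ and hence in $X$. It then puts $Y=\beta\omega\setminus(X\setminus\omega)$ and invokes Proposition~\ref{proposition:betaOmegaSmallDimensional} exactly once, using $|X\setminus\omega|\le\mathfrak c<2^{\mathfrak c}$. Disjointness of the remainders is then automatic.

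\textbf{Comparison.} Your symmetric recursion of length $2^{\mathfrak c}$ applies the Proposition at every stage on both sides. It therefore needs the extra checks you supply: $\cf(2^{\mathfrak c})>\omega$, only countably many points added per stage so that $|X_\alpha|,|Y_\alpha|<2^{\mathfrak c}$, and updating $Y$ against the already enlarged $X_{\alpha+1}$. The paper's route is shorter and gives extra information: $|X|=\mathfrak c$ and $Y$ is the complement of a set of size $\mathfrak c$. The same submodel trick is reused in Proposition~\ref{proposition:betaOmegaManyHighDimensional} to produce $2^{\mathfrak c}$ such spaces with pairwise disjoint remainders. Your route avoids elementary submodels and is essentially the bookkeeping template of Theorem~\ref{theorem:mainHyperspace}.

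\textbf{Two small corrections.} First, $\beta\omega$ has $(2^{\mathfrak c})^{\aleph_0}=2^{\mathfrak c}$ countable subsets, not $\mathfrak c^{\aleph_0}=\mathfrak c$. Your enumeration of length $2^{\mathfrak c}$ with cofinal repetitions is still fine. Second, no strengthening of Proposition~\ref{proposition:betaOmegaSmallDimensional} is needed. Under the recursive definition of a cascade limit point \emph{in a space}, every intermediate witness is itself a cascade limit point in that space. So ``$\beta\omega\setminus Y_\alpha$ is $n$-cascade countably compact'' already places the whole countable witness tree outside $Y_\alpha$.
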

\begin{proof}
    Let $\theta$ be an uncountable regular cardinal such that $\beta\omega \in H(\theta)$.
    Let $M$ be a $\aleph_0$-closed elementary submodel of $H(\theta)$ of cardinality $\mathfrak c$ with $\beta\omega\in M$ (which exists by, e.g., \cite[Lemma~III.8.4]{kunen2011set}).
    
    Define $X=\beta\omega\cap M$.
    We claim that $X$ is $n$-cascade countably compact for every $n\in\omega$.
    Indeed, fix $n\in\omega$ and let $f:[\omega]^n\to X$.
    Since $\ran(f)\subseteq X\subseteq M$ and $M$ is $\aleph_0$-closed, we have $f\in M$.
    By elementarity, there exist $p\in\omega^*$ and $x\in\beta\omega$ such that $x$ is an $n$-cascade $p$-limit point of $f$, and such witnesses may be taken in $M$.
    Hence, $x\in\beta\omega\cap M=X$, proving the claim.

    Since $|X\setminus \omega|\leq |M|=\mathfrak c<2^{\mathfrak c}$, by Proposition~\ref{proposition:betaOmegaSmallDimensional}, $Y=\beta\omega\setminus (X\setminus \omega)$ is $n$-cascade countably compact for every $n \in \omega$.

    The sequence $((n,n):n\in\omega)$ has no accumulation points in $X\times Y$.
    Indeed, any accumulation point of this sequence in $\beta\omega\times\beta\omega$ must be of the form $(p,p)$ with $p\in\omega^*$.
    If such a point belonged to $X\times Y$, then its first coordinate would belong to $X\setminus\omega$ and its second coordinate to $Y\setminus\omega$, which are disjoint.
\end{proof}

Thus, two spaces may be $n$-cascade countably compact for every $n<\omega$
while their product is not countably compact.

By iterating the previous construction, we obtain the following.

\begin{proposition}\label{proposition:betaOmegaManyHighDimensional}
    There exists a family $(C_\alpha: \alpha<2^{\mathfrak c})$ of subspaces of $\beta\omega$ which are $n$-cascade countably compact for every $n \in \omega$ such that for every two distinct $\alpha, \beta<2^{\mathfrak c}$, $C_\alpha\cap C_\beta=\omega$.
\end{proposition}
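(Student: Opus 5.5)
We will build the family by transfinite recursion on $\alpha<2^{\mathfrak c}$, combining the two devices from the proof of Theorem~\ref{theorem:betaOmegaHighDimensionalProduct}: closure under cascade limits via an $\aleph_0$-closed construction, and Proposition~\ref{proposition:betaOmegaSmallDimensional} to guarantee that the needed limits can be taken outside a set of size $<2^{\mathfrak c}$. Each $C_\alpha$ will have the form $\omega\cup D_\alpha$ with $D_\alpha\subseteq\omega^*$ and $|D_\alpha|\le\mathfrak c$, and the sets $D_\alpha$ will be pairwise disjoint. This already gives $C_\alpha\cap C_\beta=\omega$ for $\alpha\neq\beta$.

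\textbf{Stage $\alpha$.} Suppose $(D_\beta:\beta<\alpha)$ have been built. Let $Z_\alpha=\bigcup_{\beta<\alpha}D_\beta$. Then $|Z_\alpha|\le|\alpha|\cdot\mathfrak c<2^{\mathfrak c}$.

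We build $D_\alpha$ as the union of an increasing chain $(D_\alpha^\xi:\xi<\omega_1)$ of subsets of $\omega^*\setminus Z_\alpha$, each of size $\le\mathfrak c$. Start with $D_\alpha^0=\emptyset$ and take unions at limit stages. At a successor stage $\xi+1$, consider every $n\in\omega$ and every $f:[\omega]^n\to\omega\cup D_\alpha^\xi$.
\begin{itemize}
\item There are at most $\mathfrak c^{\aleph_0}=\mathfrak c$ such $f$.
\item For each $f$, apply Proposition~\ref{proposition:betaOmegaSmallDimensional} with $X=Z_\alpha$; this is allowed since $|Z_\alpha|<2^{\mathfrak c}$.
\item This gives $p\in\omega^*$ and an $n$-cascade $p$-limit point $x_f\in\beta\omega\setminus Z_\alpha$ of $f$.
\item The limit is witnessed by the countably many intermediate points of the cascade tree (the $x_{s}$ for $s\in\mathcal T_{[\omega]^n}$ restricted to the witnessing sets). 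Add $x_f$ together with all these witnesses lying in $\omega^*$ to $D_\alpha^{\xi+1}$.
\end{itemize}
All these witnesses lie outside $Z_\alpha$: by the recursive definition they are cascade limits of subfunctions $f_{s}$, and we can arrange this by applying Proposition~\ref{proposition:betaOmegaSmallDimensional} directly.

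The intermediate witnesses need care. A cleaner approach is to note that the definition only requires $x_f$ itself to lie in the space, together with intermediate points lying in the space as well. Since the intermediate points are themselves cascade limits of $f_s$ in $\beta\omega$, we should obtain the whole witness tree inside $\beta\omega\setminus Z_\alpha$. I expect this to be the main obstacle. Proposition~\ref{proposition:betaOmegaSmallDimensional} as stated only promises a cascade limit point in $\beta\omega\setminus X$, and by Definition~\ref{def:cascadeLimitPoint} (applied in the space $\beta\omega\setminus X$) the intermediate points must already lie in $\beta\omega\setminus X$. Hence the whole witness tree avoids $Z_\alpha$, and adding its countably many non-$\omega$ points keeps $|D_\alpha^{\xi+1}|\le\mathfrak c$.

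\textbf{Closure.} Since $\operatorname{cf}(\omega_1)>\omega$, every $f:[\omega]^n\to C_\alpha=\omega\cup D_\alpha$ has countable range, so the range lies in some $\omega\cup D_\alpha^\xi$. The whole witness tree chosen for $f$ was then added at stage $\xi+1$. This witness tree is a cascade limit computed inside $C_\alpha$, so $C_\alpha$ is $n$-cascade countably compact for every $n$.

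Disjointness holds because $D_\alpha\cap Z_\alpha=\emptyset$. The size bound $|D_\alpha|\le\aleph_1\cdot\mathfrak c=\mathfrak c$ keeps the recursion going through all $\alpha<2^{\mathfrak c}$, since $|Z_\alpha|\le|\alpha|\cdot\mathfrak c<2^{\mathfrak c}$ at every stage.

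Alternatively, one could use an $\aleph_0$-closed elementary submodel $M_\alpha$ of size $\mathfrak c$ containing $Z_\alpha$ as a parameter, and take $C_\alpha=\omega\cup(M_\alpha\cap\omega^*\setminus Z_\alpha)$. However, elementarity for $\beta\omega\setminus Z_\alpha$ requires $Z_\alpha\in M_\alpha$, which may fail when $|Z_\alpha|>\mathfrak c$. So the explicit $\omega_1$-chain closure above is the route I would follow.
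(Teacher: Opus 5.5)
Your proof is correct, but it closes off $C_\alpha$ by a different mechanism than the paper.

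The paper runs the same length-$2^{\mathfrak c}$ recursion and uses Proposition~\ref{proposition:betaOmegaSmallDimensional} for $Y_\alpha=\beta\omega\setminus\bigcup_{\beta<\alpha}(C_\beta\setminus\omega)$. It then takes an $\aleph_0$-closed elementary submodel $M_\alpha\prec H(\theta)$ of size $\mathfrak c$ with $\beta\omega,Y_\alpha\in M_\alpha$, and sets $C_\alpha=M_\alpha\cap Y_\alpha$. Every $f:[\omega]^n\to C_\alpha$ then belongs to $M_\alpha$, so by elementarity a cascade limit in $Y_\alpha$, with its countable witness tree, can be found in $M_\alpha$.

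You replace the submodel with an explicit $\omega_1$-chain closure. At each stage you add, for each of the at most $\mathfrak c$ functions into the current stage, the whole countable witness tree. That tree avoids $Z_\alpha$ because Definition~\ref{def:cascadeLimitPoint} is applied in the space $\beta\omega\setminus Z_\alpha$. Your route is more elementary. It also makes explicit something the paper treats only implicitly (``such witnesses may be taken in $M$''): the intermediate points of the cascade, not just the final limit, must lie in $C_\alpha$. The paper's route is shorter and reuses the device from Theorem~\ref{theorem:betaOmegaHighDimensionalProduct} verbatim.

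Your reason for rejecting the submodel route is mistaken, though. Putting $Z_\alpha$ (or $Y_\alpha$) into $M_\alpha$ only makes it one parameter of the model. Only $|M_\alpha|$ is bounded, not the cardinality of its elements. Since $\mathfrak c^{\aleph_0}=\mathfrak c$, for regular $\theta>2^{\mathfrak c}$ there is always an $\aleph_0$-closed $M_\alpha\prec H(\theta)$ of size $\mathfrak c$ containing any given $\le\mathfrak c$ elements of $H(\theta)$. This is exactly clause~(c) of the paper's construction.
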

\begin{proof}
    Let $\theta$ be an uncountable regular cardinal such that $2^{\mathfrak c}<\theta$.
    
    Recursively, construct $(C_\alpha: \alpha<2^{\mathfrak c})$ and $(M_\alpha: \alpha<2^{\mathfrak c})$ such that for every $\alpha<2^{\mathfrak c}$:
    \begin{enumerate}[label=(\alph*)]
        \item $C_\alpha$ is a subspace of $\beta\omega$ containing $\omega$ of cardinality $\leq\mathfrak c$ which is $n$-cascade countably compact for every $n \in \omega$.
        \item $M_\alpha$ is a $\aleph_0$-closed elementary submodel of $H(\theta)$ of cardinality $\mathfrak c$ with $\beta\omega\in M_\alpha$.
        \item $\beta\omega\setminus \bigcup_{\beta<\alpha}(C_\beta\setminus \omega)\in M_\alpha$.
        \item $C_\alpha=M_\alpha \cap \left(\beta\omega\setminus \bigcup_{\beta<\alpha}(C_\beta\setminus \omega)\right)$.
    \end{enumerate}

        At each step of the recursion we proceed as in (b)--(d).
    To see that (a) holds, note that by (b) and (d), $C_\beta\subseteq M_\beta$ and $|M_\beta|=\mathfrak c$ for each $\beta<\alpha$, so
    \[
        \left|\bigcup_{\beta<\alpha}(C_\beta\setminus \omega)\right|
        \leq \max\{\alpha,\mathfrak c\}<2^{\mathfrak c}.
    \]
    Hence, by Proposition~\ref{proposition:betaOmegaSmallDimensional},
    $Y_\alpha=\beta\omega\setminus \bigcup_{\beta<\alpha}(C_\beta\setminus \omega)$
    is $n$-cascade countably compact for every $n \in \omega$.
    Since $Y_\alpha\in M_\alpha$ by (c), the same argument as in the proof of Theorem~\ref{theorem:betaOmegaHighDimensionalProduct} shows that
    $C_\alpha=M_\alpha\cap Y_\alpha$
    is $n$-cascade countably compact for every $n\in\omega$.

    Finally, if $\beta<\alpha$, then by (d),
    $C_\alpha\subseteq \beta\omega\setminus (C_\beta\setminus\omega)$,
    so $C_\alpha\cap C_\beta=\omega$.
\end{proof}

Now we are ready to improve \cite[Theorem~4.8]{corral2024highCompactness}.
As in \cite{corral2024highCompactness}, we proceed by modifying the spaces $X$ and $Y$ obtained in Theorem~\ref{theorem:betaOmegaHighDimensionalProduct} to create a single space whose square is not countably compact.
\begin{corollary}\label{corollary:betaOmegaHighDimensionalProduct}
    There exists a subspace of $\beta\omega$ containing $\omega$ whose square is not countably compact and that is $n$-cascade countably compact for every $n \in \omega$.
\end{corollary}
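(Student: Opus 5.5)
We build a single subspace of $\beta\omega$ from the two spaces $X$ and $Y$ of Theorem~\ref{theorem:betaOmegaHighDimensionalProduct}, as in \cite{corral2024highCompactness}. Take a bijection $\sigma:\omega\to\omega$ without fixed points and with $\sigma\circ\sigma=\mathrm{id}$, for instance swapping $2k$ and $2k+1$. Let $\beta\sigma:\beta\omega\to\beta\omega$ be its homeomorphic extension. Then $\beta\sigma$ has no fixed points in $\omega^*$, so we can split $\omega$ into sets $E_0$, $E_1$, $E_2$ with $\sigma[E_i]\cap E_i=\emptyset$. It is simpler still to use the even/odd splitting. Let $\omega=\mathrm{Ev}\cup\mathrm{Od}$, and let $e:\omega\to\mathrm{Ev}$, $e(n)=2n$, and $o:\omega\to\mathrm{Od}$, $o(n)=2n+1$. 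These extend to homeomorphisms $\beta e:\beta\omega\to\mathrm{Ev}^*\cup\mathrm{Ev}$ and $\beta o:\beta\omega\to\mathrm{Od}^*\cup\mathrm{Od}$, whose images are disjoint clopen subsets of $\beta\omega$.

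Define
\[
Z=\beta e[X]\cup\beta o[Y]\subseteq\beta\omega .
\]
This set contains $\omega=e[\omega]\cup o[\omega]$. It is a topological sum of two clopen pieces, homeomorphic to $X$ and to $Y$ respectively.

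First we check that $Z$ is $n$-cascade countably compact for each $n$. Let $f:[\omega]^n\to Z$. By Ramsey's theorem, choose $M\in[\omega]^\omega$ such that $f[[M]^n]$ lies entirely in one of the two clopen pieces, say in $\beta e[X]$. Transferring along the homeomorphism, the map $(\beta e)^{-1}\circ f\restriction[M]^n$ has an $n$-cascade $p$-limit point in $X$ for some $p\in M^*$. Here we use that $n$-cascade countable compactness passes to restrictions $[M]^n$: compose with the increasing enumeration of $M$, and push the ultrafilter forward. Cascade limit points are preserved by continuous maps, so $f$ has an $n$-cascade $p$-limit point in $Z$. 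This restriction transfer is routine but must be checked against Definition~\ref{def:cascadeLimitPoint}. Alternatively, one can avoid the transfer by noting that the set of $[\omega]^n$ elements landing in each piece is determined by a $2$-colouring, and applying the hypothesis to a map extended by a constant outside $M$. I would use the enumeration argument.

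Second, $Z^2$ is not countably compact. Consider the sequence $((2n,2n+1):n\in\omega)$ in $Z^2$. Any accumulation point in $\beta\omega^2$ has the form $(\beta e(p),\beta o(p))$ with $p\in\omega^*$, since it is a $p$-limit for some $p$ and $p$-limits are computed coordinatewise. For this point to lie in $Z^2$ we need $\beta e(p)\in\beta e[X]$ and $\beta o(p)\in\beta o[Y]$, that is, $p\in (X\setminus\omega)\cap(Y\setminus\omega)$. That intersection is empty by the proof of Theorem~\ref{theorem:betaOmegaHighDimensionalProduct}. The main point needing care is that every accumulation point of the sequence is of this form: a cluster point in a compact space is a $p$-limit for some free $p$, and $\beta e$, $\beta o$ commute with $p$-limits.
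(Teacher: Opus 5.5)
Your proof is correct and follows the paper's argument. You split $\omega$ into two infinite pieces and set $Z=\beta e[X]\cup\beta o[Y]$. Ramsey's theorem, together with the routine transfer of $n$-cascade compactness to $[M]^n$, gives the cascade property, and the sequence $((2n,2n+1):n\in\omega)$ witnesses that $Z^2$ is not countably compact. The only difference is cosmetic: you identify the accumulation points directly as $(\beta e(p),\beta o(p))$ and use $(X\setminus\omega)\cap(Y\setminus\omega)=\emptyset$, while the paper uses closedness of the two pieces in $Z$ to pull back to $((n,n))$ in $X\times Y$. The stray remarks about $\sigma$ and about extending by a constant outside $M$ can be deleted; the latter would not force $M\in p$ anyway.
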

\begin{proof}
    Write $\omega=A\cup B$, where $A$ and $B$ are disjoint infinite subsets of $\omega$.
    
    Let $f:\omega\to A$ and $g:\omega\to B$ be bijections.
    Then $f, g$ extend to homeomorphisms $\bar f:\beta\omega\to \cl A$ and $\bar g:\beta\omega\to \cl B$.

    Let $X, Y$ be as in the previous theorem.
    Put $Z=\bar f[X]\cup \bar g[Y]$.
    We claim that $Z$ is as desired.

    To see that $Z$ is $n$-cascade countably compact for every $n \in \omega$, let $h:[\omega]^n\to Z$.
    By Ramsey's theorem, there exists $C \in [\omega]^{\omega}$ such that $h[[C]^n]$ is contained either in $\bar f[X]$ or in $\bar g[Y]$.
    Since $\bar f[X]$ and $\bar g[Y]$ are homeomorphic to $X$ and $Y$, respectively, they are $n$-cascade countably compact.
    Therefore, in either case, $h\restriction [C]^n$ has an $n$-cascade $p$-limit point in $Z$ for some free ultrafilter $p \in C^*$.

    Finally, the sequence $((f(n), g(n)): n \in \omega)$ has no accumulation points in $Z\times Z$.
    Suppose, toward a contradiction, that this sequence has an accumulation
    point. Since $\bar f[X]=Z\cap \cl A$ and $\bar g[Y]=Z\cap \cl B$, both
    $\bar f[X]$ and $\bar g[Y]$ are closed in $Z$.
    Hence, $\bar f[X]\times \bar g[Y]$ is closed in $Z\times Z$, so the sequence $((f(n), g(n)): n \in \omega)$ would have an accumulation point in $\bar f[X]\times \bar g[Y]$.
    Since $u=(\bar f,\bar g)$ is a homeomorphism from $X\times Y$ onto $\bar f[X]\times \bar g[Y]$, the sequence $((n,n):n\in\omega)$ would have an accumulation point in $X\times Y$, contrary to Theorem~\ref{theorem:betaOmegaHighDimensionalProduct}.
\end{proof}

This answers Question~4.15 of \cite{corral2024highCompactness} in ZFC. Moreover,
the space is $n$-cascade countably compact for every $n<\omega$, rather than
only $2$-cascade countably compact.

The final example separates ordinary barrier compactness from its recursive
cascade form.  It is the ZFC counterpart of
\cite[Theorem~4.10]{corral2024highCompactness}.

\begin{proposition}\label{proposition:betaOmega2NotCascade}
    There exists a subspace of $\beta\omega$ which is $k$-countably compact for every $k \in \omega$ but not $2$-cascade countably compact.
\end{proposition}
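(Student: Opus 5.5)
The plan is to separate the two notions using a function whose inner cascade limits are forced into the closures of the rows of a partition of $\omega$. Its outer, Fubini-type limits lie off every row.

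\emph{Construction.} Partition $\omega=\bigcup_{m\in\omega}R_m$ into infinite sets and fix bijections $e_m:\omega\to R_m$. Each $e_m$ extends to a homeomorphism $\bar e_m:\beta\omega\to\cl R_m$. By Proposition~\ref{proposition:betaOmegaManyHighDimensional}, fix subspaces $(C_m:m\in\omega)$ of $\beta\omega$, each $n$-cascade countably compact for every $n$, with $C_m\cap C_{m'}=\omega$ for $m\neq m'$. Put
\[
    Z=\omega\cup\bigcup_{m\in\omega}\bar e_m[C_m\setminus\omega]\cup W,
    \qquad
    W=\beta\omega\setminus\bigcup_{m\in\omega}\cl R_m .
\]
Then $Z\cap\cl R_m=\bar e_m[C_m]$, which is homeomorphic to $C_m$.

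\emph{$Z$ is not $2$-cascade countably compact.} Let $f(\{m,n\})=e_m(n)$ for $m<n$. For a free ultrafilter $p$, the map $f_m$ is essentially $e_m$ restricted to the final segment above $m$. So its only possible $p$-limit is $\bar e_m(p)$, and this point lies in $Z$ exactly when $p\in C_m$. Since the sets $C_m\setminus\omega$ are pairwise disjoint, for each $p$ the set of $m$ for which $f_m$ has a $p$-limit in $Z$ has at most one element, so it is not in $p$. Hence $f$ has no $2$-cascade $p$-limit point in $Z$, for any $p$.

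\emph{$Z$ is $k$-countably compact for every $k$.} Let $g:[\omega]^k\to Z$. By Ramsey's theorem we shrink to $A\in[\omega]^\omega$ on which one of three things holds: $g$ maps $[A]^k$ into $W$; $g$ maps $[A]^k$ into a single $\cl R_m\cap Z$; or $g$ maps $[A]^k$ into $\bigcup_m\cl R_m$ with row index $r(s)$ (the $m$ with $g(s)\in\cl R_m$) not constant. Applying the Erd\H{o}s--Rado canonical theorem to $r$ reduces the last alternative to $r(s)$ depending injectively on the coordinates in some nonempty $I\subseteq\{0,\dots,k-1\}$.
\begin{enumerate}[label=(\roman*)]
    \item If $r$ is constant equal to $m$, then $g$ maps into a copy of $C_m$. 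That copy is $k$-cascade countably compact, so $g$ has a limit there, and by the first lemma of Section~\ref{section:preliminaries} it is a $p^{[\omega]^k}$-limit.
    \item If $r$ depends on $I\neq\emptyset$, take any $p\in A^*$ and let $x$ be the $p^{[A]^k}$-limit of $g$ in $\beta\omega$. Induction along the recursive definition of $p^{[A]^k}$ should show that every row sees only a $p^{[A]^k}$-small set of values. Since $\cl R_m$ is clopen, this gives $x\notin\cl R_m$ for every $m$, so $x\in W\subseteq Z$.
    \item If $g$ maps into $W$, we need $p$ and a limit lying in $W$ rather than in some $\cl R_m$.
\end{enumerate}

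I expect (iii) to be the main obstacle. Here I would first pass to a discrete image using Proposition~\ref{prop:discreteImageCarlson}, and then argue as in Lemma~\ref{lemma:betaOmegaDimensionRaising} that the limits $x_p$ are pairwise distinct for $p$ off a set of size $\le\mathfrak c$. The goal is then to show that only few $p$ send $x_p$ into the countable union $\bigcup_m\cl R_m$.

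If that counting fails, the fallback is to enlarge $W$ or to shrink $\bigcup_m\cl R_m$: take $Z\cap\cl R_m$ to be $\bar e_m[C_m]$ while letting $W$ be an elementary-submodel-closed set as in Theorem~\ref{theorem:betaOmegaHighDimensionalProduct}. Such a $W$ contains the needed limits by elementarity, and the disjointness of the $C_m$ still kills every cascade limit of $f$.
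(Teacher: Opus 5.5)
Your space $Z$ is the paper's space $D$: the paper works in $\beta(\omega\times\omega)$ with rows $\{n\}\times\omega$, and your $W$ is its $D_\omega$. Your proof that $Z$ is not $2$-cascade countably compact is also the paper's argument.

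\textbf{The gap is case (iii).} You leave it open and propose to attack it with discrete images, a counting argument, or a modified $W$. None of this is needed; the missing observation is one line. Each $\cl R_m$ is clopen, so $\bigcup_m\cl R_m$ is open in $\beta\omega$. Hence $W$ is closed in $\beta\omega$, and therefore compact. So if $g[[A]^k]\subseteq W$, then for every $p\in A^*$ the $p^{[A]^k}$-limit of $g$ lies in $W\subseteq Z$. That limit exists because $p^{[A]^k}$ is an ultrafilter on $[A]^k$ and $\beta\omega$ is compact. Your fallback of shrinking $W$ to an elementary-submodel-closed set is unnecessary. It would also force you to redo case (ii), where you only know that the limit lies in $\beta\omega\setminus\bigcup_m\cl R_m$.

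\textbf{How your route compares.} With that fix your argument is complete, but it organizes the $k$-countable compactness differently from the paper. Your cases (i) and (ii) are correct. In (ii), each fibre of the row index is contained in a set $\{s:s_i=a\}$ with $i\in I$, and this set is $p^{[A]^k}$-null by the recursive definition. In (i), you should cite the restriction version of the lemma, or transport along an increasing bijection $\omega\to A$.

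The paper avoids the Erd\H{o}s--Rado canonical theorem and uses a single dichotomy. Either some $D_n$ contains $h[[A]^k]$ for an infinite $A$, or one builds a decreasing sequence with $h[[A_{n+1}]^k]\cap D_n=\emptyset$ and $m_n=\min A_n$. In the second case, take $B=\{m_n:n\in\omega\}$ and any $p\ni B$. Every $t\in[B]^k$ with $h(t)$ in the $n$-th row has $\min t\in\{m_0,\dots,m_n\}$, so the limit avoids every row. That one diagonal case covers your (ii) and (iii) together, including maps into $W$.
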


\begin{proof}
    By Proposition~\ref{proposition:betaOmegaManyHighDimensional}, there exists a sequence $(C_n: n \in \omega)$ of subspaces of $\beta\omega$ which are $k$-cascade countably compact for every $k \in \omega$ such that for every two distinct $n, n' \in \omega$, $C_n\cap C_{n'}=\omega$.

    For each $n\in \omega$, let $e_n:\omega\to \omega\times \omega$ be given by $e_n(m)=(n,m)$.
    Then $e_n$ has a unique extension to a topological embedding $\bar e_n:\beta\omega\to \beta(\omega\times \omega)$.
    Let $D_n=\bar e_n[C_n]$ for each $n \in \omega$.
    Thus, $D_n$ is homeomorphic to $C_n$ for each $n \in \omega$, and therefore is $k$-cascade countably compact, hence $k$-countably compact, for every $k \in \omega$.

    Let
    \[
        D_\omega=\{p \in \beta(\omega\times \omega): \{n\}\times \omega \notin p \text{ for all } n \in \omega\}
    \]
    and
    \[
        D=\bigcup_{\alpha\leq \omega} D_\alpha.
    \]
    Since $\omega\times\omega$ is countably infinite,
    $\beta(\omega\times\omega)$ is homeomorphic to $\beta\omega$. It is
    therefore enough to verify the two stated properties of $D$.

    First, we verify that $D$ is not $2$-cascade countably compact.
    Let $g:[\omega]^2\to D$ be given by $g(\{n,m\})=(n,m)$ for $n<m$.

    Fix a free ultrafilter $p\in\omega^*$.
    For each $n\in\omega$, since $\omega\setminus (n+1)\in p$, we have
    \[
    p\text{-}\lim_{m\in\omega\setminus (n+1)} g(\{n,m\})
    =
    p\text{-}\lim_{m\in\omega\setminus (n+1)} (n,m)
    =
    p\text{-}\lim_{m\in\omega\setminus (n+1)} \bar e_n(m)
    =
    \bar e_n(p).
    \]
    Since $\bar e_n(p)\in \cl(\{n\}\times\omega)$, if $\bar e_n(p)\in D$, then necessarily $\bar e_n(p)\in D_n$, and hence $p\in C_n$.
    As $C_n\cap C_m=\omega$ for $n\neq m$, a free ultrafilter belongs to at most one $C_n$.
    Therefore, for every free ultrafilter $p$, there is at most one $n\in\omega$ such that $p\text{-}\lim_{m\in\omega\setminus (n+1)} g(\{n,m\})\in D$.
    Since for $g$ to have a $2$-cascade $p$-limit point it would be necessary that
    $p\text{-}\lim_{m\in\omega\setminus (n+1)} g(\{n,m\})\in D$
    for $p$-many $n$, and $p$ is free, this is impossible.
    Thus, $g$ does not have a $2$-cascade $p$-limit point in $D$ for any free ultrafilter $p$, and so $D$ is not $2$-cascade countably compact.

    It remains to show that $D$ is $k$-countably compact for every $k\geq 1$.
    Let $k\geq 1$ and let $h:[\omega]^k\to D$.

    Suppose that, for some $n$, there is an infinite $A\subseteq\omega$ such
    that $h[[A]^k]\subseteq D_n$. Since $D_n$ is $k$-cascade countably
    compact, $h\restriction[A]^k$ has a $k$-cascade $p$-limit point in
    $D_n\subseteq D$ for some free ultrafilter $p$.

    Assume now that for every $n\in\omega$ and every $A\in[\omega]^{\omega}$ we have $h[[A]^k]\nsubseteq D_n$.
    Recursively construct a $\subseteq$-decreasing sequence $(A_n:n\in\omega)$ of infinite subsets of $\omega$ and a sequence $(m_n:n\in\omega)$ of natural numbers such that, for every $n\in\omega$:
    \begin{enumerate}[label=(\alph*)]
        \item $A_0=\omega$;
        \item $m_n=\min A_n$;
        \item $A_{n+1}\in[A_n\setminus\{m_n\}]^{\omega}$;
        \item $h[[A_{n+1}]^k]\cap D_n=\emptyset$.
    \end{enumerate}

    Indeed, once $A_n$ and $m_n$ are chosen, color $[A_n\setminus\{m_n\}]^k$ with two colors according to whether $h(s)\in D_n$ or not.
    By Ramsey's theorem, there is an infinite $A_{n+1}\in[A_n\setminus\{m_n\}]^{\omega}$ such that either $h[[A_{n+1}]^k]\subseteq D_n$ or $h[[A_{n+1}]^k]\cap D_n=\emptyset$.
    The first alternative is impossible by assumption, so (d) holds.

    Let $B=\{m_n: n \in \omega\}$.
    Fix a free ultrafilter $p$ on $\omega$ with $B\in p$.
    Since $p^{[B]^k}$ is an ultrafilter on $[B]^k$ and $\beta(\omega\times\omega)$ is compact, there exists
    \[
        x=p^{[B]^k}\text{-}\lim h \in \beta(\omega\times\omega).
    \]

    We claim that $x\in D_\omega$.
    For each $n\in\omega$, let
    \[
        U_n=\{y\in \beta(\omega\times\omega): \{n\}\times\omega\in y\}.
    \]
    $U_n$ is a clopen set.
    We show that $U_n$ is not a neighborhood of $x$.

    Let
    \[
    S_n=\{t\in [B]^k: h(t)\in U_n\}.
    \]
    Since $h(t)\in D$ for every $t\in[B]^k$, if $h(t)\in U_n$, then necessarily $h(t)\in D_n$.
    Now, if $t\in[B]^k$ and $\min(t)=m_j$ with $j>n$, then $t\subseteq A_{n+1}$, because $(A_i:i\in\omega)$ is decreasing and $\{m_j,m_{j+1},\dots\}\subseteq A_{n+1}$.
    Hence, by (d), $h(t)\notin D_n$.
    Therefore,
    \[
    S_n\subseteq \{t\in[B]^k:\min(t)\in\{m_0,\dots,m_n\}\}.
    \]
    The latter set does not belong to $p^{[B]^k}$, since $\{m_0,\dots,m_n\}\notin p$.
    Indeed, by the recursive definition of $p^{[B]^k}$, for every $F\subseteq B$,
    \[
        \{t\in [B]^k:\min(t)\in F\}\in p^{[B]^k}\iff F\in p.
    \]
    Thus, $S_n\notin p^{[B]^k}$, so $U_n$ is not a neighborhood of $x$.

    Since this holds for every $n\in\omega$, we have $\{n\}\times\omega\notin x$ for all $n\in\omega$, that is,
    \[
    x\in D_\omega\subseteq D.
    \]
    Therefore, $D$ is $k$-countably compact for every $k\geq 1$.

    This completes the proof.
\end{proof}

    \section{Products and \texorpdfstring{$n$}{n}-cascade countable compactness in groups}\label{section:product-groups}
In Corollary~\ref{corollary:betaOmegaHighDimensionalProduct}, we constructed a Tychonoff space $X$ that is $n$-cascade countably compact for every $n\in\omega$, whereas $X^2$ is not countably compact.
The next result shows that this phenomenon does not occur for Hausdorff topological groups.

Recall that a \emph{topological group} is a group $G$ endowed with a topology such that the group operation $G \times G \to G$ and the inversion map $G \to G$ are continuous.

\begin{theorem}\label{theorem:productOfCountablyCompactGroups}
    Let $G$ be a Hausdorff topological group, let $M\subseteq \omega$ be infinite, let $p\in\omega^*$ be such that $M\in p$, and let $1\le k<\omega$.
    Let $f=(f_1,\ldots,f_k):M\to G^k$.
    Define $h:[M]^k\to G$ by
    \[
        h(n_1,\ldots,n_k)=f_1(n_1)\cdot \ldots \cdot f_k(n_k)
    \]
    whenever $n_1<\cdots<n_k$ are in $M$.
    If $y\in G$ is a $[M]^k$-cascade $p$-limit point of $h$, then
    \[
        y=w_1\cdot \ldots \cdot w_k,
    \]
    where, for each $i=1,\ldots,k$, $w_i$ is a $p$-limit of $f_i$.

    Consequently, if $G$ is $k$-cascade countably compact, then $G^k$ is countably compact.
\end{theorem}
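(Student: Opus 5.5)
We argue by induction on $k$, peeling off the first coordinate with a left translation and then recovering it with a right translation. Two facts are used throughout. The first is Corollary~\ref{corollary:uniqueCascadeLimit}: since $G$ is Hausdorff, cascade $p$-limits are unique. The second is that translations $x\mapsto gx$ and $x\mapsto xg$ are homeomorphisms of $G$.

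\emph{Base case $k=1$.} Here $h=f_1$ on $[M]^1$, identified with $M$. For $n\in A$, the front $([M]^1)_{\{n\}}$ has rank $0$, so its only cascade limit is $x_n=f_1(n)$. Unwinding Definition~\ref{def:cascadeLimitPoint}, $y$ is then a $p$-limit of $(f_1(n):n\in A)$ for some $A\in p$. Since $A\in p$, $y$ is a $p$-limit of $f_1$, and we take $w_1=y$.

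\emph{Auxiliary lemma.} Before the inductive step we need a restriction-invariance statement: if $M'\subseteq M$ and $M'\in p$, then $x$ is a $[M]^j$-cascade $p$-limit of $g$ if and only if $x$ is a $[M']^j$-cascade $p$-limit of $g\restriction[M']^j$. We prove it by induction on $j$. The case $j=0$ is trivial. For the inductive step, intersect the witnessing set $A\in p$ with $M'$ (forward direction), or enlarge nothing (converse). Then apply the inductive hypothesis to each $g_n$, using that $M'\restriction_{>n}\in p$.

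\emph{Inductive step $k\ge2$.} Let $A\in p$ and $(y_n:n\in A)$ witness that $y$ is a $[M]^k$-cascade $p$-limit of $h$. Thus each $y_n$ is a $[M\restriction_{>n}]^{k-1}$-cascade $p$-limit of $h_n$, and $y$ is the $p$-limit of $(y_n)$. Define $h'(n_2,\dots,n_k)=f_2(n_2)\cdots f_k(n_k)$ on $[M]^{k-1}$, so that $h_n=f_1(n)\cdot h'\restriction[M\restriction_{>n}]^{k-1}$. Because left translation by $f_1(n)^{-1}$ is a homeomorphism and cascade limits are defined purely through neighbourhoods, $z_n=f_1(n)^{-1}y_n$ is a $[M\restriction_{>n}]^{k-1}$-cascade $p$-limit of $h'$.

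Now fix $n<m$ in $A$. By the auxiliary lemma, both $z_n$ and $z_m$ are $[M\restriction_{>m}]^{k-1}$-cascade $p$-limits of $h'$, so uniqueness gives $z_n=z_m=:z$. Again by the lemma, $z$ is a $[M']^{k-1}$-cascade $p$-limit of $h'$ for $M'=M\restriction_{>\min A}$. The induction hypothesis, applied to $(f_2,\dots,f_k)$ on $M'$, gives $z=w_2\cdots w_k$ with each $w_i$ a $p$-limit of $f_i$.

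Finally, $f_1(n)=y_nz^{-1}$ for $n\in A$. Right translation by $z^{-1}$ is continuous, so $w_1:=yz^{-1}$ is a $p$-limit of $f_1$. Hence $y=w_1z=w_1\cdots w_k$.

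\emph{Consequence.} Let $f:\omega\to G^k$ be any sequence and define $h$ as in the statement with $M=\omega$. By $k$-cascade countable compactness there are $p\in\omega^*$ and a $k$-cascade $p$-limit $y$ of $h$. The first part then yields, for each $i$, a $p$-limit $w_i$ of $f_i$. Since $p$-limits are taken coordinatewise in the product, $(w_1,\dots,w_k)$ is a $p$-limit of $f$ in $G^k$, and in particular an accumulation point. So $G^k$ is countably compact.

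\emph{Main obstacle.} The delicate point is that the fronts $[M\restriction_{>n}]^{k-1}$ differ for different $n$. Concluding that the $z_n$ all coincide therefore requires the restriction-invariance lemma together with uniqueness of cascade limits. With those in hand, the rest is routine translation bookkeeping.
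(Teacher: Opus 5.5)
Your proof is correct. It follows the paper's overall scheme: induction on $k$, left translation by $f_1(n)^{-1}$ to pass from $h_n$ to the tail product, and right translation by the common tail value to recover $w_1$. The difference is how you make the translated points $z_n$ coincide.

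You first prove a restriction-invariance lemma for cascade limits. Combined with Corollary~\ref{corollary:uniqueCascadeLimit}, this gives $z_n=z_m$ before the induction hypothesis is used, and you then apply the induction hypothesis once on $M\restriction_{>\min A}$.

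The paper instead applies the induction hypothesis separately for each $n\in A$, with base set $M\setminus(n+1)\in p$. This yields factorizations $f_1(n)^{-1}z_n=w_2^n\cdots w_{k+1}^n$ in which each $w_i^n$ is a $p$-limit of $f_i$. Uniqueness of ordinary $p$-limits of the coordinate sequences $f_i$ then shows that the $w_i^n$, and hence the translates, do not depend on $n$. Since the inductive statement is already quantified over every base set in $p$, the paper needs neither a restriction lemma nor uniqueness of cascade limits. The ``main obstacle'' you identify (the fronts $[M\restriction_{>n}]^{k-1}$ varying with $n$) therefore never arises there: the coincidence falls out at the level of the coordinate sequences.

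Your route costs an extra lemma. That lemma, however, is a natural general fact: cascade limits are unchanged when the base set is shrunk within $p$. The paper itself uses it tacitly elsewhere, for instance in the proof of Theorem~\ref{theorem:betaOmegaSmallDimensional}.
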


\begin{proof}
    We first prove the decomposition statement by induction on $k$.

    If $k=1$, then $h=f_1$ and the conclusion is immediate.

    Assume the statement holds for some $k\geq 1$, and let $f=(f_1,\ldots,f_{k+1}):M\to G^{k+1}$.
    Define $g:[M]^{k+1}\to G$ by
    \[
        g(n_1,\ldots,n_{k+1})=f_1(n_1)\cdot \ldots \cdot f_{k+1}(n_{k+1})
    \]
    whenever $n_1<\cdots<n_{k+1}$ are in $M$, and suppose that $y\in G$ is a $[M]^{k+1}$-cascade $p$-limit point of $g$.

    Then there exist $A\in p$, with $A\subseteq M$, and a family $(z_n:n\in A)$ in $G$ such that, for each $n\in A$, $z_n$ is a $[M\setminus(n+1)]^k$-cascade $p$-limit point of
    \[
        g_n:[M\setminus(n+1)]^k\to G,
        \qquad
        g_n(n_2,\ldots,n_{k+1})=f_1(n)\cdot f_2(n_2)\cdot \ldots \cdot f_{k+1}(n_{k+1}),
    \]
    and
    \[
        y=p\text{-}\lim_{n\in A} z_n.
    \]

    For each $n\in A$, let $h_n:[M\setminus(n+1)]^k\to G$ be given by
    \[
        h_n(n_2,\ldots,n_{k+1})=f_2(n_2)\cdot \ldots \cdot f_{k+1}(n_{k+1}).
    \]
    Since left translation by $f_1(n)^{-1}$ is continuous and sends $g_n$ to $h_n$, it follows that $f_1(n)^{-1}\cdot z_n$ is a $[M\setminus(n+1)]^k$-cascade $p$-limit point of $h_n$.

    As $M\setminus(n+1)\in p$ for every $n\in\omega$, the induction hypothesis
    applies. Thus, for each $n\in A$, there exist
    $w_2^n,\ldots,w_{k+1}^n\in G$ such that $w_i^n$ is a $p$-limit of
    $f_i\!\upharpoonright(M\setminus(n+1))$ for $i=2,\ldots,k+1$. Each
    $w_i^n$ is therefore also a $p$-limit of $f_i$, and
    \[
        f_1(n)^{-1}\cdot z_n = w_2^n\cdot \ldots \cdot w_{k+1}^n.
    \]

    Since $G$ is Hausdorff, $p$-limits are unique.
    Therefore, for each $i=2,\ldots,k+1$, the element $w_i^n$ does not depend on $n\in A$.
    Denote this common value by $w_i$, and set
    \[
        W=w_2\cdot \ldots \cdot w_{k+1}.
    \]
    Then, for every $n\in A$, $z_n=f_1(n)\cdot W$.
    Since $f_1(n)=z_n\cdot W^{-1}$ for every $n\in A$, and right translation by $W^{-1}$ is continuous, from $y=p\text{-}\lim_{n\in A} z_n$ we obtain
    \[
        p\text{-}\lim_{n\in A} f_1(n)=y\cdot W^{-1}.
    \]
    As $A\in p$, this is also the $p$-limit of $f_1$.
    Let $w_1:=y\cdot W^{-1}$.
    Therefore,
    \[
        y=w_1\cdot w_2\cdot \ldots \cdot w_{k+1},
    \]
    completing the induction.

    Now assume that $G$ is $k$-cascade countably compact, and let $f:\omega\to G^k$ be any sequence.
    Write $f=(f_1,\ldots,f_k)$, where each $f_i:\omega\to G$ is the $i$-th coordinate function of $f$.
    Define $h:[\omega]^k\to G$ by
    \[
        h(n_1,\ldots,n_k)=f_1(n_1)\cdot \ldots \cdot f_k(n_k)
    \]
    whenever $n_1<\cdots<n_k$.

    Since $G$ is $k$-cascade countably compact, there exist $p\in\omega^*$ and a $[\omega]^k$-cascade $p$-limit point $y\in G$ of $h$.
    By the first part of the theorem, there exist $w_1,\ldots,w_k\in G$ such that each $w_i$ is a $p$-limit of $f_i$.

    Since convergence with respect to $p$-limits in the product is coordinatewise, it follows that
    \[
        p\text{-}\lim_{n\in\omega} f(n)=(w_1,\ldots,w_k)
    \]
    in $G^k$.
    Hence, every sequence in $G^k$ has an accumulation point, so $G^k$ is countably compact.
\end{proof}

\begin{corollary}\label{corollary:productOfCountablyCompactGroups}
    Let $G$ be a Hausdorff topological group and let $\mathcal B$ be the Schreier barrier.
    If $G$ is $\mathcal B$-cascade countably compact, then $G^\omega$ is countably compact.
\end{corollary}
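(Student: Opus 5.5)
We will deduce this from the decomposition part of Theorem~\ref{theorem:productOfCountablyCompactGroups}, applied separately to each block $[\omega\setminus(n+1)]^n$ of the Schreier barrier. Since $G^\omega$ carries the product topology, it suffices to find, for a given sequence $f=(f_i:i\in\omega):\omega\to G^\omega$, a single $p\in\omega^*$ such that every coordinate $f_i$ has a $p$-limit in $G$. Then $(w_i)_{i\in\omega}$ is a $p$-limit of $f$, hence an accumulation point.

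Write the Schreier barrier as $\mathcal B=\{\{n\}\cup s: s\in[\omega\setminus(n+1)]^n\}$, so that $\mathcal B_{\{n\}}=[\omega\setminus(n+1)]^n$. We define $H:\mathcal B\to G$ by
\[
H(\{n\}\cup\{n_1<\cdots<n_n\})=f_1(n_1)\cdot\ldots\cdot f_n(n_n).
\]
The $n=0$ block only contributes the identity. By hypothesis there are $p\in\omega^*$ and a $\mathcal B$-cascade $p$-limit point $y$ of $H$. Unwinding Definition~\ref{def:cascadeLimitPoint}, we get $A\in p$ and points $(x_n:n\in A)$ such that each $x_n$ is a $[\omega\setminus(n+1)]^n$-cascade $p$-limit point of $H_n$. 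Here $H_n(n_1,\ldots,n_n)=f_1(n_1)\cdots f_n(n_n)$, and since $\omega\setminus(n+1)\in p$, the decomposition part of Theorem~\ref{theorem:productOfCountablyCompactGroups} applies with $M=\omega\setminus(n+1)$ and $k=n$, for every $n\in A$ with $n\ge1$. It yields that each $f_i$ with $1\le i\le n$ has a $p$-limit $w_i^{(n)}$ in $G$.

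Since $A\in p$ is infinite, for every $i\ge1$ there is some $n\in A$ with $n\ge i$, so $f_i$ has a $p$-limit. By Hausdorffness the limit is independent of $n$, though this is not even needed. For the coordinate $f_0$, we simply reindex: we apply the construction to the shifted sequence $(f_{i-1})_{i\ge1}$, that is, we put $f_{i-1}$ into slot $i$. Then every coordinate of $f$ has a $p$-limit for the same $p$, and $G^\omega$ is countably compact.

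The only point needing care is the bookkeeping. The cascade point $y$ itself is not used; only the existence of the level-one points $x_n$ for $p$-many $n$ matters. We must also check that the theorem's hypothesis, namely that $x_n$ is a cascade limit for the front $[M]^n$ with $M=\omega\setminus(n+1)\in p$, matches the paper's convention that $\mathcal B_{\{n\}}$ is regarded as a front on $\omega\restriction_{>n}$. Both conditions hold directly, so no real obstacle is expected.
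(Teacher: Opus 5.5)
Your proposal is correct and follows essentially the same route as the paper: after your reindexing, your map $H$ is exactly the paper's $h(\{m<n_0<\cdots<n_{m-1}\})=f_0(n_0)\cdots f_{m-1}(n_{m-1})$, and, like the paper, you apply the decomposition part of Theorem~\ref{theorem:productOfCountablyCompactGroups} on each block $\mathcal B_{\{m\}}=[\omega\setminus(m+1)]^m$ for $m\in A\setminus\{0\}$, with one fixed $p$. Your remark that uniqueness of $p$-limits is not needed (each $f_i$ only needs some $p$-limit) is a valid minor simplification of the paper's well-definedness step, but you should define $H$ directly with $f_{i-1}$ in slot $i$, so the hypothesis is invoked once and a single $p$ serves all coordinates.
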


\begin{proof}
    Let $f:\omega\to G^\omega$ be a sequence, and write
    \[
        f(n)=(f_0(n),f_1(n),\ldots)
    \]
    for each $n\in\omega$.

    Define $h:\mathcal B\to G$ as follows.
    If $s=\{0\}$, let $h(s)=e_G$.
    If
    \[
        s=\{m<n_0<\cdots<n_{m-1}\}\in\mathcal B
    \]
    with $m\ge 1$, then
    \[
        h(s)=f_0(n_0)\cdot f_1(n_1)\cdot \ldots \cdot f_{m-1}(n_{m-1}).
    \]

    Since $G$ is $\mathcal B$-cascade countably compact, there exist $p\in\omega^*$, an infinite set $A\in p$, a family $(z_m:m\in A)$ in $G$, and $z\in G$ such that, for each $m\in A$, $z_m$ is a $\mathcal B_{\{m\}}$-cascade $p$-limit point of $h_m$, and
    \[
        z=p\text{-}\lim_{m\in A} z_m.
    \]

    Replacing $A$ with $A\setminus\{0\}$ if necessary, we may assume that $0\notin A$.
    For each $m\in A$, we have
    \[
        \mathcal B_{\{m\}}=[\omega\setminus(m+1)]^m.
    \]
    Since $m\ge 1$ and $\omega\setminus(m+1)\in p$, by Theorem~\ref{theorem:productOfCountablyCompactGroups}, for each $m\in A$ there exist $w_0^m,\ldots,w_{m-1}^m\in G$ such that, for every $i<m$, $w_i^m$ is a $p$-limit of $f_i$, and
    \[
        z_m=w_0^m\cdot \ldots \cdot w_{m-1}^m.
    \]

    Since $G$ is Hausdorff, $p$-limits are unique.  Thus, for each $i$, the
    value $w_i^m$ is independent of the choice of $m\in A$ with $i<m$.

    For each $i\in\omega$, choose $m\in A$ with $i<m$, and define
    \[
        w_i:=w_i^m.
    \]
    This is well-defined by the previous paragraph, and $w_i$ is a $p$-limit of $f_i$.

    It follows that
    \[
        p\text{-}\lim_{n\in\omega} f(n)=(w_i)_{i\in\omega} \in G^\omega.
    \]
    Thus, $G^\omega$ is countably compact.
\end{proof}

    \section{Two Boolean group constructions}\label{section:boolean-groups}
In this section, we construct two Hausdorff Boolean groups without non-trivial convergent
sequences. The first is $\mathcal B$-cascade countably compact for every
barrier $\mathcal B$, and its countable power is countably compact. The second
is $\mathcal B$-countably compact for every barrier $\mathcal B$, but its
square is not countably compact.

The constructions use cascade frames, families of ultrafilters, and separating
homomorphisms. The next four subsections develop these ingredients.
\subsection{Preliminaries}
Let $\kappa$ be a cardinal.
In this section, $G_\kappa$ denotes the Abelian group $[\kappa]^{<\omega}$, with group operation given by symmetric difference, $\ominus$, and identity element $\emptyset$.
Since $G_\kappa$ is a Boolean group, it can be seen as a vector space over $\mathbb Z_2$.

For every set $A$, $G_\kappa^A=\{f: f \text{ is a function from }A\text{ into } G_\kappa\}$ has the natural structure of a vector space over $\mathbb Z_2$, with operations defined coordinatewise.

The following is straightforward from linear algebra.
\begin{lemma}\label{lemma:linearAlgebra}
    The family $(\{\xi\}: \xi \in \kappa)$ is a basis of $G_\kappa$.
    Thus, for every function $f: \kappa \to V$ into a $\mathbb Z_2$-vector space $V$, there exists a unique linear mapping $\phi: G_\kappa \to V$ such that $\phi(\{\xi\})=f(\xi)$ for every $\xi \in \kappa$.
\end{lemma}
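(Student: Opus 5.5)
The plan is to verify the two assertions of Lemma~\ref{lemma:linearAlgebra} in turn: first that $(\{\xi\}:\xi\in\kappa)$ is a $\mathbb Z_2$-basis of $G_\kappa$, and then the universal property, which is the standard fact that a linear map is freely and uniquely determined by its values on a basis.

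For spanning, take any $a\in G_\kappa$. Since $a$ is a finite subset of $\kappa$, we can write $a=\{\xi_1,\ldots,\xi_m\}$ with distinct $\xi_j$. Distinct singletons are pairwise disjoint, so their symmetric difference is their union, and hence $a=\{\xi_1\}\ominus\cdots\ominus\{\xi_m\}$. The case $a=\emptyset$ is the empty sum.

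For linear independence, suppose $\{\xi_1\}\ominus\cdots\ominus\{\xi_m\}=\emptyset$ with the $\xi_j$ distinct and $m\ge1$. By the previous observation the left side equals $\{\xi_1,\ldots,\xi_m\}\neq\emptyset$, which is a contradiction. Over $\mathbb Z_2$ the only coefficients are $0$ and $1$, so this rules out every nontrivial linear relation.

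For the second assertion, given $f:\kappa\to V$, define
\[
\phi(a)=\sum_{\xi\in a}f(\xi),
\]
a finite sum in $V$. Linearity reduces to additivity, since scalars are only $0$ and $1$. Additivity follows because for $a,b\in G_\kappa$ we have
\[
\sum_{\xi\in a}f(\xi)+\sum_{\xi\in b}f(\xi)=\sum_{\xi\in a\ominus b}f(\xi),
\]
as each $\xi\in a\cap b$ contributes $2f(\xi)=0$. By construction $\phi(\{\xi\})=f(\xi)$. Uniqueness holds because any linear map is determined by its values on a spanning set. There is no real obstacle here; the only point needing care is the identity $x+x=0$ in $V$, which is what makes the sum over the symmetric difference work.
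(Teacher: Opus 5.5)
Your proof is correct, and it is the routine verification the paper intends; the paper gives no proof and just calls the lemma straightforward from linear algebra. You check that the singletons span and are independent, and you build $\phi(a)=\sum_{\xi\in a}f(\xi)$ explicitly, which is additive because $x+x=0$ in $V$. That supplies exactly the omitted details, and it is equivalent to invoking the universal property of a basis.
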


Below, we introduce notation for the quotient of $G_\kappa^A$ by the subspace of eventually constant functions.
\begin{definition}
    The space $\finct(\kappa, A)$ is the quotient vector space of $G_\kappa^A$ by the subspace
    \[
        \{f \in G_\kappa^A: \exists c \in G_\kappa\, \exists F \in [A]^{<\omega}\,
        \forall n \in A\setminus F,\ f(n)=c\}.
    \]
\end{definition}

If $A, B, C$ are sets with $A\subseteq B$, let $\Fun(A, B, C)$ denote the set of all functions $f$ such that $A\subseteq \dom(f)\subseteq B$ and $\ran(f)\subseteq^* C$.

\begin{definition}
    For $f\in \Fun(A, \omega, G_\kappa)$, we write $[f]_{A}^\kappa$ for the equivalence class of $\bar f$ in $\finct(\kappa, A)$, where $\bar f$ is any function in $G_\kappa^A$ such that $\bar f(n)=f(n)$ for all but finitely many $n \in A$.

    When $\kappa$ is clear from the context, we simply write $[f]_A$.

    We say that a family $(f_\xi:\, \xi \in I)$ in $\Fun(A, \omega, G_{\kappa})$ is \emph{linearly independent mod fin constants in $A$} ($\li(A)$, for short) if $([f_\xi]_{A}:\, \xi \in I)$ is linearly independent in $\finct(\kappa, A)$.
\end{definition}
Thus, in the previous notation, $(f_\xi:\, \xi \in I)$ is $\li(A)$ if and only if for every nonempty finite $F\subseteq I$ and every $c\in G_\kappa$, the set $\{n\in A:\, \bigominus_{\xi \in F} f_\xi(n)\neq c\}$ is infinite.

Notice that if $B\subseteq A$ is infinite, it may happen that $(f_\xi:\, \xi \in I)$ is $\li(A)$ but not $\li(B)$.
Because we repeatedly pass to infinite subsets, we use a stronger version of
linear independence mod finite constants that is preserved by such
restrictions.

\begin{definition}
    Let $A\in [\omega]^{\omega}$.
    We say that a family $(g_\beta:\, \beta \in I)$ in $\Fun(A, \omega, G_{\kappa})$ is \emph{injectively linearly independent mod fin in $A$} ($\LI(A)$, for short) if there exists a family $(F_\beta: \beta \in I)$ of finite subsets of $A$ such that the family $(g_\beta(m):\beta \in I, m \in A\setminus F_\beta)$ is linearly independent in $G_\kappa$.
\end{definition}

The following lemma is immediate from the definitions.
\begin{lemma}
Let $\kappa$ be an infinite cardinal, $A\subseteq \omega$ be infinite and $(g_\alpha:\, \alpha \in I)$ be a family in $\Fun(A, \omega, G_{\kappa})$.
Then:
\begin{enumerate}[label=(\alph*)]
    \item If $(g_\alpha:\, \alpha \in I)$ is $\LI(A)$, then $(g_\alpha:\, \alpha \in I)$ is $\li(A)$.
    \item If $B\subseteq^* C\subseteq^* A$ are infinite, and $(g_\alpha:\, \alpha \in I)$ is $\LI(C)$, then $(g_\alpha:\, \alpha \in I)$ is $\LI(B)$.
\end{enumerate}
\end{lemma}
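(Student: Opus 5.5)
For part (a), I would argue by contraposition. Assume $(g_\alpha:\alpha\in I)$ is $\LI(A)$, witnessed by finite sets $(F_\alpha:\alpha\in I)$ such that the family $(g_\alpha(m):\alpha\in I,\ m\in A\setminus F_\alpha)$ is linearly independent in $G_\kappa$. Suppose, toward a contradiction, that it is not $\li(A)$. Then there are a nonempty finite $F\subseteq I$, some $c\in G_\kappa$, and a cofinite set $A'\subseteq A$ such that $\bigominus_{\alpha\in F}g_\alpha(n)=c$ for all $n\in A'$. Since $\ran(g_\alpha)\subseteq^* G_\kappa$ and the domains contain $A$, we may shrink $A'$ by finitely many points so that it avoids $\bigcup_{\alpha\in F}F_\alpha$. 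Now pick two distinct $n,n'\in A'$. Summing the two equations gives
\[
\bigominus_{\alpha\in F}g_\alpha(n)\ominus\bigominus_{\alpha\in F}g_\alpha(n')=\emptyset .
\]
This is a nontrivial linear relation among the distinct vectors $g_\alpha(n)$ and $g_\alpha(n')$ for $\alpha\in F$. These vectors are indexed by distinct pairs $(\alpha,n)\ne(\alpha,n')$, and all of them belong to the independent family, so the relation contradicts its independence. Hence the family is $\li(A)$.

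For part (b), I take the witnesses $(F_\beta:\beta\in I)$ for $\LI(C)$ and define new finite sets
\[
F'_\beta=(F_\beta\cap B)\cup(B\setminus C).
\]
Each $F'_\beta$ is finite, because $B\subseteq^* C$ makes $B\setminus C$ finite. For $m\in B\setminus F'_\beta$ we have $m\in C\setminus F_\beta$. So the family $(g_\beta(m):\beta\in I,\ m\in B\setminus F'_\beta)$ is a subfamily of an independent family, and subfamilies of independent families are independent. The hypothesis $C\subseteq^* A$ is only used to ensure that the functions are defined on $C$, and hence on $B$, up to finitely many points. Strictly speaking, $\Fun(A,\omega,G_\kappa)$ requires the domain to contain $A$, so the functions are already defined on $B\cap A\supseteq^* B$, and the finitely many exceptional points can be absorbed into the $F'_\beta$.

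The only point needing care is in part (a): the vectors used in the contradiction must be genuinely distinct members of the independent family. This is guaranteed by choosing $n\neq n'$ outside all of the $F_\alpha$ with $\alpha\in F$.
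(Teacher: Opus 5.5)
Your proposal is correct and is exactly the direct unpacking of the definitions that the paper means when it calls this lemma ``immediate from the definitions'' (no further proof is given): in (a), two indices $n\neq n'$ outside $\bigcup_{\alpha\in F}F_\alpha$ give the nontrivial relation $\bigominus_{\alpha\in F}g_\alpha(n)\ominus\bigominus_{\alpha\in F}g_\alpha(n')=\emptyset$ in the independent family, and in (b) the witnesses $F'_\beta=(F_\beta\cap B)\cup(B\setminus C)$ make the new family a subfamily of the old one. One small remark: in (a) you can avoid the $F_\alpha$ simply because they are finite, so the appeal to $\ran(g_\alpha)\subseteq^* G_\kappa$ is not what justifies that step.
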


The next lemma extracts, after restriction to an infinite subset, an
injectively linearly independent family whose equivalence classes span those
of the original functions.

\begin{lemma}\label{lemma:LItoLI}
Let $A\subseteq \omega$ be infinite and $I, J$ be disjoint countable sets, $(g_\alpha:\alpha \in I)$ be $\LI(A)$ and $(g_\beta:\, \beta \in J)$ be a family in $\Fun(A, \omega, G_{\kappa})$.

Then there exists an infinite $C\subseteq A$ and $J'\subseteq J$ such that $(g_\alpha|_C:\, \alpha \in I\cup J')$ is $\LI(C)$ and such that, for every $\beta \in J$, $[g_\beta]_C$ is a linear combination of $\{[g_\alpha]_{C}:\, \alpha \in I\cup J'\}$.
\end{lemma}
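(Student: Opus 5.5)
Enumerate $J=\{\beta_j: j\in\omega\}$ (or finite); if $J$ is finite, pad the argument with trivial steps. The plan is to add the $g_{\beta_j}$ one at a time, shrinking the domain along a decreasing sequence of infinite sets $A=C_0\supseteq C_1\supseteq\cdots$. At the end we diagonalize by taking $C=\{c_j:j\in\omega\}$ with $c_j\in C_{j+1}$ strictly increasing. Throughout we maintain a set $J_j\subseteq\{\beta_i:i<j\}$ such that the family $(g_\alpha:\alpha\in I\cup J_j)$ is $\LI(C_j)$, and such that every $[g_{\beta_i}]_{C_j}$ with $i<j$ is a combination of the classes of that family.

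\textbf{Single step.} Suppose $(g_\alpha:\alpha\in K)$, with $K$ countable, is $\LI(D)$ with witnesses $F_\alpha$, and let $g=g_{\beta_j}$. Let $V_m$ be the span of $\{g_\alpha(m'):\alpha\in K,\ m'\in D\setminus F_\alpha,\ m'\le m\}$; each $V_m$ is finite dimensional.

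\emph{Case (i).} There is an infinite $D'\subseteq D$ such that, for all $m$ in $D'$, the value $g(m)$ lies outside $V_m$ plus the span of the earlier values $g(m')$ with $m'\in D'$, $m'<m$. Then we add $\beta_j$ to $K$. To verify that the family stays linearly independent, note that a relation is a finite sum. Its largest index involving $g$ gives a contradiction, unless $g$ does not appear at all, in which case the old independence applies.

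\emph{Case (ii).} Otherwise, for cofinitely many $m\in D$ we can write $g(m)=v_m+\sum_{m'<m}\epsilon_{m'}g(m')$ with $v_m$ in the appropriate span. This is awkward, so I would instead use a cleaner dichotomy, thinning $D$ by Ramsey-type arguments. Write each $g(m)$ as $u_m\ominus r_m$, where $u_m$ is its component in the span of the generators $\{g_\alpha(m')\}$ and $r_m$ lies in a fixed complement $W$. If $(r_m)$ has infinitely many linearly independent terms along some infinite $D'$, we are in case (i). Otherwise, by a standard thinning, the $r_m$ lie in a finite-dimensional space on a cofinite part of $D$, and by pigeonhole $r_m$ equals a constant $c$ on an infinite $D'$.

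Next, each $u_m$ is a finite combination of the $g_\alpha(m')$. Using that those generators are independent, the values at different $m'$ do not interact. We then want to thin $D'$ so that $u_m$ uses only generators at index $m$ itself, and uses a fixed finite set $S\subseteq K$ of functions; this yields $[g]_{D''}=\sum_{\alpha\in S}[g_\alpha]_{D''}$. The expression $u_m$ could involve contributions $g_\alpha(m')$ with $m'\ne m$. These are handled by a $\Delta$-system/pigeonhole thinning: the contributions at indices $m'\notin D''$ are eliminated by choosing $D''$ sparse, since for sparse $D''$ each coordinate $m'$ is hit by at most one $m$, and what remains is either a fixed finite vector (absorbed into the constant) or is forced to cancel. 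The number of possible $S$ is countable but not finite. This is the main obstacle, and I would handle it by a diagonal argument: if no single $S$ works on an infinite set, then $g$ is effectively "new" at infinitely many places, returning us to case (i).

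\textbf{Fusion.} Having $C_{j+1}$, set the witnessing finite sets relative to $C_{j+1}$. For the diagonal set $C$, which is almost contained in each $C_j$, part (b) of the preceding lemma preserves $\LI$ for each finite subfamily. The global family on $C$ is independent because any relation involves finitely many functions, hence is already excluded on some $C_j$. However, the witnessing finite sets must be chosen uniformly, so we take $F_\alpha$ on $C$ to be $F_\alpha$ together with $\{c_i:i<j_\alpha\}$, where $j_\alpha$ is the stage at which $\alpha$ entered. The span relations for $[g_\beta]_C$ persist under restriction, since they hold mod finite constants on $C_j\supseteq^* C$.
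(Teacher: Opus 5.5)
Your proof has a genuine gap in the single step, and the problem goes beyond the countably many possible $S$ that you flag.

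First, as stated, $V_m$ need not be finite-dimensional when $K$ is infinite, since infinitely many $\alpha$ may have $m'\notin F_\alpha$. Second, your verification of Case (i) via ``the largest index involving $g$'' ignores terms $g_\alpha(m')$ with $m'$ beyond that index. Take $K=\{\alpha_0\}$, $g_{\alpha_0}(m)=\{m\}$, $F_{\alpha_0}=\emptyset$, $D=\omega$ and $g(m)=\{m+1\}$. Your Case (i) condition holds on all of $\omega$, yet $(g_{\alpha_0},g)$ is not $\LI(\omega)$, because $g(m)=g_{\alpha_0}(m+1)$.

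Your complement reformulation repairs that half: if the $r_m$ are independent along $D'$, adding $g$ does preserve $\LI(D')$. Its other half is false, however: $r_m$ being constant does not place $[g]$ in the span. In the same example $r_m=0$, yet $[g]_{D''}\notin\{0,[g_{\alpha_0}]_{D''}\}$ for every infinite $D''$, so $g$ must be added (e.g.\ on the even numbers). Similarly, let $g(m)=g_{\alpha_m}(m)$ with pairwise distinct $\alpha_m\in K$ and $m\notin F_{\alpha_m}$. Again $r_m=0$ and no fixed $S$ exists. Here $g$ can be added only after enlarging the witnesses of infinitely many old functions, which your step (keeping old witnesses) does not allow. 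The sketched $\Delta$-system and ``diagonal argument'' remedies do not cover these cases.

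The fusion also has a hole. $\LI(C_j)$ says nothing about values at points $c_i\notin C_j$. So nothing in your invariants excludes a coincidence such as $g_{\beta_J}(c_J)=g_\alpha(c_0)$ with $c_0\notin C_J$, and that coincidence violates your uniform witnesses $F_\alpha\cup\{c_i:i<j_\alpha\}$.

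The idea you are missing is to regularize before selecting. Diagonalize over the countably many nonempty finite $F\subseteq I\cup J$ and take a pseudointersection. This gives an infinite $B\subseteq A$ on which every $\bigominus_{\beta\in F}g_\beta$ is, off a finite set, either constant or injective. On such a $B$, the question whether $g_\beta$ is new becomes pure linear algebra in $\finct(\kappa,B)$. Then $J'$ is just a basis extension of the independent classes $([g_\alpha]_B:\alpha\in I)$. Moreover, independence mod finite constants upgrades to: every nonzero finite combination from $I\cup J'$ is injective off a finite set.

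A single triangular diagonalization then finishes the proof. Enumerate $I\cup J'$ as $(\beta_n)$. Pick $m_n$ so that no nonzero combination of $g_{\beta_0}(m_n),\dots,g_{\beta_n}(m_n)$ lies in the finite space $\langle g_{\beta_k}(m_j):j<n,\ k\le j\rangle$. Take $C=\{m_n:n\in\omega\}$ with witnesses $E_{\beta_n}=\{m_j:j<n\}$. A putative relation is refuted at its largest $j$.

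These triangular witnesses keep each avoided space finite and automatically absorb both examples above. For instance, $g\ominus g_{\beta_k}$ vanishes at only finitely many points, and those points are avoided. This route needs no analysis of the fine structure of $g(m)$ and no fusion of stagewise witnesses.
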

\begin{proof}
    Enumerate $[I\cup J]^{<\omega}\setminus\{\emptyset\}$ as $(F_i:i\in\omega)$.
    Recursively define a decreasing sequence $(B_i:i\in\omega)$ of infinite subsets of $A$
    such that $B_0=A$ and, for every $i\in\omega$,
    \[
        \bigominus_{\beta\in F_i} g_\beta|_{B_{i+1}} \text{ is either constant or injective.}
    \]

    This is possible because every function defined on an infinite subset of $\omega$
    has an infinite restriction which is either constant or injective.
    Let $B$ be a pseudointersection of $(B_i:i\in\omega)$.

    Since $(g_\alpha:\alpha\in I)$ is $\LI(A)$ and $B\subseteq^* A$, it follows from the
    previous lemma that $(g_\alpha|_B:\alpha\in I)$ is $\LI(B)$. Hence,
    $(g_\alpha|_B:\alpha\in I)$ is $\li(B)$, so $([g_\alpha]_B:\alpha\in I)$ is linearly
    independent in $\finct(\kappa,B)$.

    Therefore, there exists a subset $J'\subseteq J$ such that
    $([g_\alpha]_B:\alpha\in I\cup J')$ is linearly independent and spans the same
    subspace of $\finct(\kappa,B)$ as $([g_\alpha]_B:\alpha\in I\cup J)$.
    In particular, for every $\beta\in J$, the class $[g_\beta]_B$ is a linear
    combination of $\{[g_\alpha]_B:\alpha\in I\cup J'\}$.

    We claim that for every nonempty finite set $F\subseteq I\cup J'$, there exists
    a finite set $L\subseteq B$ such that
    \[
        \bigominus_{\beta\in F} g_\beta|_{B\setminus L} \text{ is injective.}
    \]

    Indeed, let $F\subseteq I\cup J'$ be nonempty and finite, and choose $i\in\omega$
    such that $F=F_i$. Since $B\subseteq^* B_{i+1}$, there exists a finite set
    $L\subseteq B$ such that $B\setminus L\subseteq B_{i+1}$.
    By construction, $\bigominus_{\beta\in F} g_\beta|_{B_{i+1}}$ is either constant or injective.
    It cannot be constant, for otherwise $\bigominus_{\beta\in F} [g_\beta]_B=0$ in $\finct(\kappa,B)$, contradicting the linear independence of
    $([g_\alpha]_B:\alpha\in I\cup J')$.
    Hence, $\bigominus_{\beta\in F} g_\beta|_{B\setminus L}$ is injective, as claimed.

    Enumerate $I\cup J'$ as $(\beta_n:n\in\omega)$.
    Recursively choose a sequence $(m_n:n\in\omega)$ of pairwise distinct elements of $B$
    as follows. Suppose $m_0,\dots,m_{n-1}$ have already been chosen. Let
    \[
        V_n=\left\langle g_{\beta_k}(m_j): j<n,\ k\le j\right\rangle.
    \]
    Since $V_n$ is a finite-dimensional $\mathbb Z_2$-subspace of $G_\kappa$, it is finite.

    For each nonempty $K\subseteq n+1$, by the claim above there exists a finite set
    $L_K\subseteq B$ such that $\bigominus_{k\in K} g_{\beta_k}|_{B\setminus L_K}$ is injective.
    Therefore,
    \[
        S_K:=\left\{m\in B:\bigominus_{k\in K} g_{\beta_k}(m)\in V_n\right\} \text{ is finite.}
    \]
    Since there are only finitely many subsets $K$ of $n+1$, the set $\{m_0,\dots,m_{n-1}\}\cup\bigcup_{\emptyset\neq K\subseteq n+1} S_K$ is finite.
    Choose
    \[
        m_n\in B\setminus\left(\{m_0,\dots,m_{n-1}\}\cup
        \bigcup_{\emptyset\neq K\subseteq n+1} S_K\right).
    \]

    Let $C=\{m_n:n\in\omega\}\subseteq B$. For each $n\in\omega$, define $E_{\beta_n}=\{m_j:j<n\}$.
    We claim that $(g_\alpha|_C:\alpha\in I\cup J')$ is $\LI(C)$, as witnessed by
    $(E_\beta:\beta\in I\cup J')$.

    Indeed, this means exactly that
    \[
        (g_{\beta_n}(m_j): n\in\omega,\ j\in\omega,\ n\le j) \text{ is linearly independent in } G_\kappa.
    \]
    Suppose, towards a contradiction, that $\bigominus_{(n,j)\in T} g_{\beta_n}(m_j)=0$ for some nonempty finite set $T\subseteq\{(n,j)\in\omega^2:n\le j\}$.
    Let $j^*=\max\{j:\exists n\ ((n,j)\in T)\}$ and let
    \[
        K=\{n:(n,j^*)\in T\}.
    \]
    Then $K$ is a nonempty subset of $j^*+1$, and
    \[
        \bigominus_{n\in K} g_{\beta_n}(m_{j^*})
        =
        \bigominus_{(n,j)\in T,\ j<j^*} g_{\beta_n}(m_j)
        \in V_{j^*}.
    \]
    This contradicts the choice of $m_{j^*}$.
    Therefore, $(g_\alpha|_C:\alpha\in I\cup J')$ is $\LI(C)$.

    Finally, since for every $\beta\in J$ the class $[g_\beta]_B$ is a linear combination
    of $\{[g_\alpha]_B:\alpha\in I\cup J'\}$, by restricting from $B$ to $C$ we obtain
    that $[g_\beta]_C$ is a linear combination of
    $\{[g_\alpha]_C:\alpha\in I\cup J'\}$ for every $\beta\in J$.
\end{proof}

\subsection{Cascade frames}
In this subsection, we introduce the notion of \emph{cascade frames}, which are structures that allow us to build cascade limits in a controlled manner.

\begin{definition}\label{definition:cascadeFrame}
Let $A \in [\omega]^{\omega}$, $B \in [A]^\omega$, $I\subseteq \kappa$, $\mathcal B$ be a barrier on $A$, 
$f:\mathcal B \to G_\kappa$, $(g_\beta:\beta \in I)$ be a family in 
$\Fun(A,\omega,G_\kappa)$, and $(x_t:t \in \mathcal T_{\mathcal B})$ be a family in $G_\kappa$.

Recursively on the rank of $\mathcal B$, we say that 
$(B,(g_\beta:\beta \in I),(x_t:t \in \mathcal T_{\mathcal B}))$ is a 
\emph{$\mathcal B$-cascade frame} for $f$ if:
\begin{enumerate}[label=(\alph*)]
    \item $(g_\beta:\beta \in I)$ is $\LI(B)$.\label{item:definition:cascadeFrame-LI}
    \item For each $t \in \mathcal B$, $x_t=f(t)$.
    \item\label{item:definition:cascadeFrame-positiveRank} If $\mathrm{rk}(\mathcal B)>0$, then there exists a finite set
    $F\subseteq I$ and $Z\in G_\kappa$ such that
    \[
    x_{\{n\}}=\bigominus_{\beta\in F} g_\beta(n)\ominus Z
    \]
    for all but finitely many $n\in B$, and $x_\emptyset=F\ominus Z$.
    In particular, $[(x_{\{n\}}:n\in B)]_B$ is a linear combination of 
    $\{[g_\beta]_B:\beta\in I\}$ in $\finct(\kappa,B)$.
    \item\label{item:definition:cascadeFrame-recursive} If $\mathrm{rk}(\mathcal B)>0$, then for each $n \in B$,
    \[
    (B\setminus(n+1),(g_\beta:\beta\in I),(x_{\{n\}\cup t}:t\in \mathcal T_{\mathcal B_{\{n\}}}))
    \]
    is a $\mathcal B_{\{n\}}$-cascade frame for $f_n$.
    \item $g_\beta(n)\in [\beta]^{<\omega}$ for each $\beta \in I$ and each $n \in A$.
\end{enumerate}
\end{definition}

Before showing that such objects exist, we explain the main reason for introducing them.

\begin{lemma}\label{lemma:cascadeFrameLimit}
Let $A \in [\omega]^{\omega}$, let $\mathcal B$ be a barrier on $A$, let
$f:\mathcal B \to G_\kappa$, let $(g_\beta:\beta \in I)$ be a family in
$\Fun(A,\omega,G_\kappa)$, and let
$(B,(g_\beta:\beta \in I),(x_t:t \in \mathcal T_{\mathcal B}))$
be a $\mathcal B$-cascade frame for $f$.

Assume $G_\kappa$ has a group topology for which there exists a free ultrafilter $q$ on $\omega$ such that:
\begin{enumerate}[label=(\arabic*)]
    \item $B \in q$, and
    \item for every $\beta \in I$, $\{\beta\}\in G_\kappa$ is a $q$-limit of $g_\beta$.
\end{enumerate}
Then $x_\emptyset$ is a $\mathcal B$-cascade $q$-limit of $f$.
\end{lemma}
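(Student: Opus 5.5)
We argue by induction on $\mathrm{rk}(\mathcal B)$, proving the statement simultaneously for all $A$, all barriers $\mathcal B$ on $A$, all $f$, and all cascade frames $(B,(g_\beta:\beta\in I),(x_t:t\in\mathcal T_{\mathcal B}))$, with $q$ fixed. If $\mathrm{rk}(\mathcal B)=0$, then $\emptyset\in\mathcal B$, so by clause (b) of Definition~\ref{definition:cascadeFrame} we have $x_\emptyset=f(\emptyset)$. This is exactly the requirement for a $\mathcal B$-cascade $q$-limit point in rank $0$.

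Now suppose $\mathrm{rk}(\mathcal B)>0$. We verify Definition~\ref{def:cascadeLimitPoint}(b) with the witnessing set taken to be $B$ (minus a finite set) and the witnessing sequence taken to be $(x_{\{n\}}:n\in B)$. Two things must be checked.

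First, for each $n\in B$, the point $x_{\{n\}}$ must be a $\mathcal B_{\{n\}}$-cascade $q$-limit point of $f_n$. By clause (d) of the definition, $(B\setminus(n+1),(g_\beta),(x_{\{n\}\cup t}:t\in\mathcal T_{\mathcal B_{\{n\}}}))$ is a $\mathcal B_{\{n\}}$-cascade frame for $f_n$. Here $\mathcal B_{\{n\}}$ is a barrier on $A\setminus(n+1)$ of strictly smaller rank, by the lemma on $\mathcal F_s$. The root of this frame is the node $t=\emptyset$, whose value is $x_{\{n\}\cup\emptyset}=x_{\{n\}}$. The hypotheses of the inductive step also hold for this frame. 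Since $q$ is free and $B\in q$, we get $B\setminus(n+1)\in q$. Each $\{\beta\}$ is still a $q$-limit of $g_\beta$, because that condition does not depend on the frame. So the induction hypothesis gives the claim.

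Second, $x_\emptyset$ must be a $q$-limit of $(x_{\{n\}}:n\in B)$. By clause (c), there are a finite $F\subseteq I$ and $Z\in G_\kappa$ such that $x_{\{n\}}=\bigominus_{\beta\in F}g_\beta(n)\ominus Z$ for all but finitely many $n\in B$, and $x_\emptyset=F\ominus Z$. Since $G_\kappa$ carries a group topology, the operation $\ominus$ is jointly continuous. The $q$-limit of a finite $\ominus$-sum of sequences is therefore the $\ominus$-sum of their $q$-limits, and the limits of constant sequences are the constants themselves. Hypothesis (2) says that each $g_\beta$ has $q$-limit $\{\beta\}$, so the $q$-limit of $x_{\{n\}}$ is $\bigominus_{\beta\in F}\{\beta\}\ominus Z=F\ominus Z=x_\emptyset$. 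Changing the sequence on finitely many indices does not affect $q$-limits, because $q$ is free. We may therefore take the witnessing set $A'\in q$ to be $B$ minus the finite exceptional set, and $A'\subseteq A$ as required.

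The main point to handle carefully is the bookkeeping in the induction. The frame at the node $\{n\}$ lives on the set $A\setminus(n+1)$, uses the subfamily indexed by $\mathcal T_{\mathcal B_{\{n\}}}$, and has root value $x_{\{n\}}$; the same $q$ must serve for all of these. We also note that $q$-limits here need not be unique, since no Hausdorff assumption is made. This causes no trouble, because the definition of a cascade limit only asks for the existence of some witnessing sequence and we have exhibited one.
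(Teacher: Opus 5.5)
Your proof is correct and follows the paper's argument: induction on $\mathrm{rk}(\mathcal B)$, using clause (d) of the cascade-frame definition together with $B\setminus(n+1)\in q$ and the induction hypothesis to get each $x_{\{n\}}$ as a $\mathcal B_{\{n\}}$-cascade $q$-limit of $f_n$, and then clause (c) plus continuity of $\ominus$ to show that $x_\emptyset=F\ominus Z$ is a $q$-limit of $(x_{\{n\}}:n\in B)$. Your added remarks are accurate bookkeeping on the same proof: the finite exceptional set is harmless because $q$ is free, and the argument never needs uniqueness of $q$-limits.
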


\begin{proof}
We proceed by induction on the rank of $\mathcal B$.
If the rank of $\mathcal B$ is $0$, then $f(\emptyset)=x_\emptyset$ and we are done.

Otherwise, for each $m \in B$, by
\ref{item:definition:cascadeFrame-recursive} of the definition,
$(B\setminus(m+1),(g_\beta:\beta\in I),(x_{\{m\}\cup t}:t\in \mathcal T_{\mathcal B_{\{m\}}}))$
is a $\mathcal B_{\{m\}}$-cascade frame for $f_m$.
Since $B\in q$ and $q$ is free, we have $B\setminus(m+1)\in q$.
Hence, by induction hypothesis, $x_{\{m\}}$ is a $\mathcal B_{\{m\}}$-cascade $q$-limit of $f_m$.

By \ref{item:definition:cascadeFrame-positiveRank} of the definition, write $[(x_{\{n\}}: n \in B)]_B$ as $\bigominus_{\beta \in F}[g_\beta]_{B}$.
There exists $Z\in G_\kappa$ such that for all but finitely many $n \in B$, $x_{\{n\}}=\bigominus_{\beta \in F} g_\beta(n)\ominus Z$.
Thus, $q\text{-}\lim_{n \in B} x_{\{n\}}=\bigominus_{\beta \in F} q\text{-}\lim_{n \in B} g_\beta(n)\ominus Z=\bigominus_{\beta \in F} \{\beta\}\ominus Z=F\ominus Z=x_\emptyset$.
Therefore, by definition, $x_\emptyset$ is a $\mathcal B$-cascade $q$-limit of $f$.
\end{proof}

\begin{corollary}\label{corollary:cascadeFrameLimit}
    Let $A \in [\omega]^{\omega}$, let $\mathcal B$ be a barrier on $A$, let
    $f:\mathcal B \to G_\kappa$, let $(g_\beta:\beta \in I)$ be a family in
    $\Fun(A,\omega,G_\kappa)$, and let
    $(B,(g_\beta:\beta \in I),(x_t:t \in \mathcal T_{\mathcal B}))$
    be a $\mathcal B$-cascade frame for $f$.
    Let $\phi:\omega\to B$ be injective, and let $p\in\omega^*$.

    If for every $\beta \in I$, $\{\beta\}\in G_\kappa$ is a $p$-limit of $g_\beta\circ \phi$, then $x_\emptyset$ is a $\mathcal B$-cascade $q$-limit of $f$ for some $q\in \omega^*$.
\end{corollary}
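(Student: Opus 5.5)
The natural choice is the image ultrafilter $q=\phi(p)$, that is,
\[
q=\{Y\subseteq\omega:\phi^{-1}[Y]\in p\},
\]
after which we verify the two hypotheses of Lemma~\ref{lemma:cascadeFrameLimit} for $q$.

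First, $q$ is a free ultrafilter. It is an ultrafilter because preimages commute with complements and intersections. It is free because $\phi$ is injective: for each $n$, the set $\phi^{-1}[\{n\}]$ has at most one element, so it is not in the free ultrafilter $p$, and hence $\{n\}\notin q$. Second, $B\in q$, since $\phi^{-1}[B]=\omega\in p$, which gives hypothesis (1).

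For hypothesis (2), fix $\beta\in I$ and a neighborhood $U$ of $\{\beta\}$ in the given group topology on $G_\kappa$. We must show that
\[
\{n\in B: g_\beta(n)\in U\}\in q,
\]
which means $\phi^{-1}[\{n\in B:g_\beta(n)\in U\}]\in p$. Since $\phi$ maps into $B$, this preimage equals $\{m\in\omega: g_\beta(\phi(m))\in U\}$. That set is in $p$ because $\{\beta\}$ is a $p$-limit of $g_\beta\circ\phi$. So $\{\beta\}$ is a $q$-limit of $g_\beta$.

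There is one technical point. The $g_\beta$ lie in $\Fun(A,\omega,G_\kappa)$, so their range is only almost contained in $G_\kappa$. The frame condition (e), however, gives $g_\beta(n)\in[\beta]^{<\omega}\subseteq G_\kappa$ for every $n\in A$, so on $B\subseteq A$ the values are genuinely in $G_\kappa$ and the limit computations make sense. Lemma~\ref{lemma:cascadeFrameLimit} now applies with this $q$ and shows that $x_\emptyset$ is a $\mathcal B$-cascade $q$-limit of $f$, with $q\in\omega^*$.

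The only steps that need any care are the freeness of $q$, where injectivity of $\phi$ is used, and the identity of preimages, where $\ran\phi\subseteq B$ is used. Neither is a real obstacle.
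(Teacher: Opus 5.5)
Your proof is correct and follows the paper's argument exactly. You push $p$ forward along $\phi$ to $q=\{Y\subseteq\omega:\phi^{-1}[Y]\in p\}$, check that $B\in q$ and that each $\{\beta\}$ is a $q$-limit of $g_\beta$, and invoke Lemma~\ref{lemma:cascadeFrameLimit}; your remarks on the freeness of $q$ (from injectivity of $\phi$) and on clause (e) of the frame definition just make explicit details the paper leaves implicit.
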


\begin{proof}
Let $q$ be the free ultrafilter on $\omega$ defined by $X\in q$ if and only if $\phi^{-1}[X]\in p$.
Then, given an open neighborhood $W$ of $\{\beta\}$, $\phi^{-1}[g_\beta^{-1}[W]]\in p$, so $g_\beta^{-1}[W]\in q$.
Thus, $\{\beta\}$ is a $q$-limit of $g_\beta$ for every $\beta \in I$.
Also, $B \in q$ since $\phi^{-1}[B]=\omega \in p$.
Thus, the claim follows from Lemma~\ref{lemma:cascadeFrameLimit}.
\end{proof}

The following technical lemma is used in the inductive step of the proof of Theorem~\ref{theorem:cascadeFrameExistence} to construct $\mathcal B$-cascade frames.

\begin{lemma}\label{lemma:completeRootOfCascadeFrame}
Let $\kappa$ be an infinite cardinal of uncountable cofinality, let $L$ be a cofinal subset of $\kappa$, let $A\in[\omega]^{\omega}$, let $\mathcal B$ be a barrier on $A$ of positive rank, let $H\subseteq \kappa$ be countable, let $D\in[A]^{\omega}$, let $f:\mathcal B\to G_\kappa$, let $(g_\beta:\beta\in H)$ be a family in $\Fun(A,\omega,G_\kappa)$, and let $(x_t:t\in\mathcal T_{\mathcal B}\setminus\{\emptyset\})$
be a family in $G_\kappa$.

Assume that:
\begin{enumerate}[label=(\roman*)]
    \item $(g_\beta|_D:\beta\in H)$ is $\LI(D)$;
    \item for every $n\in D$,
    \[
    (D\setminus(n+1),(g_\beta:\beta\in H),(x_{\{n\}\cup t}:t\in\mathcal T_{\mathcal B_{\{n\}}}))
    \]
    is a $\mathcal B_{\{n\}}$-cascade frame for $f_n$;
    \item for every $t\in\mathcal T_{\mathcal B}\setminus\{\emptyset\}$,
    \[
    x_t\in \langle f(b):b\in\mathcal B\rangle \ominus [H]^{<\omega}.
    \]
\end{enumerate}

Then there exist an infinite set $B\subseteq D$, a countable set $J\subseteq L$ disjoint from $H$, a family $(g_\beta:\beta\in J)$ in $\Fun(A,\omega,G_\kappa)$, and a point $x_\emptyset\in G_\kappa$ such that
\[
(B,(g_\beta:\beta\in H\cup J),(x_t:t\in\mathcal T_{\mathcal B}))
\]
is a $\mathcal B$-cascade frame for $f$, and
$x_t\in \langle f(b):b\in\mathcal B\rangle \ominus [H\cup J]^{<\omega}$
for every $t\in\mathcal T_{\mathcal B}$.
\end{lemma}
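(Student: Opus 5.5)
The plan is to make the root sequence $(x_{\{n\}}:n\in D)$ a linear combination, modulo finite constants, of the functions $g_\beta$. We compare it against $(g_\beta:\beta\in H)$ using Lemma~\ref{lemma:LItoLI}, and adjoin one fresh index from $L$ when it is not already in their span.

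\textbf{Step 1.} Let $x^*\in\Fun(D,\omega,G_\kappa)$ be $x^*(n)=x_{\{n\}}$. Apply Lemma~\ref{lemma:LItoLI} with the $\LI(D)$ family $(g_\beta|_D:\beta\in H)$ and with $J$ a singleton indexing $x^*$. This yields an infinite $C\subseteq D$ such that one of two cases holds. Either $(g_\beta|_C:\beta\in H)\cup(x^*|_C)$ is $\LI(C)$, or $[x^*]_C$ is a linear combination of $\{[g_\beta]_C:\beta\in H\}$ and $(g_\beta|_C:\beta\in H)$ is still $\LI(C)$.

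Put $B=C$. Condition (d) of Definition~\ref{definition:cascadeFrame} for $B$ follows from (ii). A $\mathcal B_{\{n\}}$-cascade frame on $D\setminus(n+1)$ restricts to one on $B\setminus(n+1)\subseteq D\setminus(n+1)$, since (a) passes to infinite subsets by the lemma on $\LI$, and (c) holds mod finite on subsets; this is a routine induction on rank. Adding more $g$'s with values in fresh coordinates will preserve $\LI$ downstream, which I also check inductively. Conditions (b) and (e) are inherited.

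\textbf{Step 2 (independent case).} Since $\mathcal B$ is countable, the set $\bigcup_{n}x_{\{n\}}\cup\bigcup_{b\in\mathcal B}f(b)\cup H$ is a countable subset of $\kappa$. Because $\cf\kappa>\omega$ and $L$ is cofinal, I can pick $\beta^*\in L\setminus H$ strictly above all its elements. Set $J=\{\beta^*\}$, let $g_{\beta^*}(n)=x_{\{n\}}$ for $n\in D$, and let $g_{\beta^*}(n)=\emptyset$ elsewhere in $A$. Then (e) holds because every $x_{\{n\}}\subseteq\beta^*$. Condition (a) holds by Step 1, and (c) holds with $F=\{\beta^*\}$ and $Z=\emptyset$, giving $x_\emptyset=\{\beta^*\}$. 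Finally, $x_\emptyset\in[H\cup J]^{<\omega}\subseteq\langle f(b):b\in\mathcal B\rangle\ominus[H\cup J]^{<\omega}$, and the other $x_t$ satisfy this by (iii).

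\textbf{Step 3 (dependent case).} Take $J=\emptyset$, and take $F\subseteq H$ and $Z$ with $x_{\{n\}}=\bigominus_{\beta\in F}g_\beta(n)\ominus Z$ for almost all $n\in B$. Set $x_\emptyset=F\ominus Z$; this gives (a)--(e). The remaining requirement, which I expect to be the main obstacle, is $x_\emptyset\in\langle f(b)\rangle\ominus[H]^{<\omega}$, i.e.\ $Z\in\langle f(b)\rangle\ominus[H]^{<\omega}$. We know $x_{\{n\}}$ lies in this space $V$, but the $g_\beta(n)$ need not.

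To handle it I would first try to exploit the $\LI$ witness $(F_\beta)$ together with the fact that $V$ is countable. For almost all $n$ the vectors $\bigominus_{F}g_\beta(n)=x_{\{n\}}\ominus Z$ lie in $V\ominus Z$. Taking $n\ne n'$ gives $\bigominus_F(g_\beta(n)\ominus g_\beta(n'))\in V$. Thinning $B$ so that the sequence $n\mapsto\bigominus_F g_\beta(n)$ meets $V$ at most finitely often, by comparing with the linear independence of the values, should force a contradiction unless $Z\in V$. If this fails in general, I would strengthen the bookkeeping so that all $g_\beta$ take values in $\langle f(b)\rangle\ominus[H]^{<\omega}$. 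This is exactly the property of the $g_{\beta^*}$ built in Step 2, so it is the invariant maintained by the recursion in Theorem~\ref{theorem:cascadeFrameExistence}. With that invariant, $Z=x_{\{n\}}\ominus\bigominus_F g_\beta(n)\in V$ immediately.
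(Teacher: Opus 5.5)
Your Steps 1 and 2 are the paper's argument. You take a fresh $\gamma\in L\setminus H$ with $g_\gamma(n)=x_{\{n\}}$ on $D$, apply Lemma~\ref{lemma:LItoLI} to this single extra function, and set $x_\emptyset=\{\gamma\}$ in the independent case. The gap is in Step~3, exactly where you placed it. Nothing in (i)--(iii) puts $u=\bigominus_{\beta\in F}g_\beta(n)$ into $V=\langle f(b):b\in\mathcal B\rangle\ominus[H]^{<\omega}$, so $Z=x_{\{n\}}\ominus u\in V$ is not available. The paper closes this case by asserting $u\in[H]^{<\omega}$ because each $g_\beta(n)\in[\beta]^{<\omega}$. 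But clause (e) only bounds the support of $g_\beta(n)$ below $\beta$ and says nothing about $H$.

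Your first idea (LI witnesses plus countability of $V$) cannot work either, because the lemma as stated is false. Take $A=D=\omega$, $\mathcal B=[\omega]^1$, $f(\{n\})=x_{\{n\}}=\{n\}$, $H=\{\omega+2\}$, $g_{\omega+2}(n)=\{n,\omega\}$, and $L=\kappa\setminus(\omega+1)$.
\begin{itemize}
\item The hypotheses hold: the vectors $\{n,\omega\}$ are linearly independent, the rank-$0$ subframes satisfy clauses (a), (b), (e), and $x_{\{n\}}\in\langle f(b)\rangle=[\omega]^{<\omega}$.
\item In any root frame, $x_{\{n\}}=g_{\omega+2}(n)\ominus\{\omega\}$ for all $n$, so clause (c) makes $\bigominus_{\beta\in F\ominus\{\omega+2\}}g_\beta$ eventually constant on $B$. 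Hence $\li(B)$ forces $F\ominus\{\omega+2\}=\emptyset$, so $F=\{\omega+2\}$, $Z=\{\omega\}$ and $x_\emptyset=\{\omega,\omega+2\}$.
\item Writing $x_\emptyset=h\ominus s$ with $h\in[\omega]^{<\omega}$ and $s\in[H\cup J]^{<\omega}$ forces $\omega\in s$, i.e.\ $\omega\in J\subseteq L$, which is impossible.
\end{itemize}
Here $n\mapsto\{n,\omega\}$ never meets $V$ and $Z\notin V$, so the contradiction you hoped to derive does not exist.

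Your fallback is the right repair, but it changes the hypotheses. It also has to be stated relative to a fixed ambient subgroup $W\supseteq\langle f(b):b\in\mathcal B\rangle$ rather than the current $f$. In the recursion of Theorem~\ref{theorem:cascadeFrameExistence}, the inner calls work with $f_{k_m}$. Meanwhile $K_m$ already contains the indices in $J_i$ for $i<m$, whose functions take values built from $f_{k_i}$, and these need not lie in $\langle f_{k_m}(b)\rangle\ominus[K_m]^{<\omega}$. The example above also refutes Theorem~\ref{theorem:cascadeFrameExistence} for nonempty $I$, which is how the inner calls use it. So assume $g_\beta(n)\in W\ominus[H]^{<\omega}$ for $\beta\in H$, and replace $\langle f(b):b\in\mathcal B\rangle$ by $W$ in (iii) and in the conclusion. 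Then $Z=x_{\{n\}}\ominus u\in W\ominus[H]^{<\omega}$, so $x_\emptyset=F\ominus Z$ lies there as well. The new $g_\gamma$, whose values are $x_{\{n\}}$ or $\emptyset$, preserves the invariant. Taking $W$ to be the top-level subgroup and $I=\emptyset$ yields exactly the property $x^\alpha_\emptyset\in\langle f_\alpha(b):b\in\mathcal B_\alpha\rangle\ominus[K_\alpha]^{<\omega}$ used in Theorem~\ref{theorem:exampleOfBCountablyCompactGroup}.
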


\begin{proof}
As $\cf(\kappa)>\omega$ and $L$ is cofinal in $\kappa$, we may choose $\gamma\in L\setminus H$
such that $x_{\{n\}}\subseteq \gamma$ for every $n\in D$.
Define $g_\gamma\in \Fun(A,\omega,G_\kappa)$ by
\[
g_\gamma(n)=
\begin{cases}
x_{\{n\}}, & n\in D,\\[2mm]
\emptyset, & n\in A\setminus D.
\end{cases}
\]
Then $g_\gamma(n)\in[\gamma]^{<\omega}$ for every $n\in A$.

Apply Lemma~\ref{lemma:LItoLI} to the $\LI(D)$ family
$(g_\beta|_D:\beta\in H)$
and the one-element family $(g_\gamma|_D)$.
Then there exists an infinite set $B\subseteq D$ such that exactly one of the following two cases holds:

\begin{enumerate}[label=(\textbf{Case \arabic*:}), leftmargin=*]
    \item $[g_\gamma]_B$ is a linear combination of
    $\{[g_\beta]_B:\beta\in H\}$;
    \item $(g_\beta|_B:\beta\in H\cup\{\gamma\})$ is $\LI(B)$.
\end{enumerate}

In Case~1, choose a finite set $F\subseteq H$ and $Z\in G_\kappa$ such that
\[
g_\gamma(n)=\bigominus_{\beta\in F}g_\beta(n)\ominus Z
\]
for all but finitely many $n\in B$.
Let $J=\emptyset$ and define
$x_\emptyset=F\ominus Z$.
Then clause \ref{item:definition:cascadeFrame-positiveRank} of the definition of $\mathcal B$-cascade frame holds for $B$.

We now verify that
$x_\emptyset\in \langle f(b):b\in\mathcal B\rangle \ominus [H]^{<\omega}$.

Choose $n\in B$ such that
\[
x_{\{n\}}=g_\gamma(n)=\bigominus_{\beta\in F}g_\beta(n)\ominus Z.
\]
By (iii),
$x_{\{n\}}\in \langle f(b):b\in\mathcal B\rangle \ominus [H]^{<\omega}$,
so there exist $h\in \langle f(b):b\in\mathcal B\rangle$ and $s\in [H]^{<\omega}$ such that
$x_{\{n\}}=h\ominus s$.
Since each $g_\beta(n)\in[\beta]^{<\omega}$, we have
\[
u:=\bigominus_{\beta\in F}g_\beta(n)\in [H]^{<\omega}.
\]
Thus,
$h\ominus s=u\ominus Z$,
hence,
\[
Z=h\ominus s\ominus u\in
\langle f(b):b\in\mathcal B\rangle \ominus [H]^{<\omega}.
\]
Since $F\in[H]^{<\omega}$, it follows that
\[
x_\emptyset=F\ominus Z\in
\langle f(b):b\in\mathcal B\rangle \ominus [H]^{<\omega}.
\]

In Case~2, let $J=\{\gamma\}$ and define
$x_\emptyset=\{\gamma\}$.
Then clause \ref{item:definition:cascadeFrame-positiveRank} holds for $B$, with $F=\{\gamma\}$ and $Z=\emptyset$.
Moreover,
\[
x_\emptyset=\{\gamma\}\in [H\cup J]^{<\omega}\subseteq
\langle f(b):b\in\mathcal B\rangle \ominus [H\cup J]^{<\omega}.
\]

In either case, for every $n\in B$,
$(B\setminus(n+1),(g_\beta:\beta\in H\cup J),(x_{\{n\}\cup t}:t\in\mathcal T_{\mathcal B_{\{n\}}}))$
is a $\mathcal B_{\{n\}}$-cascade frame for $f_n$:
this follows from (ii) by restricting from $D\setminus(n+1)$ to the infinite subset $B\setminus(n+1)$; clause \ref{item:definition:cascadeFrame-LI} follows from the fact that $(g_\beta:\beta\in H\cup J)$ is $\LI(B\setminus(n+1))$ (by heredity in Case~1, and by construction in Case~2), and the support condition for $\beta=\gamma$ was verified above.

Therefore,
\[
(B,(g_\beta:\beta\in H\cup J),(x_t:t\in\mathcal T_{\mathcal B}))
\]
is a $\mathcal B$-cascade frame for $f$.

Finally,
$x_t\in \langle f(b):b\in\mathcal B\rangle \ominus [H\cup J]^{<\omega}$
holds for every $t\in\mathcal T_{\mathcal B}$: this was verified above for
$t=\emptyset$, and for every non-root node it follows from (iii), since $[H]^{<\omega}\subseteq [H\cup J]^{<\omega}$.
\end{proof}

In the next theorem, we show how to construct cascade frames.

\begin{theorem} Let $\kappa$ be an infinite cardinal of uncountable cofinality, $L$ be a cofinal subset of $\kappa$, $I\subseteq \kappa$ be countable, $A \in [\omega]^{\omega}$, $\mathcal B$ be a barrier on $A$, $(g_\beta:\, \beta \in I)$ be $\LI(A)$ and $f:\mathcal B\to G_\kappa$. \label{theorem:cascadeFrameExistence}

Then there exist a countable set $J$ disjoint from $I$, an infinite set $B\subseteq A$, a family $(g_\beta:\beta\in J)$, and a family $(x_t:t\in\mathcal T_{\mathcal B})$ such that 

\begin{itemize}
\item $(B,(g_\beta:\, \beta \in I\cup J), (x_t:\, t \in \mathcal T_{\mathcal B}))$ is a $\mathcal B$-cascade frame for $f$.
\item $J\subseteq L$.
\item for every $t \in \mathcal T_{\mathcal B}$, $x_t \in \langle f(b): b \in \mathcal B\rangle \ominus [I\cup J]^{<\omega}$.
\end{itemize}
\end{theorem}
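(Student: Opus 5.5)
We argue by induction on $\mathrm{rk}(\mathcal B)$, proving the statement simultaneously for all $A$, all barriers $\mathcal B$ on $A$ of the given rank, all countable $I\subseteq\kappa$, all $\LI(A)$ families $(g_\beta:\beta\in I)$, and all $f$. If $\mathrm{rk}(\mathcal B)=0$, then $\mathcal B=\{\emptyset\}$ and $\mathcal T_{\mathcal B}=\{\emptyset\}$. We take $J=\emptyset$, $B=A$, and $x_\emptyset=f(\emptyset)$. Clauses (a), (b) and (e) hold trivially: for (e), we may assume, as the construction guarantees, that $g_\beta(n)\in[\beta]^{<\omega}$ for the given $I$; otherwise (e) is part of the hypothesis inherited along the recursion. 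Clauses (c) and (d) are vacuous. Finally, $x_\emptyset\in\langle f(b):b\in\mathcal B\rangle$.

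For positive rank, we build the frame below the root by a fusion argument and then close off the root with Lemma~\ref{lemma:completeRootOfCascadeFrame}. We recursively construct a decreasing sequence $A=D_0\supseteq D_1\supseteq\cdots$ of infinite sets, points $n_k=\min D_k$ with $D_{k+1}\subseteq D_k\setminus(n_k+1)$, and increasing countable sets $I=H_0\subseteq H_1\subseteq\cdots$. These come with families $(g_\beta:\beta\in H_{k+1}\setminus H_k)$ with indices in $L$, and with nodes $(x_{\{n_k\}\cup t}:t\in\mathcal T_{\mathcal B_{\{n_k\}}})$.

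At stage $k$, we assume $(g_\beta:\beta\in H_k)$ is $\LI(D_k)$. We apply the induction hypothesis to the barrier $\mathcal B_{\{n_k\}}$, which has smaller rank and is a barrier on $A\setminus(n_k+1)$, restricted to $D_k\setminus(n_k+1)$, with the function $f_{n_k}$ and the family indexed by $H_k$. This gives $J_k\subseteq L$ disjoint from $H_k$, an infinite $D_{k+1}\subseteq D_k\setminus(n_k+1)$, and a $\mathcal B_{\{n_k\}}$-cascade frame for $f_{n_k}$ whose nodes lie in $\langle f_{n_k}(b)\rangle\ominus[H_k\cup J_k]^{<\omega}\subseteq\langle f(b):b\in\mathcal B\rangle\ominus[H_{k+1}]^{<\omega}$. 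Here $H_{k+1}=H_k\cup J_k$.

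Put $D=\{n_k:k\in\omega\}$ and $H=\bigcup_k H_k$, which is countable. We then need to verify hypotheses (i)--(iii) of Lemma~\ref{lemma:completeRootOfCascadeFrame}.

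Hypothesis (iii) holds by construction. For (ii), the frame at $n_k$ was built on $D_{k+1}$ with family $H_{k+1}$. We must pass from $D_{k+1}$ to $D\setminus(n_k+1)\subseteq D_{k+1}$, which is harmless because frames restrict to infinite subsets by heredity of $\LI$ and of the eventual equalities. We must also enlarge the family from $H_{k+1}$ to all of $H$. Enlarging only affects clause (a) of the frame definition, so it suffices to know that $(g_\beta:\beta\in H)$ is $\LI(D\setminus(n_k+1))$. This is also exactly hypothesis (i).

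We expect this step to be the main obstacle. Each $(g_\beta:\beta\in H_k)$ is $\LI(D_k)$, hence $\LI$ on $D$ because $D\subseteq^* D_k$, but the witnessing finite sets $F_\beta$ for the union need not cohere. To handle this, we strengthen the recursion so that it maintains linear independence of the finite portion already committed. Concretely, we enumerate $H$ as it grows and, in the style of the proof of Lemma~\ref{lemma:LItoLI}, choose $n_{k+1}$ inside $D_{k+1}$ so that the new values $g_\beta(n_{k+1})$, for the finitely many $\beta$ enumerated so far, avoid the finite span of all previously committed values. This is possible because each nontrivial finite combination $\bigominus g_\beta$ is injective modulo a finite set on $D_{k+1}$, after first thinning $D_{k+1}$ to make every such combination constant or injective, and constant is impossible by $\li$. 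Doing this diagonalization simultaneously with the fusion yields $F_\beta$ witnessing that $(g_\beta:\beta\in H)$ is $\LI(D)$, and hence $\LI(D\setminus(n+1))$ for each $n$.

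Once (i)--(iii) hold, Lemma~\ref{lemma:completeRootOfCascadeFrame} supplies $B\subseteq D$, a set $J'\subseteq L$, and $x_\emptyset$ completing the frame. We set $J=(H\setminus I)\cup J'\subseteq L$, and this gives all three bullet points of the theorem.
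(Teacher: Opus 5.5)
Your proposal is correct and is essentially the paper's proof. Both argue by induction on rank, build a fusion $n_0<n_1<\cdots$ applying the induction hypothesis below each $\{n_k\}$ to the growing family, and choose each $n_k$ diagonally so that nontrivial combinations over the finitely many indices enumerated so far avoid the finite span of the values already committed; the paper does this bookkeeping with injections $\sigma$ into disjoint sets $S_m$ with $\min S_m>m$ and witnesses $F_\beta=\{k_j:j<\sigma(\beta)\}$, and then closes off the root with Lemma~\ref{lemma:completeRootOfCascadeFrame}, as you do. Your extra thinning to make combinations constant or injective is unnecessary, since $\LI(D_{k+1})$ already makes every nontrivial combination injective off a finite set, and your remark that the support condition $g_\beta(n)\in[\beta]^{<\omega}$ for $\beta\in I$ must be tacitly assumed (already needed for clause (e) in the rank-$0$ case) is a fair observation about the statement.
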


\begin{proof}
    We proceed by induction on the rank of $\mathcal B$.

    If $\mathrm{rk}(\mathcal B)=0$, then $\mathcal B=\{\emptyset\}$.
    Let $J=\emptyset$, let $(g_\beta:\beta\in J)$ be arbitrary, and define
    $x_\emptyset=f(\emptyset)$ and $B=A$.
    Then the conclusion is immediate.

    Assume now that $\mathrm{rk}(\mathcal B)>0$ and that the theorem holds for all barriers of smaller rank.

    Fix pairwise disjoint infinite sets
    $S_{-1},S_0,S_1,\dots \subseteq \omega$
    such that $\min S_m>m$ for every $m\in\omega$.
    Choose an injection
    $\sigma_I:I\to S_{-1}$.

    We recursively construct infinite sets $A_m\subseteq A$, points $k_m\in A_m$, countable sets $J_m\subseteq L$, functions $(g_\beta:\beta\in J_m)$ in $\Fun(A,\omega,G_\kappa)$, families $(x_t^m:t\in\mathcal T_{\mathcal B_{\{k_m\}}})$, injections $\sigma_m:J_m\to S_m$;
    such that, writing
    \[
    K_m=I\cup\bigcup_{i<m}J_i,
    \]
    the following hold for every $m\in\omega$:
    \begin{enumerate}[label=(\roman*)]
        \item $A_0=A$;
        \item $A_{m+1}\subseteq A_m\setminus (k_m+1)$;
        \item $(g_\beta:\beta\in K_m)$ is $\LI(A_m)$;
        \item
        $(A_{m+1},(g_\beta:\beta\in K_m\cup J_m),(x_t^m:t\in\mathcal T_{\mathcal B_{\{k_m\}}}))$
        is a $\mathcal B_{\{k_m\}}$-cascade frame for $f_{k_m}$;
        \item for every $t\in\mathcal T_{\mathcal B_{\{k_m\}}}$,
        $x_t^m\in
        \langle f(b):b\in\mathcal B\rangle
        \ominus [K_m\cup J_m]^{<\omega}$.
    \end{enumerate}

    We now explain the recursive step.
    Suppose $A_m$ and the family $(g_\beta:\beta\in K_m)$ have already been constructed, and that $(g_\beta:\beta\in K_m)$ is $\LI(A_m)$.

    Let
    \[
    \sigma=\sigma_I\cup\bigcup_{i<m}\sigma_i
    \]
    be the injection defined on $K_m$, and let
    \[
    E_m=\{\beta\in K_m:\sigma(\beta)\le m\}.
    \]
    Since $\sigma[K_m]\subseteq S_{-1}\cup\bigcup_{i<m}S_i$, each set $S_i\cap(m+1)$ is finite, and $\sigma$ is injective, the set $E_m$ is finite.

    For each $\beta\in E_m$, let $F_\beta^m\subseteq A_m$ be a finite witness for $\LI(A_m)$ restricted to the finite family $(g_\beta:\beta\in E_m)$.
    Thus, the family
    \[
    (g_\beta(n):\beta\in E_m,\ n\in A_m\setminus F_\beta^m)
    \]
    is linearly independent in $G_\kappa$.

    Let
    \[
    V_m=\left\langle g_\beta(k_j): j<m,\ \sigma(\beta)\le j\right\rangle.
    \]
    Since only finitely many pairs $(\beta,j)$ occur in the definition of $V_m$, the space $V_m$ is finite-dimensional, hence finite.

    For each nonempty finite set $H\subseteq E_m$, define
    \[
    h_H(n)=\bigominus_{\beta\in H} g_\beta(n)
    \qquad (n\in A_m).
    \]
    Because the family $(g_\beta:\beta\in E_m)$ is $\LI(A_m)$, the map $h_H$ is injective on $A_m\setminus \bigcup_{\beta\in H}F_\beta^m$.
    Therefore,
    \[
    M_H:=\{n\in A_m:h_H(n)\in V_m\}
    \]
    is finite.
    Since there are only finitely many nonempty subsets of $E_m$, the set
    \[
    \bigcup_{\emptyset\neq H\subseteq E_m}M_H
    \]
    is finite.
    Choose
    \[
    k_m\in A_m\setminus \bigcup_{\emptyset\neq H\subseteq E_m}M_H.
    \]

    Now $(g_\beta:\beta\in K_m)$ is $\LI(A_m\setminus(k_m+1))$ by heredity of $\LI$.
    Apply the inductive hypothesis to the barrier $\mathcal B_{\{k_m\}}$ on $A_m\setminus(k_m+1)$, to the family $(g_\beta:\beta\in K_m)$, and to the map $f_{k_m}$.
    We obtain a countable set $J_m\subseteq L$ disjoint from $K_m$, an infinite set
    \[
    A_{m+1}\subseteq A_m\setminus(k_m+1),
    \]
    functions $(g_\beta:\beta\in J_m)$ defined on $A_m\setminus(k_m+1)$, and a family
    \[
    (x_t^m:t\in\mathcal T_{\mathcal B_{\{k_m\}}})
    \]
    such that (iv) and (v) hold.

    For each $\beta\in J_m$, extend the function given by the inductive hypothesis arbitrarily to a function in $\Fun(A,\omega,G_\kappa)$; for instance, let
    $g_\beta(n)=\emptyset$ for every $n\in A\setminus (A_m\setminus(k_m+1))$.

    Finally, choose any injection $\sigma_m:J_m\to S_m$.
    This completes the recursive construction.
    Let
    \[
    J'=\bigcup_{m\in\omega}J_m,
    \qquad
    D=\{k_m:m\in\omega\},
    \qquad\text{and}\qquad
    \sigma=\sigma_I\cup\bigcup_{m\in\omega}\sigma_m.
    \]

    For each $\beta\in I\cup J'$, define
    \[
    F_\beta=\{k_j\in D:j<\sigma(\beta)\}.
    \]
    We claim that $(g_\beta|_D:\beta\in I\cup J')$ is $\LI(D)$, as witnessed by $(F_\beta:\beta\in I\cup J')$.

    Indeed, suppose towards a contradiction that
    \[
    \bigominus_{(\beta,j)\in T}g_\beta(k_j)=0
    \]
    for some nonempty finite set
    \[
    T\subseteq \{(\beta,j):\beta\in I\cup J',\ j\in\omega,\ j\ge \sigma(\beta)\}.
    \]

    Let
    $j^*=\max\{j:\exists\beta\,((\beta,j)\in T)\}$
    and
    $H=\{\beta:(\beta,j^*)\in T\}$.
    Then $H$ is a nonempty finite subset of $E_{j^*}$.
    Moreover,
    \[
    \bigominus_{\beta\in H}g_\beta(k_{j^*})
    =
    \bigominus_{(\beta,j)\in T,\ j<j^*}g_\beta(k_j)
    \in V_{j^*},
    \]
    contradicting the choice of $k_{j^*}$.
    This proves the claim.

    We now define $(x_t:t\in\mathcal T_{\mathcal B}\setminus\{\emptyset\})$.
    For each $m\in\omega$ and each $t\in\mathcal T_{\mathcal B_{\{k_m\}}}$, define
    \[
    x_{\{k_m\}\cup t}=x_t^m.
    \]
    If $t\in\mathcal B$ and $x_t$ has not yet been defined, let $x_t=f(t)$.
    For every remaining $t\in\mathcal T_{\mathcal B}\setminus\{\emptyset\}$ not yet defined, let $x_t=\emptyset$.

    We claim that for every $n=k_m\in D$,
    \[
    (D\setminus(n+1),(g_\beta:\beta\in I\cup J'),(x_{\{n\}\cup t}:t\in\mathcal T_{\mathcal B_{\{n\}}}))
    \]
    is a $\mathcal B_{\{n\}}$-cascade frame for $f_n$.
    Indeed, by construction,
    \[
    (A_{m+1},(g_\beta:\beta\in K_m\cup J_m),(x_t^m:t\in\mathcal T_{\mathcal B_{\{k_m\}}}))
    \]
    is a $\mathcal B_{\{k_m\}}$-cascade frame for $f_{k_m}$, and
    \[
    D\setminus(k_m+1)\subseteq A_{m+1}.
    \]
    Passing from $A_{m+1}$ to the infinite subset $D\setminus(k_m+1)$ preserves the defining clauses of a cascade frame; clause \ref{item:definition:cascadeFrame-LI} follows from the fact that $(g_\beta|_D:\beta\in I\cup J')$ is $\LI(D)$ and $\LI$ is hereditary, while enlarging the index family from $K_m\cup J_m$ to $I\cup J'$ does not affect the remaining clauses.

    Moreover, for every non-root node $t\in\mathcal T_{\mathcal B}\setminus\{\emptyset\}$,
    \[
    x_t\in \langle f(b):b\in\mathcal B\rangle \ominus [I\cup J']^{<\omega},
    \]
    because this holds for all values coming from the inductive construction, and it is trivial for the remaining values, which were defined either as $f(t)$ or as $\emptyset$.

    Apply Lemma~\ref{lemma:completeRootOfCascadeFrame} with
    $H=I\cup J'$.
    We obtain an infinite set $B\subseteq D$, a countable set $J''\subseteq L$ disjoint from $I\cup J'$, a family $(g_\beta:\beta\in J'')$, and a point $x_\emptyset\in G_\kappa$ such that
    \[
    (B,(g_\beta:\beta\in I\cup J'\cup J''),(x_t:t\in\mathcal T_{\mathcal B}))
    \]
    is a $\mathcal B$-cascade frame for $f$, and
    \[
    x_t\in \langle f(b):b\in\mathcal B\rangle \ominus [I\cup J'\cup J'']^{<\omega}
    \]
    for every $t\in\mathcal T_{\mathcal B}$.

    Let
    $J=J'\cup J''$.
    Then $J$ is countable, $J\subseteq L$, and $J\cap I=\emptyset$.
    This concludes the proof.
\end{proof}

\subsection{Tidy families of ultrafilters}

In \cite{hruvsak2021countably}, a family of ultrafilters with the following property was constructed.

\begin{proposition}
    There exists a family of ultrafilters on $\omega$, $(p_\alpha:\alpha<\mathfrak c)$, such that for every countable $D\subseteq \mathfrak c$ and every family $(h_\alpha:\alpha\in D)$, where each $h_\alpha$ is an injective enumeration of a linearly independent subset of $[\mathfrak c]^{<\omega}$, there exists a family $(U_\alpha:\alpha\in D)$ such that:
    \begin{enumerate}[label=(\alph*)]
        \item $U_\alpha \in p_\alpha$ for every $\alpha \in D$,
        \item $U_\alpha \cap U_\beta=\emptyset$ for every distinct $\alpha, \beta \in D$,
        \item $\{h_\alpha(n): n \in U_\alpha, \alpha \in D\}$ is linearly independent.
    \end{enumerate}
\end{proposition}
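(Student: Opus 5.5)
The plan is to build $(p_\alpha:\alpha<\mathfrak c)$ by a transfinite recursion of length $\mathfrak c$. At each stage we extend a family of filters $(\mathcal F_\alpha:\alpha<\mathfrak c)$ so that one more countable family $(h_\alpha:\alpha\in D)$ is handled; at the end each $\mathcal F_\alpha$ is extended to an ultrafilter $p_\alpha$. There are only $\mathfrak c^{\omega}=\mathfrak c$ pairs $(D,(h_\alpha:\alpha\in D))$ with $D\in[\mathfrak c]^{\le\omega}$ and each $h_\alpha\colon\omega\to[\mathfrak c]^{<\omega}$, so we can enumerate them in type $\mathfrak c$. To keep the filters extendable with fewer than $\mathfrak c$ commitments at every stage, we use the standard independent-family bookkeeping.

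\textbf{Setting up the filters.} Fix an independent family $\{I_\xi:\xi<\mathfrak c\}$ on $\omega$ and pairwise disjoint sets $\Xi_\alpha\subseteq\mathfrak c$ of size $\mathfrak c$. Each $\mathcal F_\alpha$ will be generated by three kinds of sets:
\begin{itemize}
\item fewer than $\mathfrak c$ sets added so far;
\item the sets $I_\xi$ for $\xi\in\Xi_\alpha$, so that every generic Boolean combination remains positive;
\item the cofinite sets.
\end{itemize}
The invariant at stage $\nu$ is that only $<\mathfrak c$ indices of the independent family have been used, and that every generated filter is proper on every Boolean combination of unused $I_\xi$'s. When all stages are complete, we extend each $\mathcal F_\alpha$ to an ultrafilter $p_\alpha$. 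We must also ensure that the $\mathcal F_\alpha$ are pairwise ``separated'', so that disjointness in (b) is possible. For this, fresh independent sets are reserved for each $\alpha$, and $\mathcal F_\alpha$ contains $I_\xi$ while $\mathcal F_\beta$ contains $\omega\setminus I_\xi$ for a $\xi$ chosen in the stage that handles $\alpha,\beta$.

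\textbf{One stage.} Let $(h_\alpha:\alpha\in D)$ be given. Enumerate $D\times\omega$ and choose natural numbers $n_0,n_1,\dots$ greedily, assigning each to some $\alpha\in D$. The goal is that the set $U_\alpha$ of numbers assigned to $\alpha$ meets every ``positive'' set for $\mathcal F_\alpha$ that we want to keep. Concretely, we want each $U_\alpha$ to be $\mathcal F_\alpha$-positive together with a fresh independent set, so that we may then add $U_\alpha$ to $\mathcal F_\alpha$.

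The key point is the following. At step $j$ the finite set $V_j=\langle h_{\alpha_i}(n_i):i<j\rangle$ is finite. Because $h_\alpha$ is an injective enumeration of a linearly independent set, only finitely many $n$ satisfy $h_\alpha(n)\in V_j$; indeed, if two distinct values both lie in $V_j$, then finitely many such values already span $V_j$. Hence a new $n_j$, avoiding finitely many integers and those already used, can be found inside any infinite prescribed set. This makes $\{h_\alpha(n):n\in U_\alpha,\alpha\in D\}$ linearly independent and the $U_\alpha$ pairwise disjoint.

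To make $U_\alpha$ positive, dovetail so that for each $\alpha$ the numbers assigned to $\alpha$ hit infinitely many members of a countable family. Specifically, take the family of sets $C\cap I^{\epsilon}$, where $C$ ranges over finite intersections from a countable subfamily of the generators relevant to $\alpha$, and $I^{\epsilon}$ over Boolean combinations of countably many fresh independent sets.

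\textbf{Main obstacle.} The difficulty is that $\mathcal F_\alpha$ has up to $<\mathfrak c$ generators, not countably many, so a countable dovetailing does not by itself guarantee that $U_\alpha$ is $\mathcal F_\alpha$-positive. I would handle this as in the independent-family method. Only the countably many generators of the form $I_\xi$ with $\xi$ unused matter generically. We therefore choose $U_\alpha$ to be almost contained in a fresh Boolean combination $I^{\epsilon_\alpha}$ of unused $I_\xi$'s, and to meet every set of the form (old generator)$\cap I^{\epsilon_\alpha}$. Independence then transfers positivity, which reduces the requirement to countably many conditions per $\alpha$ that the greedy choice above can meet.

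Verifying that this reduction is sound, that is, that adding $U_\alpha$ keeps every filter proper on all Boolean combinations of the still-unused independent sets, is the step I expect to require the most care. Once the invariant is maintained, (a)--(c) hold for the pair handled at each stage, and hence for every countable $D$ and every $(h_\alpha:\alpha\in D)$.
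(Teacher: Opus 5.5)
Your proposal has a genuine gap, and it sits at the step you flag as needing the most care: showing that the greedily built $U_\alpha$ can be added to $\mathcal F_\alpha$ while keeping the invariant.

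\textbf{Why the reduction fails.} The invariant asks that $U_\alpha\cap F\cap J$ be infinite for every $F$ in the filter generated so far and every finite Boolean combination $J$ of still-unused $I_\xi$.
\begin{itemize}
\item There are possibly uncountably many relevant $F$. Among them are the thinned sets put into $\mathcal F_\alpha$ at earlier stages.
\item There are $\mathfrak c$ many relevant $J$.
\end{itemize}
Independence modulo $\mathcal F_\alpha$ tells you that $F\cap I^{\epsilon_\alpha}\cap J$ is infinite. But your $U_\alpha$ is not $I^{\epsilon_\alpha}$. It is the part of $I^{\epsilon_\alpha}$ that survives the greedy thinning, and which points must be discarded is dictated by the $h_\beta$'s, not by the filter. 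Dovetailing lets you meet countably many of the sets $F\cap I^{\epsilon_\alpha}\cap J$. Nothing forces the survivors to meet the remaining ones, so the claimed reduction to ``countably many conditions per $\alpha$'' is unsupported.

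\textbf{The thinning is unavoidable.} Take $h_\alpha(n)=\{n\}$ and $h_\beta(n)=\{\pi(n),\gamma\}$, with $\pi$ injective and $\gamma\ge\omega$. Clause (c) then forces $|U_\alpha\cap\pi[U_\beta]|\le 1$: for distinct $k,k'$ in the intersection, the vectors $\{k\},\{k'\},\{k,\gamma\},\{k',\gamma\}$ sum to $\emptyset$. So the $p_\alpha$ must be pairwise non-isomorphic. The sets you add therefore cannot just be Boolean combinations of the $I_\xi$. All such separations have to come out of the unverified stage, and separating $p_\alpha$ from $p_\beta$ by one fresh $I_\xi$ does not address this.

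\textbf{The missing idea.} You need a reason why thinned sets lie in the ultrafilters, not merely are positive for them. That reason is that the thinning is quantitatively small. By Lemma~\ref{lemma:linearAlgebra2}, if $A$ is the independent set already chosen and $B$ is a new independent batch, at most $|A|$ elements of $B$ must be discarded.

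The construction behind this proposition (from \cite{hruvsak2021countably}, refined in the proposition proved right after it) uses this to fix the filter in advance:
\begin{itemize}
\item Take intervals $I_0<I_1<\cdots$ with $|I_n|>n^2\sum_{k<n}|I_k|$.
\item Let $\mathcal C$ be the centered family of sets missing at most $n\sum_{k<n}|I_k|$ points of each $I_n$.
\item Take an almost disjoint family $(a_\alpha:\alpha<\mathfrak c)$, and let $p_\alpha$ extend the trace of $\mathcal C$ on $\bigcup_{n\in a_\alpha}I_n$.
\end{itemize}
Given $D$ and the $h_\alpha$:
\begin{itemize}
\item Partition $\omega$ into sets $B_\alpha$, $\alpha\in D$, with $a_\alpha\subseteq^*B_\alpha$.
\item Use only $h_\alpha$ on the blocks $I_n$ with $n\in B_\alpha$.
\item Remove from each block a set $C_n$ of size at most $\sum_{k<n}|I_k|$ to keep everything independent.
\end{itemize}
Then $\bigcup_n(I_n\setminus C_n)\in\mathcal C$, so $U_\alpha=\bigcup_{n\in B_\alpha}(I_n\setminus C_n)\in p_\alpha$, and disjointness is automatic. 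No recursion of length $\mathfrak c$ and no independent family is needed, because a single filter is closed under every thinning that linear independence can demand.
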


We refine their construction to obtain a version that allows countably many disjoint blocks of indices, each with an eventually linearly independent family, while preserving disjointness of the selected sets.

We will rely on the following elementary fact about linear algebra.

\begin{lemma}\label{lemma:linearAlgebra2}
Let $A, B$ be linearly independent subsets of a vector space $V$ over a field $F$.
Then there exists $B'\subseteq B$ such that:
\begin{enumerate}[label=(\roman*)]
    \item $|B'|\leq |A|$.
    \item $B\setminus B'$ is disjoint from $A$.
    \item $(B\setminus B') \cup A$ is linearly independent and generates the same subspace as $A\cup B$.
\end{enumerate}
\end{lemma}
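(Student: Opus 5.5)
The plan is to use a maximality (Zorn) argument, combined with the exchange lemma for the finite case. Let $W=\langle A\cup B\rangle$. Choose a maximal subset $C\subseteq B$ such that $A\cup C$ is linearly independent and $C\cap A=\emptyset$; Zorn's lemma applies because linear independence is of finite character. Put $B'=B\setminus C$. Then (ii) holds by construction, since $B\setminus B'=C$ is disjoint from $A$.

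For (iii), the set $A\cup C$ is linearly independent by choice. Its span equals $W$: for $b\in B'$, either $b\in A$, or $b\notin A$ and maximality shows that $A\cup C\cup\{b\}$ is dependent. In the second case the independence of $A\cup C$ puts $b$ in $\langle A\cup C\rangle$. Hence $B\subseteq\langle A\cup C\rangle$, and so $\langle A\cup C\rangle=W$.

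The main point is (i), the bound $|B'|\le|A|$. Since $B$ is independent, $B'$ is linearly independent modulo $\langle C\rangle$: the independence of $B=B'\sqcup C$ gives that $\{b+\langle C\rangle:b\in B'\}$ is linearly independent of cardinality $|B'|$ in $W/\langle C\rangle$. On the other hand, $W=\langle A\cup C\rangle$, so $W/\langle C\rangle$ is spanned by the images of $A$ and has dimension at most $|A|$. By the invariance of dimension for vector spaces (a linearly independent set has cardinality at most that of any spanning set, valid for infinite cardinals as well), we get $|B'|\le|A|$. The only step needing care is this dimension comparison in the infinite case, and it is standard linear algebra, so I expect no genuine obstacle.
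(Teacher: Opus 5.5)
Your proof is correct and complete. The paper gives no proof of this lemma; it is quoted as an elementary fact of linear algebra, so there is no argument in the paper to compare yours against.

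Your route is the standard one:
\begin{enumerate}
\item Use Zorn's lemma to take a maximal $C\subseteq B$ with $C\cap A=\emptyset$ and $A\cup C$ independent.
\item Use maximality to get $B\subseteq\langle A\cup C\rangle$.
\item Bound $|B\setminus C|$ by comparing, in $\langle A\cup B\rangle/\langle C\rangle$, the independent images of $B\setminus C$ with the spanning images of $A$.
\end{enumerate}
It correctly handles the two points that need checking: injectivity of $b\mapsto b+\langle C\rangle$ on $B'$, and elements $b\in A\cap B$.

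In the one place the lemma is explicitly applied (the tidy-family proposition), both families are finite. There only the finite exchange lemma is needed, but your argument covers the statement in full generality.
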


\begin{proposition}
    There exists a family of ultrafilters on $\omega$, $(p_\alpha: \alpha\in \mathfrak c)$ such that:
    \begin{enumerate}[label=(\roman*)]
        \item for every set $E$,
        \item for every countable $D\subseteq \mathfrak c$,
        \item for every family $(K_\alpha:\alpha\in D)$ of pairwise disjoint countable subsets of $E$,
        \item for every family $(h_\beta: \beta \in \bigcup_{\alpha \in D} K_\alpha)$ of functions $h_\beta: \omega\to [E]^{<\omega}$ such that, for every $\alpha \in D$, there exists a family $(F_\beta: \beta \in K_\alpha)$ of finite subsets of $\omega$ such that $(h_{\beta}(m):\beta \in K_\alpha, m\in \omega \setminus F_\beta)$ is a linearly independent family in $[E]^{<\omega}$,\label{item:linearlyIndependent}
    \end{enumerate}
    there exists a family $(U_\alpha: \alpha \in D)$ such that:
    \begin{enumerate}[label=(\alph*)]
        \item $U_\alpha \in p_\alpha$ for every $\alpha \in D$,
        \item $U_\alpha \cap U_\beta=\emptyset$ for every distinct $\alpha, \beta \in D$,
        \item there exists a family $(\tilde F_\beta: \beta \in \bigcup_{\alpha \in D} K_\alpha)$ of finite subsets of $\omega$ such that
        
        \begin{equation*}
        (h_{\beta}(m): m \in U_{\alpha_\beta}\setminus \tilde F_\beta,\, \beta \in \bigcup_{\alpha \in D} K_\alpha)
        \end{equation*}

        is linearly independent in $[E]^{<\omega}$, where $\alpha_\beta$ is the unique ordinal such that $\beta \in K_{\alpha_\beta}$.
    \end{enumerate}
\end{proposition}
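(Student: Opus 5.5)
The plan is to revisit the construction of the family $(p_\alpha:\alpha<\mathfrak c)$ in \cite{hruvsak2021countably}, rather than to derive the statement formally from the previous proposition. The key observation is that their argument controls, for each $\alpha\in D$, a single sequence of vectors. It should go through essentially verbatim if each vector $h_\alpha(m)$ is replaced by a finite-dimensional block of vectors whose dimension grows slowly with $m$.

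\textbf{Reduction to blocks.} First I would normalize the data. Fix an injection $\sigma:\bigcup_{\alpha\in D}K_\alpha\to\omega$. Enlarging the witnesses $F_\beta$ to $F_\beta\cup\sigma(\beta)$ preserves linear independence, since we only remove vectors. So we may assume $F_\beta\supseteq\{0,\dots,\sigma(\beta)-1\}$. For $\alpha\in D$ and $m\in\omega$ let
\[
\Phi_\alpha(m)=\{h_\beta(m):\beta\in K_\alpha,\ m\notin F_\beta\}.
\]
Each $\Phi_\alpha(m)$ is a finite set of size at most $m$. By hypothesis \ref{item:linearlyIndependent}, the union $\bigcup_{m}\Phi_\alpha(m)$ is linearly independent, with the sets $\Phi_\alpha(m)$ pairwise disjoint as indexed families. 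In the single-sequence proposition, $\Phi_\alpha(m)$ is the singleton $\{h_\alpha(m)\}$. The conclusion we need is a block version: sets $U_\alpha\in p_\alpha$, pairwise disjoint, such that after deleting finitely many further indices $(\beta,m)$, the family $\bigcup_{\alpha\in D}\bigcup_{m\in U_\alpha}\Phi_\alpha(m)$ is linearly independent. These deleted indices give $\tilde F_\beta$.

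\textbf{Building the $U_\alpha$.} Enumerate $D=\{\alpha_i:i\in\omega\}$. As in \cite{hruvsak2021countably}, I would choose $U_\alpha$ by a recursion on a fusion sequence. At stage $n$ we have finitely many indices $\alpha_0,\dots,\alpha_n$ and a finite-dimensional subspace $W_n$ spanned by the vectors already committed. We then discard from each $U_{\alpha_i}$ those $m$ for which $\Phi_{\alpha_i}(m)$ meets $W_n\oplus\langle\text{other committed blocks}\rangle$ nontrivially. Lemma~\ref{lemma:linearAlgebra2} is the tool for this: when a new independent set $B=\bigcup_m\Phi_{\alpha_{n+1}}(m)$ is added to the finite independent set $A$ already committed, at most $|A|$ elements of $B$ have to be removed. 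These finitely many removed elements are recorded in $\tilde F_\beta$. Because only finitely many vectors are removed at each stage, and each $\beta$ is affected only at stages $n\le$ some bound depending on $\sigma(\beta)$, each $\tilde F_\beta$ stays finite.

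\textbf{Main obstacle.} The hard part is the requirement $U_\alpha\in p_\alpha$ together with pairwise disjointness. This is exactly where the special structure of the $p_\alpha$ from \cite{hruvsak2021countably} is used; the linear algebra above only handles the finite corrections. I would reread their proof to isolate the genericity property of the $p_\alpha$ that yields, for countably many distinct indices, disjoint large sets on which prescribed vectors avoid any given finite-dimensional span. I would then check that this property is stated, or provable, for $\subseteq$-growing finite sets of vectors and not just single vectors. The growth $|\Phi_\alpha(m)|\le m$ should keep all the counting arguments in their proof finite.

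If that check fails, the fallback is to apply the previous proposition to a single coded sequence per $\alpha$. Concretely, one would use the image of $\Phi_\alpha(m)$ under an injective linear map into $[E\times\omega]^{<\omega}$ that sends the whole block to one vector while preserving enough independence. Carrying out such a coding is the step I expect to be most delicate.
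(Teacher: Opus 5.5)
\textbf{There is a genuine gap.} The statement is an existence claim about the ultrafilters, yet your proposal never defines $(p_\alpha:\alpha<\mathfrak c)$. The step you call the main obstacle, getting pairwise disjoint $U_\alpha\in p_\alpha$ after discarding bad points, is the whole content of the proposition, and you leave it as ``reread their proof''. Linear algebra alone cannot supply it. Suppose $X\in p_{\alpha'}\iff X+1\in p_\alpha$, with $K_\alpha=\{\beta\}$, $K_{\alpha'}=\{\beta'\}$ and $h_{\beta'}(m)=h_\beta(m+1)$. Then for all $U_\alpha\in p_\alpha$ and $U_{\alpha'}\in p_{\alpha'}$, the set $U_{\alpha'}\cap(U_\alpha-1)$ lies in $p_{\alpha'}$. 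Each of its infinitely many points gives a repeated vector, which no finite $\tilde F_\beta$ removes.

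Your fusion scheme also fails as described. Once the $U_{\alpha_i}$ are infinite, the committed family $A$ is infinite, so Lemma~\ref{lemma:linearAlgebra2} may force infinitely many deletions. These cannot be charged to the finite sets $\tilde F_\beta$, and your claim that each $\beta$ is touched at only boundedly many stages is unjustified. Moreover, a set built by fusion has no reason to lie in $p_\alpha$. The coding fallback does not work either. A single code vector per $m$ cannot carry the independence of a growing block, because independence of the codes gives no information about the block entries themselves.

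The missing idea is to build the infinitely many deletions into the ultrafilters themselves. The paper proceeds as follows.
\begin{enumerate}[label=(\arabic*)]
    \item It fixes an interval partition $I_0<I_1<\cdots$ with $|I_n|>n^2\sum_{k<n}|I_k|$.
    \item It takes the centered family $\mathcal C$ of sets $B$ with $|I_n\setminus B|\le n\sum_{k<n}|I_k|$ for all $n$, and an almost disjoint family $(a_\alpha)$.
    \item It lets $p_\alpha$ extend $\mathcal C$ together with $\bigcup_{n\in a_\alpha}I_n$.
\end{enumerate}
Given the data, it partitions the block indices into sets $B_\alpha\supseteq^* a_\alpha$. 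Block $I_k$ is assigned to the unique $\alpha_k$ with $k\in B_{\alpha_k}$, together with a finite set $T_k\subseteq K_{\alpha_k}$ with $|T_k|\le k$.

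The blocks are then processed in increasing order, interleaving all $\alpha$'s rather than adding one $\alpha$ at a time. This way the committed family before block $n$ is finite, of size at most $n\sum_{k<n}|I_k|$. Lemma~\ref{lemma:linearAlgebra2} then gives a set $C_n\subseteq I_n$ of at most that size whose removal keeps everything independent. Since $\bigcup_n(I_n\setminus C_n)\in\mathcal C$, the sets $U_\alpha=\bigcup_{n\in B_\alpha}(I_n\setminus C_n)$ lie in $p_\alpha$ and are automatically disjoint. Each $\tilde F_\beta$ only has to absorb $F_\beta$ and the finitely many blocks before $\beta$ first enters some $T_k$.

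Your bound $|\Phi_\alpha(m)|\le m$ plays the role of $|T_k|\le k$. Without the interval and filter construction, however, the argument has nothing that produces $U_\alpha\in p_\alpha$.
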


\begin{proof}
    Let $(I_n: n \in \omega)$ be a partition of $\omega$ into finite sets such that, for every $n \in \omega$, $|I_n|>n^2\sum_{k<n}|I_k|$, and $I_0<I_1<\cdots$.
    Let $\mathcal C=\{B\subseteq \omega:\, |I_n\setminus B|\leq n\sum_{k<n}|I_k|$ for every $n \in \omega\}$.

    \begin{claim}
        $\mathcal C$ is centered.
        Moreover, for every infinite set $R\subseteq \omega$, $\{\bigcup_{n\in R} I_n\}\cup \mathcal C$ is centered.
    \end{claim}
    \begin{proof}[Proof of the claim]Fix $R$.
        Given $B_1, \ldots, B_m\in \mathcal C$, let $B=\bigcap_{i=1}^m B_i$.
        Then, for every $n \in \omega$ with $m\leq n$, we have:

        \begin{align*}
            |I_n\setminus B| & \leq \sum_{i=1}^m |I_n\setminus B_i| \leq m n \sum_{k<n}|I_k| \leq n^2 \sum_{k<n}|I_k|<|I_n|.
        \end{align*}

        Therefore, $|I_n\cap B|>0$ for every $n \in R\setminus m$, so $B\cap \bigcup_{n\in R} I_n$ is infinite.
    \end{proof}
    Let $(a_\alpha: \alpha<\mathfrak c)$ be an almost disjoint family of cardinality $\mathfrak c$, enumerated without repetitions.
    Let $\mathcal F$ be the free filter generated by $\mathcal C$.
    For each $\alpha<\mathfrak c$, let $p_\alpha$ be an ultrafilter extending the filter generated by $\mathcal F|\bigcup_{n\in a_\alpha} I_n=\{B\cap \bigcup_{n\in a_\alpha} I_n:\, B \in \mathcal F\}$.

    We now show that the family $(p_\alpha: \alpha<\mathfrak c)$ satisfies the required properties.

    Let $E$ be a set, let $D\subseteq \mathfrak c$ be countable, and let $(K_\alpha:\alpha\in D)$ be a family of pairwise disjoint, countable subsets of $E$.

    Let $(B_\alpha: \alpha \in D)$ be a partition of $\omega$ such that $a_\alpha\subseteq^* B_\alpha$ for every $\alpha \in D$.

    For every $k \in \omega$, let $\alpha_k$ be the unique ordinal such that $k \in B_{\alpha_k}$.
    Then $(\alpha_k:k\in\omega)$ is a sequence in $D$ in which every element of $D$ appears infinitely many times.
    
    For each $\alpha\in D$, let $(\beta_n^\alpha: n < |K_\alpha|)$ be an enumeration of $K_\alpha$ without repetitions.

    Let $\rho(k)=|\{i<k: \alpha_k=\alpha_i\}|$.
    Then $\rho(k)$ counts how many appearances of $\alpha_k$ have occurred in $(\alpha_i: i \in \omega)$ before stage $k$.

    Let
    \[
    T_k=\{\beta_i^{\alpha_k}: i<\min(\rho(k), |K_{\alpha_k}|)\}.
    \]
    Thus, $T_k$ consists of an initial finite segment of the enumeration of
    $K_{\alpha_k}$.

    Let $(h_\beta: \beta \in \bigcup_{\alpha \in D} K_\alpha)$ and $(F_\beta: \beta \in \bigcup_{\alpha \in D} K_\alpha)$ be as in the hypothesis.

    Now recursively, choose sets $C_n$ for $n \in \omega$ such that:
    \begin{enumerate}[label=(\alph*)]
        \item $C_n \subseteq I_n$,
        \item $|C_n|\leq n \sum_{k<n}|I_k|$,
        \item the family $(h_{\beta}(m): \beta \in T_k, m \in I_k\setminus (C_k\cup F_\beta), k\leq n)$ is linearly independent.
    \end{enumerate}

    To see that this is possible, having defined $C_k$ for $k<n$, the family $(h_{\beta}(m): \beta \in T_k, m \in I_k\setminus (C_k\cup F_\beta), k<n)$ is linearly independent.
    Note that this family has cardinality at most:

    \begin{equation*}
        \sum_{k<n} |T_k| |I_k| \leq \sum_{k<n} k |I_k| \leq n \sum_{k<n} |I_k|.
    \end{equation*}
    The family
    $(h_{\beta}(m):\beta\in T_n,\ m\in I_n\setminus F_\beta)$ is also linearly
    independent. Lemma~\ref{lemma:linearAlgebra2} therefore gives a set
    $C_n\subseteq I_n$ of cardinality at most
    $n\sum_{k<n}|I_k|$ such that
    $(h_{\beta}(m):\beta\in T_k,\ m\in I_k\setminus(C_k\cup F_\beta),\ k\leq n)$
    is linearly independent.

    Finally, let $U_\alpha=\bigcup\{I_n\setminus C_n:\, n \in B_\alpha\}$ for every $\alpha \in D$.
    For each $\beta \in \bigcup_{\alpha \in D} K_\alpha$, choose $i_\beta\in\omega$ such that
    $i_\beta\in B_{\alpha_\beta}$ and $\beta\in T_{i_\beta}$, and let
    \[
    \tilde F_\beta=\Big(\bigcup_{k<i_\beta} I_k\Big)\cup F_\beta.
    \]
    Then:

    \begin{itemize}
        \item $U_\alpha \in p_\alpha$ for every $\alpha \in D$: as $\bigcup_{n \in B_\alpha} I_n \supseteq^* \bigcup_{n \in a_\alpha} I_n$ and the latter is in $p_\alpha$, so is the former.
        Moreover, $\bigcup_{n\in\omega}(I_n\setminus C_n)\in\mathcal F$, so
        \[
        U_\alpha=
        \left(\bigcup_{n\in B_\alpha}I_n\right)
        \cap\left(\bigcup_{n\in\omega}(I_n\setminus C_n)\right)
        \in p_\alpha.
        \]
        \item $U_\alpha \cap U_{\alpha'}=\emptyset$ for every distinct $\alpha, \alpha' \in D$ as $B_\alpha\cap B_{\alpha'}$ is empty.
        \item The family
        \[
        (h_{\beta}(m): m \in U_{\alpha_{\beta}}\setminus\tilde F_{\beta},\, \beta \in \bigcup_{\alpha \in D} K_\alpha)
        \]
        is linearly independent.
        To see this, choose
        $\beta_0,\dots,\beta_l\in\bigcup_{\alpha\in D}K_\alpha$ and
        $m_0,\dots,m_l$ so that
        \[
        m_j\in U_{\alpha_{\beta_j}}\setminus\widetilde F_{\beta_j}
        \quad(j\leq l),
        \qquad
        (\beta_i,m_i)\neq(\beta_j,m_j)
        \quad(i<j\leq l).
        \]
        Let $k_j$ be such that $m_j \in I_{k_j}$ for $j\leq l$.
        As $m_j \notin \tilde F_{\beta_j}$, we have
        $\beta_j \in T_{k_j}$ and
        $m_j \in I_{k_j}\setminus (C_{k_j}\cup F_{\beta_j})$ for $j\leq l$.
        Indeed, $k_j\in B_{\alpha_{\beta_j}}$, $k_j\ge i_{\beta_j}$, and $\beta_j\in T_{i_{\beta_j}}$, hence $\beta_j\in T_{k_j}$.

        Let $N=\max\{k_0,\dots,k_l\}$. For every $j\leq l$, the vector
        $h_{\beta_j}(m_j)$ belongs to the linearly independent family
        \[
        (h_\beta(m):\beta\in T_k,\
        m\in I_k\setminus(C_k\cup F_\beta),\ k\leq N).
        \]
        Hence, the selected vectors are linearly independent.
    \end{itemize}
\end{proof}

We now isolate the two pieces of structure extracted from the previous proposition that will be used in the sequel.
The first is the ultrafilter family itself; the second is the combinatorial configuration of blocks and functions to which the proposition will be applied.

\begin{definition}
    A family of ultrafilters $(p_\alpha:\alpha\in\mathfrak c)$ satisfying the conclusion of the previous proposition is called a \emph{tidy family of ultrafilters}.
\end{definition}
\begin{definition}

    A \emph{tidy system} is a family $(K_\alpha: \alpha \in Z)$ of pairwise disjoint, countable subsets of $\mathfrak c$, where $Z\subseteq \mathfrak c$, along with a family $(h_\beta: \beta \in \bigcup_{\alpha \in Z} K_\alpha)$ of functions
    $h_\beta:\omega\to[\mathfrak c]^{<\omega}$ such that:

    \begin{enumerate}[label=(\roman*)]
        \item for every $\alpha \in Z$, there exists a family $(F_\beta: \beta \in K_\alpha)$ of finite subsets of $\omega$ such that $(h_{\beta}(m): \beta \in K_\alpha, m \in \omega\setminus F_\beta)$ is a linearly independent family in $[\mathfrak c]^{<\omega}$,
        \item for every $\alpha \in Z$ and every $\beta \in K_\alpha$, $\bigcup_{n \in \omega} h_\beta(n)\subseteq \beta$,
        \item $\omega \cap K_\alpha=\emptyset$ for every $\alpha \in Z$.
    \end{enumerate}
\end{definition}

\subsection{Constructing homomorphisms}

In this subsection, we construct homomorphisms into the Boolean group $2=\{0, 1\}$, endowed with addition modulo $2$.

\begin{definition}
    Let $(K_\alpha: \alpha \in Z)$, $(h_\beta: \beta \in \bigcup_{\alpha \in Z} K_\alpha)$ be a tidy system.
    We say that $D\subseteq \mathfrak c$ is \emph{suitably closed} if:

    \begin{enumerate}[label=(\roman*)]
        \item $\omega \subseteq D$;
        \item for every $\alpha \in D\cap Z$, $K_\alpha\subseteq D$;
        \item for every $\alpha \in Z$, if $K_\alpha \cap D\neq \emptyset$, then $\alpha \in D$;
        \item for every $\alpha \in D\cap Z$ and every $\beta \in K_\alpha$, $\bigcup_{n \in \omega} h_\beta(n) \subseteq D$.
    \end{enumerate}
\end{definition}

A standard closing-off argument yields the following.

\begin{lemma}\label{lemma:existenceOfCountableTidySets}
    Let $(K_\alpha: \alpha \in Z)$, $(h_\beta: \beta \in \bigcup_{\alpha \in Z} K_\alpha)$ be a tidy system.
    Then for every countable set $D'\subseteq \mathfrak c$, there exists a countable suitably closed set $D$ such that $D'\subseteq D$.
\end{lemma}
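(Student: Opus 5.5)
We build $D$ by a countable closing-off recursion, producing an increasing sequence $(D_k:k\in\omega)$ of countable subsets of $\mathfrak c$ and taking $D=\bigcup_{k\in\omega}D_k$. We start with $D_0=D'\cup\omega$, which is countable and gives clause (i).

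Given $D_k$, we set
\[
D_{k+1}=D_k\cup\bigcup_{\alpha\in D_k\cap Z}\Bigl(K_\alpha\cup\bigcup_{\beta\in K_\alpha}\bigcup_{n\in\omega}h_\beta(n)\Bigr)\cup\{\alpha\in Z: K_\alpha\cap D_k\neq\emptyset\}.
\]
The second term handles clauses (ii) and (iv), and the third handles clause (iii).

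Countability of $D_{k+1}$ is the only point needing an argument. Each $K_\alpha$ is countable, and each $\bigcup_{n}h_\beta(n)$ is a countable union of finite sets. So the second term is a countable union of countable sets. For the third term we use that the $K_\alpha$ are pairwise disjoint: each $\gamma\in D_k$ lies in at most one $K_\alpha$. Hence the map sending $\alpha$ to some element of $K_\alpha\cap D_k$ is injective, and the set $\{\alpha\in Z: K_\alpha\cap D_k\neq\emptyset\}$ has cardinality at most $|D_k|\le\aleph_0$. This disjointness is the one place where the tidy-system hypothesis matters, and it is the only mildly nontrivial step.

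It remains to check that $D$ is suitably closed. Clause (i) holds because $\omega\subseteq D_0$. Each of the closure conditions (ii)--(iv) has a finite hypothesis that mentions a single element of $D$: either an $\alpha\in D\cap Z$, or a witness $\gamma\in K_\alpha\cap D$. That element already lies in some $D_k$, and then the required sets or ordinals are placed into $D_{k+1}\subseteq D$ by construction. Finally, $D$ is a countable union of countable sets, so it is countable, and $D'\subseteq D_0\subseteq D$.
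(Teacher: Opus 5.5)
Your proposal is correct and matches the paper's own closing-off argument: the same $D_0=D'\cup\omega$, the same recursive step adding $K_\alpha$, the sets $\bigcup_{n}h_\beta(n)$ and the indices $\alpha$ with $K_\alpha\cap D_k\neq\emptyset$, and then $D=\bigcup_k D_k$. Your countability argument uses pairwise disjointness of the $K_\alpha$ for the third term and countability of each $K_\alpha$ for the second, which spells out what the paper compresses into ``by the tidiness assumptions.''
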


\begin{proof}
    Set $D_0=D'\cup \omega$. Recursively, given $D_n$, define
    \[
    \begin{aligned}
    D_{n+1}={}&D_n
    \cup \bigcup_{\alpha\in D_n\cap Z}K_\alpha
    \cup \{\alpha\in Z:K_\alpha\cap D_n\neq\emptyset\}\\
    &\cup \bigcup\Bigl\{
       \bigcup_{m\in\omega}h_\beta(m):
       \alpha\in D_n\cap Z,\ \beta\in K_\alpha
       \Bigr\}.
    \end{aligned}
    \]
    By the tidiness assumptions, each $D_n$ is countable. Hence, $D=\bigcup_{n\in\omega} D_n$ is countable.
    Clearly $D'\subseteq D$.
    A routine verification from the construction shows that $D$ is suitably closed.
\end{proof}

\begin{definition}
    Let $(p_\alpha: \alpha \in \mathfrak c)$ be a tidy family of ultrafilters, and let $(K_\alpha: \alpha \in Z)$, $(h_\beta: \beta \in \bigcup_{\alpha \in Z} K_\alpha)$ be a tidy system.
    
    Let $D\subseteq \mathfrak c$ be suitably closed.
    A homomorphism $\Phi: [D]^{<\omega} \to 2$ is said to be \emph{tidy} if
    \begin{equation*}
        \forall \alpha \in D \cap Z,\ \forall \beta \in K_\alpha,\ 
        \Phi(\{\beta\}) = p_\alpha\text{-}\lim_{m \in \omega} \Phi(h_\beta(m)).
    \end{equation*}
\end{definition}

\begin{lemma}\label{lemma:extensionOfTidyHomomorphisms}
    In the previous notation, every tidy homomorphism $\Phi: [D]^{<\omega} \to 2$ can be extended to a tidy homomorphism $\Psi: [\mathfrak c]^{<\omega} \to 2$.
\end{lemma}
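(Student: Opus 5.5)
We extend $\Phi$ by transfinite recursion along an increasing continuous chain of countable suitably closed sets whose union is $\mathfrak c$, extending the homomorphism one countable block at a time. Since $\mathfrak c$ may be uncountable, enumerate $\mathfrak c\setminus D$ as $(\gamma_\xi:\xi<\mathfrak c)$. Using Lemma~\ref{lemma:existenceOfCountableTidySets}, build an increasing continuous sequence $(D_\xi:\xi\le\mathfrak c)$ with $D_0=D$, each $D_{\xi+1}\supseteq D_\xi\cup\{\gamma_\xi\}$ suitably closed, and $D_{\xi+1}\setminus D_\xi$ countable. Apply the closing-off argument of that lemma starting from $D_\xi\cup\{\gamma_\xi\}$; the new part is countable because the closure operations are countable at each step. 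Unions of suitably closed sets are suitably closed, so limit stages cause no trouble. At limit stages take the union of the homomorphisms. The tidiness condition concerns a single $\beta$ and the finitely supported vectors $h_\beta(m)$, so it is preserved under unions. The whole problem therefore reduces to the successor step: extend a tidy $\Phi$ on $[D]^{<\omega}$, with $D$ suitably closed, to a tidy homomorphism on $[D']^{<\omega}$, where $D'\supseteq D$ is suitably closed and $D'\setminus D$ is countable.

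For the successor step, let $D'' = D'\setminus D$ and $Z''=(D'\setminus D)\cap Z$. By suitable closedness of $D$ and $D'$, for $\alpha\in Z''$ we have $K_\alpha\subseteq D''$: $K_\alpha\subseteq D'$, and $K_\alpha\cap D\neq\emptyset$ would force $\alpha\in D$. Conversely, if $\beta\in D''$ lies in some $K_\alpha$, then $\alpha\in Z''$. Thus the generators $\{\beta\}$ with $\beta\in D''$ split into two groups. The \emph{constrained} ones are $\beta\in\bigcup_{\alpha\in Z''}K_\alpha$, with the requirement $\Psi(\{\beta\})=p_\alpha\text{-}\lim_m\Psi(h_\beta(m))$. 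The \emph{free} ones are all the others, which we may assign arbitrarily (say $0$). The difficulty is that $h_\beta(m)$ may involve other constrained generators, and the limits must be computed consistently. Condition (ii) of a tidy system, $\bigcup_n h_\beta(n)\subseteq\beta$, gives well-foundedness: $\Psi(\{\beta\})$ depends only on values at ordinals $<\beta$.

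The cleanest way to handle this is to define $\Psi$ on $\{\beta\}$, for $\beta\in D''$, by recursion on the ordinal $\beta$. If $\beta$ is free, set $\Psi(\{\beta\})=0$. If $\beta\in K_\alpha$ with $\alpha\in Z''$, then every element of $\bigcup_m h_\beta(m)$ lies in $D'$ and below $\beta$. Hence $\Psi$ is already defined on each $h_\beta(m)$, either from $\Phi$ on $D$ or from earlier stages of the recursion. Set $\Psi(\{\beta\})=p_\alpha\text{-}\lim_m\Psi(h_\beta(m))$, which exists because $2$ is finite. Extend linearly by Lemma~\ref{lemma:linearAlgebra}; linear extension is compatible with evaluating $\Psi$ on finite sets below $\beta$. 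Tidiness for $\alpha\in D\cap Z$ is inherited from $\Phi$, since $\Psi$ extends $\Phi$ and $h_\beta(m)\subseteq D$ for those $\beta$. Tidiness for $\alpha\in Z''$ holds by construction.

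This recursion does not seem to need the tidy-family property of the ultrafilters at all, only well-foundedness via condition (ii). The step I expect to be the main obstacle is checking that this really suffices. In particular, one must make sure each $\beta$ lies in at most one $K_\alpha$ (true by pairwise disjointness), so that the definition of $\Psi(\{\beta\})$ is unambiguous. If that holds, the transfinite chain can even be bypassed: do the recursion on $\beta\in\mathfrak c\setminus D$ directly. This is valid provided that for $\beta\in K_\alpha$ with $\alpha\in Z\setminus D$, the constraint is not already fixed by $\Phi$, which follows from condition (iii) of suitable closedness.
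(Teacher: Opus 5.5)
Your argument is correct, and the version in your last paragraph is exactly the paper's proof. The paper enumerates $\mathfrak c\setminus D$ increasingly and defines the value on each new generator $\{\beta\}$ by transfinite recursion: it sets the value to $p_\alpha\text{-}\lim_m\Psi(h_\beta(m))$ when $\beta\in K_\alpha$, which is well defined because $\bigcup_m h_\beta(m)\subseteq\beta$, chooses it arbitrarily otherwise, and takes unions at limit stages. Like you, it uses no special property of the ultrafilters. The preliminary chain of countable suitably closed sets is harmless but unnecessary, and enumerating $\mathfrak c\setminus D$ in order type $\mathfrak c$ tacitly assumes $|\mathfrak c\setminus D|=\mathfrak c$.
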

\begin{proof}
    Let $\Phi: [D]^{<\omega} \to 2$ be a tidy homomorphism.

    Enumerate $\mathfrak c \setminus D$ increasingly as $\{\beta_\xi: \xi < \lambda\}$, where $\lambda$ is the order type of $\mathfrak c \setminus D$.
    We define by transfinite recursion a family $(\Phi_\xi:\xi\leq\lambda)$ such that:
    \begin{enumerate}[label=(\roman*)]
        \item $\dom(\Phi_\xi) = [D\cup \{\beta_\eta: \eta < \xi\}]^{<\omega}$;
        \item $\Phi_{\xi'} \supseteq \Phi_\xi$ whenever $\xi < \xi' \leq \lambda$;
        \item each $\Phi_\xi$ is a homomorphism.
    \end{enumerate}

    Let $\Phi_0=\Phi$. Suppose that $\xi<\lambda$ and that $\Phi_\xi$ has already been defined. Choose $e_\xi \in 2$ as follows:
    \begin{enumerate}[label=(\alph*)]
        \item if $\beta_\xi \in K_\alpha$ for the unique $\alpha \in Z$, let
        \begin{equation*}
            e_\xi = p_\alpha\text{-}\lim_{m\in\omega}\Phi_\xi(h_{\beta_\xi}(m));
        \end{equation*}
        \item otherwise, let $e_\xi$ be arbitrary.
    \end{enumerate}
    Then define $\Phi_{\xi+1}: [D\cup\{\beta_\eta:\eta\leq\xi\}]^{<\omega}\to 2$ by
    \begin{equation*}
        \Phi_{\xi+1}(s)=
        \begin{cases}
            \Phi_\xi(s), & \text{if }\beta_\xi\notin s,\\
            \Phi_\xi(s\setminus\{\beta_\xi\})+e_\xi, & \text{if }\beta_\xi\in s.
        \end{cases}
    \end{equation*}
    This defines a homomorphism extending $\Phi_\xi$.

    If $\beta_\xi \in K_\alpha$, then, by the definition of tidy system, $\bigcup_{m\in\omega} h_{\beta_\xi}(m)\subseteq \beta_\xi$. Since the enumeration is increasing, it follows that
    \begin{equation*}
        \bigcup_{m\in\omega} h_{\beta_\xi}(m)\subseteq D\cup\{\beta_\eta:\eta<\xi\},
    \end{equation*}
    so the above ultrafilter limit is well-defined.

    For every limit ordinal $\delta\leq\lambda$, let $\Phi_\delta=\bigcup_{\xi<\delta}\Phi_\xi$. Then $\Phi_\delta$ is a homomorphism.
    Finally, let $\Psi=\Phi_\lambda$.
    It is straightforward to verify that $\Psi$ is a tidy homomorphism extending $\Phi$.
\end{proof}

\begin{lemma}\label{lemma:existenceOfHomomorphisms}
    Let $(p_\alpha: \alpha \in \mathfrak c)$ be a tidy family of ultrafilters and $(K_\alpha: \alpha \in Z)$, $(h_\beta: \beta \in \bigcup_{\alpha \in Z} K_\alpha)$ be a tidy system.

    Then for every finite set $A \subseteq \mathfrak c$ and every $g:A\to 2$, there exists a tidy homomorphism $\Phi: [\mathfrak c]^{<\omega} \to 2$ such that $\Phi(\{\xi\})=g(\xi)$ for every $\xi \in A$.

    Moreover, we can guarantee that for every $\beta \in A\cap \bigcup_{\alpha \in Z} K_\alpha$ and every $i\in \{0,1\}$,
    \[
    |\{m \in \omega: \Phi(h_\beta(m))=i\}|=\omega.
    \]
\end{lemma}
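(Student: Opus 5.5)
The plan is to build $\Phi$ in two stages. First, define a tidy homomorphism on a countable suitably closed set containing $A$. Then extend it to all of $[\mathfrak c]^{<\omega}$ by Lemma~\ref{lemma:extensionOfTidyHomomorphisms}. By Lemma~\ref{lemma:existenceOfCountableTidySets}, fix a countable suitably closed $D\supseteq A$. Because $D$ is suitably closed, the sets $K_\alpha$ with $\alpha\in D\cap Z$ are pairwise disjoint countable subsets of $D$. The functions $(h_\beta:\beta\in\bigcup_{\alpha\in D\cap Z}K_\alpha)$ take values in $[D]^{<\omega}$ and satisfy clause (i) of a tidy system. So the tidy-family proposition applies with $E=D$ and the countable index set $D\cap Z$. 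It yields pairwise disjoint $U_\alpha\in p_\alpha$ and finite sets $\tilde F_\beta$ such that
\[
\mathcal V=(h_\beta(m): m\in U_{\alpha_\beta}\setminus\tilde F_\beta,\ \beta\in K_\alpha,\ \alpha\in D\cap Z)
\]
is linearly independent in $[D]^{<\omega}$.

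The key step is to define $\Phi$ on $[D]^{<\omega}$ so that the tidiness equations hold automatically. Since $\mathcal V$ is linearly independent, Lemma~\ref{lemma:linearAlgebra} (applied to a basis extending $\mathcal V$) lets us prescribe $\Phi$ freely on $\mathcal V$, provided we also handle the singletons $\{\xi\}$, $\xi\in A$, which must not conflict with $\mathcal V$. Concretely, I would choose a set $W$ of vectors so that $\mathcal V\cup W$ is a basis of $[D]^{<\omega}$. The values $\Phi(\{\beta\})$ for $\beta\in K_\alpha$ are then whatever the linear structure forces. We set $\Phi(h_\beta(m))$ equal to $\Phi(\{\beta\})$ for all $m$ in a set in $p_\alpha$, while still prescribing both values $0$ and $1$ on infinitely many $m$, for the "moreover" clause. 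The difficulty is that $\Phi(\{\beta\})$ is itself a combination of basis vectors, possibly involving $\mathcal V$.

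To avoid circularity, I would first shrink each index set: remove from $U_\alpha$ finitely or co-$p_\alpha$-many points, or split $U_\alpha$ into two disjoint infinite pieces, one in $p_\alpha$ and one not. A linearly independent family stays independent under shrinking. Then extend the shrunken $\mathcal V$ by the singletons $\{\xi\}$, $\xi\in D\setminus\bigcup_{\alpha} K_\alpha$, together with the $\{\beta\}$, removing as in Lemma~\ref{lemma:linearAlgebra2} the finitely many vectors of $\mathcal V$ that depend on each finite stage.

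The cleanest route is a countable recursion. Enumerate the $\beta$'s and the points of $A$. At stage $n$, we have committed $\Phi$ on a finite-dimensional subspace $S_n$. Membership of $h_\beta(m)$ in $S_n$ occurs only for finitely many $m\in U_{\alpha_\beta}\setminus\tilde F_\beta$, by independence against a finite-dimensional space. So on all but finitely many such $m$ we may set $\Phi(h_\beta(m))=\Phi(\{\beta\})$, after first fixing $\Phi(\{\beta\})$, or $g(\beta)$ if $\beta\in A$. On infinitely many $m\notin U_{\alpha_\beta}$, we set the opposite value, which is possible if we additionally arrange, by shrinking the pieces, that independence of $\mathcal V$ also covers a $p_\alpha$-null infinite part. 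Off these constraints we define $\Phi$ arbitrarily by linear extension.

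I expect the main obstacle to be ensuring that assigning $\Phi(\{\beta\})$ never contradicts earlier commitments. The point is that $\{\beta\}$ may already lie in the span of previously fixed vectors, since $h_{\beta'}(m)\subseteq\beta'$ can contain $\beta$. I would handle this by processing $\beta$'s in increasing ordinal order within a finite stage, exploiting $\bigcup_m h_\beta(m)\subseteq\beta$: the value $\Phi(\{\beta\})$ is then read off from the already fixed values (setting it freely when it is independent, or forced to equal $g(\beta)$ when $\beta\in A$, which I would fix at the very start). Finally, Lemma~\ref{lemma:extensionOfTidyHomomorphisms} extends the resulting tidy $\Phi$ to $[\mathfrak c]^{<\omega}$ while preserving both conclusions.
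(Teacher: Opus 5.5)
Your route is the paper's. You take a countable suitably closed set containing $A$ and get the sets $U_\alpha\in p_\alpha$ from the tidy-family property. You split them into a $p_\alpha$-piece and an infinite $p_\alpha$-null piece for the ``moreover'' clause. You then run a recursion in which $\Phi(\{\beta\})$ is forced if $\{\beta\}$ already lies in the committed span, chosen freely otherwise (or set to $g(\beta)$ for $\beta\in A$), and then copied onto cofinitely many $h_\beta(m)$. Finally you extend by Lemma~\ref{lemma:extensionOfTidyHomomorphisms}.

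The one step that does not work as written is your invariant that at stage $n$ you have committed $\Phi$ on a finite-dimensional subspace. As soon as you process a single $\beta\in\bigcup_\alpha K_\alpha$, you commit values on infinitely many linearly independent vectors $h_\beta(m)$. From then on the committed space is infinite-dimensional, so your justification ``by independence against a finite-dimensional space'' no longer applies. Note also that it is not enough for each $h_\beta(m)$ to lie outside the committed span individually. Since you give them all the same value, they must be jointly independent over that span, or the assignment can be inconsistent.

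The repair is the invariant the paper uses. Keep a linearly independent generating set $S_n$ of the committed space consisting of finitely many extra vectors (the singletons handled so far) together with a subfamily $Q_n$ of your independent family $\mathcal V$. The new vectors $h_\beta(m)$, $m\in U_{\alpha_\beta}\setminus\tilde F_\beta$, together with $Q_n$ are again a subfamily of $\mathcal V$, hence independent modulo $\langle Q_n\rangle$. The remaining generators, including the new $\{\beta\}$, span only a finite-dimensional space modulo $\langle Q_n\rangle$. So Lemma~\ref{lemma:linearAlgebra2}, applied in the quotient, lets you discard finitely many $m$ (the paper's finite sets $R_n$) so that the enlarged set stays independent. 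Then all your prescriptions extend consistently to a homomorphism.

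Processing the $\beta$'s in increasing ordinal order is unnecessary. The forced-or-free rule already prevents conflicts in any order, and the bound $\bigcup_m h_\beta(m)\subseteq\beta$ is needed only in Lemma~\ref{lemma:extensionOfTidyHomomorphisms}.
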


\begin{proof}
    Let $\bar D$ be a countable suitably closed set such that $A \subseteq \bar D$, as given by Lemma~\ref{lemma:existenceOfCountableTidySets}.
    \[
        D=\bar D\cap Z
        \qquad\text{and}\qquad
        D'=\bar D\cap \bigcup_{\alpha\in Z}K_\alpha.
    \]
    Since $\bar D$ is suitably closed, we have $D'=\bigcup_{\alpha\in D}K_\alpha$.

    There exist pairwise disjoint sets $(U_\alpha:\alpha\in D)$ and finite sets
    $(F_\beta:\beta\in\bigcup_{\alpha\in D}K_\alpha)$ such that
    $U_\alpha\in p_\alpha$ for every $\alpha\in D$ and

    \[
    (h_{\beta}(m):m\in U_{\alpha_\beta}\setminus F_\beta,\ \beta\in D')
    \]
    is linearly independent, where $\alpha_\beta$ is the unique
    $\alpha\in D$ such that $\beta\in K_\alpha$.

Enumerate $\bar D\setminus A$ as $\{\beta_n: n \in \omega\}$.
For each $n\in \omega$, let $E_n'=A\cup \{\beta_i: i<n\}$.

Recall that the notation $x\dot \cup y$ denotes $x\cup y$ and indicates that $x\cap y=\emptyset$.
We inductively define $R_n$ and $S_n$ for each $n\in \omega$ such that:

\begin{enumerate}[label=\roman*)]
    \item $R_n$ is finite and contains $\bigcup_{\beta \in E'_n\cap D'}F_\beta$.
    \item $S_0=\{\{\beta\}: \beta \in A\}\dot \cup \{h_\beta(m): \beta \in A\cap D', m \in U_{\alpha_\beta}\setminus R_0\}$.
    \item $S_{n+1}=S_n$ if $\beta_n \notin D'$ and $\{\beta_n\} \in \langle S_n\rangle$.
    \item $S_{n+1}=S_n\cup\{\{\beta_n\}\}$ if $\beta_n \notin D'$ and $\{\beta_n\} \notin \langle S_n\rangle$.
    \item $S_{n+1}=S_n\dot\cup \{h_{\beta_n}(m): m \in U_{\alpha_{\beta_n}}\setminus R_{n+1}\}$ if $\beta_n \in D'$ and $\{\beta_n\} \in \langle S_n\rangle$.
    \item $S_{n+1}=S_n\dot\cup \{h_{\beta_n}(m): m \in U_{\alpha_{\beta_n}}\setminus R_{n+1}\}\cup\{\{\beta_n\}\}$ if $\beta_n \in D'$ and $\{\beta_n\} \notin \langle S_n\rangle$.
    \item $S_n$ is linearly independent for every $n \in \omega$.
\end{enumerate}

This is straightforward to obtain by recursively choosing the finite sets $R_n$ large enough, using Lemma~\ref{lemma:linearAlgebra2}.

Notice that, in any case, $\{\beta_n\} \in \langle S_{n+1}\rangle$ for every $n \in \omega$.
Thus, $\{\{\beta\}: \beta \in \bar D\}\subseteq \langle \bigcup_{n\in \omega} S_n\rangle$, which implies that $\left\langle \bigcup_{n\in \omega} S_n\right\rangle=[\bar D]^{<\omega}$.

For $\alpha\in D$ such that there exists $\beta \in A\cap K_\alpha$, partition $U_{\alpha}\setminus R_0$ into two disjoint infinite sets $U_{\alpha}'$ and $U_{\alpha}''$ so that $U_{\alpha}'\in p_{\alpha}$.

Now, we define by induction a family of homomorphisms $\Phi_n: \langle S_n\rangle\to 2$ such that, for every $n \in \omega$:

\begin{enumerate}[label=\Roman*)]
    \item $\Phi_0(\{\beta\})=g(\beta)$ for every $\beta \in A$.
    \item $\Phi_0(h_\beta(m))=g(\beta)$ for every $\beta \in A\cap D'$, $m \in U_{\alpha_\beta}'$.
    \item $\Phi_0(h_\beta(m))=1-g(\beta)$ for every $\beta \in A\cap D'$, $m \in U_{\alpha_\beta}''$.
    \item $\Phi_{n+1}$ extends $\Phi_n$.
    \item $\Phi_{n+1}(h_{\beta_n}(m))=\Phi_{n+1}(\{\beta_n\})$ for every $m \in U_{\alpha_{\beta_n}}\setminus R_{n+1}$ if $\beta_n \in D'$.
\end{enumerate}

We start by defining $\Phi_0: \langle S_0\rangle\to 2$ as follows.
We have that $S_0=\{\{\beta\}: \beta \in A\}\cup \{h_\beta(m): \beta \in A\cap D', m \in U_{\alpha_\beta}\setminus R_0\}$ is linearly independent.
Define $\Phi'_0$ on this set of generators so that:
\begin{itemize}
    \item $\Phi_0'(\{\beta\})=g(\beta)$ for every $\beta \in A$,
    \item $\Phi_0'(h_\beta(m))=g(\beta)$ for every $\beta \in A\cap D'$, $m \in U_{\alpha_\beta}'$,
    \item $\Phi_0'(h_\beta(m))=1-g(\beta)$ for every $\beta \in A\cap D'$, $m \in U_{\alpha_\beta}''$.
    \item $\Phi_0'(s)=0$ for every $s\in S_0$ that is not of the above form.
\end{itemize}
Then $\Phi_0'$ extends to a homomorphism $\Phi_0: \langle S_0\rangle\to 2$ satisfying I)-III).

Assume we have defined $\Phi_n$ satisfying I)-V) for some $n\in \omega$. We define $\Phi_{n+1}$ as follows.
First, define $\Phi_{n+1}'$ on $S_{n+1}\cup \dom \Phi_n$.

\begin{itemize}
    \item $\Phi_{n+1}'(s)=\Phi_n(s)$ for every $s\in \dom \Phi_n$,
    \item If $\{\beta_n\}\in S_{n+1}\setminus \dom \Phi_n$, define $\Phi_{n+1}'(\{\beta_n\})=0$.
    \item If $\beta_n \in D'$ and $m \in U_{\alpha_{\beta_n}}\setminus R_{n+1}$, define $\Phi_{n+1}'(h_{\beta_n}(m))=\Phi_{n+1}'(\{\beta_n\})$.
    \item $\Phi_{n+1}'(s)=0$ for every $s\in S_{n+1}\setminus \dom\Phi_n$ that is not of the above form.
\end{itemize}

As $\langle S_n\rangle\cap\langle S_{n+1}\setminus S_n\rangle=\{0\}$, $\Phi_{n+1}'$ extends to a homomorphism $\Phi_{n+1}:\langle S_{n+1}\cup \dom \Phi_n\rangle\to 2$ satisfying V).
Finally, let
\[
\Phi=\bigcup_{n\in \omega}\Phi_n.
\]
Then $\Phi$ is a homomorphism defined on $\left\langle \bigcup_{n\in \omega} S_n\right\rangle=[\bar D]^{<\omega}$.
By construction, $\Phi(\{\beta\})=g(\beta)$ for every $\beta\in A$.

If $\beta\in A\cap D'$, then
$\Phi(h_\beta(m))=\Phi(\{\beta\})$
for every $m\in U'_{\alpha_\beta}$, and
$\Phi(h_\beta(m))\neq \Phi(\{\beta\})$
for every $m\in U''_{\alpha_\beta}$.
Since $U'_{\alpha_\beta}\in p_{\alpha_\beta}$, it follows that
\[
\Phi(\{\beta\})=p_{\alpha_\beta}\text{-}\lim_{m\in\omega}\Phi(h_\beta(m)).
\]
Moreover, both sets
\[
\{m\in\omega:\Phi(h_\beta(m))=0\}
\quad\text{and}\quad
\{m\in\omega:\Phi(h_\beta(m))=1\}
\]
are infinite.
Now let $\beta\in D'\setminus A$, and let $n\in\omega$ be such that $\beta=\beta_n$.
Then
$U_{\alpha_\beta}\setminus R_{n+1}\in p_{\alpha_\beta}$,
and by (V), $\Phi(h_\beta(m))=\Phi(\{\beta\})$ for every $m\in U_{\alpha_\beta}\setminus R_{n+1}$.
Hence,
\[
\Phi(\{\beta\})=p_{\alpha_\beta}\text{-}\lim_{m\in\omega}\Phi(h_\beta(m)).
\]

Therefore, $\Phi$ is a tidy homomorphism. By Lemma~\ref{lemma:extensionOfTidyHomomorphisms}, $\Phi$ extends to a tidy homomorphism $\Psi:[\mathfrak c]^{<\omega}\to 2$, and this $\Psi$ satisfies the conclusion of the lemma.
\end{proof}

\subsection{A \texorpdfstring{$\mathcal B$}{B}-cascade countably compact group without non-trivial convergent sequences}

In this subsection, we fix $\kappa=\mathfrak{c}$ and write $G=G_\kappa=[\mathfrak c]^{<\omega}$.

\begin{theorem}\label{theorem:exampleOfBCascadeCountablyCompactGroup}
    There exists a group topology on $G$ such that:
    \begin{itemize}
        \item $G$ is Hausdorff.
        \item $G$ is $\mathcal B$-cascade countably compact for each barrier $\mathcal B$ on $\omega$.
        \item $G$ has no non-trivial convergent sequences.
        \item $G^\omega$ is countably compact.
    \end{itemize}
\end{theorem}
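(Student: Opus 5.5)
The plan is to take the topology on $G=[\mathfrak c]^{<\omega}$ generated by a family $\mathcal H$ of homomorphisms $\Phi:G\to 2$. This is the initial (Bohr-type) topology making every $\Phi\in\mathcal H$ continuous, and it is automatically a group topology. We obtain $\mathcal H$ from one tidy system as the set of all tidy homomorphisms. Fix a tidy family $(p_\alpha:\alpha<\mathfrak c)$ of ultrafilters. Then enumerate in type $\mathfrak c$ all pairs $(\mathcal B,f)$ with $\mathcal B$ a barrier on $\omega$ and $f:\mathcal B\to G$, together with the countable sets $I$ and $\LI$ families that may arise; there are only $\mathfrak c$ such objects. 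Each index $\alpha$ should be used cofinally often, and we keep $\omega$ out of all the $K_\alpha$.

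The construction runs by recursion on $\alpha<\mathfrak c$. At stage $\alpha$ we take the $\alpha$-th function $f_\alpha:\mathcal B_\alpha\to G$. We apply Theorem~\ref{theorem:cascadeFrameExistence} with $\kappa=\mathfrak c$ (so $\cf\mathfrak c>\omega$), $I=\emptyset$, and $L$ the cofinal set of ordinals above everything used so far, taken outside $\omega$ and outside $\bigcup_{\xi<\alpha}K_\xi$. This gives a cascade frame $(B_\alpha,(g_\beta:\beta\in J_\alpha),(x_t))$. We set $K_\alpha=J_\alpha$, fix an increasing enumeration $\phi_\alpha:\omega\to B_\alpha$, and put $h_\beta=g_\beta\circ\phi_\alpha$. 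We must check that this is a tidy system. Clause (i) is the $\LI(B_\alpha)$ property. Clause (ii) is clause (e) of the frame definition, $g_\beta(n)\in[\beta]^{<\omega}$. Clause (iii) holds because $L\cap\omega=\emptyset$. Disjointness of the $K_\alpha$ is arranged through the choice of $L$.

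Every tidy homomorphism $\Phi$ satisfies $\Phi(\{\beta\})=p_\alpha\text{-}\lim_m\Phi(h_\beta(m))$. Since the topology is initial with respect to $\mathcal H$ and $2$ is discrete, it follows that $\{\beta\}$ is a $p_\alpha$-limit of $g_\beta\circ\phi_\alpha$ for each $\beta\in K_\alpha$. Corollary~\ref{corollary:cascadeFrameLimit} then gives a $\mathcal B_\alpha$-cascade $q$-limit of $f_\alpha$ for some free $q$, and so $G$ is $\mathcal B$-cascade countably compact. The countable compactness of $G^\omega$ follows from the Schreier case via Corollary~\ref{corollary:productOfCountablyCompactGroups}. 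For the Hausdorff property, given a nonzero $s\in G$ we pick $\xi\in s$ and apply Lemma~\ref{lemma:existenceOfHomomorphisms} with $A=s$ and $g$ equal to $1$ at $\xi$ and $0$ elsewhere. This yields a tidy $\Phi$ with $\Phi(s)=1$, so $\mathcal H$ separates points.

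The main obstacle is showing that there are no nontrivial convergent sequences. Let $(a_n)$ be a nontrivial sequence, which we may take injective and, after translation, converging to $0$. We enumerate the constant-at-level-$[\omega]^1$ maps $n\mapsto a_n$ among the $f_\alpha$. The $\LI$ reduction of Lemma~\ref{lemma:LItoLI} then lets us assume that on some infinite $B$ the sequence is, modulo a finite combination, an $h_\beta$ with $\beta\in K_\alpha$. For $\mathcal B=[\omega]^1$, Case~2 of Lemma~\ref{lemma:completeRootOfCascadeFrame} gives exactly $g_\gamma(n)=a_n$, while Case~1 expresses $a_n$ via earlier $g_\beta$ plus a constant. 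The moreover clause of Lemma~\ref{lemma:existenceOfHomomorphisms} supplies a tidy $\Phi$ taking both values infinitely often on $h_\beta$, which contradicts convergence; in Case~1 the fixed constant is absorbed by adjusting $\Phi$ on finitely many generators. Arranging that every injective sequence, after restriction to an infinite set, is covered by this $\LI$ reduction — rather than merely being $\li$ — is the delicate bookkeeping step. I would handle it by enumerating all sequences $\omega\to G$ as $[\omega]^1$-instances, so that each one receives its own frame.
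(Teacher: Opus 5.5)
Your proposal is correct. For the Hausdorff property, $\mathcal B$-cascade countable compactness and countable compactness of $G^\omega$ it is essentially the paper's argument. The paper fixes pairwise disjoint cofinal sets $L_\alpha$ in advance rather than recursing, and uses a chosen family $\mathcal F$ rather than all tidy homomorphisms; neither difference matters. One small fix to your recursion: take $L=\mathfrak c\setminus(\omega\cup\bigcup_{\xi<\alpha}K_\xi)$, which is cofinal because it omits fewer than $\mathfrak c$ points. The set of ordinals \emph{above} everything used so far can be empty when $\cf\mathfrak c<\mathfrak c$.

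The real difference is in excluding convergent sequences. The paper adds a second block of indices $Z_1$, with $K_\alpha=\{\alpha\}$ and $h_\alpha=u_\alpha$ ranging over all linearly independent sequences supported below $\alpha$. It passes from a nontrivial sequence to a linearly independent subsequence, which is some $u_\alpha$ because $\cf\mathfrak c>\omega$. It then applies the moreover clause of Lemma~\ref{lemma:existenceOfHomomorphisms} with $A=\{\alpha\}$, using only the statements of the frame results. You instead reuse the frames already built for $[\omega]^1$. This removes the extra bookkeeping, but it relies on the internal structure of the proofs of Theorem~\ref{theorem:cascadeFrameExistence} and Lemma~\ref{lemma:completeRootOfCascadeFrame}.

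Your closing worries disappear once you observe the following. Since $I=\emptyset$, the rank-$0$ stages add no indices, so $H=\emptyset$ in Lemma~\ref{lemma:completeRootOfCascadeFrame}. Case~1 would then say that $(a_n)$ is eventually constant on an infinite $B$, which is impossible for an injective sequence. So Case~2 always occurs, $K_\alpha=\{\gamma\}$, and $h_\gamma(m)=a_{\phi_\alpha(m)}$ is literally a subsequence of $(a_n)$. One application of the moreover clause with $A=\{\gamma\}$ finishes the argument. No translation, no absorbed constant, and no further bookkeeping are needed, since every sequence already occurs as some $f_\alpha$ on $[\omega]^1$. It is fortunate that Case~1 is vacuous, because your sketch for it would not suffice as written. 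If $a_n=\bigominus_{\beta\in F}g_\beta(n)\ominus Z$ with $|F|\geq 2$, the moreover clause controls each $\Phi\circ h_\beta$ separately, not their sum.
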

\begin{proof} We fix:
\begin{enumerate}[label=(\alph*)]
   \item A partition $\mathfrak c\setminus\omega=Z_0\cup Z_1$ such that $|Z_i|=\mathfrak c$ for each $i<2$.

\item A family $(L_\alpha:\alpha\in Z_0\cup Z_1)$ of pairwise disjoint subsets of $\mathfrak c\setminus\omega$ such that:
\begin{enumerate}[label=(\roman*)]
    \item $L_\alpha=\{\alpha\}$ for every $\alpha\in Z_1$;
    \item $L_\alpha$ is cofinal in $\mathfrak c$ for every $\alpha\in Z_0$.
\end{enumerate}
   \item A $Z_0$-enumeration of all triples $(A_\alpha, \mathcal B_\alpha, f_\alpha)$ such that:
   \begin{enumerate}[label=(\roman*)]
    \item $A_\alpha \in [\omega]^\omega$.
    \item $\mathcal B_\alpha$ is a barrier on $A_\alpha$.
    \item $f_\alpha:\, \mathcal B_\alpha \to G$.
    \item Each such triple appears $\mathfrak{c}$ many times in the enumeration.
   \end{enumerate}
   \item By Theorem~\ref{theorem:cascadeFrameExistence}, fix families $(B_\alpha, (g_\beta:\, \beta \in K_\alpha), (x_t^\alpha: t \in \mathcal T_{\mathcal B_\alpha}))$ for $\alpha \in Z_0$ such that:
   \begin{enumerate}[label=(\roman*)]
    \item $K_\alpha \subseteq L_\alpha$ is countable.
    \item $(B_\alpha,(g_\beta:\, \beta \in K_\alpha), (x_t^\alpha: t \in \mathcal T_{\mathcal B_\alpha}))$ is a $\mathcal B_\alpha$-cascade frame for $f_\alpha$.
    \item For every $t \in \mathcal T_{\mathcal B_\alpha}$, $x_t^\alpha \in \langle f_\alpha(b): b \in \mathcal B_\alpha\rangle\ominus [K_\alpha]^{<\omega}$.
   \end{enumerate}

   \item An enumeration $(u_\alpha:\alpha\in Z_1)$ of all injective sequences of linearly independent elements of $G$ such that
    \[
    \bigcup_{n\in\omega}u_\alpha(n)\subseteq \alpha
    \qquad\text{for every }\alpha\in Z_1.
    \]

    \item For every $\alpha\in Z_1$, let
    \[
    K_\alpha=\{\alpha\}
    \qquad\text{and}\qquad
    h_\alpha=u_\alpha.
    \]
\end{enumerate}

For each $\alpha\in Z_0$, let $\phi_\alpha:\omega\to B_\alpha$ be a bijection and $h_\beta=g_\beta \circ \phi_\alpha$ for each $\beta \in K_\alpha$.

Let $(p_\alpha:\alpha<\mathfrak c)$ be a tidy family of ultrafilters.
Then
\[
(K_\alpha:\alpha\in Z_0\cup Z_1),
\qquad
(h_\beta:\beta\in \bigcup_{\alpha\in Z_0\cup Z_1}K_\alpha)
\]
form a tidy system.

By Lemma~\ref{lemma:existenceOfHomomorphisms}, we may fix a family of tidy homomorphisms $\mathcal F$ with the following properties:
\begin{enumerate}[label=(\Roman*)]
\item for every $A \in G\setminus \{0\}$, there exists $\Phi \in \mathcal F$ such that $\Phi(A)=1$;
\item for every $\alpha \in Z_1$, there exists $\Phi \in \mathcal F$ such that $\Phi(u_\alpha(n))=0$ infinitely often and $\Phi(u_\alpha(n))=1$ infinitely often.
\end{enumerate}

Endow $G$ with the initial topology generated by the family of homomorphisms $\mathcal F$.

As $\mathcal F$ is a collection of homomorphisms, $G$ is a topological group.
As $\mathcal F$ separates points (by (I)), $G$ is Hausdorff.

\begin{claim}
    $G$ has no non-trivial convergent sequences.
\end{claim}
Every non-trivial sequence has an injective subsequence, and every injective sequence has a subsequence of linearly independent elements.
Let $(v_n:n\in\omega)$ be such a subsequence.
Since $\bigcup_{n\in\omega} v_n$ is countable and $\cf(\mathfrak c)>\omega$, there exists $\alpha\in Z_1$ such that $\bigcup_{n\in\omega} v_n\subseteq \alpha$.
By \textup{(e)}, we may assume $v_n=u_\alpha(n)$ for every $n\in\omega$.
By (II), there exists $\Phi\in\mathcal F$ such that $\Phi(u_\alpha(n))=0$ infinitely often and $\Phi(u_\alpha(n))=1$ infinitely often.
Thus, $\Phi\circ u_\alpha$ does not converge in $2$, so $u_\alpha$ does not converge in $G$.
Hence, the original sequence does not converge.

\begin{claim}
    $G$ is $\mathcal B$-cascade countably compact for each barrier $\mathcal B$.
\end{claim}

Let such a barrier $\mathcal B$ and $f:\, \mathcal B \to G$ be given.
There exists $\alpha \in Z_0$ such that $(A_\alpha, \mathcal B_\alpha, f_\alpha)=(\omega, \mathcal B, f)$.

For every $\Phi\in \mathcal F$ and every $\beta\in K_\alpha$, since $\Phi$ is a tidy homomorphism, $\Phi(\{\beta\})$ is a $p_\alpha$-limit of $\Phi\circ h_\beta$.
Since the topology on $G$ is the initial topology generated by $\mathcal F$, convergence in $G$ is equivalent to convergence under every homomorphism in $\mathcal F$.
Hence, $\{\beta\}$ is a $p_\alpha$-limit of $h_\beta$ in $G$ for every $\beta\in K_\alpha$.
By Corollary~\ref{corollary:cascadeFrameLimit}, $x_\emptyset^\alpha$ is a $\mathcal B$-cascade $q$-limit of $f$ for some $q\in \omega^*$, showing that $G$ is $\mathcal B$-cascade countably compact.
\begin{claim}
    $G^\omega$ is countably compact.
\end{claim}
This follows from the previous claim and Corollary~\ref{corollary:productOfCountablyCompactGroups}.
\end{proof}

\subsection{A \texorpdfstring{$\mathcal B$}{B}-countably compact group without convergent sequences whose square is not countably compact}\label{section:secondExample}

In this section, we construct a group that is $\mathcal B$-countably compact for every
barrier $\mathcal B$ but whose square is not countably compact. By
Theorem~\ref{theorem:productOfCountablyCompactGroups}, this group is not
$2$-cascade countably compact.

\begin{theorem}\label{theorem:exampleOfBCountablyCompactGroup}
    There exists a group topology on a Boolean group $H$ of cardinality $\mathfrak c$ such that:
    \begin{itemize}
        \item $H$ is Hausdorff.
        \item $H$ has no non-trivial convergent sequences.
        \item $H$ is $\mathcal B$-countably compact for every barrier $\mathcal B$.
        \item $H^2$ is not countably compact (so $H$ is not $2$-cascade countably compact).
    \end{itemize}
\end{theorem}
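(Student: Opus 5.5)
We start from the topology of Theorem~\ref{theorem:exampleOfBCascadeCountablyCompactGroup} on $G=[\mathfrak c]^{<\omega}$ and obtain $H$ as a subgroup, following van Douwen's idea of splitting the group. The point of passing to a subgroup is to lose $2$-cascade limits while keeping the plain $\mathcal B$-limits.

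\textbf{Setup.} Fix a sequence $(a_n:n\in\omega)$ of linearly independent elements; for instance take $a_n=\{n\}$, $n\in\omega$. Let $H$ be a subgroup of $G$ chosen so that two things hold. First, the sequence $((a_n,a_n\ominus d_n):n\in\omega)$ has no accumulation point in $H^2$, where $(d_n)$ is an auxiliary sequence also in $H$. Second, every $f:\mathcal B\to H$ has, for some free ultrafilter $p$, a $p^{\mathcal B}$-limit inside $H$.

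\textbf{Construction.} A more concrete route is to redo the construction of Theorem~\ref{theorem:exampleOfBCascadeCountablyCompactGroup} with modified frames. For each triple $(A_\alpha,\mathcal B_\alpha,f_\alpha)$ with $f_\alpha$ taking values in the (recursively determined) subgroup, we would not aim for a cascade limit. Instead we use the Nash--Williams theorem to thin $\mathcal B_\alpha$ so that the relevant filter $p^{\mathcal B}$ is realised through a single $\omega$-indexed enumeration $\phi:\omega\to\mathcal B\restriction B$. Then a \emph{single} new generator $\gamma$ receives $h_\gamma=f\circ\phi$, made $\LI$ after passing to a subsequence using Lemma~\ref{lemma:LItoLI}, or else it is a linear combination of old generators.

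To get failure of countable compactness in $H^2$, we arrange that the tidy family of ultrafilters assigns to the $\omega$-generators $\{n\}$ behaviour incompatible along $p$ and along the shifted sequence. Concretely, we split $\mathfrak c$ into two disjoint sets of indices. Limits of $(\{n\})$ are placed only at points $\{\gamma\}$ with $\gamma\in Z'$, and limits of $(\{n\}\ominus d_n)$ are realised only via ultrafilters $p_\alpha$ in a family almost disjoint from the first. Then no single $p$ gives both limits inside $H=\langle \omega\cup Z'\cup\dots\rangle$. The Hausdorff property and the absence of convergent sequences follow exactly as before, via (I) and (II).

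\textbf{Main obstacle.} The hardest step is ensuring that a $p^{\mathcal B}$-limit exists for arbitrary barriers while the square still fails. A $p^{\mathcal B}$-limit is a limit along an ultrafilter on $\mathcal B$, which is not of the form $p$ on $\omega$. I would first try to show that for any $f:\mathcal B\to H$ one can find an ultrafilter $\mathcal U$ on $\mathcal B$ equal to $p^{\mathcal B}$ for some $p$, together with an injective enumeration of a $\mathcal U$-large set. The values of $f$ on it then form an $\LI$ family (or reduce to finitely many old generators), so a tidy ultrafilter handles it. The delicate part is verifying that the tidy homomorphisms are not forced, by the square-witness requirement, to violate these limits; this requires the disjoint-block clause (b) of the tidy family to keep the two kinds of requirements on disjoint sets.
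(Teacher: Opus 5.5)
Your construction has a genuine gap at its central step. Making a single new generator $\{\gamma\}$ the $p_\alpha$-limit of $f\circ\phi$, with $\phi:\omega\to\mathcal B\restriction B$ an enumeration, only makes $\{\gamma\}$ a limit of $f$ along the image ultrafilter $\phi(p_\alpha)$ on $\mathcal B$. That ultrafilter is Rudin--Keisler equivalent to $p_\alpha$, and there is no reason for it to have the form $q^{\mathcal B}$.

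Already for $\mathcal B=[\omega]^2$, $q^{\mathcal B}$ is a copy of the Fubini square $q\otimes q$. So you would need $p_\alpha\equiv_{RK}q\otimes q$, which nothing in the tidy family arranges, and which is impossible when $p_\alpha$ is a P-point. Thinning via Nash--Williams does not change this, since $q^{\mathcal B\restriction M}$ is still an iterated ultrafilter.

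Nor can you expect independence on a $q^{\mathcal B}$-large set. Take $f(\{m,n\})=a_m\ominus b_n$ for $m<n$, with all $a_m,b_n$ independent. Every $q^{[\omega]^2}$-large set contains $\{m,n\},\{m',n\},\{m,n'\},\{m',n'\}$ for some $m<m'<n<n'$, and these four values sum to $\emptyset$. So the infinite set on which Lemma~\ref{lemma:LItoLI} makes $f\circ\phi$ independent is in general not $q^{\mathcal B}$-large for any $q$.

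This is why the paper does aim for a cascade limit, contrary to your remark. A cascade frame (Theorem~\ref{theorem:cascadeFrameExistence}) applies Lemma~\ref{lemma:LItoLI} to $\omega$-indexed functions level by level along $\mathcal T_{\mathcal B}$. Every new generator gets its limit along one and the same ultrafilter $q$ on $\omega$, namely the image of $p_\alpha$ under $\phi_\alpha:\omega\to B_\alpha\subseteq\omega$. This yields a $\mathcal B$-cascade $q$-limit $x^\alpha_\emptyset$ in $G$, hence a $q^{\mathcal B}$-limit. $H$ fails to be cascade compact only because it receives the root $x^\alpha_\emptyset$ but not the intermediate nodes $x_t$.

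The square-killing half also lacks a mechanism. You cannot decide where limits of $(\{n\})$ are ``placed'': they are determined by $\mathcal F$. In the topology of Theorem~\ref{theorem:exampleOfBCascadeCountablyCompactGroup} even $G^\omega$ is countably compact, so your pair sequence has accumulation points in $G^2$, and everything depends on which points the subgroup omits. Splitting $\mathfrak c$, or using an almost disjoint subfamily of the $p_\alpha$, does not prevent some $u$ from having both coordinate limits in $H$. This is especially so because $H$ must keep absorbing limits for all maps into $H$, and your $H=\langle\omega\cup Z'\cup\cdots\rangle$ is never pinned down.

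The paper builds $H=\bigcup_{\xi<\mathfrak c}H_\xi$ with $|H_\xi|<\mathfrak c$. For each $u$ such that $u\text{-}\lim\{2n\}$, $u\text{-}\lim\{2n+1\}$ or $u\text{-}\lim\{2n,2n+1\}$ lies in $H_\xi$, it keeps a forbidden point $\delta_u$ (one of the first two limits) outside $H$ forever. This needs two ingredients absent from your plan:
\begin{itemize}
\item the extra homomorphisms $\tilde f_A$ ($A\subseteq\omega$), which make these limits injective in $u$, so that fewer than $\mathfrak c$ ultrafilters are relevant at each stage;
\item the fact that each triple $(A,\mathcal B,f)$ recurs $\mathfrak c$ times with pairwise disjoint cofinal sets $L_\alpha$, so one can choose a copy with $L_\alpha$ disjoint from $\bigcup H_\xi$ and from every $\delta_u$.
\end{itemize}
If $x^\alpha_\emptyset\notin H_\xi$, then $x^\alpha_\emptyset=h\ominus s$ with $h\in H_\xi$ and $\emptyset\neq s\subseteq K_\alpha\subseteq L_\alpha$. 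Hence every new element of $H_\xi\ominus\langle x^\alpha_\emptyset\rangle$ meets $L_\alpha$, and no forbidden point ever enters $H$.
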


\begin{proof}
    Let $G=[\mathfrak c]^{<\omega}$.
    We use the same bookkeeping as in the proof of
    Theorem~\ref{theorem:exampleOfBCascadeCountablyCompactGroup}; in particular,
    every triple $(A_\alpha,\mathcal B_\alpha,f_\alpha)$ in the enumeration
    \textup{(c)} appears $\mathfrak c$-many times.

    We enlarge the family $\mathcal F$ of homomorphisms that generates the
    topology on $G$. For each $A\subseteq\omega$, let
    $f_A:[\omega]^{<\omega}\to 2$ be the unique homomorphism such that
    $f_A(\{n\})=1$ if and only if $n\in A$.
    Since $\omega$ is suitably closed, by Lemma~\ref{lemma:extensionOfTidyHomomorphisms}, we can extend $f_A:[\omega]^{<\omega}\to 2$ to a tidy homomorphism $\tilde f_A:G\to 2$.
    Let $\mathcal F$ be as before, but also containing $\tilde f_A$ for every $A \subseteq \omega$.

    Then, as in the proof of Theorem~\ref{theorem:exampleOfBCascadeCountablyCompactGroup}, $G$ is Hausdorff, countably compact, and has no non-trivial convergent sequences.
    Moreover, $G^\omega$ is countably compact.

    To obtain our example, we construct a subgroup $H$ of $G$ for which, in the subspace topology, $H^2$ is not countably compact, but $H$ is countably compact.
    This subgroup will have cardinality $\mathfrak c$.

    The group $H$ will contain $[\omega]^{<\omega}$, and the sequence $((\{2n\}, \{2n+1\}): n \in \omega)$ will have no accumulation point in $H^2$.

    \begin{claim} Assume $p, q \in \omega^*$ are free ultrafilters. Then:
        \begin{enumerate}[label=\roman*)]
            \item If $p \neq q$ and both $p\text{-}\lim \{2n\}$ and $q\text{-}\lim \{2n\}$ exist in $G$, then they are different.
            \item If $p \neq q$ and both $p\text{-}\lim \{2n+1\}$ and $q\text{-}\lim \{2n+1\}$ exist in $G$, then they are different.
            \item If both $p\text{-}\lim \{2n\}$ and $p\text{-}\lim \{2n+1\}$ exist in $G$, then they are different.
            \item If $p \neq q$ and both $p\text{-}\lim \{2n,2n+1\}$ and $q\text{-}\lim \{2n,2n+1\}$ exist in $G$, then they are different.
        \end{enumerate}
    \end{claim}
    \begin{proof}[Proof of the Claim]
        For (i), let $x=p\text{-}\lim \{2n\}$ and $y=q\text{-}\lim \{2n\}$.
        Let $B \in p \setminus q$ and let $A=\{2n: n \in B\}$. Notice that $B'=\omega \setminus B \in q$.
        Then $\tilde f_A(\{2n\})=1$ for every $n \in B$, and $\tilde f_A(\{2n\})=0$ for every $n \in B'$.
        By continuity, $\tilde f_A(x)=1$ and $\tilde f_A(y)=0$, so $x \neq y$.
        The proof of (ii) is similar.

        For (iii), let $x=p\text{-}\lim \{2n\}$ and $y=p\text{-}\lim \{2n+1\}$.
        Let $A$ be the set of even numbers.
        Then $\tilde f_A(\{2n\})=1$ for every $n \in \omega$, and $\tilde f_A(\{2n+1\})=0$ for every $n \in \omega$.
        By continuity, $\tilde f_A(x)=1$ and $\tilde f_A(y)=0$, so $x \neq y$.
        
        For (iv), let $x=p\text{-}\lim \{2n,2n+1\}$ and $y=q\text{-}\lim \{2n,2n+1\}$.
        Let $B \in p \setminus q$ and let
        \[
        A=\{2n:n\in B\}.
        \]
        Notice that $B'=\omega\setminus B \in q$.
        Then $\tilde f_A(\{2n,2n+1\})=1$ for every $n\in B$, and
        $\tilde f_A(\{2n,2n+1\})=0$ for every $n\in B'$.
        By continuity, $\tilde f_A(x)=1$ and $\tilde f_A(y)=0$, so $x\neq y$.
    \end{proof}

    Write $Z_0=\{\mu_\xi:\xi<\mathfrak c\}$ in increasing order.
    Let
    \[
    \mathcal U=
    \{p\in\omega^*: p\text{-}\lim_{n\in\omega}\{2n\}
    \text{ and }p\text{-}\lim_{n\in\omega}\{2n+1\}\text{ exist in }G\}.
    \]
    Now we define $H_\xi$, $\mathcal U_\xi$ and $(\delta_u: u \in \mathcal U_\xi)$ such that the following hold.
    \begin{enumerate}[label=\alph*)]
        \item $H_0=[\omega]^{<\omega}$.
        \item $H_\beta\subseteq H_\xi$ for $\beta<\xi$.
        \item $\mathcal U_\xi=\{p \in \mathcal U: p\text{-}\lim \{2n\} \in H_\xi$ or $p\text{-}\lim \{2n+1\} \in H_\xi$ or $p\text{-}\lim \{2n, 2n+1\} \in H_\xi\}$.
        \item For every $u \in \mathcal U_\xi$, $\delta_u\notin H_\xi$.
        \item For every $u \in \mathcal U_\xi$, $\delta_u = u\text{-}\lim_n \{2n\}$ or $\delta_u = u\text{-}\lim_n \{2n+1\}$.
        \item $|H_\xi|\leq |\xi|+\omega$.
        \item $H_\xi=\bigcup_{\eta<\xi} H_\eta$ if $\xi$ is a limit ordinal.
        \item If $\ran f_{\mu_\xi}\subseteq H_\xi$, then $H_{\xi+1}$ has a $p^{\mathcal B_{\mu_\xi}}$-limit point of $f_{\mu_\xi}$ for some $p \in \omega^*$ containing $A_{\mu_\xi}$.
    \end{enumerate}

    We verify this is possible.
    Define $H_0=[\omega]^{<\omega}$.
    Let $\mathcal U_0$ be as in clause~\textup{(c)}.
    Then clauses~\textup{(a)}--\textup{(c)} are satisfied.
    Clauses~\textup{(f)}--\textup{(h)} do not need to be verified at this step.
    To verify clauses~\textup{(d)} and~\textup{(e)}, it suffices to see that $\mathcal U_0=\emptyset$.
    We verify that $(\{2n\}: n \in \omega)$ and $(\{2n, 2n+1\}: n \in \omega)$ have no limit points in $H_0$ (the proof for $(\{2n+1\}: n \in \omega)$ is similar).
    Given $N \in \omega$, let $A_N=\{2n: n \geq N\}$.
    Then $\tilde f_{A_N}(\{2n\})=\tilde f_{A_N}(\{2n,2n+1\})=1$ for every
    $n\geq N$. Hence, every limit point $x$ of $(\{2n\}:n\in\omega)$ or
    $(\{2n,2n+1\}:n\in\omega)$ must satisfy $\tilde f_{A_N}(x)=1$ for every
    $N\in\omega$.
    However, if $x\in H_0=[\omega]^{<\omega}$, choose $N\in\omega$ such that
    $x\cap [2N,\infty)=\emptyset$.
    Then $\tilde f_{A_N}(x)=0$, so $x$ cannot be a limit point of $(\{2n\}: n \in \omega)$ or of $(\{2n,2n+1\}: n \in \omega)$.

    Now we proceed to the successor step.
    If $\ran f_{\mu_\xi}\not\subseteq H_\xi$, let $H_{\xi+1}=H_\xi$ and $\mathcal U_{\xi+1}=\mathcal U_\xi$.
    Then clauses~\textup{(a)}--\textup{(h)} are satisfied.

    So assume $\ran f_{\mu_\xi}\subseteq H_\xi$.
    By items \textup{(i)}, \textup{(ii)}, and \textup{(iv)} of the claim and a simple counting argument, $|\mathcal U_\xi|<\mathfrak c$ whenever $|H_\xi|<\mathfrak c$.
    Thus, there exists $\alpha \in Z_0$ such that $(A_\alpha, \mathcal B_\alpha, f_\alpha)=(A_{\mu_\xi}, \mathcal B_{\mu_\xi}, f_{\mu_\xi})$ and $L_\alpha\cap\left(\bigcup_{u \in \mathcal U_\xi}\delta_u\cup \bigcup H_\xi\right) =\emptyset$.

    If $x_\emptyset^\alpha\in H_\xi$, let $H_{\xi+1}=H_\xi$ and $\mathcal U_{\xi+1}=\mathcal U_\xi$.
    Then \textup{(d)} and \textup{(e)} are preserved, and \textup{(h)} holds because $x_\emptyset^\alpha$ is a $p^{\mathcal B_{\mu_\xi}}$-limit point of $f_{\mu_\xi}$ already in $H_\xi$.

    So assume $x_\emptyset^\alpha\notin H_\xi$.
    Since $x_\emptyset^\alpha\in \langle f_\alpha(b): b\in\mathcal B_\alpha\rangle\ominus [K_\alpha]^{<\omega}$
    and $\{f_\alpha(b): b\in\mathcal B_\alpha\}\subseteq H_\xi$, there exist $h\in H_\xi$ and $s\in [K_\alpha]^{<\omega}$ such that
    $x_\emptyset^\alpha=h\ominus s$.
    As $K_\alpha\subseteq L_\alpha$ and every element of $H_\xi$ is disjoint from $L_\alpha$, it follows that $s\neq\emptyset$, and hence
    $x_\emptyset^\alpha\cap L_\alpha=s\neq\emptyset$.
    In particular, $x_\emptyset^\alpha\neq \delta_u$ for every $u\in\mathcal U_\xi$.

    Let $H_{\xi+1}=H_\xi\ominus \langle x_\emptyset^\alpha \rangle$ and
    $\mathcal U_{\xi+1}$ be as in \textup{(c)}. It remains to verify
    \textup{(d)} and \textup{(e)}.

    First, we check that if $u \in \mathcal U_\xi$, then $\delta_u \notin H_{\xi+1}$.
    Indeed, by the choice of $\alpha$, every $\delta_u$ and every element of $H_\xi$ is disjoint from $L_\alpha$.
    Since $x_\emptyset^\alpha\cap L_\alpha\neq\emptyset$, every element of $H_{\xi+1}\setminus H_\xi$ has nonempty intersection with $L_\alpha$.
    Therefore $\delta_u\notin H_{\xi+1}$ for every $u\in\mathcal U_\xi$.

    By clauses~\textup{(d)} and~\textup{(e)} at stage $\xi$, at least one of the
    following holds for each $u\in\mathcal U_\xi$.
    \begin{enumerate}[label=\Roman*)]
        \item $u\text{-}\lim \{2n\} \in H_{\xi}$.
        \item $u\text{-}\lim \{2n+1\} \in H_{\xi}$.
        \item $u\text{-}\lim \{2n, 2n+1\} \in H_{\xi}$.
    \end{enumerate}

    Cases~\textup{(I)} and~\textup{(II)} cannot hold simultaneously, since we would have $\delta_u\in H_\xi$.
    If Cases~\textup{(I)} and~\textup{(III)} hold, then
    $u\text{-}\lim \{2n+1\}=u\text{-}\lim \{2n\}\ominus
    u\text{-}\lim \{2n,2n+1\}\in H_\xi$, reducing to the preceding
    contradiction. The combination of Cases~\textup{(II)} and~\textup{(III)}
    is similar. Hence exactly one of the three cases holds.

    Thus, for each $u \in \mathcal U_{\xi+1}$, we may define $\delta_u= u-\lim \{2n\}$ or $\delta_u= u-\lim \{2n+1\}$ so that $\delta_u \notin H_{\xi+1}$.
    This completes the proof of d) and e).

    For the limit step, let $H_\xi=\bigcup_{\eta<\xi} H_\eta$ and $\mathcal U_\xi=\bigcup_{\eta<\xi} \mathcal U_\eta$.
    Then clauses~\textup{(a)}--\textup{(h)} follow directly.
    This completes the construction.

    Now let $H=\bigcup_{\xi <\mathfrak c} H_\xi$.
    Since $H\subseteq G=[\mathfrak c]^{<\omega}$, we have $|H|\leq\mathfrak c$.
    Moreover, $H$ is infinite and, as we will see, $H$ is a countably compact topological group.
    Hence $|H|\geq\mathfrak c$, and therefore $|H|=\mathfrak c$.

    Since $H$ is a subgroup of the Hausdorff group $G$ endowed with the subspace topology, $H$ is Hausdorff and has no non-trivial convergent sequences.

    \textbf{$H$ is $\mathcal B$-countably compact for every barrier $\mathcal B$}: let such a barrier $\mathcal B$ on $A$ and $f:\, \mathcal B \to H$ be given.

    Since $\mathcal B$ is a barrier on a countable set, $\mathcal B$ is countable, and therefore $\ran f$ is countable.
    As $H=\bigcup_{\xi<\mathfrak c} H_\xi$, there exists $\eta<\mathfrak c$ such that $\ran f\subseteq H_\eta$.

    By the choice of the enumeration, the triple $(A,\mathcal B,f)$ appears $\mathfrak c$-many times, so we may choose $\xi\geq\eta$ such that $(A_{\mu_\xi},\mathcal B_{\mu_\xi},f_{\mu_\xi})=(A,\mathcal B,f)$.
    Then $\ran f_{\mu_\xi}\subseteq H_\xi$, and by clause \textup{(h)} of the construction, $H_{\xi+1}$ has a $p^{\mathcal B}$-limit point of $f$ for some $p\in\omega^*$ containing $A$.

    \textbf{$H^2$ is not countably compact}: consider the sequence $((\{2n\}, \{2n+1\}): n \in \omega)$ in $H^2$.
    If this sequence had an accumulation point $(x,y)$ in $H^2$, then there would exist $u \in \omega^*$ such that $x=u\text{-}\lim \{2n\}$ and $y=u\text{-}\lim \{2n+1\}$.
    Thus, $u \in \mathcal U$ and $x, y \in H_\xi$ for some $\xi <\mathfrak c$.
    However, $\delta_u=x$ or $\delta_u=y$, and $\delta_u \notin H_\xi$, a contradiction.

\end{proof}

    \section{Cascade-compact powers and Vietoris hyperspaces}\label{section:hyperspaces}
The Vietoris hyperspaces were first studied by Vietoris in \cite{vietoris1922bereiche}.
The Vietoris hyperspace of a topological space $X$ is the space of all non-empty closed subsets of $X$ endowed with a natural topology.

\begin{definition}
Let $(X, \tau)$ be a $T_1$ topological space (that is, a topological space in which singletons are closed).
We denote the collection of all non-empty closed subsets of $X$ by $\exp X$.
The \emph{Vietoris topology} on $\exp X$ is the topology generated by the sub-base $\{U^+, U^-: U \in \tau\}$, where
\begin{equation*}
    U^+ = \{F \in \exp X: F \subseteq U\} \text{ and } U^- = \{F \in \exp X: F \cap U \neq \emptyset\}.
\end{equation*} 
\end{definition}
If $(X, d)$ is a compact metric space, the topology induced by the Hausdorff metric on $\exp X$ coincides with the Vietoris topology \cite{michael1951topologies}.

A classical result, due to Vietoris, states that the hyperspace of a space is compact if and only if the space itself is compact \cite{vietoris1922bereiche}.
Regarding generalizations of compactness, Ginsburg proved that, for a Tychonoff space $X$:
\begin{enumerate}[label=(\alph*)]
    \item \cite[Corollary 2.3]{ginsburg1975some} If all powers of $X$ are countably compact, then $\exp X$ is countably compact.
    \item \cite[Corollaries 2.3 and 2.7]{ginsburg1975some} If $\exp X$ is pseudocompact (respectively, countably compact), then all finite powers of $X$ are pseudocompact (respectively, countably compact).
\end{enumerate}

Ginsburg asked whether $\exp X$ is pseudocompact whenever all powers of $X$ are pseudocompact.
A counterexample was first provided in \cite[Theorem 5.1]{hruvsak2007pseudocompactness}.
The example was strengthened in \cite{ortiz2018small}: $\exp X$ may fail to be
pseudocompact even when every power of $X$ of cardinality less than
$\mathfrak h$ is countably compact. Recall that the \emph{distributivity
number} $\mathfrak h$ is the least cardinality of a family of
$\subseteq^*$-dense, $\subseteq^*$-downward closed subsets of
$[\omega]^{\omega}$ whose intersection is empty.
It is easily seen that $\aleph_1\leq \mathfrak h\leq \mathfrak c$ and it is well-known that both inequalities may be consistently strict.
For more on $\mathfrak h$, see, e.g., \cite{blass2009combinatorial}.

In this section, we further improve the result from \cite{ortiz2018small} by constructing a space $X\subseteq \beta\omega$ such that all powers of $X$ of cardinality less than $\mathfrak h$ are $n$-cascade countably compact for every $n \in \omega$, and yet $\exp X$ is not pseudocompact.

We use the following lemma from \cite{ortiz2018small}.

\begin{lemma}[{\cite[Lemma 2.1]{ortiz2018small}}]\label{lemma:accumulationPointsHyperspace}
    Let $X \subseteq \beta\omega$ with $\omega \subseteq X$, and let $(F_n : n \in \omega)$ be a sequence in $\exp X$ with $F_n \subseteq \omega$.
    Then $(F_n : n \in \omega)$ has a $p$-limit point in $\exp X$ if and only if every sequence in $\prod_{n \in \omega} F_n$ has a $p$-limit point in $X$, and, in that case,
    \[
        p\text{-}\lim(F_n : n \in \omega)=\cl_X\{p\text{-}\lim f : f \in \prod_{n \in \omega} F_n\}.
    \]
\end{lemma}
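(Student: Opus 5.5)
Both directions come from unwinding the Vietoris topology on $\exp X$ and using the structure of $\beta\omega$. Throughout, $\omega\subseteq X\subseteq\beta\omega$ and each $F_n\subseteq\omega$ is closed in $X$. Since $\omega$ is discrete and open in $\beta\omega$, $F_n$ is closed in $X$ exactly when it contains none of the points of $X\setminus\omega$ in $\cl_{\beta\omega}F_n$. For $f\in\prod_n F_n$, write $x_f=p\text{-}\lim f$, computed in $\beta\omega$, and set $L=\{x_f: f\in\prod_n F_n\}\subseteq\beta\omega$.

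\emph{($\Rightarrow$)} Suppose $K\in\exp X$ is a $p$-limit of $(F_n)$, and fix $f\in\prod_n F_n$. We show that $x_f\in K$; in particular $x_f\in X$, so every such sequence has a $p$-limit in $X$.
\begin{enumerate}[label=(\arabic*)]
\item Suppose $x_f\notin K$. Since $K$ is closed in $X$, pick a clopen $\hat C\subseteq\beta\omega$ with $C\subseteq\omega$, $C\in x_f$, and $\hat C\cap K=\emptyset$.
\item Since $x_f$ is a $p$-limit of $f$, the set $\{n: f(n)\in C\}$ belongs to $p$.
\item On the other hand, $K\in (X\setminus\hat C)^+$, and this is an open set of $\exp X$. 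Hence $\{n: F_n\subseteq X\setminus \hat C\}\in p$, so $\{n: f(n)\notin C\}\in p$.
\end{enumerate}
Steps (2) and (3) contradict each other, so $x_f\in K$. This gives $\cl_X L\subseteq K$.

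For the reverse inclusion, take $y\in K$ and a basic clopen $\hat C\ni y$. Since $K\in\hat C^-$, we get $\{n: F_n\cap C\ne\emptyset\}\in p$. Choosing $f(n)\in F_n\cap C$ on this set and arbitrary $f(n)\in F_n$ elsewhere gives $x_f\in\hat C$. Thus $y\in\cl_X L$, and $K=\cl_X L$.

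\emph{($\Leftarrow$)} Assume $L\subseteq X$ and let $K=\cl_X L$. We show that $K$ is a $p$-limit of $(F_n)$. It suffices to check sub-basic neighbourhoods; by regularity of $X$ and compactness arguments, clopen traces $\hat C\cap X$ suffice for $U^-$. For $U^+$, however, the set $U$ must be arbitrary open in $X$.

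\emph{Case $U^-$.} Here $K\cap\hat C\ne\emptyset$, so some $x_f\in\hat C$. Then $\{n: f(n)\in C\}\in p$, and this set is contained in $\{n: F_n\in\hat C^-\}$.

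\emph{Case $U^+$.} This is the main obstacle. Suppose $K\subseteq U$ but $A=\{n: F_n\not\subseteq U\}\in p$. Pick $f(n)\in F_n\setminus U$ for $n\in A$, extend $f$ arbitrarily, and let $x_f\in L\subseteq K\subseteq U$. Since $U\cap\omega$ is a neighbourhood trace, $\{n: f(n)\in U\}\in p$. But for $n\in A$ the point $f(n)\in\omega$ lies outside $U$, giving a contradiction.

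The only subtle point is that $U$ is open in $X$ and $f(n)\in\omega\subseteq X$, so "$f(n)\in U$" makes sense. The $p$-limit statement is taken inside $X$ because $x_f\in X$. One must also verify that $K$ is nonempty and closed, which is clear since $L\ne\emptyset$.
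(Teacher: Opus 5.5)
Your $(\Leftarrow)$ direction is correct, but there is a genuine gap in step (1) of your $(\Rightarrow)$ direction. Closedness of $K$ in $X$ lets you separate $K$ only from points of $X$. It yields a clopen $\widehat C\ni x_f$ missing $K$ only when $x_f\notin\cl_{\beta\omega}K$. Since $x_f\in X$ is exactly what you are trying to prove, the step is unsupported when $x_f\in\cl_{\beta\omega}K\setminus X$. What steps (1)--(3) actually prove is $L\subseteq\cl_{\beta\omega}K$, and your second paragraph proves $K\subseteq\cl_{\beta\omega}L$. Together these give $\cl_{\beta\omega}K=\cl_{\beta\omega}L$, but not $L\subseteq X$.

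This cannot be patched, because the forward implication is false as literally stated. Take $X=\omega$ and $F_n=\{0,1,\dots,n\}$. The only set $U^+$ containing $\omega$ is $\omega^+=\exp\omega$, so every basic neighbourhood of $\omega$ in $\exp\omega$ contains one of the form $U_1^-\cap\dots\cap U_k^-$ with $\emptyset\neq U_i\subseteq\omega$. Such a neighbourhood contains $F_n$ for all $n\geq\max_i\min U_i$. Hence $\omega$ is a $p$-limit of $(F_n)$ for every $p\in\omega^*$, while $f(n)=n$ has no $p$-limit in $\omega$. Here $x_f=p\in\cl_{\beta\omega}K\setminus X$, precisely the case your step (1) overlooks.

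The missing ingredient is an extra hypothesis such as pairwise disjointness of the $F_n$. This holds for the sets $C_n$ in Theorem~\ref{theorem:mainHyperspace}, which uses only the forward direction. With disjointness you can finish as follows.
\begin{itemize}
\item Given $f\in\prod_n F_n$, put $S=\ran f$. For $g\in\prod_n F_n$, disjointness gives $g(n)\in S$ iff $g(n)=f(n)$. Hence $S\in x_g$ iff $\{n:g(n)=f(n)\}\in p$ iff $x_g=x_f$.
\item Therefore $\widehat S\cap L=\{x_f\}$. Since $\widehat S$ is clopen, also $\widehat S\cap\cl_{\beta\omega}L=\{x_f\}$.
\item Because $K\subseteq\cl_{\beta\omega}L$, if $x_f\notin X$ then $K\cap\widehat S=\emptyset$. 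So $K\in(X\setminus\widehat S)^+$, giving $\{n:F_n\cap S=\emptyset\}\in p$. This contradicts $f(n)\in F_n\cap S$ for every $n$.
\end{itemize}
Thus $L\subseteq X$, so $x_f\in\cl_{\beta\omega}K\cap X=K$, and the rest of your argument then goes through unchanged.
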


We first prove a general construction theorem.
\begin{theorem}\label{theorem:mainHyperspace}
    Let $\mu$ and $\lambda$ be two uncountable cardinals such that:
    \begin{enumerate}[label=(\arabic*)]
        \item $\omega_1\leq \mu \leq \mathfrak c \leq \lambda \leq 2^{\mathfrak c}$.
        \item For every $\theta<\lambda$, $\theta^{<\mu}\leq\lambda$.
        \item $\cf(\lambda) \geq \mu$.
        \item For every $Z\subseteq \omega^*$ with $|Z|<\lambda$, every $\kappa<\mu$, and every $n\in\omega$, $(\beta\omega\setminus Z)^\kappa$ is $n$-cascade countably compact.
    \end{enumerate}
Then there exists $X\subseteq \beta\omega$ such that $\omega\subseteq X$, $|X|=\lambda$, and $X^\kappa$ is $n$-cascade countably compact for every $\kappa<\mu$ and every $n\in\omega$, and $\exp X$ is not pseudocompact.
\end{theorem}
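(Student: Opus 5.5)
The plan is to follow the construction of \cite{ortiz2018small}: build $X$ by a transfinite recursion of length $\lambda$, closing off under witnesses for cascade limits. In parallel, keep a small ``forbidden'' set $Z$ of free ultrafilters out of $X$; this set is what will make $\exp X$ fail to be pseudocompact. First fix a sequence $(F_n:n\in\omega)$ of pairwise disjoint finite subsets of $\omega$ with $|F_n|\to\infty$; for instance, $F_n$ is an interval of length $n+1$. By Lemma~\ref{lemma:accumulationPointsHyperspace}, $(F_n)$ fails to have a $p$-limit point in $\exp X$ as soon as, for each $p\in\omega^*$, some selector $f\in\prod_n F_n$ has $p\text{-}\lim f\notin X$. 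Moreover $\{F_n\}$ is a discrete family of open sets in $\exp X$ (each $F_n$ is clopen in $X$ since $F_n\subseteq\omega$, and $\{F_n\}$ is isolated). So it will suffice to destroy all accumulation points of $(F_n)$ while keeping $\omega\subseteq X$. That gives a locally finite infinite family of open sets, hence non-pseudocompactness.

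The recursion proceeds in stages $\xi<\lambda$, producing increasing $X_\xi\subseteq\beta\omega$ and $Z_\xi\subseteq\omega^*$ with $X_\xi\cap Z_\xi=\emptyset$ and $|X_\xi|,|Z_\xi|<\lambda$. We take unions at limits. Using (2) and $\cf(\lambda)\ge\mu$, bookkeep all pairs $(\kappa,h)$ with $\kappa<\mu$, $n\in\omega$ and $h:[\omega]^n\to X^\kappa$. Any such $h$ has range inside $X_\xi^\kappa$ for some $\xi<\lambda$: it involves $<\mu$ points, and $\cf\lambda\ge\mu$. The number of such $h$ with values in $X_\xi$ is at most $|X_\xi|^{<\mu}\le\lambda$, so a $\lambda$-length enumeration catches all of them.

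At a stage handling $h:[\omega]^n\to X_\xi^\kappa$, apply hypothesis (4) to $Z=Z_\xi$. This gives a point $y\in(\beta\omega\setminus Z_\xi)^\kappa$ and $p\in\omega^*$ such that $y$ is an $n$-cascade $p$-limit point of $h$. A cascade limit is realized along a countable tree of intermediate limits $x_t$, one for each $t\in\mathcal T_{[\omega]^n}$, and each is a $\kappa$-tuple. The cascade witness must lie in $X$, so we add all coordinates of all $x_t$ to $X_{\xi+1}$; these are at most $\kappa\cdot\omega<\mu\le\lambda$ points. We then need an independent stage for killing: for each $p\in\omega^*$ handled at stage $\xi$, choose a selector $f\in\prod F_n$ with $p\text{-}\lim f\notin X_{\xi+1}$ and put it into $Z_{\xi+1}$.

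The step I expect to be the main obstacle is this killing step, and it must be arranged to cost $<\lambda$ per stage while still handling all $2^{\mathfrak c}$ ultrafilters $p$. I would handle it as in \cite{ortiz2018small}: for fixed $p$, the selector limits $\{p\text{-}\lim f: f\in\prod F_n\}$ number $\ge\mathfrak c$ and are pairwise distinct for distinct $f$ modulo $p$, since $F_n$ are disjoint and $|F_n|\to\infty$. Only $|X_{\xi+1}|<\lambda$ of these are excluded, and the available selector limits have cardinality up to $2^{\mathfrak c}$ (e.g.\ $p$-limits of $p$-distinct selectors are distinct). Thus one new forbidden point per $p$ can always be chosen off $X$. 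To keep $|Z|<\lambda$ at each stage, I would instead run the killing on the side: enumerate $\omega^*$ in length $\le 2^{\mathfrak c}$ but only forbid $\lambda$-many points overall. Here I would interleave, taking the final $X=\bigcup X_\xi$ of size $\lambda$ and then, noting that every accumulation point of $(F_n)$ in $\exp X$ would be determined by countably many data already present at some stage, argue by reflection (using $\cf\lambda\ge\mu\ge\omega_1$) that it was killed at that stage. Verifying $|X|=\lambda$ (pad with arbitrary points if needed) and cascade compactness of $X^\kappa$ then follow from the bookkeeping.
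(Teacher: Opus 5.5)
Your handling of the cascade side is fine. Adding the coordinates of all intermediate points $x_t$, not only of the final limit, is exactly what the definition of a cascade limit point in $X^\kappa$ requires. The gap is in the killing step, which you correctly flag as the crux but do not resolve.

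\textbf{What goes wrong.} For fixed $p$, the set $W_p=\{p\text{-}\lim f: f\in\prod_n F_n\}$ has cardinality exactly $\mathfrak c$, not ``up to $2^{\mathfrak c}$'', because $|\prod_n F_n|=\mathfrak c$.
\begin{itemize}
\item The count ``only $|X_{\xi+1}|<\lambda$ of these are excluded'' gives nothing once $|X_{\xi+1}|\ge\mathfrak c$. If $p$ is treated only at a stage dictated by an enumeration of $\omega^*$, the witnesses produced by (4) in the meantime may already have put all of $W_p$ into $X$, leaving nothing to forbid.
\item The sets $W_p$ are pairwise disjoint: the continuous extension to $\beta\omega$ of a map $\omega\to\omega$ sending each element of $F_n$ to $n$ sends $p\text{-}\lim f$ back to $p$. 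So forbidding a point for every $p\in\omega^*$ needs $2^{\mathfrak c}$ forbidden points. That is incompatible with keeping $|Z_\xi|<\lambda$ (which you need in order to apply (4)) whenever $\lambda<2^{\mathfrak c}$, e.g.\ $\lambda=\mathfrak c$.
\item The closing ``reflection'' does not help. A $p$-limit of $(F_n)$ in $\exp X$ means $W_p\subseteq X$, a set of size $\mathfrak c$. It need not lie inside any single $X_\xi$, since only $\cf\lambda\ge\mu$ with $\mu\le\mathfrak c$ is assumed. Even if it did, no point of $W_p$ would be left to forbid.
\end{itemize}

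\textbf{The missing idea.} An ultrafilter $p$ needs attention only if $W_p$ meets $X$; otherwise $W_p\not\subseteq X$ trivially. When it does, $p$ must be killed at the first stage at which $W_p$ meets $X$.
\begin{itemize}
\item Your additions satisfy $|X_{\xi+1}\setminus X_\xi|<\mu$. By disjointness each new point lies in at most one $W_p$, so fewer than $\mu$ ultrafilters are newly threatened at each stage.
\item For each newly threatened $p$, $W_p\cap X_{\xi+1}\subseteq X_{\xi+1}\setminus X_\xi$ has size $<\mu\le\mathfrak c=|W_p|$. So some point of $W_p\setminus X_{\xi+1}$ is available to forbid.
\item This gives the invariant ``if $W_p\cap X_{\xi+1}\neq\emptyset$ then $W_p\cap Z_{\xi+1}\neq\emptyset$'' with $|Z_{\xi+1}\setminus Z_\xi|<\mu$. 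Hence $|Z_\xi|<\lambda$ at every stage, so (4) can always be applied.
\end{itemize}
This is precisely clause (i) of the paper's construction. The paper uses an almost disjoint family $\mathcal A\subseteq\prod_n C_n$ of size $\mathfrak c$ in place of all selectors, to the same effect.
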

\begin{proof}
    Fix a sequence $C=(C_n: n \in \omega)$ of pairwise disjoint subsets of $\omega$ such that $|C_n|=2^n$.
    We will construct $X$ containing $\omega$, so $C_n$ will be a sequence of isolated points in $\exp X$.
    We will guarantee that this sequence has no accumulation point in $\exp X$, thus, $\exp X$ is not pseudocompact.

    For that purpose, let $\mathcal A$ be an almost disjoint family of elements of $\prod_{n\in\omega} C_n$ of cardinality $\mathfrak c$; that is, for every two distinct $f,g\in\mathcal A$, we have $f(n)\neq g(n)$ for all but finitely many $n\in\omega$.
    To guarantee that the sequence $(C_n:n\in\omega)$ has no accumulation point in $\exp X$ (and thus, that $\exp X$ is not pseudocompact), by Lemma~\ref{lemma:accumulationPointsHyperspace}, it is sufficient that, for every $p \in \omega^*$ there exists $g \in \mathcal A$ such that $p\text{-}\lim g \notin X$. For each $p \in \omega^*$, let $Z_p=\{p\text{-}\lim g: g \in \mathcal A\}\subseteq \beta\omega$.
    We will construct $X$ such that $Z_p\not\subseteq X$ for any $p \in \omega^*$.
    Also, we have to guarantee that $X^\kappa$ is $n$-cascade countably compact for every $\kappa<\mu$ and $n \in \omega$.
    To ensure this, we intend to enumerate all functions in $\bigcup\{(X^\kappa)^{[\omega]^n} : \kappa<\mu,\ n\in\omega\}$.
    However, $X$ does not exist yet.
    Thus, we will use a bookkeeping function $b:\lambda\to \lambda\times \lambda$ to keep track of the sequences that appear in each step of the construction, that is, a function $b=(b_0, b_1)$ of $\lambda$ onto $\lambda\times \lambda$ such that for every $\alpha<\lambda$, $b_0(\alpha)\leq \alpha$.

    The idea is to recursively construct $X$ by stages $X_\alpha$, controlling its cardinality and adding accumulation points for $n$-dimensional sequences chosen by the bookkeeping function $b$ at each step.
    At the same time, we construct a set $Y$ of forbidden future points of $X$ to guarantee that $Z_p\not\subseteq X$ for every $p \in \omega^*$.

   Formally, we recursively construct $(X_\alpha: \alpha<\lambda)$, $(f^\alpha_\beta: \beta<\lambda)$, $(Y_\alpha: \alpha<\lambda)$ such that:

    \begin{enumerate}[label=(\alph*)]
        \item $X_0=\omega$ and $Y_0=\emptyset$.
        \item $X_\beta\subsetneq X_\alpha\subseteq \beta\omega$ for every $\beta<\alpha<\lambda$.
        \item $Y_\beta\subseteq Y_\alpha\subseteq \omega^*$ for every $\beta<\alpha<\lambda$.
        \item $|X_{\alpha+1}\setminus X_\alpha|<\mu$ and $|Y_{\alpha+1}\setminus Y_\alpha|<\mu$ for every $\alpha<\lambda$.
        \item $\{f^\alpha_\beta: \beta<\lambda\}=\bigcup\{(X_\alpha^\kappa)^{[\omega]^n}: \kappa<\mu,\ n\in\omega\}$.
        \item If $f^{b_0(\alpha)}_{b_1(\alpha)}:[\omega]^n\to X_{b_0(\alpha)}^\kappa$, then there exists $p\in\omega^*$ such that $f^{b_0(\alpha)}_{b_1(\alpha)}$ has an $n$-cascade $p$-limit point in $(X_{\alpha+1})^\kappa$.
        \item If $\alpha<\lambda$ is limit, then $X_\alpha=\bigcup_{\beta<\alpha}X_\beta$ and $Y_\alpha=\bigcup_{\beta<\alpha}Y_\beta$.
        \item $X_\alpha \cap Y_\alpha=\emptyset$ for every $\alpha<\lambda$.
        \item For every $p\in \omega^*$ and $\alpha<\lambda$, if $Z_p\cap X_\alpha\neq \emptyset$ then $Z_p \cap Y_{\alpha+1}\neq \emptyset$.
        \item $|X_\alpha|<\lambda$ and $|Y_\alpha|<\lambda$.
    \end{enumerate}
    Assume that this is possible and let $X=\bigcup_{\alpha<\lambda} X_\alpha$ and $Y=\bigcup_{\alpha<\lambda} Y_\alpha$.

    Fix $\kappa<\mu$ and $n\in\omega$. We show that $X^\kappa$ is $n$-cascade countably compact.
    Let $f:[\omega]^n\to X^\kappa$.
    Since $\cf(\lambda)>\max\{\omega,\kappa\}$, there exists $\alpha<\lambda$ such that $\operatorname{ran}(f)\subseteq X_\alpha^\kappa$.
    Thus, there exists $\beta<\lambda$ such that $f=f_\beta^\alpha$, and there exists $\gamma<\lambda$ such that $b(\gamma)=(\alpha,\beta)$ (so $\alpha\leq\gamma$).
    By (f), there exists $p\in\omega^*$ such that $f_\beta^\alpha$ has an $n$-cascade $p$-limit point in $(X_{\gamma+1})^\kappa$.
    Hence, it also has one in $X^\kappa$.

    To prove that $\exp X$ is not pseudocompact, let $p\in\omega^*$.
    We show that $Z_p\not\subseteq X$.  Assume instead that
    $Z_p\subseteq X$, and let $\alpha<\lambda$ be the first ordinal such that
    $Z_p\cap X_\alpha\neq\emptyset$.  By (i), there exists
    $y\in Z_p\cap Y_{\alpha+1}\subseteq Y\setminus X$, a contradiction.
    Lemma~\ref{lemma:accumulationPointsHyperspace} now shows that
    $(C_n:n\in\omega)$ has no $p$-limit in $\exp X$ for any $p\in\omega^*$,
    and therefore has no accumulation point.  The observation at the beginning
    of the proof completes the argument.

    Now we show such a construction is possible.

    For the limit step, let $X_\alpha=\bigcup_{\beta<\alpha} X_\beta$, $Y_\alpha=\bigcup_{\beta<\alpha} Y_\beta$ and
    \[
    \{f^\alpha_\beta: \beta<\lambda\}=\bigcup\{(X_\alpha^\kappa)^{[\omega]^n}: \kappa<\mu,\ n\in\omega\}.
    \]
    First suppose that $\mu<\lambda$. Then

    \begin{equation*}
    |X_\alpha|\leq \sum_{\beta<\alpha}|X_{\beta+1}\setminus X_\beta|\leq |\alpha|\mu<\lambda.
    \end{equation*}
    If $\mu=\lambda$, then $\alpha<\mu\leq \cf(\lambda)$, so
    \[
    |X_\alpha|\leq \sum_{\beta<\alpha}|X_{\beta+1}\setminus X_\beta|+\omega<\lambda.
    \]
    Similarly, $|Y_\alpha|<\lambda$.

    Moreover, if $\kappa<\mu$, then
    $|(X_\alpha^\kappa)^\omega|\leq\lambda$ by (2).  Since
    $\mu\leq\lambda$, it follows that
    \[
    \left|\bigcup\left\{(X_\alpha^\kappa)^{[\omega]^n}:
    \kappa<\mu,\ n\in\omega\right\}\right|\leq\lambda,
    \]
    so this union may be enumerated as in (e).

    Clearly, all the items hold for the limit step.
    The zeroth step is similarly easy.
    For the successor step, first we construct $Y_{\alpha+1}$ as follows.

    \begin{proof}[Construction of $Y_{\alpha+1}$]
    First, notice that $|\{(p, g)\in \omega^*\times \mathcal A: p-\lim g\in \left(X_\alpha\setminus \bigcup_{\beta<\alpha}X_\beta\right)\}|<\mu$ as $|X_\alpha\setminus \bigcup_{\beta<\alpha}X_\beta|<\mu$ and $p-\lim g \neq q-\lim g'$ if $p, q \in \omega^*$ are distinct or $g, g'\in \mathcal A$ are distinct (since the remainders of the closures of the ranges of $g$ and $g'$ in $\beta\omega$ are disjoint in case $g\neq g'$ as they are almost disjoint subsets of $\omega$).
    Thus, its projection $A=\{p \in \omega^*: \exists g \in \mathcal A: p-\lim g \in \left(X_\alpha\setminus \bigcup_{\beta<\alpha}X_\beta\right)\}$ is of size $<\mu$.

    Let $B=\{p \in A: Z_p\cap Y_\alpha = \emptyset\}$.
    Fix $p\in B$.
    Since $Z_p\cap Y_\alpha=\emptyset$, by item (i) at all previous stages we must have $Z_p\cap X_\beta=\emptyset$ for every $\beta<\alpha$.
    Hence,
    \[
    Z_p\cap \bigcup_{\beta<\alpha}X_\beta=\emptyset.
    \]
    Therefore, for every $g\in\mathcal A$,
    \[
    p\text{-}\lim g\in X_\alpha
    \quad\Longleftrightarrow\quad
    p\text{-}\lim g\in X_\alpha\setminus \bigcup_{\beta<\alpha}X_\beta.
    \]
    It follows that $|\{g\in\mathcal A: p\text{-}\lim g\in X_\alpha\}|<\mu$.
    Thus, there exists $g_p\in\mathcal A$ such that $p\text{-}\lim g_p\notin X_\alpha$.
    Let
    $Y_{\alpha+1}=\{p\text{-}\lim g_p: p\in B\}\cup Y_\alpha$.
    Since $|B|<\mu$, we have $|Y_{\alpha+1}\setminus Y_\alpha|<\mu$.
    Then $Y_{\alpha+1}\cap X_\alpha=\emptyset$, $Y_{\alpha+1}\subseteq \omega^*$ and (i) holds:

    Given $p \in \omega^*$ such that $Z_p\cap X_\alpha\neq \emptyset$, if $Z_p\cap X_\beta\neq \emptyset$ for some $\beta<\alpha$, then $Z_p\cap Y_\alpha\neq \emptyset$ by (i) at step $\beta+1\leq \alpha$.
    On the other hand, if $Z_p\cap X_\beta=\emptyset$ for every $\beta<\alpha$, then $p \in A$ and $p \in B$, so $p\text{-}\lim g_p \in Z_p\cap Y_{\alpha+1}$.
    \end{proof}

    Now we must construct $X_{\alpha+1}$.
    Let $f=f^{b_0(\alpha)}_{b_1(\alpha)}:[\omega]^n\to X_{b_0(\alpha)}^\kappa$.
    Since $b_0(\alpha)\leq \alpha$, we may regard $f$ as a function into $X_\alpha^\kappa$.
    As $|Y_{\alpha+1}|<\lambda$, by item (4) of the hypotheses, $(\beta\omega\setminus Y_{\alpha+1})^\kappa$ is $n$-cascade countably compact.
    Thus, there exist $p\in\omega^*$ and an $n$-cascade $p$-limit point $x\in(\beta\omega\setminus Y_{\alpha+1})^\kappa$ of $f$.

    Write $x=(x_\xi)_{\xi<\kappa}$ and let $S_\alpha=\{x_\xi:\xi<\kappa\}$.
    Then $|S_\alpha|\leq \kappa<\mu$, and $S_\alpha\cap Y_{\alpha+1}=\emptyset$.
    Choose also a point
    \[
    r_\alpha\in \omega^*\setminus (X_\alpha\cup Y_{\alpha+1}\cup S_\alpha).
    \]
    This is possible because $|X_\alpha\cup Y_{\alpha+1}\cup S_\alpha|<\lambda\leq 2^{\mathfrak c}=|\omega^*|.$
    Define $X_{\alpha+1}=X_\alpha\cup S_\alpha\cup\{r_\alpha\}$.
    Then $x\in (X_{\alpha+1})^\kappa$, so $f$ has an $n$-cascade $p$-limit point in $(X_{\alpha+1})^\kappa$.
    Moreover,
    \[
    X_\alpha\subsetneq X_{\alpha+1},\qquad X_{\alpha+1}\cap Y_{\alpha+1}=\emptyset,\qquad |X_{\alpha+1}\setminus X_\alpha|<\mu.
    \]

    Let $\{f_\beta^{\alpha+1}: \beta<\lambda\}$ be an enumeration of
    $\bigcup\{(X_{\alpha+1}^\kappa)^{[\omega]^n}: \kappa<\mu,\ n\in\omega\}$
    as in item~(e).

    This completes the recursion. Since each of the $\lambda$ stages adds
    fewer than $\mu\leq\lambda$ points, $|X|\leq\lambda$. Moreover, the points
    $(r_\alpha:\alpha<\lambda)$ are pairwise distinct, because
    $r_\alpha\notin X_\alpha$ whereas $r_\beta\in X_\alpha$ whenever
    $\beta<\alpha$. Hence, $|X|\geq\lambda$, and therefore $|X|=\lambda$.
\end{proof}

To apply the preceding theorem with $\mu=\mathfrak h$ and
$\lambda=2^{\mathfrak c}$, it remains to verify condition~(4). We use the
following extension of Proposition~\ref{proposition:betaOmegaSmallDimensional}.

\begin{theorem}\label{theorem:betaOmegaSmallDimensional}
    Let $n \in \omega$, let $A \in[\omega]^\omega$, let $\kappa<\mathfrak h$ be an infinite cardinal, let
    $Y\subseteq \beta \omega$, and let $\mu=\max\{|Y|,\omega,\kappa\}$.
    Assume that $\mu<2^{\mathfrak c}$.

    Let $(f^\alpha)_{\alpha<\kappa}$ be functions
    $f^\alpha:[A]^n \to \beta \omega \setminus Y$.
    Let
    \[
        f=(f^\alpha)_{\alpha<\kappa}:[A]^n\to (\beta\omega\setminus Y)^\kappa.
    \]

    Then, for every $B \in [A]^\omega$, there exist $C \in [B]^\omega$ and
    $E \subseteq \omega^*$ with $|E| \leq \mu$ such that, for every
    $p \in C^* \setminus E$, the restriction $f\restriction [C]^n$
    has a $[C]^n$-cascade $p$-limit point in $(\beta\omega\setminus Y)^\kappa$.

    In particular, if $|Y|<2^{\mathfrak c}$, then $(\beta\omega\setminus Y)^\kappa$ is
    $n$-cascade countably compact for every $\kappa<\mathfrak h$ and every $n\in\omega$.
\end{theorem}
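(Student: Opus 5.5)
We argue by induction on $n$, following the proof of Lemma~\ref{lemma:betaOmegaDimensionRaising} but now for $\kappa$ coordinates simultaneously. The extra ingredient is $\kappa<\mathfrak h$: it allows us to run fewer than $\mathfrak h$ many restriction steps and still find a common infinite $C\subseteq^* $ all the chosen sets. Since cascade $p$-limits in a product are taken coordinatewise, a point $x=(x_\alpha)_{\alpha<\kappa}$ is a $[C]^n$-cascade $p$-limit of $f\restriction[C]^n$ if, for each $\alpha$, $x_\alpha$ is one for $f^\alpha\restriction[C]^n$, provided the witnessing sets in $p$ can be chosen uniformly. For finite $n$ we can take them to be tails of $C$, so this is harmless.

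The main work is the single-coordinate statement: for every $g:[A]^n\to\beta\omega\setminus Y$ and every $B\in[A]^\omega$ there is $C\in[B]^\omega$ with $|E_g(C)|\le\max\{|Y|,\aleph_0\}$, where $E_g(C)$ is the set of $p\in C^*$ such that $g$ has no $n$-cascade $p$-limit in $\beta\omega\setminus Y$. This is exactly Lemma~\ref{lemma:betaOmegaDimensionRaising} applied to $g\restriction[B]^n$. The key observation is that the conclusion is hereditary: if $C'\subseteq^* C$ is infinite, then $C'^*\subseteq C^*$. Moreover, an $n$-cascade $p$-limit of $g\restriction[C]^n$ is also one for $g\restriction[C']^n$ whenever $C'\in p$, because we can shrink the witnessing sets $A$ in Definition~\ref{def:cascadeLimitPoint} to sets in $p$ contained in $C'$, recursively. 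So $E_g(C')\subseteq E_g(C)\cup(\text{something small})$. I would verify this monotonicity carefully, including the finitely many points of $C'\setminus C$, by showing that the limit point is unchanged under modification on a finite set.

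Next, define $\mathcal D_\alpha=\{C\in[\omega]^\omega: |E_{f^\alpha}(C)|\le\max\{|Y|,\aleph_0\}\}$. By Lemma~\ref{lemma:betaOmegaDimensionRaising} and the heredity above, $\mathcal D_\alpha$ is $\subseteq^*$-dense below $A$ and $\subseteq^*$-downward closed. Since $\kappa<\mathfrak h$, the intersection of the families $\mathcal D_\alpha\restriction B$ is still dense, so there is $C\in[B]^\omega$ lying in every $\mathcal D_\alpha$. Concretely, one can either cite the standard fact that fewer than $\mathfrak h$ open dense subsets of $([\omega]^\omega,\subseteq^*)$ have a dense intersection, or build a $\subseteq^*$-decreasing sequence $(C_\alpha:\alpha<\kappa)$ with $C_{\alpha+1}\in\mathcal D_\alpha$, using $\kappa<\mathfrak h\le\mathfrak t$ at limit stages to obtain pseudointersections. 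Then set $E=\bigcup_{\alpha<\kappa}E_{f^\alpha}(C)$, so that $|E|\le\kappa\cdot\max\{|Y|,\aleph_0\}=\mu$. For $p\in C^*\setminus E$, every coordinate has a cascade $p$-limit $x_\alpha\in\beta\omega\setminus Y$, and $(x_\alpha)_{\alpha<\kappa}$ is a $[C]^n$-cascade $p$-limit of $f\restriction[C]^n$. This uses the coordinatewise argument by induction on $n$: at each level, the witnessing set can be taken to be $C$ minus a finite set. That is legitimate because, for $[C]^n$ with $p$ an ultrafilter, the sequence $(x_m)$ of level-one limits is defined for all $m\in C$ once each coordinate's limits exist for every $m$.

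The main obstacle is this uniformity: Definition~\ref{def:cascadeLimitPoint} only requires limits for $p$-many $m$, and these sets may differ across the $\kappa$ coordinates. I would handle it by showing that for frames of the form $[C]^n$ arising from Lemma~\ref{lemma:betaOmegaDimensionRaising}, the lower-level limits in fact exist for all $m$, since the lemma's construction produces $x_{p,k}$ for every $k\in B$. If needed, I would strengthen the induction statement to say ``for every $m\in C$, $f_m$ has a cascade $p$-limit''. Finally, the ``in particular'' clause follows because $|C^*|=2^{\mathfrak c}>\mu$, so some $p\in C^*\setminus E$ exists.
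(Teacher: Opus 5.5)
Your proposal has a genuine gap, and it sits exactly at the step you call harmless. The witnessing sets cannot in general be taken to be tails of $C$, and your families $\mathcal D_\alpha$ give no control over them. $C\in\mathcal D_\alpha$ only says that for most $p\in C^*$ the coordinate $f^\alpha$ has \emph{some} $n$-cascade $p$-limit, witnessed by some $A'_\alpha(p)\in p$. A cascade $p$-limit of the tuple needs a single set in $p$ of points $m$ at which all $\kappa$ coordinates $f^\alpha_m$ have limits outside $Y$. The intersection $\bigcap_{\alpha<\kappa}A'_\alpha(p)$ need not belong to $p$, already for $\kappa=\omega$.

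Here is a concrete counterexample. Let $n=2$, $\kappa=\omega$, $Y=\{q\}$ with $q\in\omega^*$, and let $g:\omega\to\omega$ be finite-to-one and onto with fibres of unbounded size. For $m<m'$ put $f^j(\{m,m'\})=g(m')$ if $m=j$, and $f^j(\{m,m'\})=0$ otherwise. Each $f^j$ has the $2$-cascade $p$-limit $0$ for every $p\in\omega^*$, with witnessing set $\omega\setminus(j+1)$. So every $C$ lies in every $\mathcal D_j$, your $E$ is empty, and the $\mathfrak h$-step may return $C=\omega$. But take $p\in(\beta g)^{-1}(q)$. For every $m$, the $m$-th coordinate of a level-one limit of $f_m$ would be $p\text{-}\lim_{m'>m}g(m')=q\in Y$, so the tuple has no $2$-cascade $p$-limit in $(\beta\omega\setminus Y)^\omega$. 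Moreover, $(\beta g)^{-1}(q)$ is an infinite closed subset of $\omega^*$, so it has $2^{\mathfrak c}$ points and cannot be absorbed into any exceptional set of size $\le\mu$.

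Your suggested remedy does not close this. The lemma's construction does yield lower-level limits at every point of \emph{its} output set. But the $C$ produced by the $\mathfrak h$-step is only almost contained in each coordinate's set, the finitely many exceptional points depend on $\alpha$, and their union can be all of $C$. In the example, coordinate $j$ misbehaves only at the node $\{j\}$, which its own witnessing sets avoid but the tuple's cannot.

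What is missing is control of the lower levels for all coordinates at once. The paper obtains it by inducting on the statement for the whole tuple. It first builds $\widetilde B=\{m_k:k\in\omega\}$ by applying the inductive hypothesis to the $\kappa$-tuple $f_{m_k}$ at each stage. This gives countably many exceptional sets, each of size $\le\mu$, so for $p\notin\widetilde E$ every $f_{m_k}$ already has a cascade $p$-limit $x^p_{m_k}$ in $(\beta\omega\setminus Y)^\kappa$. The top point $x^p=p\text{-}\lim x^p_c$ exists by compactness of $(\beta\omega)^\kappa$. The hypothesis $\kappa<\mathfrak h$ is used only to find $C\subseteq\widetilde B$ on which each $f^\alpha$ is, off a finite set, discrete and Erd\H{o}s--Rado canonical. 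The cases $0\in I_\alpha$ and $0\notin I_\alpha$ then decide whether $\pi_\alpha(x^p)\in Y$. At that top level only $D_\alpha\in p$ matters, so the dependence of $D_\alpha$ on $\alpha$ is harmless.

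Your route can also be repaired by strengthening $\mathcal D_\alpha$ as follows: for all but $\max\{|Y|,\aleph_0\}$ many $p\in C^*$, \emph{every} node function $f^\alpha_s$ with $s\in[A]^{<n}$ has a cascade $p$-limit in $\beta\omega\setminus Y$.
\begin{itemize}
\item This condition depends on $C$ only through $C^*$, hence it is $\subseteq^*$-open.
\item It is dense: apply Lemma~\ref{lemma:betaOmegaDimensionRaising} to the countably many $f^\alpha_s$ in turn and take a pseudointersection.
\item For $p$ outside the union of the $\kappa$ exceptional sets, uniqueness of cascade limits (Corollary~\ref{corollary:uniqueCascadeLimit}) lets you assemble the node limits coordinatewise, with tails of $C$ as witnessing sets at every level.
\end{itemize}

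Finally, $\mathfrak t\le\mathfrak h$, not conversely. So the tower construction you offer as an alternative for the $\mathfrak h$-step is not available; use the characterization of $\mathfrak h$ via open dense families instead.
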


\begin{proof}
    We proceed by induction on $n$.

    If $n=0$, then $[\omega]^0=\{\emptyset\}$, so we may take $C=B$ and
    $E=\emptyset$.

    Assume the statement holds for $n$, and let
    \[
        f=(f^\alpha)_{\alpha<\kappa}:[A]^{n+1}\to (\beta\omega\setminus Y)^\kappa
    \]
    be given.
    For each $\alpha<\kappa$ and each $m\in A$, define
    $f^\alpha_m:[A\setminus (m+1)]^n\to \beta\omega\setminus Y$
    by
    \[
        f^\alpha_m(s)=f^\alpha(\{m\}\cup s)
        \qquad (s\in [A\setminus (m+1)]^n).
    \]
    Also, define
    \[
        f_m=(f^\alpha_m)_{\alpha<\kappa}:[A\setminus (m+1)]^n\to (\beta\omega\setminus Y)^\kappa
    \]
    for each $m\in A$.

    Fix $B\in [A]^\omega$.
    Recursively define a $\subseteq$-decreasing sequence $(B_k)_{k\in\omega}$ in $[B]^\omega$,
    a sequence $(m_k)_{k\in\omega}$ in $B$, and sets $(E_k)_{k\in\omega}$ such that:
    \begin{enumerate}[label=(\alph*)]
        \item $B_0=B$;
        \item $m_k=\min B_k$ for every $k\in\omega$;
        \item $B_{k+1}\in [B_k\setminus (m_k+1)]^\omega$ for every $k\in\omega$;
        \item $E_k\subseteq \omega^*$ and $|E_k|\leq \mu$ for every $k\in\omega$;
        \item for every $p\in B_{k+1}^*\setminus E_k$, the restriction $f_{m_k}\restriction [B_{k+1}]^n$
        has a $[B_{k+1}]^n$-cascade $p$-limit point in $(\beta\omega\setminus Y)^\kappa$.
    \end{enumerate}
    This is possible by the inductive hypothesis.

    Let $\widetilde B=\{m_k:k\in\omega\}$ and $\widetilde E=\bigcup_{k\in\omega}E_k$.
    Since $\mu$ is infinite, $|\widetilde E|\leq\mu$.

    We claim that for every $p\in \widetilde B^*\setminus\widetilde E$,
    every $k\in\omega$, the restriction
    \[
        f_{m_k}\restriction [\widetilde B\setminus (m_k+1)]^n
    \]
    has a $[\widetilde B\setminus (m_k+1)]^n$-cascade $p$-limit point in
    $(\beta\omega\setminus Y)^\kappa$.
    Indeed, $\widetilde B\setminus (m_k+1)\subseteq B_{k+1}$ and
    $\widetilde B\setminus (m_k+1)\in p$. Hence, any
    $[B_{k+1}]^n$-cascade $p$-limit point of
    $f_{m_k}\restriction [B_{k+1}]^n$
    is also a $[\widetilde B\setminus (m_k+1)]^n$-cascade $p$-limit point of
    $f_{m_k}\restriction [\widetilde B\setminus (m_k+1)]^n$.
    This proves the claim.

    For each $\alpha<\kappa$, let $\mathcal D_\alpha$ be the set of all
    $M\in[\widetilde B]^\omega$ such that there exist $D\in[M]^\omega$ and
    $I\subseteq n+1$ satisfying:
    \begin{enumerate}[label=(\roman*)]
        \item $M\setminus D$ is finite;
        \item $f^\alpha[[D]^{n+1}]$ is discrete;
        \item whenever
        $a_0<\cdots<a_n$
        and
        $b_0<\cdots<b_n$
        are elements of $D$, then
        \[
            f^\alpha(\{a_0,\dots,a_n\})=f^\alpha(\{b_0,\dots,b_n\})
            \quad\Longleftrightarrow\quad
            \forall i\in I\ (a_i=b_i).
        \]
    \end{enumerate}

    We claim that each $\mathcal D_\alpha$ is open dense in
    $([\widetilde B]^\omega,\subseteq^*)$.

    To see density, fix $M\in[\widetilde B]^\omega$.
    By Proposition~\ref{prop:discreteImageCarlson}, there exists
    $M'\in[M]^\omega$ such that $f^\alpha[[M']^{n+1}]$ is discrete.
    Applying the Erd\H{o}s--Rado canonical theorem to $f^\alpha\restriction [M']^{n+1}$,
    we obtain $D\in[M']^\omega$ and $I\subseteq n+1$ such that
    \[
        f^\alpha(\{a_0,\dots,a_n\})=f^\alpha(\{b_0,\dots,b_n\})
        \quad\Longleftrightarrow\quad
        \forall i\in I\ (a_i=b_i)
    \]
    whenever
    $a_0<\cdots<a_n$ and $b_0<\cdots<b_n$ are elements of $D$.
    Then $D\in\mathcal D_\alpha$, and $D\subseteq M$, so $\mathcal D_\alpha$ is dense.

    To see openness, let $M\in\mathcal D_\alpha$, witnessed by some
    $D\in[M]^\omega$ and $I\subseteq n+1$, and let $N\in[\widetilde B]^\omega$
    satisfy $N\subseteq^* M$.
    Put $D'=N\cap D$.
    Then $D'\in[N]^\omega$ and $N\setminus D'$ is finite.
    Since $D'\subseteq D$, the set $f^\alpha[[D']^{n+1}]$ is discrete, and the same set $I$ works for $[D']^{n+1}$.
    Hence, $N\in\mathcal D_\alpha$, proving openness.

    Since $\kappa<\mathfrak h$, the intersection
    $\bigcap_{\alpha<\kappa}\mathcal D_\alpha$ is dense. Choose
    \[
        C\in\bigcap_{\alpha<\kappa}\mathcal D_\alpha.
    \]

    For each $\alpha<\kappa$, fix $D_\alpha\in[C]^\omega$ and $I_\alpha\subseteq n+1$
    witnessing that $C\in\mathcal D_\alpha$.
    Then $C\setminus D_\alpha$ is finite,
    $f^\alpha[[D_\alpha]^{n+1}]$ is discrete, and whenever
    $a_0<\cdots<a_n$ and $b_0<\cdots<b_n$ are elements of $D_\alpha$, then
    \[
        f^\alpha(\{a_0,\dots,a_n\})=f^\alpha(\{b_0,\dots,b_n\})
        \quad\Longleftrightarrow\quad
        \forall i\in I_\alpha\ (a_i=b_i).
    \]

    For each $p\in C^*\setminus\widetilde E$ and $c\in C$, choose a point
    $x_c^p\in (\beta\omega\setminus Y)^\kappa$ which is a
    $[\widetilde B\setminus(c+1)]^n$-cascade $p$-limit point of
    $f_c\restriction [\widetilde B\setminus(c+1)]^n$.
    Since $(\beta\omega)^\kappa$ is compact, there exists
    \[
        x^p=p\text{-}\lim_{c\in C}x_c^p \in (\beta\omega)^\kappa.
    \]

    For each $p\in C^*\setminus\widetilde E$, $\alpha<\kappa$, and $c\in D_\alpha$,
    since $x_c^p$ is a $[\widetilde B\setminus(c+1)]^n$-cascade $p$-limit point of
    $f_c\restriction [\widetilde B\setminus(c+1)]^n$, by continuity its $\alpha$-th
    coordinate $\pi_\alpha(x_c^p)$ is a
    $[\widetilde B\setminus(c+1)]^n$-cascade $p$-limit point of
    $f_c^\alpha\restriction [\widetilde B\setminus(c+1)]^n$.
    Since $D_\alpha\setminus(c+1)\subseteq \widetilde B\setminus(c+1)$ and
    $D_\alpha\setminus(c+1)\in p$,
    \[
      \pi_\alpha(x_c^p) \text{ is a } n\text{-cascade } p\text{-limit point of } f_c^\alpha\restriction [D_\alpha\setminus(c+1)]^n.
    \]

    Below, for each $\alpha<\kappa$, we define a set $F_\alpha\subseteq C^*$ with
    $|F_\alpha|\leq\mu$ such that, whenever
    $p\in C^*\setminus(\widetilde E\cup F_\alpha)$, one has
    $\pi_\alpha(x^p)\in \beta\omega\setminus Y$.
    Fix $\alpha<\kappa$.

    \textbf{Case 1: $0\in I_\alpha$.}
    Then for every two distinct $c,d\in D_\alpha$,
    \[
        f^\alpha_c\big[[D_\alpha\setminus(c+1)]^n\big]
        \cap
        f^\alpha_d\big[[D_\alpha\setminus(d+1)]^n\big]
        =\emptyset.
    \]

    We claim that the map $p\longmapsto \pi_\alpha(x^p)$
    is injective on $C^*\setminus\widetilde E$.
    Indeed, let $p,q\in C^*\setminus\widetilde E$ be distinct.
    Since $D_\alpha\in p\cap q$, choose disjoint sets $P\in p$ and $Q\in q$
    with $P,Q\subseteq D_\alpha$.
    Put
    \[
        P_f=\bigcup_{c\in P}f^\alpha_c\big[[D_\alpha\setminus(c+1)]^n\big],
        \qquad
        Q_f=\bigcup_{c\in Q}f^\alpha_c\big[[D_\alpha\setminus(c+1)]^n\big].
    \]
    Since $f^\alpha[[D_\alpha]^{n+1}]$ is discrete in $\beta\omega$, the closures in $\beta\omega$
    of any two disjoint subsets of $f^\alpha[[D_\alpha]^{n+1}]$ are disjoint.
    Hence, $\cl P_f\cap \cl Q_f=\emptyset$.
    For every $c\in P$,
    $\pi_\alpha(x_c^p)\in
        \cl\Bigl(f^\alpha_c[[D_\alpha\setminus(c+1)]^n]\Bigr)\subseteq \cl P_f$.
    Since $P\in p$, $\cl P_f$ is closed and $\pi_\alpha$ is continuous, it follows that
    \[
        \pi_\alpha(x^p)=p\text{-}\lim_{c\in C}\pi_\alpha(x_c^p)\in \cl P_f.
    \]
    Similarly, $\pi_\alpha(x^q)\in \cl Q_f$.
    Hence, $\pi_\alpha(x^p)\neq \pi_\alpha(x^q)$, proving the claim.

    Let
    \[
        F_\alpha=\{p\in C^*\setminus\widetilde E:\pi_\alpha(x^p)\in Y\}.
    \]
    Since $p\mapsto \pi_\alpha(x^p)$ is injective and $|Y|\leq\mu$, we have
    $|F_\alpha|\leq\mu$.

    \textbf{Case 2: $0\notin I_\alpha$.}
    Let $l=\min D_\alpha$.
    For every $c\in D_\alpha\setminus(l+1)$ and every
    $s\in [D_\alpha\setminus(c+1)]^n$, the sets $\{l\}\cup s$ and $\{c\}\cup s$
    agree on all coordinates indexed by $I_\alpha$.
    Hence, $f^\alpha(\{l\}\cup s)=f^\alpha(\{c\}\cup s)$.
    Therefore,
    \[
        f^\alpha_c=
        f^\alpha_l\restriction [D_\alpha\setminus(c+1)]^n
        \qquad\text{for every }c\in D_\alpha\setminus(l+1).
    \]

    Put $F_\alpha=\emptyset$.
    Fix $p\in C^*\setminus\widetilde E$.
    Since $D_\alpha\setminus(l+1)\in p$, let
    \[
        z^\alpha_p\in \beta\omega\setminus Y
    \]
    be a $[D_\alpha\setminus(l+1)]^n$-cascade $p$-limit point of
    $f^\alpha_l\restriction [D_\alpha\setminus(l+1)]^n$.

    For every $c\in D_\alpha\setminus(l+1)$, the point $z^\alpha_p$ is a
    $[D_\alpha\setminus(c+1)]^n$-cascade $p$-limit point of
    $f^\alpha_c\restriction [D_\alpha\setminus(c+1)]^n$.
    Since $\pi_\alpha(x_c^p)$ is also such a point, by
    Corollary~\ref{corollary:uniqueCascadeLimit} we have
    \[
        \pi_\alpha(x_c^p)=z^\alpha_p
        \qquad\text{for every }c\in D_\alpha\setminus(l+1).
    \]

    As $D_\alpha\setminus(l+1)\in p$, it follows that
    $\pi_\alpha(x^p)=p\text{-}\lim_{c\in C}\pi_\alpha(x_c^p)=z^\alpha_p\in\beta\omega\setminus Y$.
    This finishes the second case.

    Finally, let
    \[
        E=\widetilde E\cup\bigcup_{\alpha<\kappa}F_\alpha.
    \]
    Since $\mu\geq\kappa$ and $|F_\alpha|\leq\mu$ for every $\alpha<\kappa$, we have
    $|E|\leq\mu$.

    Fix $p\in C^*\setminus E$.
    Then $\pi_\alpha(x^p)\in \beta\omega\setminus Y$ for every $\alpha<\kappa$, so
    $x^p\in (\beta\omega\setminus Y)^\kappa$.
    For every $c\in C$, since
    \[
        C\setminus(c+1)\subseteq \widetilde B\setminus(c+1)
        \qquad\text{and}\qquad
        C\setminus(c+1)\in p,
    \]
    the point $x_c^p$ is also a $[C\setminus(c+1)]^n$-cascade $p$-limit point of
    $f_c\restriction [C\setminus(c+1)]^n$.

    Since $x^p=p\text{-}\lim_{c\in C}x_c^p$ and $C\in p$, it follows that
    $x^p$ is a $[C]^{n+1}$-cascade $p$-limit point of
    $f\restriction [C]^{n+1}$.
    This completes the induction.

    For the last assertion, assume $|Y|<2^{\mathfrak c}$, fix $\kappa<\mathfrak h$ and $n\in\omega$,
    and let $g:[\omega]^n\to (\beta\omega\setminus Y)^\kappa$.
    Apply the theorem with $A=B=\omega$.
    Recall that $\mu=\max\{|Y|,\omega,\kappa\}<2^{\mathfrak c}$.

    There exist $C\in[\omega]^\omega$ and
    $E\subseteq \omega^*$ with $|E|\leq\mu$ such that every
    $p\in C^*\setminus E$ yields a $[C]^n$-cascade $p$-limit point of
    $g\restriction [C]^n$ in $(\beta\omega\setminus Y)^\kappa$.
    Since $|C^*|=2^{\mathfrak c}>\mu$, we may choose $p\in C^*\setminus E$.
    Thus, $(\beta\omega\setminus Y)^\kappa$ is $n$-cascade countably compact.
\end{proof}

\begin{corollary}\label{corollary:mainHyperspaceH}
    There exists $X\subseteq \beta\omega$ of cardinality $2^{\mathfrak c}$ such that $\omega\subseteq X$, $X^\kappa$ is $n$-cascade countably compact for every $\kappa<\mathfrak h$ and every $n\in\omega$, and $\exp X$ is not pseudocompact. In particular, $X^\omega$ is $n$-cascade countably compact for every $n\in\omega$.
\end{corollary}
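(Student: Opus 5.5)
The plan is to apply Theorem~\ref{theorem:mainHyperspace} with $\mu=\mathfrak h$ and $\lambda=2^{\mathfrak c}$, and then read off the conclusion. Hypothesis~(1) is immediate, since $\omega_1\leq\mathfrak h\leq\mathfrak c\leq 2^{\mathfrak c}$.

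For hypothesis~(3) we need $\cf(2^{\mathfrak c})\geq\mathfrak h$. By K\"onig's theorem, $\cf(2^{\mathfrak c})>\mathfrak c\geq\mathfrak h$, which settles it.

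For hypothesis~(2), fix $\theta<2^{\mathfrak c}$. We must show $\theta^{<\mathfrak h}\leq 2^{\mathfrak c}$. For each $\nu<\mathfrak h\leq\mathfrak c$ we have
\[
\theta^\nu\leq (2^{\mathfrak c})^{\nu}=2^{\mathfrak c\cdot\nu}=2^{\mathfrak c}.
\]
Summing over the at most $\mathfrak c$ cardinals $\nu<\mathfrak h$ gives $\theta^{<\mathfrak h}\leq\mathfrak c\cdot 2^{\mathfrak c}=2^{\mathfrak c}$. This step only needs cardinal arithmetic.

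Hypothesis~(4) is the one substantive input. It asks that for every $Z\subseteq\omega^*$ with $|Z|<2^{\mathfrak c}$, every $\kappa<\mathfrak h$ and every $n\in\omega$, the power $(\beta\omega\setminus Z)^\kappa$ is $n$-cascade countably compact. For infinite $\kappa$ this is exactly the last assertion of Theorem~\ref{theorem:betaOmegaSmallDimensional}. The one point to check there is that $\mu=\max\{|Z|,\omega,\kappa\}<2^{\mathfrak c}$, which holds because $\kappa<\mathfrak h\leq\mathfrak c$. For finite $\kappa$, I would note that $(\beta\omega\setminus Z)^\kappa$ is a retract of $(\beta\omega\setminus Z)^\omega$ (any nonempty factor allows this), and that a cascade limit of $f$ composed with the projection is a cascade limit of the projected map. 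Alternatively, one can pad the finitely many coordinate functions with copies of one of them and apply the infinite case to get the same conclusion.

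With all four hypotheses verified, Theorem~\ref{theorem:mainHyperspace} produces $X\subseteq\beta\omega$ with $\omega\subseteq X$, $|X|=2^{\mathfrak c}$, $\exp X$ not pseudocompact, and $X^\kappa$ $n$-cascade countably compact for all $\kappa<\mathfrak h$ and all $n\in\omega$. The final sentence of the statement is the special case $\kappa=\omega$, which is allowed since $\omega<\omega_1\leq\mathfrak h$. I expect no real obstacle: the only places needing care are the cofinality bound and the finite-$\kappa$ case of hypothesis~(4).
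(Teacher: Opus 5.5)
Your proposal is correct and matches the paper's proof: apply Theorem~\ref{theorem:mainHyperspace} with $\mu=\mathfrak h$ and $\lambda=2^{\mathfrak c}$, verify (2) by cardinal arithmetic, (3) by K\"onig's theorem, and (4) by Theorem~\ref{theorem:betaOmegaSmallDimensional}. Your treatment of finite $\kappa$, obtaining the cascade limit by projecting from the countable power (continuous maps preserve cascade limits), is a point the paper leaves implicit, since that theorem is stated only for infinite $\kappa$.
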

\begin{proof}
    Apply Theorem~\ref{theorem:mainHyperspace} with $\mu=\mathfrak h$ and $\lambda=2^{\mathfrak c}$.
    Condition~(1) is immediate.
    For condition~(2), let $\theta<2^{\mathfrak c}$ and $\nu<\mathfrak h$.
    Since $\mathfrak h\leq \mathfrak c$, we have $\nu\leq \mathfrak c$, and therefore
    \[
        \theta^\nu\leq (2^{\mathfrak c})^{\mathfrak c}=2^{\mathfrak c}.
    \]
    Hence, $\theta^{<\mathfrak h}\leq 2^{\mathfrak c}$.
    Condition~(3) follows from König's theorem, and condition~(4) follows from
    Theorem~\ref{theorem:betaOmegaSmallDimensional}.
\end{proof}

By replacing $\mathfrak h$ with $\mathfrak p$, the \emph{pseudointersection number}, in the previous corollary, we obtain a space of smaller cardinality.

\begin{corollary}\label{corollary:mainHyperspaceP}
    There exists $X\subseteq \beta\omega$ of cardinality $\mathfrak c$ such that
    $\omega\subseteq X$, $X^\kappa$ is $n$-cascade countably compact for every
    $\kappa<\mathfrak p$ and every $n\in\omega$, and $\exp X$ is not pseudocompact.
    In particular, $X^\omega$ is $n$-cascade countably compact for every $n\in\omega$.
\end{corollary}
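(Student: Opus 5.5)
We apply Theorem~\ref{theorem:mainHyperspace} with $\mu=\mathfrak p$ and $\lambda=\mathfrak c$, so the construction yields $X$ of cardinality $\mathfrak c$. We check the four hypotheses in turn. Condition~(1), $\omega_1\le\mathfrak p\le\mathfrak c\le\mathfrak c\le 2^{\mathfrak c}$, is immediate. Condition~(3), $\cf(\mathfrak c)\ge\mathfrak p$, is a classical fact: $\mathfrak p$ is regular and $\mathfrak p\le\cf(\mathfrak c)$ (one may cite it, or derive it from $2^\kappa=\mathfrak c$ for $\kappa<\mathfrak p$ together with König's theorem). Condition~(2) asks that $\theta^{<\mathfrak p}\le\mathfrak c$ for every $\theta<\mathfrak c$. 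For $\nu<\mathfrak p$ we have $\theta^\nu\le\mathfrak c^\nu=(2^{\omega})^\nu=2^\nu$, and $2^\nu=\mathfrak c$ for $\nu<\mathfrak p$ (a theorem of Martin--Solovay type, valid below $\mathfrak p$). Taking the supremum over the $\mathfrak p\le\mathfrak c$ many $\nu$ gives $\theta^{<\mathfrak p}\le\mathfrak c$.

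The substantive point is condition~(4): for $Z\subseteq\omega^*$ with $|Z|<\mathfrak c$, $\kappa<\mathfrak p$ and $n\in\omega$, the power $(\beta\omega\setminus Z)^\kappa$ must be $n$-cascade countably compact. Since $\mathfrak p\le\mathfrak h$, we have $\kappa<\mathfrak h$, and $|Z|<\mathfrak c<2^{\mathfrak c}$. So Theorem~\ref{theorem:betaOmegaSmallDimensional} applies directly, with $Y=Z$ and $\mu=\max\{|Z|,\omega,\kappa\}<2^{\mathfrak c}$. Its final assertion gives exactly the required $n$-cascade countable compactness. For finite $\kappa$ the statement is covered by passing to an infinite power and projecting; alternatively one can rerun the same induction, which uses no infinitude of $\kappa$ in an essential way.

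Theorem~\ref{theorem:mainHyperspace} then produces $X\subseteq\beta\omega$ with $\omega\subseteq X$, $|X|=\mathfrak c$, $X^\kappa$ $n$-cascade countably compact for all $\kappa<\mathfrak p$ and $n\in\omega$, and $\exp X$ not pseudocompact. The final clause about $X^\omega$ follows because $\omega<\omega_1\le\mathfrak p$.

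The only step needing care is condition~(2), together with the cofinality fact in condition~(3). Both rest on the cardinal arithmetic $2^\nu=\mathfrak c$ for $\nu<\mathfrak p$. I would cite these as standard facts rather than reprove them; everything else is a direct instantiation of earlier results.
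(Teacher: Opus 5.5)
Your proposal is correct and follows the paper's proof. Like the paper, you apply Theorem~\ref{theorem:mainHyperspace} with $\mu=\mathfrak p$ and $\lambda=\mathfrak c$, get conditions (2) and (3) from the standard cardinal arithmetic $2^\nu=\mathfrak c$ for $\nu<\mathfrak p$, and get condition (4) from Theorem~\ref{theorem:betaOmegaSmallDimensional} via $\mathfrak p\le\mathfrak h$; your K\"onig argument for $\cf(\mathfrak c)\ge\mathfrak p$ and your projection remark for finite $\kappa$ are, if anything, more careful than the paper's brief citations.
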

\begin{proof}
    Apply Theorem~\ref{theorem:mainHyperspace} with $\mu=\mathfrak p$ and $\lambda=\mathfrak c$.
    Condition~(1) is immediate.

    Condition~(2) follows from \cite[III.l.26]{kunen2011set}.

    Condition~(3) holds as $\mathfrak p$ is regular (see e.g. \cite{blass2009combinatorial} or \cite{kunen2011set}).

    Condition~(4) follows from Theorem~\ref{theorem:betaOmegaSmallDimensional} as $\mathfrak p\leq \mathfrak h$.

    Therefore, there exists $X\subseteq \beta\omega$ of cardinality $\mathfrak c$ such that
    $\omega\subseteq X$, $X^\kappa$ is $n$-cascade countably compact for every
    $\kappa<\mathfrak p$ and every $n\in\omega$, and $\exp X$ is not pseudocompact.
\end{proof}

Thus, finite cascade countable compactness of every power below $\mathfrak h$
does not imply pseudocompactness of the Vietoris hyperspace.

    \section{Acknowledgements}
The first author was financed, in part, by the São Paulo Research Foundation (FAPESP), Brazil.
Process Number 2025/07302-0.

\subsection*{AI disclaimer}
The authors employed artificial intelligence tools for editorial purposes, including suggestions on grammar, clarity, organization, and proofreading.
The model used was OpenAI's ChatGPT 5.4 Thinking.

All suggestions made by the model were carefully reviewed and edited by the authors.
All definitions, statements, proofs, and final decisions were made and verified by the authors, who take full responsibility for the final version of the manuscript.

    \bibliographystyle{amsplain}
    \bibliography{includes/bibliography}
\end{document}